 \newcommand{\bx}{\boldsymbol{x}}
 \newcommand{\bu}{\boldsymbol{u}}
\theoremstyle{definition}
\newtheorem{theorem}{Theorem}[section]
\newtheorem{definition}{Definition}[section]
\newtheorem{proposition}{Proposition}[section]
\newtheorem{remark}{Remark}[section]
\newtheorem{lemma}{Lemma}[section]
\begin{document}

\title{\sc{Well-posedness of solutions to stochastic fluid-structure interaction}}
\author{Jeffrey Kuan and Sun\v{c}ica \v{C}ani\'{c}\\
Department of Mathematics\\
University of California Berkeley}
\maketitle
\begin{abstract}
In this paper we introduce a constructive approach to study well-posedness of solutions to stochastic fluid-structure interaction with stochastic noise.
We focus on a benchmark problem
in stochastic fluid-structure interaction, and prove the existence of a unique weak solution in the  probabilistically strong sense. 
The benchmark problem consists of the 2D time-dependent Stokes equations describing the flow of an incompressible, viscous fluid 
interacting with  a linearly elastic membrane modeled by the 1D linear wave equation. The membrane is stochastically forced by the time-dependent 
white noise. The fluid and the structure are linearly coupled.
The constructive existence proof is based on a time-discretization via an operator splitting approach. This introduces
a sequence of approximate solutions, which are random variables. We show the existence of a subsequence of approximate solutions
which converges, almost surely, to a weak solution in the probabilistically strong sense.
The proof is based on uniform energy estimates in terms of the {\emph{expectation}} of the energy norms,
which are the backbone for a weak compactness argument giving 
rise to a weakly convergent subsequence of {\emph{probability measures}} associated with the approximate solutions.
Probabilistic techniques
based on the Skorohod representation theorem and the Gy\"ongy-Krylov lemma are then employed to obtain almost sure convergence
of a subsequence of the random approximate solutions to a weak solution in the probabilistically strong sense.
The result shows that the deterministic benchmark FSI model is robust to stochastic noise, even in the presence of rough white noise in time.
To the best of our knowledge, this is the first well-posedness result for fully coupled stochastic fluid-structure interaction.
\if 1 = 0
Once we have established the existence of a subsequence of probability measures that converges to some probability measure 
as the time step goes to zero,
we would like to show that on a further subsequence, there exist the {\emph{random variables}} 
 that will converge almost surely to a random
variable which is a weak solution, almost surely, in the probabilistically strong sense.
Showing this almost sure convergence with respect, however, has to be done in two parts. 
In the first part we get a hold of a subsequence of approximate solutions that converge almost surely {\bf{but}} on another probability space,
and then use this information in the second part to construct a convergent subsequence of approximate solutions that converge on the {\emph{original}} probability space, by invoking 
\fi
\end{abstract}
\section{Introduction}
In this paper, we introduce a constructive approach to study solutions of stochastic fluid-structure interaction (SFSI) with stochastic noise.
This manuscript is written as an introduction to the use of stochastic techniques to study SFSI, and is aimed at audiences
that have experience with deterministic FSI, but may be new to stochastic analysis. 
We focus on a benchmark problem 
 in which a stochastically forced linearly elastic membrane interacts with the flow of a 
viscous incompressible Newtonian fluid in two spatial dimensions. 
The membrane is modeled by the linear wave equation, while the fluid is modeled by the 2D time-dependent Stokes equations. 
The problem is forced by a ``rough'' stochastic forcing given by a time-dependent white noise $\dot{W}(t)$,
where $W$ is a given one-dimensional Brownian motion with respect to a complete probability space 
$(\Omega, \mathcal{F}, \mathbb{P})$ 
with complete filtration $\{\mathcal{F}_{t}\}_{t \ge 0}$.
The fluid and the membrane are coupled via a two-way coupling
describing continuity of fluid and structure velocities at the fluid-structure interface, and continuity of contact forces at
the interface. The coupling is calculated at the linearized, fixed interface, rendering this problem a linear stochastic fluid-structure 
interaction problem. The goal is to show that despite the rough white noise, the resulting problem is well-posed, showing that
the underlying deterministic fluid-structure interaction problem is robust to noise. Indeed, we prove the existence of a 
unique weak solution in the probabilistically strong sense (see Definition~\ref{strong} in Section~\ref{sec:weak}) to this 
stochastic fluid-structure interaction problem. 
This means that there exist unique random variables (stochastic processes),
 describing the fluid velocity $\boldsymbol{u}$, the structure velocity $v$, and the structure displacement $\eta$, 
 such that those stochastic processes are adapted to the filtration $\{\mathcal{F}_{t}\}_{t \ge 0}$,
 i.e., they only depend on the past history of the processes up to time $t$ and not on the future,
which satisfy the weak formulation of the original problem almost surely. This is the main result of this manuscript.

To prove the existence of a unique weak solution in the probabilistically strong sense,
we design a constructive existence proof.
The constructive existence proof is based on semi-discretizing the problem in time by dividing the time interval $(0,T)$ into $N$
subintervals of width $\Delta t=T/N$, and  using a time-splitting scheme,
introduced in \cite{BGR}, to construct approximate solutions. 
The goal is to show that the approximate solutions converge almost surely with respect to a certain topology, to the unique weak solution as  $\Delta t$ goes to zero. 
In contrast to the  deterministic case, see  the works of Muha and \v{C}ani\'c in \cite{MuhaCanic13, BorSunMultiLayered,BorSun3d}, where a time-discretization
via operator splitting approach was used to study existence of weak solutions, 
the splitting scheme for the current problem involving stochastic noise needs to be constructed more carefully to obtain stability, see Section~\ref{split_scheme}.
In particular, the problem has to be split so that
the stochastic part is considered separately from the deterministic part, and the fluid and structure problems are split
and solved in a particular order
so that the resulting stochastic integrals involving the stochastic noise increments
can be evaluated and estimated to prove stability. 
See Remark~\ref{stochastic_increment} in Section~\ref{SemidiscreteProblem}.
More precisely, along each time sub-interval $(t^n_{N} ,t^{n+1}_{N}), n=0,\dots,N-1$, the following three sub-problems are solved to obtain
approximate solutions consisting of the fluid and structure velocities, and the structure displacement, $(\boldsymbol{u},v,\eta)$.
First, in {\bf{Step 1}}, the structure displacement and structure velocity are updated using only the structure displacement and structure velocity from the previous time step. The resulting random variables are measurable with respect to the sigma algebra ${\cal{F}}_{t^n_{N}}$. 
Then, in {\bf{Step 2}}, which is the stochastic step, 
 the structure velocity is updated by adding to the structure velocity calculated in Step 1
 the stochastic noise increment from time step ${t^n_{N}}$ to time step ${t^{n+1}_{N}}$. Since the structure velocity obtained in Step 1
 is a random variable that is measurable with respect to the sigma algebra ${\cal{F}}_{t^{n}_{N}}$, 
 and the stochastic increment from ${t^n_{N}}$ to ${t^{n+1}_{N}}$ is independent of it,
 we will be able to obtain boundedness of the stochastic integral involving these two quantities
 by using their independence. This will lead to stability.
The resulting updated structure velocity is a random variable that is measurable with respect to the sigma algebra ${\cal{F}}_{t^{n+1}_{N}}$.
Finally, in {\bf{Step~3}}, the fluid and structure velocities are updated by using the information from the just calculated structure velocity in Step 2.
This gives rise
to random variables that are measurable with respect to the sigma algebra ${\cal{F}}_{t^{n+1}_{N}}$.
We would like to show that the sequence or a subsequence of random variables constructed this way converges in a certain topology to
a weak solution in the probabilistically strong sense of the coupled SFSI problem. 

Based on this splitting scheme, uniform energy estimates in terms of expectation can be derived.
In addition to estimating the expectation of the kinetic and elastic energy of the problem, it is important to get 
a uniform bound on the expectation of the numerical dissipation, to show that the numerical dissipation is bounded and that it
in fact, approaches zero as the time step $\Delta t$ goes to zero, which is crucial in the convergence proof. 
This is provided in Proposition~\ref{uniformenergy}. Furthermore, another interesting observation is that the energy estimates
will have an extra term on the right-hand side which accounts for the energy pumped into the problem by the stochastic noise.
This is in addition to the energy/work contributions by the initial and boundary data. 
These energy estimates define an energy function space for the unknown functions $(\boldsymbol{u},v,\eta)$.
A separable subspace of the energy space, specified in \eqref{phase} in Section~\ref{sec:measures_weak_conv}
is called a {\emph{phase space}}, and is denoted by ${\cal{X}}$.

Uniform estimates are the backbone for weak compactness, giving rise to convergent subsequences whose limits 
potentially satisfy the original problem in a certain sense. 
In the deterministic case, the uniform energy estimates typically imply existence of weakly- and weakly*-convergent subsequences
in the appropriate topologies, which is usually sufficient to pass to the limit in the linear problem and recover the weak solution. 
This is, however, not the case with the 
stochastic problem. The main reason is that the uniform energy estimates are given in expectation 
-- they hold on average over all realizations, not pathwise for each outcome $\omega \in \Omega$ in the underlying probability space. 
To deal with this issue, a {\emph{compactness argument}} needs to be invoked, even though the underling stochastic FSI problem is linear.
The compactness argument is used to obtain the existence of weakly convergent subsequences
of {\emph{probability measures}}, or {\emph{laws}}, describing the {\emph{distributions}} of the random approximate solutions.
Once weak convergence of probability measures is established, one can work on getting an 
 almost sure convergence of the random approximate solutions, which is necessary to recover the weak solution.
 
To establish
weak convergence of probability measures, one must show that the probability measures are {\bf{tight}}.
More precisely, one must show that for each $\epsilon>0$,
there exists a {\bf{compact set}} in the phase space ${\cal{X}}$ of 
displacements and fluid and structure velocities, such that the probability that our approximate solutions 
$(\boldsymbol{u}_N,v_N,\eta_N)$ live in that compact set is greater than $1-\epsilon$. 
See Definition~\ref{tight} for tightness of measures. 
The proof of tightness of the sequence of probability measures $\mu_N$ corresponding to the laws of the approximate solutions $(\boldsymbol{u}_N,v_N,\eta_N)$  will follow
from a {\emph{deterministic}} compactness argument alla Aubin-Lions. The compactness argument
will establish the existence of 
a compact subset of the phase space ${\cal{X}}$ that contains the approximate solutions $(\boldsymbol{u}_N,v_N,\eta_N)$ with probability greater than $1 - \epsilon$, thus verifying the tightness property.

Once we have established the existence of a subsequence of probability measures $\mu_N$ that converges weakly to some probability measure $\mu$ as $N \to \infty$, or equivalently,
as $\Delta t \to 0$, we would like to show that on a further subsequence, the {\emph{random variables}} $(\boldsymbol{u},v,\eta)_N$
will converge almost surely to a random
variable with the law $\mu$, with respect to the probability space $(\Omega, \mathcal{F}, \mathbb{P})$. 
Showing this almost sure convergence with respect to the probability space $(\Omega, \mathcal{F}, \mathbb{P})$, however, has to be done in two parts. 
In the first part, we get a hold of a subsequence of approximate solutions that converge almost surely {\bf{but}} on another probability space,
and then use this information in the second part to construct a convergent subsequence of approximate solutions that converge on the {\emph{original}} probability space. The following is a more detailed albeit succinct description of the two parts. 

{\bf{Part 1.}} We use the Skorohod representation theorem to deduce that there exists a sequence of random variables 
$(\tilde{\boldsymbol{u}},\tilde{v},\tilde{\eta})_N$,
defined on {\bf a probability space 
$(\tilde\Omega, \tilde{\mathcal{F}}, \tilde{\mathbb{P}})$}, which is {\emph{not necessarily the same as}} the original probability space 
$(\Omega, \mathcal{F}, \mathbb{P})$, such that the laws of $(\tilde{\boldsymbol{u}},\tilde{v},\tilde{\eta})_N$ are $\mu_N$, and $(\tilde{\boldsymbol{u}},\tilde{v},\tilde{\eta})_N$ converge almost surely to a random variable $(\tilde{\boldsymbol{u}},\tilde{v},\tilde{\eta})$ with the law $\mu$, 
 on the ``tilde'' probability space.
 On this ``tilde'' probability space we also show that
the almost sure limit $(\tilde{\boldsymbol{u}},\tilde{v},\tilde{\eta})$ satisfies the weak formulation
of the original problem  almost surely, {\bf{but}} with respect to the ``tilde'' probability space. This means 
that this limit is a {\bf{weak solution to the original problem in the probabilistically {\emph{weak}} sense}},
see Definition~\ref{weak}. 
This result will be useful in showing the existence of a unique weak solution in the probabilistically {\emph{strong}} sense
on the {\emph{original}} probability space $(\Omega, \mathcal{F}, \mathbb{P})$, discussed in the second part.

{\bf{Part 2.}} 
We would like to be able to prove that our sequence of approximate solutions $(\boldsymbol{u},v,\eta)_N$,
obtained using our time-discretization via operator splitting approach described above, converges almost surely to a random variable 
$(\boldsymbol{u},v,\eta)$ on the original probability space, and satisfies the weak formulation almost surely on the original probability space.
Namely, we would like to prove that the limit is   a {\bf{weak solution to the original problem in the probabilistically {\emph{strong}} sense}}.
If we could obtain that the sequence $(\boldsymbol{u},v,\eta)_N$ converges {\bf{in probability}} to a random variable on the original probability space
$(\Omega, \mathcal{F}, \mathbb{P})$, namely $(\boldsymbol{u},v,\eta)_N  \xrightarrow[ ]{p}  (\boldsymbol{u},v,\eta)$,
then the almost sure convergence along a subsequence will follow immediately. 
To obtain convergence {\emph{in probability}} of $(\boldsymbol{u},v,\eta)_N$,
we will invoke a standard  Gy\"{o}ngy-Krylov argument \cite{GK}.

More precisely, to prove that $X_N=(\boldsymbol{u},v,\eta)_N$ converge {\emph{in probability}} to some random variable $X^*=(\boldsymbol{u},v,\eta)$ 
on $(\Omega, \mathcal{F}, \mathbb{P})$, $X_N  \xrightarrow[ ]{p}  X^*$, based on the Gy\"{o}ngy-Krylov lemma  \cite{GK},
we need to show that for every two subsequences $X_l$ and $X_m$, there exists a subsequence $x_k=(X_{l_k},X_{m_k})$
such that the following two properties hold:
\begin{enumerate}
\item The joint laws  $\nu_{X_{l_k},X_{m_k}}$ of the subsequence $x_k$ converge to some probability measure $\nu$ as $k\to\infty$;
\item The limiting law is supported on the diagonal: $\nu(\{(X, Y) : X = Y\}) = 1.$
\end{enumerate}
The first property will follow from the tightness of measures $\mu_l$ and $\mu_m$, which are the laws associated with the random variables 
$X_l=(\boldsymbol{u},v,\eta)_l$ and $X_m=(\boldsymbol{u},v,\eta)_m$. The tightness of the measures $\mu_l$ and $\mu_m$
 implies tightness of the joint measures $\nu_{X_{l},X_{m}}$ as well.
To show that the second property holds, we will use the result of Part 1 above, combined with a {\emph{deterministic uniqueness}} argument. 
Namely, Part 1 gives us the existence of the almost surely convergent subsequences  $\tilde{X}_l=(\tilde{\boldsymbol{u}},\tilde{v},\tilde\eta)_l$ and $\tilde{X}_m=(\tilde{\boldsymbol{u}},\tilde{v},\tilde\eta)_m$
on the ``tilde'' probability space that have the same
laws $\mu_l$ and $\mu_m$ as $X_l=(\boldsymbol{u},v,\eta)_l$ and $X_m=(\boldsymbol{u},v,\eta)_m$. 
Those two ``tilde'' subsequences of random variables converge to the limits $\tilde{X}^1$ and $\tilde{X}^2$, respectively, each of which has the law $\mu$,
and a joint law of $(\tilde{X}^1,\tilde{X}^2)$ equal to  $\nu$ from Property 1 above. 
Recall, from Step 1, that both $\tilde{X}^1$ and $\tilde{X}^2$ are weak solutions in the probabilistically weak sense. 
To show that this joint law $\nu$ is supported on the diagonal, namely, to show Property 2 above, it is sufficient to show that $\tilde{X}^1$ is equal to $\tilde{X}^2$ almost surely,
namely it will be sufficient to show that $\tilde{\mathbb{P}}(\tilde{X}^{1} = \tilde{X}^{2}) =1$.
Indeed, proving the diagonal condition from the Gy\"ongy-Krylov lemma
is associated with proving pathwise {{uniqueness}} of weak solutions, which we present in Section~\ref{deterministic}. 

Once the properties from the Gy\"{o}ngy-Krylov lemma have been verified, we can conclude that there exists a subsequence of 
$(\boldsymbol{u},v,\eta)_N$, which we continue to denote by $N$,
 such that $(\boldsymbol{u},v,\eta)_N \xrightarrow[ ]{p} (\boldsymbol{u},v,\eta)$, which implies almost sure
convergence along a subsequence on the original probability space. This is presented in Section~\ref{GKlemma}.

Finally, the proof that the limiting function $(\boldsymbol{u},v,\eta)$ recovered above is a weak solution in the probabilistically strong sense
is presented in Section~\ref{final}.

To the best of our knowledge, this is the first well-posedness result in the context of stochastic fluid-structure interaction.
The result shows that our deterministic benchmark FSI model is robust to stochastic noise, even in the presence of rough white noise in time.
This proof combines stochastic PDE analysis tools with deterministic FSI approaches.
Additionally, the constructive proof lays out a framework for the development of a numerical scheme
for this class of SFSI problems. 
In the next section, we provide a brief review of the related literature.

\if 1 = 0

More precisely, to prove that $X_N=(\boldsymbol{u},v,\eta)_N$ converge {\emph{in probability}} to some random variable $X^*=(\boldsymbol{u},v,\eta)$ 
on $(\Omega, \mathcal{F}, \mathbb{P})$, based on the Gy\"{o}ngy-Krylov lemma 
we need to show that for every two subsequences $X_l$ and $X_m$, there exists a subsequence
such that the following two properties hold:
\begin{enumerate}
\item The joint laws $\nu_{X_{l_k},X_{m_k}}$ converge to some probability measure $\nu$ as $k\to\infty$;
\item The limiting law is supported on the diagonal: $\nu(\{(X, Y) : X = Y\}) = 1.$
\end{enumerate}
The first property will follow from the tightness of measures $\mu_l$ and $\mu_m$, which are the laws associated with the random variables 
$X_l=(\boldsymbol{u},v,\eta)_l$ and $X_m=(\boldsymbol{u},v,\eta)_m$. This implies tightness of the joint measures as well.
To show that the second property holds, we will use the result of Part 1, above. 
Namely, Part 1 gives us the existence of the almost surely convergent subsequences on the ``tilde'' probability space that have the same
laws $\mu_l$ and $\mu_m$ as $X_l=(\boldsymbol{u},v,\eta)_l$ and $X_m=(\boldsymbol{u},v,\eta)_m$. 
Those two ``tilde'' subsequences of random variables converge to the limits $\tilde{X}^1$ and $\tilde{X}^2$, each of which has the law $\mu$,
and a joint law of $(\tilde{X}^1,\tilde{X}^2)$ equal to  $\nu$ from property 1. If we could show that this limiting joint law is such that it is only supported
when the two random variables for which the joint law is defined (such as $\tilde{X}^1$ and $\tilde{X}^2$) ``are the same'', then the diagonal 
condition from the Gy\"ongy-Krylov lemma would be satisfied. 
Indeed, proving the diagonal condition from the Gy\"ongy-Krylov lemma
is associated with proving {\emph{deterministic uniqueness}} of weak solutions, which will imply that 
$\tilde{X}^{1} = \tilde{X}^{2}$ pathwise almost surely, or $\tilde{\mathbb{P}}(\tilde{X}^{1} = \tilde{X}^{2}) =1$.
This verifies the second property of the Gy\"ongy-Krylov lemma,
and completes the proof showing that on a subsequence, which we continue to denote by $N$,
$(\boldsymbol{u},v,\eta)_N \xrightarrow[ ]{P} (\boldsymbol{u},v,\eta)$. 
The almost sure convergence on $(\Omega, \mathcal{F}, \mathbb{P})$
then follows immediately. 

More precisely, the Gy\"{o}ngy-Krylov Lemma, see Lemma~\ref{GK} in Section~\ref{original_space}, states that a sequence of random variables $X_n$
defined on a probability space $(\Omega, \mathcal{F}, \mathbb{P})$ converges {\bf{in probability}} to some random variable $X^*$,
denoted by $X_n \xrightarrow[ ]{P}  X^*$, if and only if for every two subsequences $X_l$ and $X_m$
of $X_n$, there exists a subsequence $x_k=(X_{l_k},X_{m_k})$ converging {\emph{in law} to a random variable $x^*$, 
and the limiting law of $x^*$ is supported on the diagonal. 
Thus to prove that $X_N=(\boldsymbol{u},v,\eta)_N$ converge {\emph{in probability}} to some random variable $X^*=(\boldsymbol{u},v,\eta)$ 
on $(\Omega, \mathcal{F}, \mathbb{P})$, we need to show that for every two subsequences $X_l$ and $X_m$, there exists a subsequence
such that the following two properties hold:
\begin{enumerate}
\item The joint laws $\nu_{X_{l_k},X_{m_k}}$ converge to some probability measure $\nu$ as $k\to\infty$;
\item The limiting law is supported on the diagonal: $\nu(\{(X, Y) : X = Y\}) = 1.$
\end{enumerate}
The first property will follow from the tightness of measures $\mu_l$ and $\mu_m$, which are the laws associated with the random variables 
$X_l=(\boldsymbol{u},v,\eta)_l$ and $X_m=(\boldsymbol{u},v,\eta)_m$, which implies tightness of the joint measures as well.
To show that the second property holds, we will use the result of Part 1, above. 
Namely, Part 1 gives us the existence of the almost surely convergent subsequences on the ``tilde'' probability space that have the same
laws $\mu_l$ and $\mu_m$ as $X_l=(\boldsymbol{u},v,\eta)_l$ and $X_m=(\boldsymbol{u},v,\eta)_m$. 
Those two ``tilde'' subsequences of random variables converge to the limits $\tilde{X}^1$ and $\tilde{X}^2$, each of which has the law $\mu$,
and a joint law of $(\tilde{X}^1,\tilde{X}^2)$ equal to  $\nu$ from property 1. If we could show that this limiting joint law is such that it is only supported
when the two random variables for which the joint law is defined (such as $\tilde{X}^1$ and $\tilde{X}^2$) ``are the same'', then the diagonal 
condition from the Gy\"ongy-Krylov lemma would be satisfied. 
Indeed, proving the diagonal condition from the Gy\"ongy-Krylov lemma
is associated with proving {\emph{deterministic uniqueness}} of weak solutions, which will imply that 
$\tilde{X}^{1} = \tilde{X}^{2}$ pathwise almost surely, or $\tilde{\mathbb{P}}(\tilde{X}^{1} = \tilde{X}^{2}) =1$.
This verifies the second property of the Gy\"ongy-Krylov lemma,
and completes the proof showing that on a subsequence, which we continue to denote by $N$,
$(\boldsymbol{u},v,\eta)_N \xrightarrow[ ]{P} (\boldsymbol{u},v,\eta)$. 
The almost sure convergence on $(\Omega, \mathcal{F}, \mathbb{P})$
then follows immediately. 

\fi


\section{Literature review}


The mathematical analysis of deterministic fluid-structure interaction 
began around twenty years ago by focusing on  rigorous well-posedness  for linearly coupled fluid-structure interaction models. Linearly coupled FSI models are models where the fluid and structure coupling conditions are evaluated along a fixed fluid-structure interface, and the fluid equations are posed on a fixed fluid domain, even though the structure is assumed to be elastic and displaces from its reference configuration. The results concerning these linearly coupled models typically deal with establishing existence/uniqueness of weak or strong solutions. The existence and uniqueness of a weak solution to a linearly coupled model involving an interaction between the linear Stokes equations and the equations of linear elasticity was established in \cite{Gunzburger} using a Galerkin method. The Navier-Stokes equations for an incompressible, viscous fluid
linearly coupled to immersed elastic solids were considered in \cite{BarGruLasTuff2,BarGruLasTuff,KukavicaTuffahaZiane}.
In particular, the work in \cite{BarGruLasTuff2} deals with showing the existence of energy-level weak solutions, by a careful examination of the trace regularity of the hyperbolic structure dynamics in terms of the normal stress  at the fluid-structure interface. The results in \cite{BarGruLasTuff,KukavicaTuffahaZiane} deal with establishing sufficient regularity of initial data that provides existence of strong solutions of the corresponding linearly coupled systems.

The well-posedness analysis of deterministic FSI models was extended later to nonlinearly coupled models, where the fluid domain changes in time according to the structure displacement, and hence the problem is a moving boundary problem where the fluid domain is not known a priori. There is by now an extensive mathematical literature dealing with the well-posedness of such models, see e.g., \cite{BdV1,CDEM,ChengShkollerCoutand,ChenShkoller,CSS1,CSS2,CG,Grandmont16,FSIforBIO_Lukacova,IgnatovaKukavica,ignatova2014well,Kuk,LengererRuzicka,Lequeurre,MuhaCanic13,
BorSun3d,BorSunMultiLayered,BorSunNonLinearKoiter,BorSunSlip,Raymond} and the references therein. Of these references, we note that the approach outlined in \cite{MuhaCanic13, BorSun3d,BorSunMultiLayered,BorSunNonLinearKoiter,BorSunSlip,withRoland} is closely related to the approach used  in the current manuscript. 
In particular, the approach is based on using a splitting scheme, known as the Lie operator splitting scheme, that discretizes the nonlinearly coupled problem in time by a time step $\Delta t$, and separates the coupled problem into fluid and structure subproblems. Then, compactness arguments of Aubin-Lions type (see \cite{aubin1963theoreme,lions1969quelques,MuhaCanicCompactness}) are used to pass to the limit as $\Delta t \to 0$ in the approximate weak formulations satisfied by the approximate solutions, in order to obtain a \textit{constructive existence proof} for weak solutions to nonlinearly coupled fluid-structure interaction problems. This approach proved to be quite robust for {\emph{deterministic}} fluid-structure interaction problems, since it provided existence of weak solutions
 for several different 
scenarios involving thin, thick, and multi-layered structures coupled to the flow of an incompressible, viscous fluid via the no-slip or Navier slip boundary conditions,
see \cite{MuhaCanic13}, \cite{BorSun3d}, \cite{BorSunMultiLayered}, \cite{BorSunNonLinearKoiter}, \cite{BorSunSlip}.

In the present work, a version of this 
approach is extended to deal with {\emph{stochastic}} fluid-structure interaction problems,
by combining stochastic calculus with 
stochastic operator splitting approaches
introduced in \cite{BGR} and analyzed in \cite{GK2}.
%
%
More precisely, we design a time-discretized, operator splitting method in just the right way so that all the stochastic integrals are well-defined,
and the resulting time-discretized scheme is stable, allowing us to show, using stochastic calculus,  an almost sure convergence of approximate solutions to a weak solution
in the probabilistically strong sense of the coupled fluid-structure interaction problem.
To the best of our knowledge, this is the first well-posedness 
result on fully coupled \textit{stochastic fluid-structure interaction}.
Our result builds on recent developments in the area of \textit{stochastic partial differential equations} (SPDEs).

\textit{Stochastic partial differential equations}
are PDEs that feature some sort of random noise forcing, such as white noise forcing in either time, or both time and space, or spatially homogeneous Gaussian noise that is independent at every time but potentially correlated in space. They are motivated by the fact that many 
real-life systems modeled by PDEs exhibit some type of random noise, which can significantly impact the resulting dynamics of the system. 
The current manuscript considers a stochastic linearly coupled fluid-structure interaction model involving the interaction between a fluid modeled by the linear Stokes equations and an elastic membrane modeled by the wave equation. Although 
the coupled stochastic FSI model has not been previously considered in the stochastic PDE literature, there are many works that study either stochastic fluid dynamics or stochastic wave equations separately, as we summarize below. 


In terms of stochastic fluid equations, the consideration of stochastic Navier-Stokes equations is an active area of research, see e.g., 
\cite{BTNS,CGNS,FlandoliGatarek,KukavicaStochasticNS}. The study of stochastic Navier-Stokes equations was initiated in the work of \cite{BTNS}, which considered an abstract stochastic equation of Navier-Stokes type, with an additive random noise forcing in time, and a random initial condition. It was shown that there exists a solution that satisfies the problem almost surely in a distributional sense. In the works of \cite{CGNS, FlandoliGatarek}, this abstract equation of Navier-Stokes type is extended to more general settings where there is nonlinear dependence of the intensity of the random noise forcing on the actual solution itself. These two works consider different abstract conditions on this nonlinear dependence and prove existence of martingale, or probabilistically weak, solutions to the resulting stochastic equations. Both of these works use a Galerkin scheme to construct solutions and obtain existence by establishing uniform bounds on the sequence of random functions satisfying the finite-dimensional Galerkin problems. 
We note that passing to the limit in the Galerkin solutions in \cite{CGNS, FlandoliGatarek} was done 
by using standard probabilistic methods, such as establishing tightness of laws, showing weak convergence in law, and invoking the Skorohod representation theorem, which are standard techniques that we will employ for our current problem as well. 
While there are many works on stochastic fluid dynamics, we mention in particular a recent work \cite{LNT}, which establishes the existence of local martingale solutions, which are martingale solutions up to some stopping time, for a system of one layer shallow water equations for fluid velocity and water depth in two spatial dimensions, driven by random noise forcing described by cylindrical Wiener processes. We remark that \cite{LNT} employs similar probabilistic methods in passing to the limit in a sequence of random approximate solutions (obtained by a Galerkin method) that motivated many of the probabilistic arguments in this manuscript, though the methods used in this current manuscript for constructing approximate solutions are different, as they are based on time discretization
using an operator splitting approach, and not spatial discretization using a Galerkin method. One reason for the use of time-discretization via operator splitting,
versus a Galerkin approach, is a possible extension to the moving boundary case. In the Galerkin case, the basis functions for the moving boundary case will 
depend on the \textit{random} solution itself, which is difficult to deal with.

In terms of stochastic wave equations, there is extensive work on well-posedness and properties of solutions. It is classically well-known that the stochastic wave equation with spacetime white noise has a mild solution only in dimension one, but not in dimensions two and higher (see for example \cite{DMini}). This is due to the fact that the fundamental solution of the linear wave equation in dimension two is not square integrable in spacetime, and in higher dimensions, it is not even function-valued. Hence, work on the stochastic wave equation in dimensions two and higher, focuses on considering stochastic wave equations with a more general type of noise, such as spatially homogeneous Gaussian noise (see for example \cite{DaPrato}) which is independent in time but correlated in space. 
In particular,  the authors of \cite{Dalang, DF, KZ} consider conditions for this spatially homogeneous Gaussian noise, such that the resulting stochastic wave equation has a solution that is function-valued (rather than just a distribution) in dimensions two and higher. Existence results for such stochastic wave equations in higher dimensions are also considered in \cite{CD}, and the H\"{o}lder continuity and regularity properties of stochastic wave equations in higher dimensions are considered in  \cite{CD, DSS}. 

We conclude this literature review by mentioning a recent work \cite{SVWE} by the current authors,  where a \textit{stochastic viscous wave equation} was 
derived as a model for a stochastic linearly coupled fluid-structure interaction problem in a  geometry that allowed 
the entire fluid-structure system to be modeled by a single {stochastic viscous wave equation}, describing the random displacement of the structure from its reference configuration. This model describes 
the interaction between a two-dimensional infinite plate, modeled by the 2D wave equation, and a 3D fluid in the lower half space, modeled by the stationary Stokes equations, under the additional influence of spacetime white noise (random noise that is formally independent at every point in space and time).  
The work in \cite{SVWE} considers well-posedness for the stochastic viscous wave equation and establishes 
existence and uniqueness of a mild solution in spatial dimensions one and two, in addition to improved H\"{o}lder regularity properties. 
This result is interesting because the classical heat and wave equations driven by spacetime white noise in dimension two, do not possess
a mild solution. The main reason why the stochastic viscous wave equation studied in \cite{SVWE} admits a H\"{o}lder continuous mild solution in dimension two
(which is the physical dimension)
is the ``right'' scaling and the regularity properties of the fractional derivative operator (Dirichlet-to-Neumann operator), which
models the effects of viscous fluid regularization on the elastodynamics of a stochastically perturbed 2D membrane. 

While the results in \cite{SVWE} provide an insight into the behavior of solutions to stochastic FSI, they are restricted by the fact that 
the stochastic viscous wave equation is not a fully coupled model, it is defined in a special geometry on the entire $\mathbb R^2$, 
and it does not include the fluid inertia effects. 
This allowed the use of mathematical techniques that are not available in the fully coupled case of stochastic FSI.
The goal of the current manuscript is to develop techniques for studying \textit{fully coupled} stochastic fluid-structure interaction systems,
defined on physically relevant geometries, including  fluid inertia effects described by the time-dependent Stokes equations. 

%

\section{Description of the model}\label{model}

The model problem considered here is defined on a fixed fluid domain, 
which is a rectangle $\Omega_{f} = [0, L] \times [0, R]$. The boundary $\partial \Omega_{f}$ of the fluid domain 
consists of four parts: the moving boundary part denoted by $\Gamma$ (it is the reference configuration of the moving boundary), 
the bottom of the ``channel'' denoted by $\Gamma_b$, and the
inlet and outlet parts of the boundary $\Gamma_{in}$ and $\Gamma_{out}$ where the pressure data is prescribed.
The flow in the fluid domain $\Omega_f$
is driven by the inlet and outlet pressure data, and by the motion of the moving boundary. See Fig.~\ref{domain}.
We will use $\boldsymbol{x} = (z, r)$ to denote the coordinates of points in the fluid domain.

\begin{figure}[htp!]
\center
                    \includegraphics[width = 0.9 \textwidth]{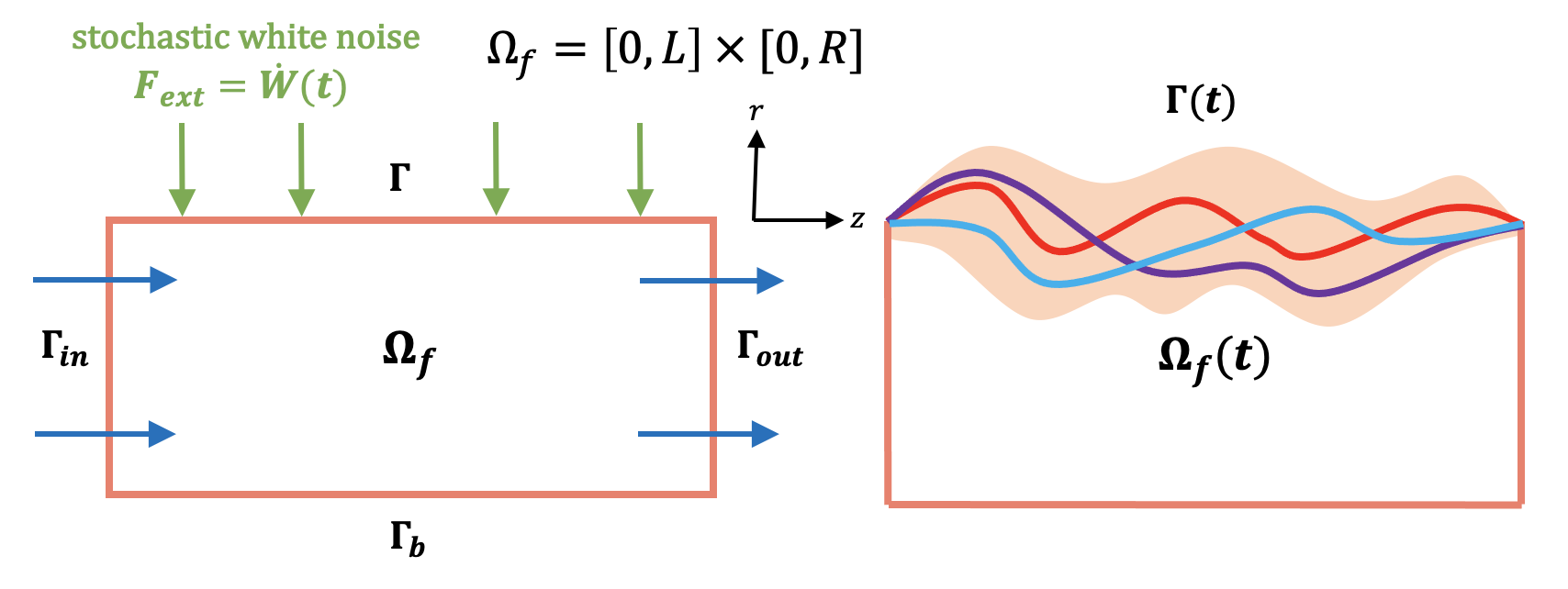}
  \caption{{\small\emph{Left: A sketch of the linearly coupled stochastic FSI problem, with $\Omega_{f}$ denoting the reference fluid domain, $\Gamma$ denoting the reference configuration of the structure, and $\dot{W}(t)$ denoting stochastic white noise forcing on the structure. Right: The different colors represent different possible outcomes for the random configuration $\Gamma(t)$ of the structure at some time $t$. The lightly shaded region represents a confidence interval of where the structure is likely to be.}}}
\label{domain}
\end{figure}

The {\bf{fluid flow}} in $\Omega_f$ will be modeled by the time-dependent Stokes equations for an incompressible, viscous fluid:
\begin{equation}\label{NS}
\left.
\begin{array}{rcl}
\partial_{t}\boldsymbol{u} &=& \nabla \cdot \boldsymbol{\sigma}, \\
\nabla \cdot {\boldsymbol u} &=& 0,
\end{array}
\right\} \quad {\rm in} \ \Omega_{f},
\end{equation}
where $\boldsymbol{u}(t, \boldsymbol{x}) = (u_{z}(t, \bx), u_{r}(t, \bx))$ is the fluid velocity, 
$
\boldsymbol{\sigma} = -p\boldsymbol{I} + 2\mu \boldsymbol{D}(\boldsymbol{u})
$
is the Cauchy stress tensor describing a Newtonian fluid, and $p$ is the fluid pressure.
This gives rise to the following system:
\begin{equation}\label{NS2}
\left.
\begin{array}{rcl}
\partial_{t}\boldsymbol{u} - \mu \Delta \boldsymbol{u} + \nabla p &=& 0, \\
\nabla \cdot {\boldsymbol u} &=& 0,
\end{array}
\right\} \quad {\rm in} \ \Omega_{f}.
\end{equation}

At the top boundary $\Gamma$ of the fluid domain, an elastic membrane interacts with the fluid flow. 
 We  assume that this elastic structure experiences displacement only in the vertical direction from its reference configuration $\Gamma$, 
 and we denote the magnitude of this displacement by $\eta(t, z)$. 
 The {\bf{elastodynamics of the structure}} will be modeled by the wave equation:
\begin{equation}\label{wave}
\eta_{tt} - \Delta \eta = f, \qquad \text{ on } \Gamma,
\end{equation}
where $f$ is an external forcing term. 

\if 1 = 0
\textbf{Fluid subproblem:} Next, we describe the fluid subproblem. We will assume that the fluid is an incompressible, viscous, Newtonian fluid residing in the fluid domain $\Omega_{f}$. With the assumption of linear coupling, we will assume as an approximation that the fluid domain is fixed over time to be $\Omega_{f}$. We will denote the fluid velocity by $\boldsymbol{u}(t, \boldsymbol{x}) = (u_{z}(t, x), u_{r}(t, x))$. We model the fluid by the linear Stokes equations, given by 
\begin{equation}\label{NS}
\left.
\begin{array}{rcl}
\partial_{t}\boldsymbol{u} &=& \nabla \cdot \boldsymbol{\sigma}, \\
\nabla \cdot {\boldsymbol u} &=& 0,
\end{array}
\right\} \quad {\rm in} \ \Omega_{f},
\end{equation}
where the first equation describes the balance of forces by Newton's second law and the second equation describes the incompressibility condition. The Cauchy stress tensor is defined by
\begin{equation*}
\boldsymbol{\sigma} = -p\boldsymbol{I} + 2\mu \boldsymbol{D}(\boldsymbol{u}),
\end{equation*}
so that the linear Stokes equations can also be expressed as
\begin{equation}\label{NS2}
\left.
\begin{array}{rcl}
\partial_{t}\boldsymbol{u} - \mu \Delta \boldsymbol{u} + \nabla p &=& 0, \\
\nabla \cdot {\boldsymbol u} &=& 0,
\end{array}
\right\} \quad {\rm in} \ \Omega_{f}.
\end{equation}
We will also make the following assumptions on the flow. We impose a symmetry condition at the bottom edge of $\Omega$, which represents the central axis of the full rectangular region, where $\Omega_{f}$ represents the upper half of this full rectangular region, where upon reflecting across the central axis, one can recover the dynamics on the full rectangular region. In particular,
\begin{equation}\label{velocityb}
u_{r} = \partial_{r}u_{z} = 0, \qquad \text{ on } \Gamma_{b}.
\end{equation}
On the boundaries representing the inlet and outlet, $\Gamma_{in}$ and $\Gamma_{out}$, we will assume that the incoming and outgoing flow is horizontal, so that
\begin{equation}\label{velocityinout}
u_{r} = 0, \qquad \text{ on } \Gamma_{in/out},
\end{equation}
and the flow is driven by changes in inlet and outlet pressure, which are prescribed at each time,
\begin{equation}\label{pressuredata}
p = P_{in/out}(t), \qquad \text{ on } \Gamma_{in/out},
\end{equation}
where $P_{in/out}(t) \in L^{2}(0, T)$ denotes the inlet and outlet pressure data. We will solve the linear problem on the time interval $[0, T]$, where $T > 0$ is given. 
\fi

The fluid and structure are coupled via two sets of coupling conditions, the kinematic and dynamic coupling conditions, which are evaluated along 
the {\emph{fixed interface}}. This is known as {\bf{linear coupling}}. 
The {\bf{kinematic coupling condition}} considered in this work describes the continuity of velocities at the fluid-structure interface
\begin{equation}\label{kinematic}
\boldsymbol{u} = \eta_{t} \boldsymbol{e}_{r}, \qquad \text{ on } \Gamma,
\end{equation}
also known as the {\emph{no-slip}} condition. 
The {\bf{dynamic coupling condition}} describes balance of forces at the interface. Namely, it states that the
elastodynamics of the thin elastic structure is driven by the jump in the force acting on the structure,
coming from the normal component of the normal fluid stress $\boldsymbol{\sigma} \boldsymbol{e_{r}} \cdot \boldsymbol{e_{r}}$ on one side,
and the external forcing $F_{ext}$ on the other:
\begin{equation*}
\eta_{tt} - \Delta \eta = -\boldsymbol{\sigma} \boldsymbol{e_{r}} \cdot \boldsymbol{e_{r}} + F_{ext}, \qquad \text{ on } \Gamma,
\end{equation*}
where $\boldsymbol{e_{r}}$ is the unit outer normal to the fixed fluid-structure interface $\Gamma$.

In this manuscript, we consider the external force $F_{ext}$ to be a stochastic force. 
In particular, as a start, we consider 
\begin{equation*}
F_{ext} = \dot{W}(t),
\end{equation*}
where $W$ is a one-dimensional Brownian motion in time. Note that the stochastic force is constant on the whole structure at each time. As a result, the stochastic noise is rough temporally but is constant spatially. We remark that although this is a simplified model, we use it to demonstrate the difficulties present in the stochastic case in the simplest possible setting. 

More precisely, we let $W$ denote a one-dimensional Brownian motion with respect to an underlying probability space with filtration, $(\Omega, \mathcal{F}, \{\mathcal{F}_{t}\}_{t \ge 0}, \mathbb{P})$, in which case $dW$ is formally the derivative of this Brownian motion. This is a purely formal notation that we will give precise meaning to later, as Brownian motion is almost surely nowhere differentiable. 

In addition, we will assume that the filtration $\{\mathcal{F}_{t}\}_{t \ge 0}$ is a \textbf{complete filtration}, 
which means that $\mathcal{F}_{t}$ contains all null sets of $(\Omega, \mathcal{F}, \mathbb{P})$ for every $t \ge 0$,
where a \textbf{null set} is defined to be any measurable set in $\mathcal{F}$ that has probability zero. This technical assumption will be useful to pass to the limit in our analysis of the stochastic problem above, as it allows us to bypass technicalities regarding null sets when considering almost sure limits of stochastic processes. \textit{In particular, the almost sure limit of $\mathcal{F}_{t}$ measurable random variables for any arbitrary $t \ge 0$ is still $\mathcal{F}_{t}$ measurable under the assumption of a complete filtration.} This is not a restrictive assumption, as one can complete a filtration by simply adding all null sets to $\mathcal{F}_{t}$ for all $t \ge 0$, and $W$ will still be a Brownian motion with respect to the completed filtration. See Section 1.4 in Revuz and Yor \cite{RY} for more information about complete filtrations.

{\bf{In summary}}, the coupled stochastic fluid-structure interaction problem studied in this manuscript, supplemented with initial and boundary data,
 is given by the following:
 {\emph{
 Find $(\bu,\eta)$ such that
}}

\begin{equation}\label{eq}
\left.
\begin{array}{rcl}
\eta_{tt} - \Delta \eta &=& -\boldsymbol{\sigma} \boldsymbol{e_{r}} \cdot \boldsymbol{e_{r}} + dW(t), \\
\boldsymbol{u} &=& \eta_{t} \boldsymbol{e_{r}}, 
\end{array}
\right\} \quad \text{ on } \Gamma,
\quad
\left.
\begin{array}{rcl}
\partial_{t}\boldsymbol{u} &=& \nabla \cdot \boldsymbol{\sigma}, \\
\nabla \cdot {\boldsymbol u} &=& 0,
\end{array}
\right\} \quad {\rm in} \ \Omega_{f},
\end{equation}
{\emph{with boundary data:}}
\begin{equation}\label{bdata}
\left.
\begin{array}{rcl}
u_{r} &=& 0, \\
p &=& P_{in/out}(t),
\end{array}
\right\} \qquad \text{ on } \Gamma_{in/out},
\qquad
u_{r} = \partial_{r}u_{z} = 0, \qquad \text{ on } \Gamma_{b},
\end{equation}
{\emph{and the following \textit{deterministic} initial data:}}
\begin{equation}\label{idata}
\boldsymbol{u}(0, z, r) = \boldsymbol{u}_{0}(z, r), \qquad \eta(0, z, R) = \eta_{0}(z), \qquad \partial_{t}\eta(0, z, R) = v_{0}(z),
\end{equation}
{\em{where $\boldsymbol{u}_{0} \in L^{2}(\Omega_{f})$, $\eta_{0} \in H_{0}^{1}(\Gamma)$, and $v_{0} \in L^{2}(\Gamma)$,}}
{\emph{and $W$ is a
given one-dimensional Brownian motion with respect to the complete probability space $(\Omega, \mathcal{F}, \mathbb{P})$ 
with complete filtration $\{\mathcal{F}_{t}\}_{t \ge 0}$.
}}

Thus, the problem is driven by deterministic inlet and outlet pressure data $P_{in/out}(t)$ prescribed on $\Gamma_{in/out}$, with the flow symmetry condition imposed at the bottom boundary $\Gamma_b$.
Notice that throughout this manuscript, we will be using $\Omega$ to denote the underlying probability space,
while $\Omega_{f}$ denotes the fluid domain.

\section{Definition of a weak solution and Main Result}\label{sec:weak}

To define the space of weak solutions to the above problem, we first introduce the function space for the fluid velocity:
\begin{equation}\label{VF}
\mathcal{V}_{F} = \{\boldsymbol{u} = (u_{z}, u_{r}) \in H^{1}(\Omega_{f})^{2}: \nabla \cdot \boldsymbol{u} = 0, \ u_{z} = 0 \text{ on } \Gamma, \ u_{r} = 0 \text{ on } \partial \Omega_{f} \text{\textbackslash} \Gamma\}.
\end{equation}
Since the structure subproblem is given by the wave equation with clamped ends, the natural space of functions for the structure is 
\begin{equation}\label{VS}
\mathcal{V}_{S} = H_{0}^{1}(\Gamma).
\end{equation}
Motivated by the energy inequality presented in Sec.~\ref{energy}, we introduce the 
following solution spaces in time for the fluid and structure subproblems:
\begin{equation}\label{WF}
\mathcal{W}_{F}(0, T) = L^{2}(\Omega; L^{\infty}(0, T; L^{2}(\Omega_{f}))) \cap L^{2}(\Omega; L^{2}(0, T; \mathcal{V}_{F})).
\end{equation}
\begin{equation}\label{WS}
\mathcal{W}_{S}(0, T) = L^{2}(\Omega; W^{1, \infty}(0, T; L^{2}(\Gamma))) \cap L^{2}(\Omega; L^{\infty}(0, T; \mathcal{V}_{S})).
\end{equation}
We emphasize that $\boldsymbol{u}$ and $\eta$ are random variables, and that the $L^2(\Omega)$ part of the solution spaces
reflects the fact that the energy estimate will hold in expectation.

Finally, we introduce the solution space for the {\emph{stochastic coupled FSI problem}:
\begin{equation}\label{W}
\mathcal{W}(0, T) = \{(\boldsymbol{u}, \eta) \in \mathcal{W}_{F}(0, T) \times \mathcal{W}_{S}(0, T) : \boldsymbol{u}|_{\Gamma} = \eta_{t} \boldsymbol{e_{r}} \text{ for almost every $t \in [0, T]$, a.s.}\}.
\end{equation}
Notice that in this solution space, the kinematic coupling condition is enforced strongly.

As in the deterministic case, we define weak solutions by integrating in space and time against an appropriate space of test functions,
which we define to be:
\begin{equation}\label{testspace}
\mathcal{Q}(0, T) = \{(\boldsymbol{q}, \psi) \in C_{c}^{1}([0, T); \mathcal{V}_{F} \times \mathcal{V}_{S}) : \boldsymbol{q}(t, z, R) = \psi(t, z) \boldsymbol{e_{r}}\}.
\end{equation}
These test functions are deterministic functions. Because the fluid domain does not change in time with the assumption of linear coupling, we can define 
\begin{equation}\label{Q}
\mathcal{Q} = \{(\boldsymbol{q}, \psi) \in \mathcal{V}_{F} \times \mathcal{V}_{S} : \boldsymbol{q}|_{\Gamma} = \psi \boldsymbol{e}_{r}\},
\end{equation}
and hence view the test functions as differentiable, compactly supported functions on $[0, T)$ that take values in the fixed function space $\mathcal{Q}$. 

To motivate the definition of a weak solution, we will proceed as in \cite{MuhaCanic13}. 
For the purposes of the derivation of the weak solution, we consider, for the moment, the case of a general deterministic external force $F_{ext}(t)$ in place of $\dot{W}(t)$, so that the first equation for the structure becomes 
\begin{equation*}
\eta_{tt} - \Delta \eta = -\boldsymbol{\sigma} \boldsymbol{e_{r}} \cdot \boldsymbol{e_{r}} + F_{ext}(t).
\end{equation*}
We will derive the standard deterministic partial differential equation definition of a weak solution, assuming that $F_{ext}(t)$ is a purely deterministic function in time, and then generalize this to the stochastic case.

We start by taking a test function $(\boldsymbol{q}, \psi) \in \mathcal{Q}(0, T)$, and multiplying the linear Stokes equation by $\boldsymbol{q}$ and integrating in space and time. We obtain
\begin{equation*}
\int_{0}^{T}\int_{\Omega_{f}} \partial_{t}\boldsymbol{u} \cdot \boldsymbol{q} d\boldsymbol{x} dt = \int_{0}^{T} \int_{\Omega_{f}} (\nabla \cdot \boldsymbol{\sigma}) \cdot \boldsymbol{q} d\boldsymbol{x} dt.
\end{equation*}
By integrating the first term by parts in time, we obtain:
\begin{align*}
\int_{0}^{T}\int_{\Omega_{f}} \partial_{t}\boldsymbol{u} \cdot \boldsymbol{q} d\boldsymbol{x} dt &= \int_{\Omega_{f}} \boldsymbol{u} \cdot \boldsymbol{q} d\boldsymbol{x}\Big\vert^{t = T}_{t = 0} - \int_{0}^{T} \int_{\Omega_{f}} \boldsymbol{u} \cdot \partial_{t}\boldsymbol{q} d\boldsymbol{x} dt 
= -\int_{\Omega_{f}} \boldsymbol{u_{0}} \cdot \boldsymbol{q}(0) d\boldsymbol{x} - \int_{0}^{T} \int_{\Omega_{f}} \boldsymbol{u} \cdot \partial_{t}\boldsymbol{q} d\boldsymbol{x} dt.
\end{align*}
By integrating the second term by parts in space and using the divergence free condition on $\boldsymbol{q}$, we obtain:
\begin{equation*}
\int_{\Omega_{f}} (\nabla \cdot \boldsymbol{\sigma}) \cdot \boldsymbol{q} d\boldsymbol{x} = \int_{\partial \Omega_{f}} (\boldsymbol{\sigma} \boldsymbol{n}) \cdot \boldsymbol{q} dS - 2\mu \int_{\Omega_{f}} \boldsymbol{D}(\boldsymbol{u}) : \boldsymbol{D}(\boldsymbol{q}) d\boldsymbol{x},
\end{equation*}
where $\boldsymbol{D}(\boldsymbol{u})$ and $\boldsymbol{D}(\boldsymbol{q})$ represent the symmetrized gradient. Using the definition of the Cauchy stress tensor, 
$
\boldsymbol{\sigma} = -p\boldsymbol{I} + 2\mu \boldsymbol{D}(\boldsymbol{u}),
$
and integrating in time, we obtain
\begin{align*}
&\int_{0}^{T} \int_{\Omega_{f}} (\nabla \cdot \boldsymbol{\sigma}) \cdot \boldsymbol{q} d\boldsymbol{x} dt = \int_{0}^{T} \int_{\Gamma_{in}} p q_{z} dr dt - \int_{0}^{T} \int_{\Gamma_{out}} pq_{z} dr dt - 2\mu \int_{0}^{T} \int_{\Omega_{f}} \boldsymbol{D}(\boldsymbol{u}) : \boldsymbol{D}(\boldsymbol{q}) d\boldsymbol{x} dt \\
&- \int_{0}^{T} \int_{\Gamma} \nabla \eta \cdot \nabla \psi dz dt + \int_{0}^{T} \int_{\Gamma} \partial_{t}\eta \partial_{t}\psi dz dt + \int_{\Gamma} v_{0} \psi(0) dz + \int_{0}^{T}\left(\int_{\Gamma} \psi dz\right) F_{ext}(t) dt.
\end{align*}

Putting this all together, we get that
\begin{small}
\begin{multline*}
- \int_{0}^{T} \int_{\Omega_{f}} \boldsymbol{u} \cdot \partial_{t}\boldsymbol{q} d\boldsymbol{x} dt + 2\mu \int_{0}^{T} \int_{\Omega_{f}} \boldsymbol{D}(\boldsymbol{u}) : \boldsymbol{D}(\boldsymbol{q}) d\boldsymbol{x} dt - \int_{0}^{T} \int_{\Gamma} \partial_{t}\eta\partial_{t}\psi dz dt + \int_{0}^{T} \int_{\Gamma} \nabla \eta \cdot \nabla \psi dz dt \\
= \int_{0}^{T} P_{in}(t) \left(\int_{\Gamma_{in}} q_{z} dr\right) dt - \int_{0}^{T} P_{out}(t) \left(\int_{\Gamma_{out}} q_{z} dr\right) dt 
+ \int_{\Omega_{f}} \boldsymbol{u_{0}} \cdot \boldsymbol{q}(0) d\boldsymbol{x} + \int_{\Gamma} v_{0}\psi(0) dz + \int_{0}^{T}\left(\int_{\Gamma} \psi dz\right) F_{ext}(t) dt,
\end{multline*}
\end{small}
where we used the fact that
$P_{in/out}(t) = p$ on $\Gamma_{in/out}$.

Now, we formally substitute 
$
F_{ext}(t) = \dot{W}(t),
$
into the definition of the deterministic weak solution, to get that the term containing $F_{ext}(t)$ can be interpreted  in the stochastic case as:
\begin{equation*}
\int_{0}^{T}\left(\int_{\Gamma} \psi dz\right) dW(t).
\end{equation*}
Since $W$ is a one dimensional Brownian motion and since $\int_{\Gamma} \psi dz$ is a deterministic function in time, we can interpret this term as a stochastic integral.

Before we give the definition of a weak solution to the stochastic FSI problem above, we recall the definition of a {\bf{stochastic basis}}. A \textbf{stochastic basis} $\mathcal{S}$ is an ordered quintuple (see \cite{LNT } for the notation)
\begin{equation*}
\mathcal{S} = (\Omega, \mathcal{F}, \{\mathcal{F}_{t}\}_{t \ge 0}, \mathbb{P}, W),
\end{equation*}
where $(\Omega, \mathcal{F}, \mathbb{P})$ is a probability space, $\{\mathcal{F}_{t}\}_{t \ge 0}$ is a complete filtration with respect to this probability space, and $W$ is a one-dimensional Brownian motion on the probability space with respect to the filtration $\{\mathcal{F}_{t}\}_{t \ge 0}$, meaning that:
(1) $W$ has continuous paths, almost surely,
(2) $W$ is adapted to the filtration $\{\mathcal{F}_{t}\}_{t \ge 0}$,
and (3) $W(t) - W(s)$ is independent of $\mathcal{F}_{s}$ for all $t \ge s$ and $W(t) - W(s) \sim N(0, t - s)$ for all $0 \le s \le t$,
where $N$ denotes the normal distribution.

We will define two notions of solution: (1) a weak solution in a probabilistically weak sense, and (2) a weak solution in a probabilistically strong sense. 
The second one is stronger than the first, but we will need the first to be able to prove the existence of a weak solution in a probabilistically strong sense.

\begin{definition}\label{weak}
An ordered triple $(\tilde{\mathcal{S}}, \tilde{\boldsymbol{u}}, \tilde{\eta})$ is a \textit{weak solution in a probabilistically \textbf{weak} sense} if there exists a stochastic basis
\begin{equation*}
\tilde{\mathcal{S}} = (\tilde{\Omega}, \tilde{\mathcal{F}}, \{\tilde{\mathcal{F}_{t}}\}_{t \ge 0}, \tilde{\mathbb{P}}, \tilde{W})
\end{equation*}
and $(\tilde{\boldsymbol{u}}, \tilde{\eta}) \in \mathcal{W}(0, T)$ with paths almost surely in $C(0, T; \mathcal{Q}')$, which satisfies:
\begin{itemize}
\item $(\tilde{\boldsymbol{u}}, \tilde{\eta})$ is adapted to the filtration $\{\tilde{\mathcal{F}_{t}}\}_{t \ge 0}$,
\item $\tilde{\eta}(0) = \eta_{0}$ almost surely, and
\item for all $(\boldsymbol{q}, \psi) \in \mathcal{Q}(0, T)$, 
\begin{small}
\begin{multline*}
- \int_{0}^{T} \int_{\Omega_{f}} \tilde{\boldsymbol{u}} \cdot \partial_{t}\boldsymbol{q} d\boldsymbol{x} dt + 2\mu \int_{0}^{T} \int_{\Omega_{f}} \boldsymbol{D}(\tilde{\boldsymbol{u}}) : \boldsymbol{D}(\boldsymbol{q}) d\boldsymbol{x} dt - \int_{0}^{T} \int_{\Gamma} \partial_{t}\tilde{\eta}\partial_{t}\psi dz dt + \int_{0}^{T} \int_{\Gamma} \nabla \tilde{\eta} \cdot \nabla \psi dz dt \\
= \int_{0}^{T} P_{in}(t) \left(\int_{\Gamma_{in}} q_{z} dr\right) dt - \int_{0}^{T} P_{out}(t) \left(\int_{\Gamma_{out}} q_{z} dr\right) dt 
+ \int_{\Omega_{f}} \boldsymbol{u_{0}} \cdot \boldsymbol{q}(0) d\boldsymbol{x} + \int_{\Gamma} v_{0}\psi(0) dz + \int_{0}^{T}\left(\int_{\Gamma} \psi dz\right) d\tilde{W},
\end{multline*}
\end{small}
almost surely.
\end{itemize}
\end{definition}

\begin{definition}\label{strong}
An ordered pair $(\boldsymbol{u}, \eta)$ is a \textit{weak solution in a probabilistically \textbf{strong} sense} if $(\boldsymbol{u}, \eta) \in \mathcal{W}(0, T)$ with paths almost surely in $C(0, T; \mathcal{Q}')$, satisfies:
\begin{itemize}
\item $(\boldsymbol{u}, \eta)$ is adapted to the filtration $\{\mathcal{F}_{t}\}_{t \ge 0}$
\item $\eta(0) = \eta_{0}$ almost surely, and 
\item for all $(\boldsymbol{q}, \psi) \in \mathcal{Q}(0, T)$, 
\begin{small}
\begin{multline*}
- \int_{0}^{T} \int_{\Omega_{f}} \boldsymbol{u} \cdot \partial_{t}\boldsymbol{q} d\boldsymbol{x} dt + 2\mu \int_{0}^{T} \int_{\Omega_{f}} \boldsymbol{D}(\boldsymbol{u}) : \boldsymbol{D}(\boldsymbol{q}) d\boldsymbol{x} dt - \int_{0}^{T} \int_{\Gamma} \partial_{t}\eta\partial_{t}\psi dz dt + \int_{0}^{T} \int_{\Gamma} \nabla \eta \cdot \nabla \psi dz dt \\
= \int_{0}^{T} P_{in}(t) \left(\int_{\Gamma_{in}} q_{z} dr\right) dt - \int_{0}^{T} P_{out}(t) \left(\int_{\Gamma_{out}} q_{z} dr\right) dt 
+ \int_{\Omega_{f}} \boldsymbol{u_{0}} \cdot \boldsymbol{q}(0) d\boldsymbol{x} + \int_{\Gamma} v_{0}\psi(0) dz + \int_{0}^{T}\left(\int_{\Gamma} \psi dz\right) dW.
\end{multline*}
\end{small}
almost surely.
\end{itemize}
\end{definition}





In a probabilistically strong solution as in the second definition above, we have a random solution satisfying the initial conditions on the originally given (arbitrary) probability space with a one dimensional Brownian motion with respect to a complete filtration. 
In a probabilistically weak solution, we have a weaker requirement that the random solution exists on a \textit{particular (not arbitrary) probability space}, where the initial 
conditions are satisfied ``in law''. We will show the existence of a weak solution in the probabilistically strong sense. However, to get to that
solution, we will first show existence of a convergent subsequence of probability measures corresponding to the laws of the approximate solutions, 
then construct a weak solution in the probabilistically weak sense using the Skorohod representation theorem, and then use the Gy\"{o}ngy-Krylov argument \cite{GK} to get to a weak solution in the probabilistically strong sense.

The main result of this work is stated in the following theorem.


\begin{theorem}[{\bf{Main Result}}] \label{Main theorem}
Let $\boldsymbol{u}_{0} \in L^{2}(\Omega_{f})$, $v_{0} \in L^{2}(\Gamma)$, and $\eta_{0} \in H_{0}^{1}(\Gamma)$. Let $P_{in/out} \in L^{2}_{loc}(0, \infty)$ and let $(\Omega, \mathcal{F}, \mathbb{P})$ be a probability space with a Brownian motion $W$ with respect to a given complete filtration $\{\mathcal{F}_{t}\}_{t \ge 0}$. Then, for any $T > 0$, there exists a unique weak solution in a probabilistically strong sense to the given stochastic fluid-structure interaction problem \eqref{eq}--\eqref{idata}. 
\end{theorem}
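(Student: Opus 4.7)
The plan is to follow the constructive scheme laid out in the introduction, combining a stochastic operator splitting discretization with probabilistic compactness tools. First I would discretize time on $[0,T]$ into $N$ subintervals of length $\Delta t = T/N$ and, on each subinterval $(t^n_N,t^{n+1}_N)$, define approximate random variables $(\boldsymbol{u}_N^{n+1}, v_N^{n+1}, \eta_N^{n+1})$ by the three-step splitting scheme: a deterministic structure update (Step~1), a stochastic increment update using $W(t^{n+1}_N)-W(t^n_N)$ (Step~2), and a coupled fluid/structure velocity update (Step~3). Crucially, because the Step~1 output is $\mathcal{F}_{t^n_N}$-measurable while the noise increment is independent of that sigma-algebra, the cross terms arising in the discrete energy balance vanish in expectation, allowing me to close uniform energy estimates (Proposition~\ref{uniformenergy}) controlling $\mathbb{E}\|\boldsymbol{u}_N\|_{L^\infty_t L^2}^2$, $\mathbb{E}\|\boldsymbol{u}_N\|_{L^2_t H^1}^2$, $\mathbb{E}\|v_N\|_{L^\infty_t L^2(\Gamma)}^2$, $\mathbb{E}\|\nabla\eta_N\|_{L^\infty_t L^2(\Gamma)}^2$, together with the expected numerical dissipation. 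Piecewise-constant and piecewise-linear interpolants in time then give a sequence $(\boldsymbol{u}_N,v_N,\eta_N)$ of random approximate solutions in the energy space satisfying an approximate weak formulation by construction.

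Second, to extract a limit I pass from random variables to their laws. The uniform bounds above, together with a deterministic Aubin--Lions style compactness argument on the phase space $\mathcal{X}$ (exploiting the fractional time regularity that Step~2 inherits from Brownian increments), give tightness of the laws $\mu_N$ of $(\boldsymbol{u}_N,v_N,\eta_N)$ in the sense of Definition~\ref{tight}. Prokhorov's theorem then yields a weakly convergent subsequence $\mu_N \rightharpoonup \mu$. Applying the Skorohod representation theorem on an auxiliary probability space $(\tilde\Omega,\tilde{\mathcal F},\tilde{\mathbb P})$, I obtain random variables $(\tilde{\boldsymbol{u}}_N,\tilde v_N,\tilde\eta_N)$ with the same laws $\mu_N$ converging $\tilde{\mathbb P}$-a.s.\ to some $(\tilde{\boldsymbol{u}},\tilde v,\tilde\eta)$ of law $\mu$, accompanied by a tilde Brownian motion $\tilde W$. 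Passing to the limit in the linear deterministic terms of the approximate weak formulation is immediate from the a.s.\ convergence, and the discrete noise term is identified in the limit with $\int_0^T\bigl(\int_\Gamma \psi\,dz\bigr) d\tilde W$ by a standard martingale identification. This produces a weak solution in the probabilistically weak sense on the tilde space (Definition~\ref{weak}).

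Third, to upgrade from probabilistically weak to probabilistically strong existence, I apply the Gy\"ongy--Krylov lemma. For any two subsequences $X_l=(\boldsymbol{u}_l,v_l,\eta_l)$ and $X_m=(\boldsymbol{u}_m,v_m,\eta_m)$, the joint laws $\nu_{X_l,X_m}$ are tight on $\mathcal{X}\times\mathcal{X}$, so a further subsequence converges to some $\nu$. Applying Skorohod to the joint laws produces two tilde-limits $\tilde X^1,\tilde X^2$, each a probabilistically weak solution driven by the \emph{same} tilde Brownian motion $\tilde W$ (constructing the joint Skorohod space so that both limits see the same noise is an essential bookkeeping point). I then invoke the pathwise uniqueness result of Section~\ref{deterministic}: since the problem is linear, the difference $\tilde X^1-\tilde X^2$ satisfies pathwise a deterministic homogeneous linear FSI system with zero initial data and zero stochastic forcing, so it vanishes, giving $\tilde{\mathbb P}(\tilde X^1=\tilde X^2)=1$. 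Hence $\nu$ is supported on the diagonal, and Gy\"ongy--Krylov yields convergence in probability of $(\boldsymbol{u}_N,v_N,\eta_N)$ on the original $(\Omega,\mathcal{F},\mathbb{P})$, hence almost sure convergence along a subsequence. Completeness of $\{\mathcal{F}_t\}$ ensures that the limit is adapted, and passing to the limit a.s.\ in the approximate weak formulation (which is already driven by the original $W$) produces a weak solution in the probabilistically strong sense; uniqueness follows from the same pathwise linear argument applied to any two strong solutions sharing $W$ and the initial data.

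The main obstacles I anticipate are twofold. First, the splitting must be arranged so that every discrete stochastic integral is well-defined and the discrete energy identity closes; the order in Steps~1--3 and the $\mathcal{F}_{t^n_N}$-measurability in Step~2 (see Remark~\ref{stochastic_increment}) are precisely what permits this, and verifying the tightness-relevant time regularity of the interpolants requires care because the Step~2 jumps are only of order $\sqrt{\Delta t}$. Second, after Skorohod the identification of the limiting stochastic integral and the verification that $\tilde W$ is a Brownian motion with respect to the augmented filtration generated by $(\tilde{\boldsymbol{u}},\tilde v,\tilde\eta)$ --- and, in the joint-space version, that the same $\tilde W$ drives both $\tilde X^1$ and $\tilde X^2$ --- is the delicate probabilistic point on which the pathwise uniqueness argument, and hence the entire upgrade to strong existence, rests.
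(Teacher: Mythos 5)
Your proposal follows essentially the same architecture as the paper's proof: the three-step structure/stochastic/fluid operator splitting with the carefully chosen order to preserve $\mathcal{F}_{t^n_N}$-measurability, uniform energy estimates in expectation, tightness of laws via an Aubin--Lions/Simon argument with the fractional Hölder control from Brownian increments, Skorohod to obtain a probabilistically weak solution on a tilde space, Gy\"ongy--Krylov with the deterministic uniqueness lemma to upgrade to a probabilistically strong solution, and completeness of the filtration for adaptedness. The decomposition, the key lemmas, and the probabilistic bookkeeping points you flag (same $\tilde W$ driving both joint Skorohod limits; identification of the limiting stochastic integral) are all exactly the ones the paper uses.
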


\section{A priori energy estimate}\label{energy}
We derive a formal energy estimate by assuming that the solution is pathwise regular enough to justify the integration by parts. 
We use $\| \cdot \|_{L^2(\Gamma)}$ and $(\cdot, \cdot)$ to denote the norm and inner product on $L^{2}(\Gamma)$,
and $\| \cdot \|_{L^2(\Omega_f)}$ and $\langle \cdot, \cdot \rangle$ to denote the norm and inner product on $L^{2}(\Omega_{f})$.

We define the total energy at time $T$ by
\begin{equation*}
E(T) := \frac{1}{2}\int_{\Gamma} |\nabla \eta|^{2} dz + \frac{1}{2} \int_{\Gamma} |v|^{2} dz + \frac{1}{2} \int_{\Omega_{f}} |\boldsymbol{u}|^{2} d\boldsymbol{x}=
\frac{1}{2}\left(\|\nabla \eta\|_{L^2(\Gamma)}^{2} + \|v\|_{L^2(\Gamma)}^{2} + \|\boldsymbol{u}\|_{L^2(\Omega_f)}^{2}\right),
\end{equation*}
and the total dissipation by time $T$ by
\begin{equation*}
D(T) = \int_{0}^{T} \int_{\Omega_{f}} |\boldsymbol{D}(\boldsymbol{u})|^{2} d\boldsymbol{x}.
\end{equation*}
To estimate the total energy and dissipation for the stochastic processes $\bu$, $v$ and $\eta$, we 
rewrite the stochastic fluid-structure interaction problem in the following stochastic differential formulation:
\begin{align*}
d\eta &= v dt, \\
dv &= (\Delta \eta - \boldsymbol{\sigma} \boldsymbol{e_{r}} \cdot \boldsymbol{e_{r}}) dt + dW, \\
d\boldsymbol{u} &= (\nabla \cdot \boldsymbol{\sigma}) dt.
\end{align*}
Notice that the first equation implies 
$
d(\nabla \eta) = (\nabla v) dt.
$
To obtain an energy estimate, we first apply It\"{o}'s formula to express the differentials of the $L^2$-norms of the stochastic processes 
that define the total energy  of the problem:
\begin{equation*}
d(\|\nabla \eta\|_{L^2(\Gamma)}^{2}) = 2 (\nabla \eta, \nabla v) dt,
\end{equation*}
\begin{equation*}
d(\|v\|_{L^2(\Gamma)}^{2}) = [2(\Delta \eta, v) - 2(\boldsymbol{\sigma} \boldsymbol{e_{r}} \cdot \boldsymbol{e_{r}}, v) + L] dt + 2(1, v) dW,
\end{equation*}
\begin{equation*}
d(\|\boldsymbol{u}\|_{L^2(\Omega_f)}^{2}) = 2\langle \nabla \cdot \boldsymbol{\sigma}, \boldsymbol{u}\rangle dt.
\end{equation*}
By adding these equations together, we obtain
that the differential of the total energy satisfies:
\begin{equation*}
d(\|\nabla \eta\|_{L^2(\Gamma)}^{2} + \|v\|_{L^2(\Gamma)}^{2} + \|\boldsymbol{u}\|_{L^2(\Omega_f)}^{2}) = [2\langle \nabla \cdot \boldsymbol{\sigma}, \boldsymbol{u} \rangle - 2(\boldsymbol{\sigma} \boldsymbol{e_{r}} \cdot \boldsymbol{e_{r}}, v) + L] dt + 2(1, v) dW,
\end{equation*}
where we have used that $(\Delta \eta, v) = -(\nabla \eta, \nabla v)$
under the assumption that $\eta$ and $v$ are smooth and vanish at the endpoints of $\Gamma$.
Recalling the kinematic coupling condition $\boldsymbol{u}|_{\Gamma} = v$, we obtain that
\begin{equation*}
\int_{\Omega_{f}} (\nabla \cdot \boldsymbol{\sigma}) \cdot \boldsymbol{u} d\boldsymbol{x} = \int_{\Gamma_{in}} pu_{z} dr - \int_{\Gamma_{out}} pu_{z} dr + \int_{\Gamma} (\boldsymbol{\sigma} \boldsymbol{e_{r}} \cdot \boldsymbol{e_{r}}) v dz - 2\mu \int_{\Omega_{f}} |\boldsymbol{D}(\boldsymbol{u})|^{2} d\boldsymbol{x},
\end{equation*}
which implies
\begin{multline*}
d\left(\frac{1}{2}\int_{\Gamma} |\nabla \eta|^{2} dz + \frac{1}{2} \int_{\Gamma} |v|^{2} dz + \frac{1}{2} \int_{\Omega_{f}} |\boldsymbol{u}|^{2} d\boldsymbol{x}\right) \\
= \left(\frac{L}{2} -2\mu \int_{\Omega_{f}} |\boldsymbol{D}(\boldsymbol{u})|^{2} d\boldsymbol{x} + \int_{\Gamma_{in}} pu_{z} dr - \int_{\Gamma_{out}} pu_{z} dr\right) dt + \left(\int_{\Gamma} v dz\right) dW.
\end{multline*}
Therefore, after integration, for all $T \ge 0$, we have
\begin{equation}\label{energyequality}
E(T) + 2\mu \int_{0}^{T} \int_{\Omega_{f}} |\boldsymbol{D}(\boldsymbol{u})|^{2} d\boldsymbol{x}dt = E_{0} + \frac{LT}{2} + \int_{0}^{T} \int_{\Gamma_{in}} P_{in}(t) u_{z}dr dt - \int_{0}^{T} \int_{\Gamma_{out}} P_{out}(t) u_{z} dr dt + \int_{0}^{T} \left(\int_{\Gamma} v dz\right) dW.
\end{equation}
We estimate the terms on the right hand side of \eqref{energyequality} as follows. 
For the pressure term we use H\"{o}lder's inequality, the trace inequality, Poincar\'{e}'s inequality, and Korn's inequality \cite{Korn} to get
\begin{align}\label{pressureenergy}
&\left|\int_{0}^{T} \left(\int_{\Gamma_{in}} u_{z} dr\right) P_{in}(t) dt\right| \le C\left|\int_{0}^{T} \left(\int_{\Gamma_{in}} |u_{z}|^{2} dr\right)^{1/2} P_{in}(t) dt\right| \le C \left|\int_{0}^{T} ||\nabla \boldsymbol{u}||_{L^{2}(\Omega_{f})} P_{in}(t) dt\right| \nonumber \\
&\le C \left|\int_{0}^{T} ||\boldsymbol{D} (\boldsymbol{u})||_{L^{2}(\Omega_{f})} P_{in}(t) dt\right| \le C(D(T))^{1/2} ||P_{in}(t)||_{L^{2}(0, T)} 
\le \epsilon D(T) + C(\epsilon) ||P_{in}(t)||_{L^{2}(0, T)}^{2}.
\end{align}
We note that the constant $C(\epsilon)$ depends only on $\epsilon$ and the parameters of the problem. The same computation holds for the outlet pressure.

For the stochastic integral, we will bound the {{expectation}} 
$
\mathbb{E}\left(\max_{0 \le \tau \le T} \left|\int_{0}^{\tau} \left(\int_{\Gamma} \partial_{t}\eta dz\right) dW\right|\right)
$ 
since the final energy estimate will be given in terms of expectation of
the total energy and dissipation at time $T$. 
To  bound 
this quantity, we use the Burkholder-Davis-Gundy (BDG) inequality under the assumption that the process $\partial_{t}\eta$ is a predictable stochastic process with respect to the given filtration $\{\mathcal{F}_{t}\}_{t \ge 0}$:
\begin{align}\label{BDG}
&\mathbb{E}\left(\max_{0 \le s \le T} \left|\int_{0}^{s} \left(\int_{\Gamma} \partial_{t}\eta dz\right) dW\right|\right) 
\le \mathbb{E} \left(\left|\int_{0}^{T} \left(\int_{\Gamma} \partial_{t}\eta dz\right)^{2} dt\right|^{1/2}\right) 
\le C\mathbb{E} \left(\left|\int_{0}^{T} ||\partial_{t}\eta||^{2}_{L^{2}(\Gamma)} dt\right|^{1/2}\right) \nonumber \\
&\le C\left(\mathbb{E}\left|\int_{0}^{T} ||\partial_{t}\eta||^{2}_{L^{2}(\Gamma)} dt\right|\right)^{1/2} 
\le CT^{1/2} \cdot \left[\mathbb{E}\left(\max_{0 \le t \le T} ||\partial_{t}\eta(t, \cdot)||^{2}_{L^{2}(\Gamma)}\right)\right]^{1/2} \nonumber \\
&\le C(\epsilon) T + \epsilon\mathbb{E}\left(\max_{0 \le t \le T} ||\partial_{t}\eta(t, \cdot)||^{2}_{L^{2}(\Gamma)}\right) 
\le C(\epsilon)T + \epsilon\mathbb{E}\left(\max_{0 \le t \le T} E(t)\right).
\end{align}
Now, we first use \eqref{pressureenergy} in \eqref{energyequality} to obtain
\begin{equation*}
E(T) + 2\mu D(T) \le E(0) + \frac{LT}{2} + 2\epsilon D(\tau) + C(\epsilon)\left(||P_{in}(t)||^{2}_{L^{2}(0, T)} + ||P_{out}(t)||^{2}_{L^{2}(0, T)}\right) + \int_{0}^{T} \left(\int_{\Gamma} vdz\right) dW,
\end{equation*}
and then choose $\epsilon < \frac{\mu}{2}$ and $\epsilon < \frac{1}{2}$ to get
\begin{equation*}
E(T) + \mu D(T) \le E(0) + \frac{LT}{2} + C(\epsilon)\left(||P_{in}(t)||^{2}_{L^{2}(0, T)} + ||P_{out}(t)||^{2}_{L^{2}(0, T)}\right) + \int_{0}^{T} \left(\int_{\Gamma} vdz\right) dW.
\end{equation*}

Taking a maximum over times $t \in [0, T]$, taking an expectation, and then using the estimate in \eqref{BDG}, we obtain
the following {\emph{a priori}} energy estimate for the coupled problem \eqref{eq}--\eqref{idata}:
\begin{equation*}
\mathbb{E}\left(\max_{0 \le t \le T} E(t) + \mu \int_{0}^{t} \int_{\Omega_{f}} |\boldsymbol{D}(\boldsymbol{u})|^{2} d\boldsymbol{x} ds \right) \le C\left(T + E(0) + ||P_{in}(t)||^{2}_{L^{2}(0, T)} + ||P_{out}(t)||^{2}_{L^{2}(0, T)}\right),
\end{equation*}
where $C$ is independent of $T$, depending only on the parameters of the problem. 

\begin{remark}
The right hand side of the energy estimate shows the four sources of energy input into the system: $E(0)$ represents the initial kinetic and potential energy, the two final terms represent the energy input from the inlet and outlet pressure, and $CT$ represents the energy input from the stochastic forcing on the structure.
\end{remark}

\section{The splitting scheme}\label{split_scheme}
To prove the existence of a weak solution to the given stochastic FSI problem we adapt a Lie operator splitting scheme 
that was first designed in the context of nonlinear fluid-structure interaction by Muha and \v{C}ani\'{c} in \cite{MuhaCanic13}. See also \cite{withRoland}.
The stochastic component provides a new difficulty as one must appropriately handle the stochastic noise in the splitting. Since the stochastic noise is added to the structure, one might first attempt a splitting with two subproblems: (1) the structure subproblem with the stochastic noise added, discretized using a Brownian increment, and (2)  the fluid subproblem. However, such a splitting {\bf{does not give rise to a stable scheme}}. To overcome this problem, we use a stochastic splitting introduced in \cite{BGR}, which has been used in stochastic differential equations to split stochastic effects from all other deterministic effects. We design a three part splitting scheme that involves a structure subproblem, a stochastic subproblem, and a fluid subproblem, {\bf{which gives rise to a stable and convergent scheme}}, as we show below. 

Given a fixed time $T > 0$, for each positive integer $N$, let $\Delta t = \frac{T}{N}$ denote the associated time step,
and let $t^{n}_{N} = n\Delta t$ denote the discrete times for $n = 0, 1, ...., N - 1, N$.
At each time step, we update the following vector using a three step method described below:
\begin{equation*}
\boldsymbol{X}^{n + \frac{i}{3}}_{N} 
= \left( \boldsymbol{u}^{n + \frac{i}{3}}_{N}, v^{n + \frac{i}{3}}_{N}, \eta^{n + \frac{i}{3}}_{N} \right)^T,\quad 
n = 0, 1, ...., N - 1, \quad i = 1, 2, 3,
\end{equation*}
where $i = 1$ corresponds to the result after updating the structure subproblem, $i = 2$ corresponds to the stochastic subproblem, and $i = 3$ corresponds to the  fluid subproblem, with the initial data
$\boldsymbol{X}^{0}_{N} = \left( \boldsymbol{u}_{0},  v_{0}, \eta_{0} \right)^T$ for each $N$. 

\if 1 = 0
Throughout the scheme, we will find it convenient to use the following nonstandard convention. The Brownian motion $\{W_{t}\}_{t \ge 0}$ is defined for $t \ge 0$, where by definition of a Brownian motion, $W(0) = 0$. We will extend each continuous sample path of Brownian motion to all $t \in \mathbb{R}$, including negative times, by setting
\begin{equation*}
W_{t} = 0, \qquad \text{ for } t \le 0.
\end{equation*}
Note that every sample path of $W_{t}$ is continuous on $\mathbb{R}$, and furthermore, for any arbitrary $T > 0$, the sample paths of $W_{t}$ are almost surely uniformly H\"{o}lder continuous on $(-\infty, T]$. We will furthermore extend the filtration $(\mathcal{F}_{t})_{t \ge 0}$ to the whole real line, by setting $\mathcal{F}_{t} = \mathcal{F}_{0}$ for $t < 0$. 
\fi

\subsection{The structure subproblem}

In this subproblem, we keep the fluid velocity fixed, so that 
\begin{equation*}
\boldsymbol{u}^{n + \frac{1}{3}}_{N} = \boldsymbol{u}^{n}_{N},
\end{equation*}
and update the structure displacement and the structure velocity by requiring that
$(\eta^{n + \frac{1}{3}}_{N}, v^{n + \frac{1}{3}}_{N})$ satisfy the following first order system in weak variational form:
\begin{equation*}
\int_{\Gamma} \frac{\eta^{n + \frac{1}{3}}_{N} - \eta^{n}_{N}}{\Delta t} \phi dz = \int_{\Gamma} v^{n + \frac{1}{3}}_{N} \phi dz, \qquad \text{ for all } \phi \in L^{2}(\Gamma),
\end{equation*}
\begin{equation}\label{structuresubp}
\int_{\Gamma} \frac{v^{n + \frac{1}{3}}_{N} - v^{n}_{N}}{\Delta t} \psi dz + \int_{\Gamma} \nabla \eta^{n + \frac{1}{3}}_{N} \cdot \nabla \psi dz = 0, \qquad \text{ for all } \psi \in H_{0}^{1}(\Gamma),
\end{equation}
where this system is solved pathwise for each $\omega \in \Omega$ separately. We note that $(\eta^{n + \frac{1}{3}}_{N}, v^{n + \frac{1}{3}}_{N})$ is a random variable taking values in $H_{0}^{1}(\Gamma) \times H_{0}^{1}(\Gamma)$. To verify this, we must check that it is a measurable function of the probability space.

\begin{proposition}
Suppose that $\eta^{n}_{N}$ and $v^{n}_{N}$ are $\mathcal{F}_{t^{n}_{N}}$ measurable random variables taking values in $H_{0}^{1}(\Gamma)$ and $L^{2}(\Gamma)$ respectively. Then, the structure problem \eqref{structuresubp} has a unique solution $(\eta^{n + \frac{1}{3}}_{N}, v^{n + \frac{1}{3}}_{N})$, which is a random variable taking values in $H_{0}^{1}(\Gamma) \times H_{0}^{1}(\Gamma)$ that is measurable with respect to $\mathcal{F}_{t^{n}_{N}}$. 
\end{proposition}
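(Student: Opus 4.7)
The plan is to reduce the coupled system to a single elliptic problem for $v^{n+\frac{1}{3}}_N$, solve it pathwise via Lax--Milgram, and then verify measurability by showing that the solution depends continuously (in fact, linearly and boundedly) on the data.

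First, I would use the first equation of \eqref{structuresubp} to express $\eta^{n+\frac{1}{3}}_N$ in terms of $v^{n+\frac{1}{3}}_N$. Choosing $\phi \in L^2(\Gamma)$ arbitrary and using density yields pointwise (in $\omega$) the identity
\begin{equation*}
\eta^{n+\frac{1}{3}}_N = \eta^{n}_N + \Delta t\, v^{n+\frac{1}{3}}_N.
\end{equation*}
Substituting this into the second equation of \eqref{structuresubp}, for every $\psi \in H_0^1(\Gamma)$,
\begin{equation*}
\int_\Gamma v^{n+\frac{1}{3}}_N \psi\, dz + (\Delta t)^2 \int_\Gamma \nabla v^{n+\frac{1}{3}}_N \cdot \nabla \psi\, dz
= \int_\Gamma v^n_N \psi\, dz - \Delta t \int_\Gamma \nabla \eta^n_N \cdot \nabla \psi\, dz.
\end{equation*}
The left-hand side defines a bilinear form $a(v,\psi)$ on $H_0^1(\Gamma) \times H_0^1(\Gamma)$ which is obviously bounded and coercive (with coercivity constant $\min(1,(\Delta t)^2)$, uniformly in $\omega$). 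The right-hand side defines, for each fixed $\omega$, a bounded linear functional on $H_0^1(\Gamma)$ since $v^n_N(\omega)\in L^2(\Gamma)$ and $\eta^n_N(\omega)\in H_0^1(\Gamma)$.

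Next, by the Lax--Milgram theorem applied pathwise, there exists a unique $v^{n+\frac{1}{3}}_N(\omega) \in H_0^1(\Gamma)$ solving this equation for $\mathbb{P}$-a.e. $\omega$, and then $\eta^{n+\frac{1}{3}}_N(\omega) := \eta^n_N(\omega) + \Delta t\, v^{n+\frac{1}{3}}_N(\omega)$ belongs to $H_0^1(\Gamma)$. Uniqueness for the pair $(\eta^{n+\frac{1}{3}}_N, v^{n+\frac{1}{3}}_N)$ in $H_0^1(\Gamma)\times H_0^1(\Gamma)$ follows from uniqueness in Lax--Milgram plus the explicit formula for $\eta^{n+\frac{1}{3}}_N$.

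The main (and only nontrivial) point is measurability. Let $\mathcal{L}\colon L^2(\Gamma)\times H_0^1(\Gamma)\to H_0^1(\Gamma)$ denote the solution map that sends $(v^n_N,\eta^n_N)$ to $v^{n+\frac{1}{3}}_N$. Because $a$ is symmetric, coercive, and bounded with constants independent of the data, the standard Lax--Milgram estimate gives
\begin{equation*}
\|\mathcal{L}(v,\eta)\|_{H_0^1(\Gamma)} \le C\bigl(\|v\|_{L^2(\Gamma)} + \|\eta\|_{H_0^1(\Gamma)}\bigr),
\end{equation*}
and by linearity of the equation in the data, $\mathcal{L}$ is a bounded linear operator, hence continuous, hence Borel measurable from $L^2(\Gamma)\times H_0^1(\Gamma)$ to $H_0^1(\Gamma)$. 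Since the map $\omega \mapsto (v^n_N(\omega),\eta^n_N(\omega))$ is $\mathcal{F}_{t^n_N}$-measurable into $L^2(\Gamma)\times H_0^1(\Gamma)$ by hypothesis, composition yields that $\omega\mapsto v^{n+\frac{1}{3}}_N(\omega)$ is an $\mathcal{F}_{t^n_N}$-measurable $H_0^1(\Gamma)$-valued random variable. Then $\eta^{n+\frac{1}{3}}_N = \eta^n_N + \Delta t\, v^{n+\frac{1}{3}}_N$ is a sum of two such random variables, hence is itself $\mathcal{F}_{t^n_N}$-measurable into $H_0^1(\Gamma)$, completing the proof. No serious obstacle is expected; the argument is a direct combination of Lax--Milgram, pushed through a linear continuous map to transport measurability from the input data to the solution.
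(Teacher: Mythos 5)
Your proof is correct and follows essentially the same route as the paper's: eliminate one unknown using the first equation, apply Lax--Milgram pathwise to the resulting elliptic problem, and transport $\mathcal{F}_{t^n_N}$-measurability through the bounded linear solution map. The only cosmetic difference is that you solve for $v^{n+\frac{1}{3}}_N$ while the paper solves for $\eta^{n+\frac{1}{3}}_N$, which are interchangeable via $\eta^{n+\frac{1}{3}}_N = \eta^n_N + \Delta t\, v^{n+\frac{1}{3}}_N$.
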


\begin{proof}
Let $F^{n}_{N}: (\eta^{n}_{N}, v^{n}_{N}) \to (\eta^{n + \frac{1}{3}}_{N}, v^{n + \frac{1}{3}}_{N})$ be the deterministic linear map that sends deterministic data $(\eta^{n}_{N}, v^{n}_{N}) \in H_{0}^{1}(\Gamma) \times L^{2}(\Gamma)$ to the unique solution $(\eta^{n + \frac{1}{3}}_{N}, v^{n + \frac{1}{3}}_{N}) \in H_{0}^{1}(\Gamma) \times H_{0}^{1}(\Gamma)$ satisfying the weak formulation \eqref{structuresubp} as a deterministic problem. We must show that this deterministic linear map $F^{n}_{N}: (\eta^{n}_{N}, v^{n}_{N}) \to (\eta^{n + \frac{1}{3}}_{N}, v^{n + \frac{1}{3}}_{N})$ is a continuous (or equivalently, bounded) linear map from $H_{0}^{1}(\Gamma) \times L^{2}(\Gamma)$ to $H_{0}^{1}(\Gamma) \times H_{0}^{1}(\Gamma)$. 
To do this, we must show that for given deterministic functions $\eta^{n}_{N}$ and $v^{n}_{N}$ in $H_{0}^{1}(\Gamma)$ and $L^{2}(\Gamma)$, 
there is a unique solution $(\eta^{n + \frac{1}{3}}_{N}, v^{n + \frac{1}{3}}_{N})$ to the above problem in $H_{0}^{1}(\Gamma) \times H_{0}^{1}(\Gamma)$,
 and that the solution map is a bounded linear map.

The existence of a unique weak solution follows from the Lax-Milgram lemma. Namely, by plugging the first equation
in \eqref{structuresubp} into the second equation, we see that $\eta^{n + \frac{1}{3}}_{N}$ must satisfy the following weak formulation:
\begin{equation}\label{structuresubpmod}
\int_{\Gamma} \eta^{n + \frac{1}{3}}_{N} \psi dz + (\Delta t)^{2} \int_{\Gamma} \nabla \eta^{n + \frac{1}{3}}_{N} \cdot \nabla \psi dz = (\Delta t) \int_{\Gamma} v^{n}_{N} \psi dz + \int_{\Gamma} \eta^{n}_{N} \psi dz, \qquad \text{ for all } \psi \in H_{0}^{1}(\Gamma).
\end{equation}
The bilinear form $B: H_{0}^{1}(\Gamma) \times H_{0}^{1}(\Gamma) \to \mathbb{R}$ (defined by the left-hand side)
\begin{equation*}
B(\eta, \psi) := \int_{\Gamma} \eta \psi dz + (\Delta t)^{2} \int_{\Gamma} \nabla \eta \cdot \nabla \psi dz,
\end{equation*} 
is clearly coercive and continuous, and furthermore, for any fixed but arbitrary $\eta^{n}_{N} \in H_{0}^{1}(\Gamma)$ and $v^{n}_{N} \in L^{2}(\Gamma)$, the map 
\begin{equation*}
\psi \to (\Delta t) \int_{\Gamma} v^{n}_{N} \psi dz + \int_{\Gamma} \eta^{n}_{N} \psi dz
\end{equation*}
is a linear functional on $H_{0}^{1}(\Gamma)$. So the existence of a unique $\eta^{n + \frac{1}{3}}_{N} \in H_{0}^{1}(\Gamma)$ satisfying the weak formulation above in \eqref{structuresubpmod} is given by the Lax-Milgram lemma applied to $H_{0}^{1}(\Gamma)$.
 One then recovers
\begin{equation*}
v^{n + \frac{1}{3}}_{N} = \frac{\eta^{n + \frac{1}{3}}_{N} - \eta^{n}_{N}}{\Delta t} \in H_{0}^{1}(\Gamma).
\end{equation*}

To show that the linear map $F^{n}: (\eta^{n}_{N}, v^{n}_{N}) \to (\eta^{n + \frac{1}{3}}_{N}, v^{n + \frac{1}{3}}_{N})$ is a bounded linear map from $H_{0}^{1}(\Gamma) \times L^{2}(\Gamma)$ to $H_{0}^{1}(\Gamma) \times H_{0}^{1}(\Gamma)$, we note that by substituting $\psi = \eta^{n + \frac{1}{3}}_{N}$ in \eqref{structuresubpmod}, we obtain
\begin{align*}
||\eta^{n + \frac{1}{3}}_{N}||_{H_{0}^{1}(\Gamma)}^{2} &\le C_{N} \left((\Delta t) \int_{\Gamma} v^{n}_{N} \cdot \eta^{n + \frac{1}{3}}_{N} dz + \int_{\Gamma} \eta^{n}_{N} \cdot \eta^{n + \frac{1}{3}}_{N} dz\right) \\
&\le C_{N} \left(||\eta^{n}_{N}||_{H_{0}^{1}(\Gamma)} + ||v^{n}_{N}||_{L^{2}(\Gamma)}\right) ||\eta^{n + \frac{1}{3}}_{N}||_{H_{0}^{1}(\Gamma)}.
\end{align*}
Hence,
$
||\eta^{n + \frac{1}{3}}_{N}||_{H_{0}^{1}(\Gamma)} \le C_{N} \left(||\eta^{n}_{N}||_{H_{0}^{1}(\Gamma)} + ||v^{n}_{N}||_{L^{2}(\Gamma)}\right)
$
for a constant $C_{N}$ depending only on $N$. 

Then, by the fact that $v^{n + \frac{1}{3}}_{N} = \frac{\eta^{n + \frac{1}{3}}_{N} - \eta^{n}_{N}}{\Delta t}$, we also have
$
||v^{n + \frac{1}{3}}_{N}||_{H_{0}^{1}(\Gamma)} \le C_{N} \left(||\eta^{n}_{N}||_{H_{0}^{1}(\Gamma)} + ||v^{n}_{N}||_{L^{2}(\Gamma)}\right).
$

Thus, $F^{n}_{N}: (\eta^{n}_{N}, v^{n}_{N}) \to (\eta^{n + \frac{1}{3}}_{N}, v^{n + \frac{1}{3}}_{N})$ is a bounded linear map from $H_{0}^{1}(\Gamma) \times L^{2}(\Gamma)$ to $H_{0}^{1}(\Gamma) \times H_{0}^{1}(\Gamma)$, and so the result of the structure subproblem, which consists of the random functions 
$
(\eta^{n + \frac{1}{3}}_{N}, v^{n + \frac{1}{3}}_{N}) = F^{n}_{N} \circ (\eta^{n}_{N}, v^{n}_{N})
$,
is a pair of $\mathcal{F}_{t^{n}_{N}}$ measurable random variables, taking values in $H_{0}^{1}(\Gamma) \times H_{0}^{1}(\Gamma)$. 
\end{proof}
To show that the approximate solutions defined by the subproblems converge to the weak solution of the continuous problem as $\Delta t \to 0$, 
we will need uniform bounds on the approximating sequences, which will follow from the uniform bounds on 
the discrete energy of the problem. 
For this purpose, we define the discrete energy at time $t_{n}$ by
\begin{equation}\label{discreteenergy}
E^{n + \frac{i}{3}}_{N} = \frac{1}{2}\left(\int_{\Omega_{f}} |\boldsymbol{u}^{n + \frac{i}{3}}_{N}|^{2} d\boldsymbol{x} + ||v^{n + \frac{i}{3}}_{N}||_{L^{2}(\Gamma)}^{2} + ||\nabla \eta^{n + \frac{i}{3}}_{N}||_{L^{2}(\Gamma)}^{2}\right),
\end{equation}
and we define the fluid dissipation at time $t_{n}$ by 
\begin{equation}\label{discretedissipation}
D^{n}_{N} = (\Delta t) \mu \int_{\Omega_{f}} |\boldsymbol{D}(\boldsymbol{u}^{n}_{N})|^{2}d\boldsymbol{x}.
\end{equation}
We emphasize that these are random variables.

\begin{proposition}
The following discrete energy equality is satisfied pathwise:
\begin{equation*}
E^{n + \frac{1}{3}}_{N} + \frac{1}{2} \left( ||v^{n + \frac{1}{3}}_{N} - v^{n}_{N}||^{2}_{L^{2}(\Gamma)}\right) + \frac{1}{2} \left(||\nabla \eta^{n + \frac{1}{3}}_{N} - \nabla \eta^{n}_{N}||^{2}_{L^{2}(\Gamma)}\right) = E^{n}_{N}.
\end{equation*}
\end{proposition}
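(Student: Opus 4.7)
The plan is to derive the equality pathwise from the variational formulation \eqref{structuresubp}, using the standard polarization identity $(a-b)a = \tfrac{1}{2}a^2 - \tfrac{1}{2}b^2 + \tfrac{1}{2}(a-b)^2$. Because the fluid velocity is not modified in this step ($\boldsymbol{u}^{n+\frac{1}{3}}_N = \boldsymbol{u}^{n}_N$), the fluid contribution to $E^{n+\frac{1}{3}}_N$ cancels that in $E^n_N$, and it suffices to establish
\begin{equation*}
\tfrac{1}{2}\|v^{n+\frac{1}{3}}_N\|_{L^2(\Gamma)}^2 + \tfrac{1}{2}\|\nabla\eta^{n+\frac{1}{3}}_N\|_{L^2(\Gamma)}^2 + \tfrac{1}{2}\|v^{n+\frac{1}{3}}_N - v^n_N\|_{L^2(\Gamma)}^2 + \tfrac{1}{2}\|\nabla\eta^{n+\frac{1}{3}}_N - \nabla\eta^n_N\|_{L^2(\Gamma)}^2 = \tfrac{1}{2}\|v^n_N\|_{L^2(\Gamma)}^2 + \tfrac{1}{2}\|\nabla\eta^n_N\|_{L^2(\Gamma)}^2.
\end{equation*}

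The key observation is that, by the previous proposition, $v^{n+\frac{1}{3}}_N \in H^1_0(\Gamma)$, so $\psi = (\Delta t)\, v^{n+\frac{1}{3}}_N$ is an admissible test function in the second equation of \eqref{structuresubp}. With this choice I obtain
\begin{equation*}
\int_\Gamma (v^{n+\frac{1}{3}}_N - v^n_N)\, v^{n+\frac{1}{3}}_N \, dz + (\Delta t) \int_\Gamma \nabla\eta^{n+\frac{1}{3}}_N \cdot \nabla v^{n+\frac{1}{3}}_N \, dz = 0.
\end{equation*}
Next I exploit the first equation of \eqref{structuresubp}: since it holds for every $\phi \in L^2(\Gamma)$, it gives the pointwise a.e.\ identity $(\Delta t)\, v^{n+\frac{1}{3}}_N = \eta^{n+\frac{1}{3}}_N - \eta^n_N$, and therefore $(\Delta t)\, \nabla v^{n+\frac{1}{3}}_N = \nabla\eta^{n+\frac{1}{3}}_N - \nabla\eta^n_N$. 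Substituting this into the second term transforms it into $\int_\Gamma \nabla\eta^{n+\frac{1}{3}}_N \cdot (\nabla\eta^{n+\frac{1}{3}}_N - \nabla\eta^n_N)\, dz$.

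Applying the polarization identity separately to each of the two integrals converts them into the desired differences of squared norms plus the dissipative numerical terms $\tfrac{1}{2}\|v^{n+\frac{1}{3}}_N - v^n_N\|_{L^2(\Gamma)}^2$ and $\tfrac{1}{2}\|\nabla\eta^{n+\frac{1}{3}}_N - \nabla\eta^n_N\|_{L^2(\Gamma)}^2$; rearranging and adding $\tfrac{1}{2}\|\boldsymbol{u}^n_N\|_{L^2(\Omega_f)}^2$ to both sides produces exactly the claimed equality. The argument is pathwise for every $\omega \in \Omega$ because the variational system \eqref{structuresubp} itself is solved pathwise.

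I do not expect a genuine obstacle here; the only subtle point is that one must use the first equation to rewrite $(\Delta t)\nabla v^{n+\frac{1}{3}}_N$ as $\nabla\eta^{n+\frac{1}{3}}_N - \nabla\eta^n_N$ \emph{before} applying polarization, which is legitimate precisely because the previous proposition guarantees $v^{n+\frac{1}{3}}_N \in H^1_0(\Gamma)$ rather than just $L^2(\Gamma)$. Without this $H^1_0$ regularity, the choice $\psi = (\Delta t)\, v^{n+\frac{1}{3}}_N$ would not be admissible in the second equation of \eqref{structuresubp}, and the clean cancellation would fail.
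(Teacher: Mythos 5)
Your proof is correct and follows essentially the same route as the paper's: substitute $\psi = v^{n+\frac{1}{3}}_N$ (or its $\Delta t$ multiple) into the second equation of \eqref{structuresubp}, use the first equation to replace $(\Delta t)\nabla v^{n+\frac{1}{3}}_N$ by $\nabla\eta^{n+\frac{1}{3}}_N - \nabla\eta^n_N$, and then apply the polarization identity $(a-b)\cdot a = \tfrac{1}{2}(|a|^2 + |a-b|^2 - |b|^2)$ to both terms, finally observing that the fluid contribution is unchanged. Your emphasis on the $H^1_0(\Gamma)$ regularity of $v^{n+\frac{1}{3}}_N$ (from the preceding proposition) as the point that legitimizes this choice of test function matches the paper's own remark.
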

\begin{proof}
Because $v^{n + \frac{1}{3}}_{N}  \in H_{0}^{1}(\Gamma)$, we can substitute $\psi = v^{n + \frac{1}{3}}_{N}$ in the weak formulation to obtain that pathwise,
\begin{equation*}
\int_{\Gamma} (v^{n + \frac{1}{3}}_{N} - v^{n}_{N}) \cdot v^{n + \frac{1}{3}}_{N} dz + (\Delta t) \int_{\Gamma} \nabla \eta^{n + \frac{1}{3}}_{N} \cdot \nabla v^{n + \frac{1}{3}}_{N} dz = 0. 
\end{equation*}
By using the identity
$
(a - b) \cdot a = \frac{1}{2}(|a|^{2} + |a - b|^{2} - |b|^{2}),
$
along with the fact that
$
v^{n + \frac{1}{3}}_{N} = \frac{\eta^{n + \frac{1}{3}}_{N} - \eta^{n}_{N}}{\Delta t},
$
we obtain that the following identity holds pathwise:
\begin{small}
\begin{align*}
\frac{1}{2} ||v^{n + \frac{1}{3}}_{N}||^{2}_{L^{2}(\Gamma)} &+ \frac{1}{2} ||\nabla \eta^{n + \frac{1}{3}}_{N}||^{2}_{L^{2}(\Gamma)} + \frac{1}{2} ||v^{n + \frac{1}{3}}_{N} - v^{n}_{N}||^{2}_{L^{2}(\Gamma)} + \frac{1}{2} ||\nabla \eta^{n + \frac{1}{3}}_{N} - \nabla \eta^{n}_{N}||^{2}_{L^{2}(\Gamma)} 
= \frac{1}{2} ||v^{n}_{N}||^{2}_{L^{2}(\Gamma)} + \frac{1}{2} ||\nabla \eta^{n}_{N}||^{2}_{L^{2}(\Gamma)}.
\end{align*}
\end{small}
The result follows once we note that $\boldsymbol{u}^{n + \frac{1}{3}}_{N} = \boldsymbol{u}^{n}_{N}$.
\end{proof}

\subsection{The stochastic subproblem}

In this subproblem, we incorporate only the effects of the stochastic forcing, which appears in only the structure equation. 
In this step, we keep the structure displacement and fluid velocity fixed
\begin{equation*}
\eta^{n + \frac{2}{3}}_{N} = \eta^{n + \frac{1}{3}}_{N}, \qquad \boldsymbol{u}^{n + \frac{2}{3}}_{N} = \boldsymbol{u}^{n + \frac{1}{3}}_{N},
\end{equation*}
and only update the structure velocity as
\begin{equation}\label{stochdef}
v^{n + \frac{2}{3}}_{N} = v^{n + \frac{1}{3}}_{N} + [W((n + 1) \Delta t) - W(n \Delta t)].
\end{equation}
In particular, we are splitting the stochastic part of the structure problem from the deterministic part. This is necessary to obtain a stable scheme. We state the following simple proposition.

\begin{proposition}
Suppose that $v^{n + \frac{1}{3}}_{N}$ is an $\mathcal{F}_{t^{n}_{N}}$ measurable random variable taking values in $H_{0}^{1}(\Gamma)$. Then, $v^{n + \frac{2}{3}}_{N}$ is an $\mathcal{F}_{t^{n + 1}_{N}}$ measurable random variable taking values in $H^{1}(\Gamma)$. 
\end{proposition}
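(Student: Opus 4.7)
The plan is to verify the two assertions separately: the regularity claim that $v^{n+\frac{2}{3}}_N$ takes values in $H^1(\Gamma)$, and the measurability claim with respect to $\mathcal{F}_{t^{n+1}_N}$. Both should follow directly from the definition \eqref{stochdef} together with basic properties of Brownian motion and of Bochner-measurable random variables.

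For the regularity claim, I would observe that the Brownian increment $W((n+1)\Delta t) - W(n\Delta t)$ is a real-valued random variable that enters \eqref{stochdef} as a spatially constant function on $\Gamma$. Since $\Gamma$ is a bounded interval, every constant belongs to $H^1(\Gamma)$, and the linear map $c \mapsto c\cdot \mathbf{1}_\Gamma$ from $\mathbb{R}$ to $H^1(\Gamma)$ is continuous. Because $v^{n+\frac{1}{3}}_N \in H_0^1(\Gamma) \subset H^1(\Gamma)$ by hypothesis, the sum in \eqref{stochdef} lies in $H^1(\Gamma)$; note that we generically lose the vanishing boundary trace, which is why the target space weakens from $H_0^1(\Gamma)$ to $H^1(\Gamma)$, and this is consistent with the statement.

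For the measurability claim, the key inputs are that the filtration is increasing and that the Brownian motion $W$ is adapted to $\{\mathcal{F}_t\}_{t\ge 0}$. Since $t^n_N \le t^{n+1}_N$, the hypothesis that $v^{n+\frac{1}{3}}_N$ is $\mathcal{F}_{t^n_N}$-measurable immediately upgrades to $\mathcal{F}_{t^{n+1}_N}$-measurability (as an $H^1(\Gamma)$-valued random variable, via the continuous inclusion $H_0^1(\Gamma) \hookrightarrow H^1(\Gamma)$). Adaptedness of $W$ gives that both $W((n+1)\Delta t)$ and $W(n\Delta t)$ are $\mathcal{F}_{t^{n+1}_N}$-measurable, hence so is their difference. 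Composing with the continuous linear embedding $\mathbb{R} \to H^1(\Gamma)$ sending $c$ to $c\cdot \mathbf{1}_\Gamma$ produces an $\mathcal{F}_{t^{n+1}_N}$-measurable $H^1(\Gamma)$-valued random variable. Finally, since $H^1(\Gamma)$ is separable, addition $H^1(\Gamma) \times H^1(\Gamma) \to H^1(\Gamma)$ is continuous, so the sum of two $\mathcal{F}_{t^{n+1}_N}$-measurable $H^1(\Gamma)$-valued random variables is again $\mathcal{F}_{t^{n+1}_N}$-measurable (equivalently, by Pettis's theorem, strong and weak measurability coincide here).

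There is really no substantive obstacle; this proposition is a bookkeeping step whose only content is to confirm that the stochastic step does not destroy adaptedness and respects the functional-analytic setting. The only small care needed is in the regularity statement — recognizing that adding a (random) constant to an $H_0^1$ function produces an element of $H^1$ but not generally $H_0^1$ — and in invoking separability of $H^1(\Gamma)$ to legitimize the sum of Banach-space-valued measurable maps without worrying about measurability pathologies.
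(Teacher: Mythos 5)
The paper states this proposition without proof, calling it ``simple,'' so there is no paper argument to compare against. Your proof is correct and supplies exactly the bookkeeping the authors omitted: the spatial regularity drop from $H_0^1(\Gamma)$ to $H^1(\Gamma)$ comes from adding a spatially constant (hence $H^1$ but not trace-zero) Brownian increment, and the $\mathcal{F}_{t^{n+1}_N}$-measurability follows from the increasing filtration, adaptedness of $W$, and continuity of the relevant linear maps (with separability of $H^1(\Gamma)$ handling measurability of the sum of Banach-valued random variables). Nothing to add.
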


Notice that the solution $v^{n + \frac{2}{3}}_{N}$ to the stochastic subproblem taking values in $H^{1}(\Gamma)$, satisfies pathwise the following integral equality, which will be useful later:
\begin{equation}\label{stochsubp}
\int_{\Gamma} \frac{v^{n + \frac{2}{3}}_{N} - v^{n + \frac{1}{3}}_{N}}{\Delta t} \psi dz = \int_{\Gamma} \frac{W((n + 1)\Delta t) - W(n\Delta t)}{\Delta t} \psi dz, \qquad \text{ for all } \psi \in H_{0}^{1}(\Gamma).
\end{equation}


\begin{proposition}
The following discrete energy identity holds pathwise:
\begin{equation*}
E^{n + \frac{2}{3}}_{N} = E^{n + \frac{1}{3}}_{N} + [W((n + 1) \Delta t) - W(n \Delta t)]\int_{\Gamma} v^{n + \frac{1}{3}}_{N} dz + \frac{L}{2}[W((n + 1) \Delta t) - W(n \Delta t)]^{2},
\end{equation*}
\end{proposition}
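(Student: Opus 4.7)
The identity concerns only how the energy changes under the stochastic update in Step 2, where the fluid velocity and the structure displacement are held fixed and only the structure velocity is modified by adding the scalar (spatially constant) Brownian increment $\Delta W_n := W((n+1)\Delta t) - W(n\Delta t)$. So my plan is to reduce the identity to a pointwise algebraic expansion of $\|v^{n+2/3}_N\|_{L^2(\Gamma)}^2$, and let the remaining pieces of the energy carry over trivially.

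First, from the definitions $\eta^{n+2/3}_N = \eta^{n+1/3}_N$ and $\boldsymbol{u}^{n+2/3}_N = \boldsymbol{u}^{n+1/3}_N$, the terms $\tfrac{1}{2}\|\nabla\eta\|_{L^2(\Gamma)}^2$ and $\tfrac{1}{2}\|\boldsymbol{u}\|_{L^2(\Omega_f)}^2$ in the discrete energy \eqref{discreteenergy} are unchanged between $n+\tfrac{1}{3}$ and $n+\tfrac{2}{3}$. Hence it suffices to prove, pathwise, that
\begin{equation*}
\tfrac{1}{2}\|v^{n+\frac{2}{3}}_N\|_{L^2(\Gamma)}^2 = \tfrac{1}{2}\|v^{n+\frac{1}{3}}_N\|_{L^2(\Gamma)}^2 + \Delta W_n \int_\Gamma v^{n+\frac{1}{3}}_N\, dz + \tfrac{L}{2}(\Delta W_n)^2.
\end{equation*}

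Second, I use the definition \eqref{stochdef}: $v^{n+2/3}_N(z) = v^{n+1/3}_N(z) + \Delta W_n$, where $\Delta W_n$ is independent of the spatial variable $z$. Squaring pointwise in $z$ and integrating over $\Gamma$, the expansion
\begin{equation*}
\bigl(v^{n+\frac{1}{3}}_N + \Delta W_n\bigr)^2 = (v^{n+\frac{1}{3}}_N)^2 + 2\,\Delta W_n\, v^{n+\frac{1}{3}}_N + (\Delta W_n)^2
\end{equation*}
gives, after integration in $z$, the cross term $2\,\Delta W_n \int_\Gamma v^{n+1/3}_N\, dz$ and the constant term $(\Delta W_n)^2 \int_\Gamma 1\, dz = L(\Delta W_n)^2$, since $L = |\Gamma|$. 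Dividing by two yields precisely the desired identity and completes the proof once combined with the unchanged energy contributions.

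There is no real obstacle here: the update is an additive perturbation of $v$ by a spatially constant random scalar, so the identity is a direct consequence of expanding $\|a+b\|_{L^2(\Gamma)}^2$. The only care needed is to track that $\Delta W_n$ is a (random) constant in space, which is why the quadratic term picks up the factor $L = |\Gamma|$ that appears in the energy estimate and that was also seen in \eqref{energyequality} via the It\^o correction.
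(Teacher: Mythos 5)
Your proof is correct and takes essentially the same approach as the paper: expand the square $\bigl(v^{n+1/3}_N + \Delta W_n\bigr)^2$ pointwise, integrate over $\Gamma$ to obtain the cross term and the factor $L = |\Gamma|$, and note that the fluid velocity and structure displacement are unchanged in this subproblem.
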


\begin{proof}
From
$
v^{n + \frac{2}{3}}_{N} = v^{n + \frac{1}{3}}_{N} + [W((n + 1) \Delta t) - W(n \Delta t)],
$
we get that
\begin{equation*}
\frac{1}{2}|v^{n + \frac{2}{3}}_{N}|^{2} = \frac{1}{2}|v^{n + \frac{1}{3}}_{N}|^{2} + v^{n + \frac{1}{3}}_{N} \cdot [W((n + 1) \Delta t) - W(n \Delta t)] + \frac{1}{2}[W((n + 1) \Delta t) - W(n \Delta t)]^{2}.
\end{equation*}
Therefore, after integrating over $\Gamma$, one gets the desired energy equality,
after recalling that $\eta$ and $\bu$ do not change in this subproblem.
\end{proof}

\subsection{The fluid subproblem}
In this subproblem, we keep the structure displacement fixed
\begin{equation*}
\eta^{n + 1}_{N} = \eta^{n + \frac{2}{3}}_{N},
\end{equation*}
and update the fluid and structure velocities.
To define the problem satisfied by the fluid and structure velocities,
 we introduce the following notation for the corresponding fixed time function spaces:
\begin{equation*}
\mathcal{V} = \{(\boldsymbol{u}, v) \in \mathcal{V}_{F} \times L^{2}(\Gamma) : \boldsymbol{u}|_{\Gamma} = v \boldsymbol{e_{r}}\},\quad
\mathcal{Q} = \{(\boldsymbol{q}, \psi) \in \mathcal{V}_{F} \times H_{0}^{1}(\Gamma) : \boldsymbol{q}|_{\Gamma} = \psi \boldsymbol{e}_{r}\},
\end{equation*}
where $\mathcal{V}_{F}$ is defined by \eqref{VF}. Note that the definition of $\mathcal{V}$ and $\mathcal{Q}$ does not depend on $N$ or $n$.

Then, the fluid subproblem is to find $(\boldsymbol{u}^{n + 1}_{N}, v^{n + 1}_{N})$ taking values in $\mathcal{V}$ pathwise, such that  
\begin{multline}\label{fluidsubp}
\int_{\Omega_{f}} \frac{\boldsymbol{u}^{n + 1}_{N} - \boldsymbol{u}^{n + \frac{2}{3}}_{N}}{\Delta t} \cdot \boldsymbol{q} d\boldsymbol{x} + 2\mu \int_{\Omega_{f}} \boldsymbol{D}(\boldsymbol{u}^{n + 1}_{N}) : \boldsymbol{D}(\boldsymbol{q}) d\boldsymbol{x} + \int_{\Gamma} \frac{v^{n + 1}_{N} - v^{n + \frac{2}{3}}_{N}}{\Delta t} \psi dz \\
= P^{n}_{N, in}\int_{0}^{R} (q_{z})|_{z = 0} dr - P^{n}_{N, out} \int_{0}^{R} (q_{z})|_{z = L} dr, \quad \forall (\boldsymbol{q}, \psi) \in \mathcal{Q},
\end{multline}
pathwise for each outcome $\omega \in \Omega$, where
$
P^{n}_{N, in/out} = \frac{1}{\Delta t} \int_{n\Delta t}^{(n + 1)\Delta t} P_{in/out}(t) dt.
$

\begin{proposition}\label{fluidsubprop}
Suppose that $\boldsymbol{u}^{n + \frac{2}{3}}_{N}$ and $v^{n + \frac{2}{3}}_{N}$ are $\mathcal{F}_{t^{n + 1}_{N}}$ measurable random variables taking values in $\mathcal{V}_{F}$ and $H^{1}(\Gamma)$ respectively. Then, the fluid subproblem \eqref{fluidsubp} has a unique solution $(\boldsymbol{u}^{n + 1}_{N}, v^{n + 1}_{N})$ that is an $\mathcal{F}_{t^{n + 1}_{N}}$ measurable random variable taking values in $\mathcal{V}$. 
\end{proposition}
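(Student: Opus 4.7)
The plan is to mirror the structure-subproblem proposition: for each fixed $\omega \in \Omega$, reformulate \eqref{fluidsubp} as a deterministic variational problem, apply Lax--Milgram to obtain pathwise existence and uniqueness, and then promote measurability to the random setting using the fact that the deterministic solution map is linear and bounded, hence Borel measurable. The pressure data $P^n_{N, in/out}$ enters only as deterministic constants, so they play no role in the measurability step.

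Concretely, multiplying \eqref{fluidsubp} by $\Delta t$ recasts the problem as finding $(\boldsymbol{u}^{n+1}_N, v^{n+1}_N) \in \mathcal{Q}$ such that $B((\boldsymbol{u}^{n+1}_N, v^{n+1}_N), (\boldsymbol{q}, \psi)) = L^n_N(\boldsymbol{q}, \psi)$ for all $(\boldsymbol{q}, \psi) \in \mathcal{Q}$, where
\begin{equation*}
B((\boldsymbol{u}, v), (\boldsymbol{q}, \psi)) = \int_{\Omega_f} \boldsymbol{u} \cdot \boldsymbol{q}\, d\boldsymbol{x} + 2\mu\, \Delta t \int_{\Omega_f} \boldsymbol{D}(\boldsymbol{u}) : \boldsymbol{D}(\boldsymbol{q})\, d\boldsymbol{x} + \int_\Gamma v\psi\, dz,
\end{equation*}
and $L^n_N$ collects the data terms from $(\boldsymbol{u}^{n+\frac{2}{3}}_N, v^{n+\frac{2}{3}}_N)$ together with the deterministic pressure boundary contributions. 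Equipping $\mathcal{Q}$ with its natural $H^1(\Omega_f) \times H^1(\Gamma)$ inner product, continuity of $B$ on $\mathcal{Q} \times \mathcal{Q}$ is immediate from Cauchy--Schwarz, and coercivity follows from Korn's inequality on $\Omega_f$: the boundary conditions built into $\mathcal{V}_F$ (namely $u_z = 0$ on $\Gamma$ and $u_r = 0$ on $\partial \Omega_f \setminus \Gamma$) make $\|\boldsymbol{D}(\cdot)\|_{L^2(\Omega_f)}$ an equivalent norm on $\mathcal{V}_F$, and the Poincar\'e inequality on $\Gamma$ together with the kinematic coupling $\boldsymbol{q}|_\Gamma = \psi\boldsymbol{e_r}$ controls the $H^1(\Gamma)$ norm of $\psi$. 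Continuity of $L^n_N$ is handled by the trace inequality on $\Gamma_{in/out}$, so Lax--Milgram provides a unique pathwise solution in $\mathcal{Q} \subset \mathcal{V}$.

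For measurability, the same bilinear estimate yields the quantitative bound
\begin{equation*}
\|(\boldsymbol{u}^{n+1}_N, v^{n+1}_N)\|_{\mathcal{Q}} \le C_N\Bigl(\|\boldsymbol{u}^{n+\frac{2}{3}}_N\|_{L^2(\Omega_f)} + \|v^{n+\frac{2}{3}}_N\|_{L^2(\Gamma)} + |P^n_{N, in}| + |P^n_{N, out}|\Bigr),
\end{equation*}
showing that the deterministic solution map $G^n_N : (\boldsymbol{u}^{n+\frac{2}{3}}_N, v^{n+\frac{2}{3}}_N) \mapsto (\boldsymbol{u}^{n+1}_N, v^{n+1}_N)$ is continuous linear from $\mathcal{V}_F \times H^1(\Gamma)$ into $\mathcal{V}$, hence Borel measurable. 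Composing with the $\mathcal{F}_{t^{n+1}_N}$-measurable random variable $(\boldsymbol{u}^{n+\frac{2}{3}}_N, v^{n+\frac{2}{3}}_N)$ yields the required measurability. The one subtlety I anticipate is the slight asymmetry between the trial space $\mathcal{V}$ (with $v \in L^2(\Gamma)$) and the test space $\mathcal{Q}$ (with $\psi \in H^1_0(\Gamma)$), which I would address by noting that any solution produced by Lax--Milgram on $\mathcal{Q}$ automatically lies in $\mathcal{V}$, and uniqueness on the larger space $\mathcal{V}$ follows by testing the homogeneous equation against the solution itself once one observes that, by the kinematic coupling, the trace of any $\boldsymbol{u} \in \mathcal{V}_F$ on $\Gamma$ is $H^{1/2}$-regular and vanishes at the endpoints, so it can be used as a legitimate test function after a standard mollification/density argument.
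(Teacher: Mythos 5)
Your overall strategy (pathwise Lax--Milgram plus measurability via a bounded linear solution map) matches the paper, but the Lax--Milgram setup itself contains a genuine error. You apply Lax--Milgram on $\mathcal{Q}$ equipped with the $H^{1}(\Omega_{f}) \times H^{1}(\Gamma)$ norm, and claim coercivity of
\[
B((\boldsymbol{u}, v), (\boldsymbol{u}, v)) = \|\boldsymbol{u}\|^{2}_{L^{2}(\Omega_{f})} + 2\mu\,\Delta t\,\|\boldsymbol{D}(\boldsymbol{u})\|^{2}_{L^{2}(\Omega_{f})} + \|v\|^{2}_{L^{2}(\Gamma)}
\]
with respect to that norm, asserting that ``the Poincar\'e inequality on $\Gamma$ together with the kinematic coupling $\boldsymbol{q}|_{\Gamma} = \psi\boldsymbol{e}_{r}$ controls the $H^{1}(\Gamma)$ norm of $\psi$.'' That step fails: $B$ has no term involving $\nabla v$ on $\Gamma$, and the trace of an $H^{1}(\Omega_{f})$ vector field lies only in $H^{1/2}(\Gamma)$, not $H^{1}(\Gamma)$. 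There is no estimate of $\|\nabla\psi\|_{L^{2}(\Gamma)}$ by the $H^{1}(\Omega_{f})$ norm of $\boldsymbol{q}$ or by $B$. Consequently $B$ is not coercive on $\mathcal{Q}$ in the topology you chose, and your conclusion that ``Lax--Milgram provides a unique pathwise solution in $\mathcal{Q} \subset \mathcal{V}$'' does not follow --- indeed the solution need not lie in $\mathcal{Q}$ at all, since $v^{n+1}_{N}$ is merely the $H^{1/2}(\Gamma)$ trace of $\boldsymbol{u}^{n+1}_{N}$.

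The fix, which is what the paper does, is to run Lax--Milgram on $\mathcal{V}$ (where $v$ carries only the $L^{2}(\Gamma)$ norm, matching what $B$ actually controls), not on $\mathcal{Q}$. Coercivity on $\mathcal{V}$ then follows from the Korn identity and the extra $L^{2}(\Omega_{f})$ term in $B$, with a constant that may depend on $N$ but is harmless for measurability. The mismatch between the trial space $\mathcal{V}$ and the smaller test space $\mathcal{Q}$ appearing in \eqref{fluidsubp} is handled not by mollifying the trace of $\boldsymbol{u}$ (your proposed remedy) but simply by density of $\mathcal{Q}$ in $\mathcal{V}$: Lax--Milgram on $\mathcal{V}$ produces a unique solution tested against all of $\mathcal{V}$, and a dense subspace of test functions suffices to preserve uniqueness. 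The measurability portion of your argument (bounded linear solution map composed with $\mathcal{F}_{t^{n+1}_{N}}$-measurable data) is fine once the Lax--Milgram framework is corrected.
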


\begin{proof}
We establish this result again using the Lax Milgram lemma. We let $T^{n}_{N}: \mathcal{V}_{F} \times H^{1}(\Gamma) \times \mathbb{R} \times \mathbb{R} \to \mathcal{V}$ denote the deterministic map that sends deterministic data $(\boldsymbol{u}^{n + \frac{2}{3}}_{N}, v^{n + \frac{2}{3}}_{N}, P^{n}_{N, in}, P^{n}_{N, out}) \in \mathcal{V}_{F} \times H^{1}(\Gamma) \times \mathbb{R} \times \mathbb{R}$ to the unique solution $(\boldsymbol{u}^{n + 1}_{N}, v^{n + 1}_{N}) \in \mathcal{V}$ satisfying the deterministic form of the weak formulation \eqref{fluidsubp}. We want to show that the deterministic linear map $T^{n}_{N}: (\boldsymbol{u}^{n + \frac{2}{3}}_{N}, v^{n + \frac{2}{3}}_{N}, P^{n}_{N, in}, P^{n}_{N, out}) \to (\boldsymbol{u}^{n + 1}_{N}, v^{n + 1}_{N})$ is a continuous map.
We start by showing that the bilinear form $B: \mathcal{V} \times \mathcal{V} \to \mathbb{R}$ given by
\begin{equation*}
B((\boldsymbol{u}, v), (\boldsymbol{q}, \psi)) = \int_{\Omega_{f}} \boldsymbol{u} \cdot \boldsymbol{q} d\boldsymbol{x} + 2\mu (\Delta t) \int_{\Omega_{f}} \boldsymbol{D}(\boldsymbol{u}) : \boldsymbol{D}(\boldsymbol{q}) d\boldsymbol{x} + \int_{\Gamma} v \psi dz,
\end{equation*}
is coercive and continuous. Coercivity follows from the Korn equality (see for example, Lemma 6 on pg. 377 in \cite{CDEM}), applied to  
\begin{equation*}
B((\boldsymbol{u}, v), (\boldsymbol{u}, v)) = \int_{\Omega_{f}} |\boldsymbol{u}|^{2} d\boldsymbol{x} + 2\mu (\Delta t) \int_{\Omega_{f}} |\boldsymbol{D}(\boldsymbol{u})|^{2} d\boldsymbol{x} + \int_{\Gamma} v^{2} dz,
\end{equation*}
to obtain
$
||\nabla \boldsymbol{u}||^{2}_{L^{2}(\Omega_{f})} = 2||\boldsymbol{D}(\boldsymbol{u})||^{2}_{L^{2}(\Omega_{f})}.
$
Continuity of the bilinear form $B$ follows from an application of the Cauchy-Schwarz inequality. 

Next, one can verify that the map sending
\begin{equation*}
(\boldsymbol{q}, \psi) \to \int_{\Omega_{f}} \boldsymbol{u}^{n + \frac{2}{3}}_{N} \cdot \boldsymbol{q} d\boldsymbol{x} + \int_{\Gamma} v^{n + \frac{2}{3}}_{N} \psi dz + (\Delta t) \left(P^{n}_{N, in}\int_{0}^{R} (q_{z})|_{z = 0} dr - P^{n}_{N, out}\int_{0}^{R} (q_{z})|_{z = L} dr\right),
\end{equation*}
is a continuous linear functional on $\mathcal{V}$. Thus, the existence of a unique $(\boldsymbol{u}^{n + 1}_{N}, v^{n + 1}_{N}) \in \mathcal{V}$ satisfying \eqref{fluidsubp} with the larger space of test functions $(\boldsymbol{q}, \psi) \in \mathcal{V}$ is guaranteed by the Lax-Milgram lemma. Note that $\mathcal{V}$ is a larger space than the space $\mathcal{Q}$ required for the test functions in the fluid subproblem \eqref{fluidsubp}. However, we still have the desired uniqueness of the solution in $\mathcal{V}$ if we restrict the test functions to $\mathcal{Q}$ as in \eqref{fluidsubp} because $\mathcal{Q}$ is dense in $\mathcal{V}$.

Then, using coercivity, the trace inequality for $\boldsymbol{u} \in H^{1}(\Omega_{f})$, and the fact that 
\begin{multline*}
B((\boldsymbol{u}^{n + 1}_{N}, v^{n + 1}_{N}), (\boldsymbol{u}^{n + 1}_{N}, v^{n + 1}_{N})) \\
= \int_{\Omega_{f}} \boldsymbol{u}^{n + \frac{2}{3}}_{N} \cdot \boldsymbol{u}^{n + 1}_{N} d\boldsymbol{x} + \int_{\Gamma} v^{n + \frac{2}{3}}_{N} \cdot v^{n + 1}_{N} dz + (\Delta t)\left(P^{n}_{N, in}\int_{0}^{R} (\boldsymbol{u}^{n + 1}_{N})_{z}|_{z = 0} dr - P^{n}_{N, out}\int_{0}^{R} (\boldsymbol{u}^{n + 1}_{N})_{z}|_{z = L} dr\right),
\end{multline*}
we obtain the continuity of the map $T^{n}$.

Thus, since $\boldsymbol{u}^{n + \frac{2}{3}}_{N}$ and $v^{n + \frac{2}{3}}_{N}$ are $\mathcal{F}_{t^{n + 1}_{N}}$ measurable by assumption, the random functions
$
(\boldsymbol{u}^{n + 1}_{N}, v^{n + 1}_{N}) = T^{n}_{N} \circ (\boldsymbol{u}^{n + \frac{2}{3}}_{N}, v^{n + \frac{2}{3}}_{N}),
$
which solve the fluid subproblem, are $\mathcal{F}_{t^{n + 1}_{N}}$ measurable random variables also. 
\end{proof}


\begin{proposition}
The following discrete energy identity holds pathwise:
\begin{multline*}
E^{n + 1}_{N} + 2\mu (\Delta t) \int_{\Omega_{f}} |\boldsymbol{D}(\boldsymbol{u}^{n + 1}_{N})|^{2} d\boldsymbol{x} + \frac{1}{2}\left(||\boldsymbol{u}^{n + 1}_{N} - \boldsymbol{u}^{n + \frac{2}{3}}_{N}||_{L^{2}(\Omega_{f})}^{2} \right) + \frac{1}{2}\left(||v^{n + 1}_{N} - v^{n + \frac{2}{3}}_{N}||_{L^{2}(\Gamma)}^{2} \right) \\
= E^{n + \frac{2}{3}}_{N} + (\Delta t) \left(P^{n}_{N, in}\int_{0}^{R} (\boldsymbol{u}^{n + 1}_{N})_{z}|_{z = 0} dr - P^{n}_{N, out} \int_{0}^{R} (\boldsymbol{u}^{n + 1}_{N})_{z}|_{z = L} dr\right).
\end{multline*}
\end{proposition}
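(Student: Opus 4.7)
My plan is to derive the discrete energy identity by testing the weak formulation \eqref{fluidsubp} against $(\boldsymbol{q}, \psi) = (\boldsymbol{u}^{n+1}_N, v^{n+1}_N)$ and then rearranging via the elementary polarization identity $(a-b)\cdot a = \tfrac{1}{2}(|a|^2 + |a-b|^2 - |b|^2)$, exactly as in the structure subproblem energy identity. Since every step is deterministic, I would fix an arbitrary $\omega \in \Omega$ and work pathwise. The first point to verify is admissibility of the proposed test function: by the kinematic coupling condition enforced in $\mathcal{V}$, we have $(\boldsymbol{u}^{n+1}_N, v^{n+1}_N) \in \mathcal{V}$. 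The component $v^{n+1}_N$ is a priori only in $L^2(\Gamma)$, not in $H^1_0(\Gamma)$, so this pair does not belong to $\mathcal{Q}$. However, the Lax--Milgram step in the proof of Proposition~\ref{fluidsubprop} shows that \eqref{fluidsubp} actually holds for all test functions in the larger space $\mathcal{V}$, so the substitution is allowed.

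Substituting $(\boldsymbol{q}, \psi) = (\boldsymbol{u}^{n+1}_N, v^{n+1}_N)$ into \eqref{fluidsubp} and multiplying by $\Delta t$, I would obtain
\begin{multline*}
\int_{\Omega_f} (\boldsymbol{u}^{n+1}_N - \boldsymbol{u}^{n+\frac{2}{3}}_N)\cdot \boldsymbol{u}^{n+1}_N \, d\boldsymbol{x} + 2\mu(\Delta t)\int_{\Omega_f} |\boldsymbol{D}(\boldsymbol{u}^{n+1}_N)|^2 \, d\boldsymbol{x} + \int_\Gamma (v^{n+1}_N - v^{n+\frac{2}{3}}_N)\, v^{n+1}_N \, dz \\
= (\Delta t)\left(P^n_{N,in}\int_0^R (\boldsymbol{u}^{n+1}_N)_z\big|_{z=0}\,dr - P^n_{N,out}\int_0^R (\boldsymbol{u}^{n+1}_N)_z\big|_{z=L}\,dr\right).
\end{multline*}
Applying the polarization identity to the integrands of the first and third terms on the left-hand side turns them into
\[
\tfrac{1}{2}\bigl(\|\boldsymbol{u}^{n+1}_N\|_{L^2(\Omega_f)}^2 + \|\boldsymbol{u}^{n+1}_N - \boldsymbol{u}^{n+\frac{2}{3}}_N\|_{L^2(\Omega_f)}^2 - \|\boldsymbol{u}^{n+\frac{2}{3}}_N\|_{L^2(\Omega_f)}^2\bigr),
\]
with the completely analogous expression for $v^{n+1}_N$ and $v^{n+\frac{2}{3}}_N$ on $\Gamma$.

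Finally, since the structure displacement is not updated in the fluid step we have $\eta^{n+1}_N = \eta^{n+\frac{2}{3}}_N$, hence $\tfrac{1}{2}\|\nabla\eta^{n+1}_N\|_{L^2(\Gamma)}^2 = \tfrac{1}{2}\|\nabla\eta^{n+\frac{2}{3}}_N\|_{L^2(\Gamma)}^2$. Adding this common quantity to both sides assembles the $\tfrac{1}{2}\|\boldsymbol{u}^{n+1}_N\|^2 + \tfrac{1}{2}\|v^{n+1}_N\|^2$ terms together with $\tfrac{1}{2}\|\nabla\eta^{n+1}_N\|^2$ into $E^{n+1}_N$ in the sense of \eqref{discreteenergy}, and the negative counterparts together with $\tfrac{1}{2}\|\nabla\eta^{n+\frac{2}{3}}_N\|^2$ into $-E^{n+\frac{2}{3}}_N$. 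Rearranging gives the claimed pathwise identity. I do not expect a serious obstacle here: the computation is routine bookkeeping, and the only subtle point is the admissibility of $(\boldsymbol{u}^{n+1}_N, v^{n+1}_N)$ as a test function, already resolved by the density extension noted in Proposition~\ref{fluidsubprop}.
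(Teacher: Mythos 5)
Your proposal is correct and follows the same approach as the paper: substitute $(\boldsymbol{q},\psi)=(\boldsymbol{u}^{n+1}_N, v^{n+1}_N)$ into \eqref{fluidsubp} (admissible since Proposition~\ref{fluidsubprop} extends the test space to $\mathcal{V}$), multiply by $\Delta t$, and apply the polarization identity $(a-b)\cdot a = \tfrac{1}{2}(|a|^2+|a-b|^2-|b|^2)$. Your observation that $\eta^{n+1}_N=\eta^{n+\frac{2}{3}}_N$ makes the $\eta$-part of the discrete energies cancel is the same bookkeeping the paper leaves implicit.
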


\begin{proof}
We can substitute $(\boldsymbol{q}, \psi) = (\boldsymbol{u}^{n + 1}_{N}, v^{n + 1}_{N})$ into the weak formulation of the fluid subproblem since we showed in  Proposition \ref{fluidsubprop} that \eqref{fluidsubp} holds more generally for test functions in $\mathcal{V}$. We obtain
\begin{multline*}
\int_{\Omega_{f}} \frac{\boldsymbol{u}^{n + 1}_{N} - \boldsymbol{u}^{n + \frac{2}{3}}_{N}}{\Delta t} \cdot \boldsymbol{u}^{n + 1}_{N} d\boldsymbol{x} + 2\mu \int_{\Omega_{f}} |\boldsymbol{D}(\boldsymbol{u}^{n + 1}_{N})|^{2} d\boldsymbol{x} + \int_{\Gamma} \frac{v^{n + 1}_{N} - v^{n + \frac{2}{3}}_{N}}{\Delta t} \cdot v^{n + 1}_{N} dz \\
= P^{n}_{N, in}\int_{0}^{R} (\boldsymbol{u}^{n + 1}_{N})_{z}|_{z = 0} dr - P^{n}_{N, out} \int_{0}^{R} (\boldsymbol{u}^{n + 1}_{N})_{z}|_{z = L} dr.
\end{multline*}
The desired equality follows after multiplication by $\Delta t$, and by using the identity
$
(a - b) \cdot a = \frac{1}{2}(|a|^{2} + |a - b|^{2} - |b|^{2}).
$
\end{proof}
\if 1 = 0
By using the trace inequality for $\boldsymbol{u} \in H^{1}(\Omega)$, the Poincare inequality, and the Korn inequality, we get the estimate that the right hand side is 
\begin{multline*}
\left|(\Delta t) P^{n}_{in}\int_{0}^{R} (\boldsymbol{u}^{n + 1}_{N})_{z}|_{z = 0} dr - (\Delta t) P^{n}_{out} \int_{0}^{R} (\boldsymbol{u}^{n + 1}_{N})_{z}|_{z = L} dr\right| \le C(\Delta t) ||\nabla \boldsymbol{u}^{n + 1}_{N}||_{L^{2}(\Omega)} \cdot \left(|P^{n}_{in}| + |P^{n}_{out}|\right) \\
= C(\Delta t) ||\boldsymbol{D} (\boldsymbol{u}^{n + 1}_{N})||_{L^{2}(\Omega)} \cdot \left(|P^{n}_{in}| + |P^{n}_{out}|\right) \le C\epsilon ||D(\boldsymbol{u}^{n + 1}_{N})||_{L^{2}(\Omega)}^{2} + \frac{C}{\epsilon}(\Delta t)^{2} (|P^{n}_{in}|^{2} + |P^{n}_{out}|^{2}).
 \end{multline*}
 By choosing $\epsilon$ sufficiently small so that $C\epsilon < \mu \Delta t$, we get that 
 \begin{multline*}
E^{n + 1}_{N} - E^{n}_{N} + D^{n + 1}_{N} + \frac{1}{2} ||\boldsymbol{u}^{n + 1} - \boldsymbol{u}^{n + \frac{1}{2}}||_{L^{2}(\Omega)}^{2} + \frac{1}{2}||v^{n + 1} - v^{n + \frac{1}{2}}||_{L^{2}(\Gamma)}^{2} \le C_{\epsilon} (\Delta t)^{2} \left(|P^{n}_{in}|^{2} + |P^{n}_{out}|^{2}\right).
\end{multline*}
We have, by using the properties of Brownian motion, that 
\begin{align*}
(\Delta t)^{2} |P^{n}_{in/out}|^{2} &\le 2\left(\int_{n\Delta t}^{(n + 1)\Delta t} P_{in/out}(t) dt\right)^{2} + 2(W_{in/out}((n + 1)\Delta t) - W_{in/out}(n\Delta t))^{2} \\
&\le 2(\Delta t)||P_{in/out}||_{L^{2}(n\Delta t, (n + 1)\Delta t)}^{2} + 2(W_{in/out}((n + 1)\Delta t) - W_{in/out}(n\Delta t))^{2}.
\end{align*}
\fi

\subsection{The full, coupled semidiscrete problem}\label{SemidiscreteProblem}
By adding the weak formulations of the stochastic and fluid subproblems \eqref{stochsubp} and \eqref{fluidsubp}, and the second equation in the structure subproblem \eqref{structuresubp}, we have that the solution to the full semidiscrete problem is 
$(\boldsymbol{u}^{n + 1}_{N}, v^{n + 1}_{N})\in \mathcal{V}$, and $(v^{n + \frac{1}{3}}_{N}, \eta^{n + \frac{1}{3}}_{N}) \in H_{0}^{1}(\Gamma) \times H_{0}^{1}(\Gamma)$, satisfying the following equality pathwise:
\begin{equation}\label{semi1}
\begin{array}{l}
\displaystyle{\int_{\Omega_{f}} \frac{\boldsymbol{u}^{n + 1}_{N} - \boldsymbol{u}^{n}_{N}}{\Delta t} \cdot \boldsymbol{q} d\boldsymbol{x} + 2\mu \int_{\Omega_{f}} \boldsymbol{D}(\boldsymbol{u}^{n + 1}_{N}) : \boldsymbol{D}(\boldsymbol{q}) d\boldsymbol{x} + \int_{\Gamma} \frac{v^{n + 1}_{N} - v^{n}_{N}}{\Delta t} \psi dz + \int_{\Gamma} \nabla \eta^{n + 1}_{N} \cdot \nabla \psi dz }
\\
\displaystyle{= \int_{\Gamma} \frac{W((n + 1)\Delta t) - W(n\Delta t)}{\Delta t} \psi dz + P^{n}_{N, in}\int_{0}^{R} (q_{z})|_{z = 0} dr - P^{n}_{N, out} \int_{0}^{R} (q_{z})|_{z = L} dr, 
\  \forall (\boldsymbol{q}, \psi) \in \mathcal{Q},
}
\\
\hskip 1.3in \displaystyle{
\int_{\Gamma} \frac{\eta^{n + 1}_{N} - \eta^{n}_{N}}{\Delta t} \phi dz = \int_{\Gamma} v^{n + \frac{1}{3}}_{N} \phi dz, \quad  \forall \phi \in L^{2}(\Gamma),
}
\end{array}
\end{equation}
where
$
\displaystyle{P^{n}_{N, in/out} = \frac{1}{\Delta t} \int_{n\Delta t}^{(n + 1)\Delta t} P_{in/out}(t) dt.}
$
Note that $\eta^{n + 1}_{N} = \eta^{n + \frac{1}{3}}_{N}$ by the way we constructed the splitting scheme.

The following proposition provides uniform estimates on the expectation of the kinetic and elastic energy for the full, semidiscrete coupled problem (uniform in the number of time steps $N$, or equivalently, uniform in $\Delta t$), as well as uniform estimates on the expectation of
the numerical dissipation.
\if 1 = 0
We derived discrete energy estimates for each of the subproblems separately, and now we want to derive an energy estimate for the full semidiscrete problem. We accomplish this in the following proposition. These discrete energy estimates will be uniform in the number of time steps $N$. For convenience, we reproduce all energy identities for the structure, stochastic, and fluid subproblems below:
\begin{equation*}
E^{n + \frac{1}{3}}_{N} + \frac{1}{2} \left( ||v^{n + \frac{1}{3}}_{N} - v^{n}_{N}||^{2}_{L^{2}(\Gamma)}\right) + \frac{1}{2} \left(||\nabla \eta^{n + \frac{1}{3}}_{N} - \nabla \eta^{n}_{N}||^{2}_{L^{2}(\Gamma)}\right) = E^{n}_{N},
\end{equation*}
\begin{equation*}
E^{n + \frac{2}{3}}_{N} = E^{n + \frac{1}{3}}_{N} + [W((n + 1) \Delta t) - W(n \Delta t)]\int_{\Gamma} v^{n + \frac{1}{3}}_{N} dz + \frac{L}{2}[W((n + 1) \Delta t) - W(n \Delta t)]^{2},
\end{equation*}
\begin{multline*}
E^{n + 1}_{N} + 2\mu (\Delta t) \int_{\Omega_{f}} |\boldsymbol{D}(\boldsymbol{u}^{n + 1}_{N})|^{2} d\boldsymbol{x} + \frac{1}{2}\left(||\boldsymbol{u}^{n + 1}_{N} - \boldsymbol{u}^{n + \frac{2}{3}}_{N}||_{L^{2}(\Omega_{f})}^{2} \right) + \frac{1}{2}\left(||v^{n + 1}_{N} - v^{n + \frac{2}{3}}_{N}||_{L^{2}(\Gamma)}^{2} \right) \\
= E^{n + \frac{2}{3}}_{N} + (\Delta t) \left(P^{n}_{in}\int_{0}^{R} (\boldsymbol{u}^{n + 1}_{N})_{z}|_{z = 0} dr - P^{n}_{out} \int_{0}^{R} (\boldsymbol{u}^{n + 1}_{N})_{z}|_{z = L} dr\right).
\end{multline*}
\fi

\begin{proposition}\label{uniformenergy}
Let $N>0$  and let $\Delta t = \frac{T}{N}$. There exists a constant $C$ independent of $N$ and depending only on the
initial data, the parameters of the problem, and $||P_{in/out}||^{2}_{L^{2}(0, T)}$, such that the following uniform energy estimates hold:
\begin{enumerate}
\item \textit{\textbf{Uniform semidiscrete kinetic energy and elastic energy estimates:}} 
\begin{equation*}
\mathbb{E}\left(\max_{n = 0, 1, ..., N - 1} E^{n + \frac{1}{3}}_{N}\right) \le C, \ \ \mathbb{E}\left(\max_{n = 0, 1, ..., N - 1} E^{n + \frac{2}{3}}_{N}\right) \le C, \text{ and } \ \ \mathbb{E}\left(\max_{n = 0, 1, ..., N - 1} E^{n + 1}_{N}\right) \le C.
\end{equation*}
\item \textit{\textbf{Uniform semidiscrete viscous fluid dissipation estimate:}}
\begin{equation*}
\sum_{j = 1}^{N} \mathbb{E}(D^{j}_{N}) \le C.
\end{equation*}
\item \textit{\textbf{Uniform numerical dissipation estimates:}}
\begin{equation*}
\sum_{n = 0}^{N - 1} \left(\mathbb{E}\left( ||v^{n + \frac{1}{3}}_{N} - v^{n}_{N}||^{2}_{L^{2}(\Gamma)}\right) + \mathbb{E}\left(||\nabla \eta^{n + \frac{1}{3}}_{N} - \nabla \eta^{n}_{N}||^{2}_{L^{2}(\Gamma)}\right)\right) \le C. 
\end{equation*}
\begin{equation*}
\sum_{n = 0}^{N - 1} \mathbb{E}\left(||v^{n + \frac{2}{3}}_{N} - v^{n + \frac{1}{3}}_{N}||^{2}_{L^{2}(\Gamma)}\right) \le C.
\end{equation*}
\begin{equation*}
\sum_{n = 0}^{N - 1} \left(\mathbb{E}\left(||\boldsymbol{u}^{n + 1}_{N} - \boldsymbol{u}^{n + \frac{2}{3}}_{N}||_{L^{2}(\Omega_{f})}^{2}\right) + \mathbb{E}\left(||v^{n + 1}_{N} - v^{n + \frac{2}{3}}_{N}||_{L^{2}(\Gamma)}^{2}\right)\right) \le C. 
\end{equation*}
\end{enumerate}
\end{proposition}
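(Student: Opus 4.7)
The plan is to combine the three per-step energy identities into one telescoping identity, take the supremum over the time index and then expectation, and absorb the ``good'' terms on the right-hand side into the left. Adding the structure, stochastic, and fluid identities yields, pathwise,
\begin{equation*}
E^{n+1}_N + D^{n+1}_N + \mathcal{N}^{n+1}_N \;=\; E^n_N + \Delta t\,\Pi^n_N \;+\; \Delta W^n \!\int_\Gamma v^{n+\frac{1}{3}}_N\,dz \;+\; \frac{L}{2}(\Delta W^n)^2,
\end{equation*}
where $\Delta W^n := W(t^{n+1}_N)-W(t^n_N)$, the quantity $\mathcal{N}^{n+1}_N \ge 0$ lumps the four nonnegative numerical-dissipation terms coming from the three subproblems, and $\Pi^n_N$ denotes the inlet/outlet pressure work $P^n_{N,in/out}\!\int(\boldsymbol{u}^{n+1}_N)_z\,dr$. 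Telescoping over $n=0,\ldots,m$ gives the discrete counterpart of the formal identity \eqref{energyequality}.

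The pressure sum is treated exactly as in \eqref{pressureenergy}: combining the trace, Poincar\'e, Korn and Young inequalities with $\sum_n \Delta t\,|P^n_{N,in/out}|^2 \le \|P_{in/out}\|^2_{L^2(0,T)}$ (Jensen applied to the time-averaged data) produces a contribution $\epsilon\sum_n D^{n+1}_N + C(\epsilon)\|P\|^2_{L^2(0,T)}$, which is absorbed into the viscous dissipation on the left. The quadratic Brownian sum is trivial, since its summands are nonnegative:
\begin{equation*}
\mathbb{E}\Big[\max_m \tfrac{L}{2}\sum_{n=0}^m (\Delta W^n)^2\Big] \;=\; \tfrac{L}{2}\sum_{n=0}^{N-1}\mathbb{E}(\Delta W^n)^2 \;=\; \tfrac{LT}{2}.
\end{equation*}

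The main obstacle is the cross term $M_m := \sum_{n=0}^m \Delta W^n \!\int_\Gamma v^{n+\frac{1}{3}}_N\,dz$. The structural fact that motivated the design of the splitting is that $v^{n+\frac{1}{3}}_N$ is $\mathcal{F}_{t^n_N}$-measurable, while $\Delta W^n$ is mean-zero and independent of $\mathcal{F}_{t^n_N}$, so $(M_m)$ is a discrete $\mathcal{F}_{t^{m+1}_N}$-martingale. Doob's $L^2$ maximal inequality, combined with orthogonality of martingale increments and conditioning on $\mathcal{F}_{t^n_N}$, gives
\begin{equation*}
\mathbb{E}\Big[\max_m |M_m|^2\Big] \;\le\; 4\sum_{n=0}^{N-1}\mathbb{E}\Big[(\Delta W^n)^2\Big(\!\int_\Gamma v^{n+\frac{1}{3}}_N\,dz\Big)^{\!2}\Big] \;=\; 4\Delta t\sum_{n=0}^{N-1}\mathbb{E}\Big[\Big(\!\int_\Gamma v^{n+\frac{1}{3}}_N\,dz\Big)^{\!2}\Big].
\end{equation*}
Using Cauchy--Schwarz on $\Gamma$, the pathwise bound $E^{n+\frac{1}{3}}_N \le E^n_N$ from the structure-subproblem identity, and Young's inequality then yields the discrete analog of \eqref{BDG},
\begin{equation*}
\mathbb{E}\big[\max_m |M_m|\big] \;\le\; C\sqrt{T\,\big(E^0_N + \mathbb{E}[\max_m E^{m+1}_N]\big)} \;\le\; \epsilon\,\mathbb{E}\big[\max_m E^{m+1}_N\big] + C(\epsilon)(T + T\,E^0_N).
\end{equation*}

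Choosing $\epsilon$ small enough to absorb both $\epsilon\sum_n D^{n+1}_N$ and $\epsilon\,\mathbb{E}[\max_m E^{m+1}_N]$ into the left-hand side closes the estimate and simultaneously yields the bound on $\mathbb{E}[\max_n E^{n+1}_N]$, the viscous-dissipation bound (2), and bound (3) for each of the four numerical-dissipation sums separately (they are already present as nonnegative summands in $\mathcal{N}^{n+1}_N$). The remaining two bounds in (1), for $\mathbb{E}[\max_n E^{n+\frac{1}{3}}_N]$ and $\mathbb{E}[\max_n E^{n+\frac{2}{3}}_N]$, follow from the individual subproblem identities: the first directly from $E^{n+\frac{1}{3}}_N \le E^n_N$, and the second by applying the single-step version of the stochastic estimate above, which has already been controlled.
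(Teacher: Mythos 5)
Your proposal is correct and follows the paper's overall strategy (telescope the three per-step energy identities, take a pathwise max and then expectation, absorb the pressure work into the viscous dissipation, and control the stochastic cross term via a martingale maximal inequality), but you handle the martingale term with a slightly different tool. The paper interprets the partial sums $M_m = \sum_{n\le m}\Delta W^n\int_\Gamma v^{n+\frac13}_N\,dz$ as a continuous-time stochastic integral of a predictable step function $f$ and applies the Burkholder--Davis--Gundy inequality with $p=1$, yielding $\mathbb E[\max_s|\int_0^s f\,dW|]\le C\,\mathbb E\big[(\int_0^T|f|^2\,dt)^{1/2}\big]$, and then Young's inequality at the pathwise level. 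You instead work directly with $M_m$ as a discrete $\mathcal F_{t^{m+1}_N}$-martingale, apply Doob's $L^2$ maximal inequality together with orthogonality of increments and conditioning, and then use Jensen to pass from $\mathbb E[\max|M_m|]$ to $(\mathbb E[\max|M_m|^2])^{1/2}$. Both routes land on a square-root bound that Young's inequality converts into $\epsilon\,\mathbb E[\max E] + C(\epsilon)(T+E_0)$; the BDG route gives a marginally sharper intermediate bound (expectation of the square root rather than square root of the expectation), which is irrelevant here, while your Doob-based argument is somewhat more elementary since it never needs to identify the discrete sum with a time-continuous stochastic integral or invoke the $p=1$ BDG constant. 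The one place to be careful, which you handle correctly, is that after using $E^{n+\frac13}_N\le E^n_N$ and summing, you must separate $E^0_N=E_0$ from the terms $E^n_N$, $n\ge1$, which are bounded by $\max_m E^{m+1}_N$; and the $\mathbb E[\max_n E^{n+\frac23}_N]$ bound in item (1) does need the one extra observation that $\max_n|\Delta W^n\int_\Gamma v^{n+\frac13}_N\,dz|\le 2\max_m|M_m|$, so it is covered by the same maximal estimate.
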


\begin{proof}

First, recall the definitions of the discrete energy $E^{n + \frac{i}{3}}_{N}$ and the discrete fluid dissipation $D^{n}_{N}$ from \eqref{discreteenergy} and \eqref{discretedissipation}. We start with the second uniform numerical dissipation estimate. This estimate follows directly
from  the stochastic subproblem \eqref{stochdef} after integration
\begin{equation*}
\int_{\Gamma} |v^{n + \frac{2}{3}}_{N} - v^{n + \frac{1}{3}}_{N}|^{2} dz = L \cdot [W((n + 1)\Delta t) - W(n\Delta t)]^{2},
\end{equation*}
and summation of the expectations of both sides:
\begin{equation*}
\sum_{n = 0}^{N - 1} \mathbb{E}\left(||v^{n + \frac{2}{3}}_{N} - v^{n + \frac{1}{3}}_{N}||^{2}_{L^{2}(\Gamma)}\right) = \sum_{n = 0}^{N - 1} \mathbb{E}\left(L \cdot [W((n + 1)\Delta t) - W(n\Delta t)]^{2}\right) = LT.
\end{equation*}

We now verify the remaining uniform energy estimates. By summing the structure, stochastic, and fluid discrete energy identities, we obtain
\begin{small}
\begin{multline}\label{remark}
E^{n + 1}_{N} + \sum_{k = 0}^{n} \left(2\mu(\Delta t) \int_{\Omega_{f}} |\boldsymbol{D}(\boldsymbol{u}_{N}^{k + 1})|^{2} d\boldsymbol{x} + \frac{1}{2}\left(||\boldsymbol{u}^{k + 1}_{N} - \boldsymbol{u}^{k + \frac{2}{3}}_{N}||_{L^{2}(\Omega_{f})}^{2} \right) + \frac{1}{2}\left(||v^{k + 1}_{N} - v^{k + \frac{2}{3}}_{N}||_{L^{2}(\Gamma)}^{2} \right)\right) 
\\
+ \sum_{k = 0}^{n} \left(\frac{1}{2} \left( ||v^{k + \frac{1}{3}}_{N} - v^{k}_{N}||^{2}_{L^{2}(\Gamma)}\right) + \frac{1}{2} \left(||\nabla \eta^{k + \frac{1}{3}}_{N} - \nabla \eta^{k}_{N}||^{2}_{L^{2}(\Gamma)}\right)\right) 
\\
= E_{0} + (\Delta t) \sum_{k = 0}^{n} \left(P^{k}_{N, in}\int_{0}^{R} (\boldsymbol{u}^{k + 1}_{N})_{z}|_{z = 0} dr - P^{k}_{N, out} \int_{0}^{R} (\boldsymbol{u}^{k + 1}_{N})_{z}|_{z = L} dr \right) 
\\
+ \sum_{k = 0}^{n} \left([W((k + 1) \Delta t) - W(k \Delta t)]\int_{\Gamma} v^{k + \frac{1}{3}}_{N} dz + \frac{L}{2}[W((k + 1) \Delta t) - W(k \Delta t)]^{2}\right),
\end{multline}
\begin{multline*}
E^{n + \frac{2}{3}}_{N} + \sum_{k = 0}^{n - 1} \left(2\mu(\Delta t) \int_{\Omega_{f}} |\boldsymbol{D}(\boldsymbol{u}_{N}^{k + 1})|^{2} d\boldsymbol{x} + \frac{1}{2}\left(||\boldsymbol{u}^{k + 1}_{N} - \boldsymbol{u}^{k + \frac{2}{3}}_{N}||_{L^{2}(\Omega_{f})}^{2} \right) + \frac{1}{2}\left(||v^{k + 1}_{N} - v^{k + \frac{2}{3}}_{N}||_{L^{2}(\Gamma)}^{2} \right)\right) \\
+ \sum_{k = 0}^{n} \left(\frac{1}{2} \left( ||v^{k + \frac{1}{3}}_{N} - v^{k}_{N}||^{2}_{L^{2}(\Gamma)}\right) + \frac{1}{2} \left(||\nabla \eta^{k + \frac{1}{3}}_{N} - \nabla \eta^{k}_{N}||^{2}_{L^{2}(\Gamma)}\right)\right) \\
= E_{0} + (\Delta t) \sum_{k = 0}^{n - 1} \left(P^{k}_{N, in}\int_{0}^{R} (\boldsymbol{u}^{k + 1}_{N})_{z}|_{z = 0} dr - P^{k}_{N, out} \int_{0}^{R} (\boldsymbol{u}^{k + 1}_{N})_{z}|_{z = L} dr\right) \\
+ \sum_{k = 0}^{n} \left([W((k + 1) \Delta t) - W(k \Delta t)]\int_{\Gamma} v^{k + \frac{1}{3}}_{N} dz + \frac{L}{2}[W((k + 1) \Delta t) - W(k \Delta t)]^{2}\right),
\end{multline*}
and 
\begin{multline*}
E^{n + \frac{1}{3}}_{N} + \sum_{k = 0}^{n - 1} \left(2\mu(\Delta t) \int_{\Omega_{f}} |\boldsymbol{D}(\boldsymbol{u}_{N}^{k + 1})|^{2} d\boldsymbol{x} + \frac{1}{2}\left(||\boldsymbol{u}^{k + 1}_{N} - \boldsymbol{u}^{k + \frac{2}{3}}_{N}||_{L^{2}(\Omega_{f})}^{2} \right) + \frac{1}{2}\left(||v^{k + 1}_{N} - v^{k + \frac{2}{3}}_{N}||_{L^{2}(\Gamma)}^{2} \right)\right) \\
+ \sum_{k = 0}^{n} \left(\frac{1}{2} \left( ||v^{k + \frac{1}{3}}_{N} - v^{k}_{N}||^{2}_{L^{2}(\Gamma)}\right) + \frac{1}{2} \left(||\nabla \eta^{k + \frac{1}{3}}_{N} - \nabla \eta^{k}_{N}||^{2}_{L^{2}(\Gamma)}\right)\right) \\
= E_{0} + (\Delta t) \sum_{k = 0}^{n - 1} \left(P^{k}_{N, in}\int_{0}^{R} (\boldsymbol{u}^{k + 1}_{N})_{z}|_{z = 0} dr - P^{k}_{N, out} \int_{0}^{R} (\boldsymbol{u}^{k + 1}_{N})_{z}|_{z = L} dr\right) \\
+ \sum_{k = 0}^{n - 1} \left([W((k + 1) \Delta t) - W(k \Delta t)]\int_{\Gamma} v^{k + \frac{1}{3}}_{N} dz + \frac{L}{2}[W((k + 1) \Delta t) - W(k \Delta t)]^{2}\right),
\end{multline*}
\end{small}
for $n = 0, 1, ..., N - 1$. Therefore,
\begin{small}
\begin{multline*}
\mathbb{E}\left(\max_{i = 1, 2, 3} \left[\max_{n = 0, 1, ..., N - 1} E^{n + \frac{i}{3}}_{N}\right]\right) 
+ \sum_{k = 0}^{N - 1} \Bigg[\mathbb{E}\left(2\mu(\Delta t) \int_{\Omega_{f}} |\boldsymbol{D}(\boldsymbol{u}_{N}^{k + 1})|^{2} d\boldsymbol{x}\right) 
+ \frac{1}{2} \mathbb{E} \left(||\boldsymbol{u}^{k + 1}_{N} - \boldsymbol{u}^{k + \frac{2}{3}}_{N}||_{L^{2}(\Omega_{f})}^{2} \right) \\
+ \frac{1}{2} \mathbb{E} \left(||v^{k + 1}_{N} - v^{k + \frac{2}{3}}_{N}||_{L^{2}(\Gamma)}^{2} \right) 
+ \left(\frac{1}{2} \mathbb{E} \left( ||v^{k + \frac{1}{3}}_{N} - v^{k}_{N}||^{2}_{L^{2}(\Gamma)}\right) + \frac{1}{2} \mathbb{E} \left(||\nabla \eta^{k + \frac{1}{3}}_{N} - \nabla \eta^{k}_{N}||^{2}_{L^{2}(\Gamma)}\right)\right)\Bigg] \\
\le 2E_{0} + 2\mathbb{E}\left[\max_{n = 0, 1, ..., N - 1} (\Delta t) \sum_{k = 0}^{n} \left(P^{k}_{N, in}\int_{0}^{R} (\boldsymbol{u}^{k + 1}_{N})_{z}|_{z = 0} dr - P^{k}_{N, out} \int_{0}^{R} (\boldsymbol{u}^{k + 1}_{N})_{z}|_{z = L} dr\right)\right] \\
+ 2\mathbb{E}\left[\max_{n = 0, 1, ..., N - 1} \sum_{k = 0}^{n} \left([W((k + 1) \Delta t) - W(k \Delta t)]\int_{\Gamma} v^{k + \frac{1}{3}}_{N} dz + \frac{L}{2}[W((k + 1) \Delta t) - W(k \Delta t)]^{2}dr\right)\right].
\end{multline*}
\end{small}
What is left is to bound the quantities
\begin{equation*}
I_{1} := \mathbb{E}\left[\max_{n = 0, 1, ..., N - 1} (\Delta t) \sum_{k = 0}^{n} \left(P^{k}_{N, in}\int_{0}^{R} (\boldsymbol{u}^{k + 1}_{N})_{z}|_{z = 0} dr - P^{k}_{N, out} \int_{0}^{R} (\boldsymbol{u}^{k + 1}_{N})_{z}|_{z = L} dr\right)\right],
\end{equation*}
and
\begin{small}
\begin{multline*}
\mathbb{E}\left[\max_{n = 0, 1, ..., N - 1} \sum_{k = 0}^{n} \left([W((k + 1) \Delta t) - W(k \Delta t)]\int_{\Gamma} v^{k + \frac{1}{3}}_{N} dz + \frac{L}{2}[W((k + 1) \Delta t) - W(k \Delta t)]^{2}dr\right)\right] \\
\le \mathbb{E}\left[\max_{n = 0, 1, ..., N - 1} \sum_{k = 0}^{n} \left([W((k + 1) \Delta t) - W(k \Delta t)]\int_{\Gamma} v^{k + \frac{1}{3}}_{N} dz\right)\right] 
+ \frac{L}{2}\mathbb{E}\left(\sum_{k = 0}^{N - 1} [W((k + 1) \Delta t) - W(k \Delta t)]^{2}\right) := I_{2} + I_{3}.
\end{multline*}
\end{small}

\noindent \textbf{\textit{Bound for $I_{1}$:}} The same argument will work for $P^{k}_{N, in}$ and $P^{k}_{N, out}$ so without loss of generality, we perform the bounds below for $P^{k}_{N, in}$. We recall that
$\displaystyle{
P^{k}_{N, in} = \frac{1}{\Delta t} \int_{k\Delta t}^{(k + 1)\Delta t} P_{in}(t) dt,
}$
where $P^{k}_{N, in}$ is deterministic. Therefore, we have the following bound, for the term in $I_{1}$ that involves $P^{k}_{N, in}$:
\begin{small}
\begin{multline*}
\mathbb{E}\left[\max_{n = 0, 1, ..., N - 1} (\Delta t) \sum_{k = 0}^{n} \left(P^{k}_{N, in}\int_{0}^{R} (\boldsymbol{u}^{k + 1}_{N})_{z}|_{z = 0} dr\right)\right] 
\le \mathbb{E}\left(\sum_{k = 0}^{N - 1} (\Delta t) |P^{k}_{N, in}| \left|\int_{0}^{R} (\boldsymbol{u}^{k + 1}_{N})_{z}|_{z = 0} dr\right|\right) \\
\le \sum_{k = 0}^{N - 1} \mathbb{E} \left[(\Delta t) \frac{1}{4\epsilon} |P^{k}_{N, in}|^{2} + \epsilon(\Delta t)\left(\int_{0}^{R} (\boldsymbol{u}^{k + 1}_{N})_{z}|_{z = 0} dr\right)^{2}\right] \\
\le \sum_{k = 0}^{N - 1} \mathbb{E} \left[\frac{1}{4\epsilon} \cdot \frac{1}{\Delta t} \left(\int_{k\Delta t}^{(k + 1)\Delta t} P_{in}(t) dt\right)^{2} + C\epsilon(\Delta t) \int_{0}^{R} (\boldsymbol{u}^{k + 1}_{N})_{z}^{2}|_{z = 0} dr\right] \\
\le \sum_{k = 0}^{N - 1} \mathbb{E} \left[\frac{1}{4\epsilon} ||P_{in}||_{L^{2}(k\Delta t, (k + 1)\Delta t)}^{2} + C\epsilon(\Delta t) \int_{\Omega_{f}} |\boldsymbol{D}(\boldsymbol{u}^{k + 1}_{N})|^{2} d\boldsymbol{x} \right] 
= \frac{1}{4\epsilon} ||P_{in}||^{2}_{L^{2}(0, T)} + \sum_{k = 0}^{N - 1} \mathbb{E}\left(C\epsilon(\Delta t) \int_{\Omega_{f}} |\boldsymbol{D}(\boldsymbol{u}^{k + 1}_{N})|^{2} d\boldsymbol{x}\right),
\end{multline*}
where we used Korn's inequality in the last line.
\end{small}
Therefore,
\begin{equation*}
I_{1} \le \frac{1}{4\epsilon}||P_{in}||^{2}_{L^{2}(0, T)} + \frac{1}{4\epsilon}||P_{out}||^{2}_{L^{2}(0, T)} + \sum_{k = 0}^{N - 1} \mathbb{E}\left(2C\epsilon(\Delta t) \int_{\Omega_{f}} |\boldsymbol{D}(\boldsymbol{u}^{k + 1}_{N})|^{2} d\boldsymbol{x}\right).
\end{equation*}
Note that the constant $C$ is independent of $\Delta t$ and $N$. It is the geometric constant arising from the application of the Poincar\'{e} inequality on the fluid domain $\Omega_{f}$.

\vspace{0.1in}

\noindent \textbf{\textit{Bound for $I_{2}$:}} Next, we examine $I_{2}$ and start with an estimate involving the absolute values:
\begin{align*}
I_{2} 
\le \mathbb{E}\left(\max_{n = 0, 1, ..., N - 1} \left|\sum_{k = 0}^{n} \left(\int_{0}^{L} v^{k + \frac{1}{3}}_{N} dz\right) \cdot [W((k + 1)\Delta t) - W(k\Delta t)]\right|\right).
\end{align*}
Next, we consider the expression under the absolute value sign, and consider it as the following {\emph{stochastic integral}}:
\begin{equation*}
 \sum_{k = 0}^{n} \left(\int_{0}^{L} v^{k + \frac{1}{3}}_{N} dz\right) \cdot [W((k + 1)\Delta t) - W(k\Delta t)]
=  \int_{0}^{(n + 1)\Delta t} f(t) dW(t),
\end{equation*}
where $f(t)$ is the random function on $[0,T]$ defined by:
\begin{equation}\label{remark2}
f(t) = \sum_{k = 0}^{N - 1} \left(\int_{0}^{L} v^{k + \frac{1}{3}}_{N} dz\right) \cdot 1_{(k\Delta t, (k + 1)\Delta t]}(t).
\end{equation}
Because $v^{k + \frac{1}{3}}_{N}$ is $\mathcal{F}_{t^{k}_{N}}$ measurable, this integrand is predictable. 
{\emph{This is a direct consequence of how we split the stochastic part of the
problem from the structure subproblem.}} 
Without such a splitting, we would not be able to make the same conclusion.  
Hence, since the stochastic integral is a continuous process in time, we have
\begin{equation*}
I_{2} \le \mathbb{E}\left(\max_{0 \le s \le T} \left|\int_{0}^{s} f(t) dW\right|\right).
\end{equation*}
Using the BDG inequality, we obtain that
\begin{small}
\begin{multline*}
I_{2} \le \mathbb{E}\left[\left(\int_{0}^{T} |f(t)|^{2} dt\right)^{1/2}\right] = \mathbb{E}\left[(\Delta t)^{1/2} \left(\sum_{k = 0}^{N - 1} \left(\int_{0}^{L} v^{k + \frac{1}{3}}_{N} dz\right)^{2}\right)^{1/2} \right] 
\le \epsilon(\Delta t)\mathbb{E} \sum_{k = 0}^{N - 1} \left(\int_{0}^{L} v^{k + \frac{1}{3}}_{N} dz\right)^{2} + \frac{1}{4\epsilon} \\
\le \epsilon L (\Delta t)\mathbb{E} \sum_{k = 0}^{N - 1} ||v^{k + \frac{1}{3}}_{N}||_{L^{2}(\Gamma)}^{2} + \frac{1}{4\epsilon} 
\le \epsilon L N(\Delta t) \mathbb{E} \left(\max_{k = 0, 1, ..., N - 1} ||v^{k + \frac{1}{3}}_{N}||^{2}_{L^{2}(\Gamma)}\right) + \frac{1}{4\epsilon} \le 2\epsilon L N(\Delta t) \mathbb{E} \left(\max_{n = 0, 1, ..., N - 1} E^{n + \frac{1}{3}}_{N}\right) + \frac{1}{4\epsilon}.
\end{multline*}
\end{small}

\vspace{0.1in}

\noindent \textbf{\textit{Bound for $I_{3}$:}} Finally, by using the properties of Brownian motion, we immediately deduce that
\begin{equation*}
I_{3} := \frac{L}{2}\mathbb{E}\left(\sum_{k = 0}^{N - 1} [W((k + 1) \Delta t) - W(k \Delta t)]^{2}\right) = \frac{LT}{2}.
\end{equation*}

\vspace{0.2in}

\noindent \textbf{\textit{Conclusion:}} From the above estimates, we conclude that
\begin{small}
\begin{multline*}
\mathbb{E}\left(\max_{i = 1, 2, 3} \left[\max_{n = 0, 1, ..., N - 1} E^{n + \frac{i}{3}}_{N}\right]\right) 
+ \sum_{k = 0}^{N - 1} \Bigg[\mathbb{E}\left(2\mu(\Delta t) \int_{\Omega_{f}} |\boldsymbol{D}(\boldsymbol{u}_{N}^{k + 1})|^{2} d\boldsymbol{x}\right) 
+ \frac{1}{2} \mathbb{E} \left(||\boldsymbol{u}^{k + 1}_{N} - \boldsymbol{u}^{k + \frac{2}{3}}_{N}||_{L^{2}(\Omega_{f})}^{2} \right) \\
+ \frac{1}{2} \mathbb{E} \left(||v^{k + 1}_{N} - v^{k + \frac{2}{3}}_{N}||_{L^{2}(\Gamma)}^{2} \right) 
+ \left(\frac{1}{2} \mathbb{E} \left( ||v^{k + \frac{1}{3}}_{N} - v^{k}_{N}||^{2}_{L^{2}(\Gamma)}\right) + \frac{1}{2} \mathbb{E} \left(||\nabla \eta^{k + \frac{1}{3}}_{N} - \nabla \eta^{k}_{N}||^{2}_{L^{2}(\Gamma)}\right)\right)\Bigg] \\
\le 2E_{0} + \frac{1}{2\epsilon}||P_{in}||^{2}_{L^{2}(0, T)} + \frac{1}{2\epsilon}||P_{out}||^{2}_{L^{2}(0, T)} + \sum_{k = 0}^{N - 1} \mathbb{E}\left(4C\epsilon(\Delta t) \int_{\Omega_{f}} |\boldsymbol{D}(\boldsymbol{u}^{k + 1}_{N})|^{2} d\boldsymbol{x}\right) 
+ 4\epsilon L T \cdot \mathbb{E} \left(\max_{n = 0, 1, ..., N - 1} E^{n + \frac{1}{3}}_{N}\right) + \frac{1}{2\epsilon} + LT.
\end{multline*}
\end{small}
We note that the constant $C$ depends only on the fluid domain $\Omega_{f}$ and not on $\Delta t$ or $N$. The result follows once we fix $\epsilon > 0$, independent of $\Delta t$, such that $4C\epsilon < \mu$ and $4\epsilon LT < \frac{1}{2}$, and move the 
associated terms from the right hand side to the left hand side. We emphasize that this gives a uniform energy estimate because the choice of $\epsilon$ is independent of $\Delta t$ and hence $N$.

\if 1 = 0
We use the previous discrete energy estimates for the subproblems, which hold for $n = 0, 1, ..., N - 1$:
\begin{equation*}
(E^{n + \frac{1}{2}}_{N} - E^{n}_{N}) + \frac{1}{2} \mathbb{E}\left( ||v^{n + \frac{1}{2}}_{N} - v^{n}_{N}||^{2}_{L^{2}(\Gamma)}\right) + \frac{1}{2} \mathbb{E}\left(||\nabla \eta^{n + \frac{1}{2}}_{N} - \eta^{n}_{N}||^{2}_{L^{2}(\Gamma)}\right) = 0,
\end{equation*}
and
 \begin{multline*}
(E^{n + 1}_{N} - E^{n + \frac{1}{2}}_{N})  + D^{n + 1}_{N} + \frac{1}{2} \mathbb{E}\left(||\boldsymbol{u}^{n + 1} - \boldsymbol{u}^{n + \frac{1}{2}}||_{L^{2}(\Omega)}^{2}\right) + \frac{1}{2}\mathbb{E}\left(||v^{n + 1} - v^{n + \frac{1}{2}}||_{L^{2}(\Gamma)}^{2}\right) \\
\le (\Delta t)\left(1 + ||P_{in}||^{2}_{L^{2}(n\Delta t, (n + 1)\Delta t)} + ||P_{out}||^{2}_{L^{2}(n\Delta t, (n + 1)\Delta t)}\right).
\end{multline*}
Summing these over values of $n$ gives the desired result, once we note that $E^{0}_{N} := E_{0}$ is independent of $N$, as it depends only on the initial data, and
\begin{multline*}
\sum_{n = 1}^{N} (\Delta t)\left(1 + ||P_{in}||^{2}_{L^{2}(n\Delta t, (n + 1)\Delta t)} + ||P_{out}||^{2}_{L^{2}(n\Delta t, (n + 1)\Delta t)}\right) \\
= N(\Delta t) + (\Delta t) ||P_{in}||^{2}_{L^{2}(0, T)} + (\Delta t) ||P_{out}||^{2}_{L^{2}(0, T)} \le T(1 + ||P_{in}||^{2}_{L^{2}(0, T)} + ||P_{out}||^{2}_{L^{2}(0, T)}).
\end{multline*}
In fact, one can specifically take 
\begin{equation*}
C := 2E_{0} + 2T(1 + ||P_{in}||^{2}_{L^{2}(0, T)} + ||P_{out}||^{2}_{L^{2}(0, T)})
\end{equation*}
in all of the above inequalities.
\fi

\end{proof}

\begin{remark}\label{stochastic_increment}
We remark that it is in these energy estimates that one can see the importance of using our particular splitting strategy to obtain a stable scheme. 
Namely, this splitting strategy enabled us to estimate the terms involving the white noise as stochastic integrals, such as the second to last term in 
estimate \eqref{remark}. Because  $v^{k+\frac{1}{3}}_{N}$ is ${\cal{F}}_{t^k_{N}}$ measurable, 
the stochastic increment $[W((k + 1) \Delta t) - W(k \Delta t)]$ is independent of the integral of $v^{k+\frac{1}{3}}_{N}$, and hence,
we were able to rewrite this term as a stochastic integral, see \eqref{remark2}. 
\end{remark}

\section{Approximate solutions}

We use the solutions at fixed times of our semidiscrete scheme, $\boldsymbol{u}^{n + \frac{i}{3}}_{N}$, $\eta^{n + \frac{i}{3}}_{N}$, and $v^{n + \frac{i}{3}}_{N}$ for $i = 1, 2, 3$, to create approximate solutions for the given stochastic FSI problem in time on the time interval $[0, T]$, for each $N$,
which we will need to pass to the limit as $\Delta t \to 0$. 
The approximate solutions will be defined as piecewise functions in time.
However,
we must be careful in  this construction of approximate solutions 
to make sure that they are adapted to the given filtration $\{\mathcal{F}_{t}\}_{t \ge 0}$ associated to the given Brownian motion.

\subsection{Definition of approximate solutions}

We start with the fluid. We define the approximate random function $\boldsymbol{u}_{N}$ on $[0, T] \times \Omega_{f}$ to be the piecewise constant function
\begin{equation*}
\boldsymbol{u}_{N}(t, \cdot) = \boldsymbol{u}^{n - 1}_{N}, \qquad \text{ for } t \in ((n - 1)\Delta t, n \Delta t].
\end{equation*}
Note that because $\boldsymbol{u}^{n}_{N}$ is $\mathcal{F}_{t^{n}_{N}}$ measurable, the choice of $\boldsymbol{u}^{n - 1}_{N}$ instead of $\boldsymbol{u}^{n}_{N}$ above is used so that the resulting process $\boldsymbol{u}_{N}$ is adapted to the filtration $\{\mathcal{F}_{t}\}_{t \ge 0}$.

Next, we consider the functions associated with the structure. Note that $\eta^{n}_{N}$, $\eta^{n + \frac{1}{3}}_{N}$, and $v^{n + \frac{1}{3}}_{N}$ are $\mathcal{F}_{t^{n}_{N}}$ measurable while $v^{n + \frac{2}{3}}_{N}$ is $\mathcal{F}_{t^{n + 1}_{N}}$ measurable. It turns out that we will not need to keep track of $v^{n + \frac{2}{3}}_{N}$ when passing to the limit, since it does not appear in \eqref{semi1}. So it suffices to define
\begin{equation*}
\eta_{N}(t, \cdot) = \eta^{n - 1}_{N}, \qquad v_{N}(t, \cdot) = v^{n - 1}_{N}, \qquad v^{*}_{N}(t, \cdot) = v^{n - \frac{2}{3}}_{N}, \qquad \text{ for } t \in ((n - 1)\Delta t, n \Delta t],
\end{equation*}
and these are all adapted to the given filtration $\{\mathcal{F}_{t}\}_{t \ge 0}$. Note that $v_{N}$ defined on $[0, T] \times \Gamma$ is pathwise the trace of the fluid velocity $\boldsymbol{u}_{N}$ defined on $[0, T] \times \Omega_{f}$ for all $t \in [0, T]$, but this is not true for $v^{*}_{N}$, since $v^{*}_{N}$ is the structure velocity obtained after the structure subproblem in the semidiscrete scheme, which does not update the fluid velocity directly. 

We also introduce a piecewise linear interpolation $\overline{\eta}_{N}$ of $\eta_{N}$ to add additional regularity to the structure displacement, 
since we will want the structure displacement to be in 
$W^{1, \infty}(0, T; L^{2}(\Gamma))$ almost surely in the limit as $\Delta t \to 0$. Thus,  $\overline{\eta}_{N}$ is piecewise linear such that
\begin{equation}\label{eta_bar}
\overline{\eta}_{N}(n\Delta t) = \eta^{n}_{N}, \qquad \text{ for } n = 0, 1, ..., N.
\end{equation}
Note that $\overline{\eta}_{N}$ has Lipschitz continuous paths in time, and furthermore, 
\begin{equation}\label{v_star}
\partial_{t}\overline{\eta}_{N} = v^{*}_{N}.
\end{equation}
Because both $\eta^{n}_{N}$ and $\eta^{n + 1}_{N}$ are $\mathcal{F}_{t^{n}_{N}}$ adapted, $\overline{\eta}_{N}$ is adapted to the filtration $\{\mathcal{F}_{t}\}_{t \ge 0}$. We will also introduce a piecewise constant function $\eta_{N}^{\Delta t}$ for the structure displacement, given by
\begin{equation}\label{etatimeshift}
\eta^{\Delta t}_{N}(t, \cdot) = \eta^{n}_{N}, \qquad \text{ for } t \in ((n - 1)\Delta t, n\Delta t].
\end{equation}
Note that $\eta^{\Delta t}_{N}$ is adapted to the filtration $\{\mathcal{F}_{t}\}_{t \ge 0}$ and is a time-shifted version of $\eta_{N}$, which is emphasized in the notation by the superscript of $\Delta t$. This time-shifted structure displacement will be useful for passing to the limit in Section \ref{limit}.

We will also consider the corresponding piecewise linear interpolations for the fluid velocity and structure velocity, which satisfy
\begin{equation}\label{uv_bar}
\overline{\boldsymbol{u}}_{N}(n\Delta t) = \boldsymbol{u}^{n}_{N}, \qquad \overline{v}_{N}(n\Delta t) = v^{n}_{N}, \qquad \text{ for } n = 0, 1, ..., N.
\end{equation}
We will need to consider $\overline{\boldsymbol{u}}_{N}$ and $\overline{v}_{N}$ because we will express the discrete time derivatives $\frac{\boldsymbol{u}^{n + 1}_{N} - \boldsymbol{u}^{n}_{N}}{\Delta t}$ and $\frac{v^{n + 1}_{N} - v^{n}_{N}}{\Delta t}$ in the semidiscrete formulation \eqref{semi1} in terms of the time derivatives of $\overline{\boldsymbol{u}}_{N}$ and $\overline{v}_{N}$. We will also need to consider piecewise constant time-shifted functions $\boldsymbol{u}_{N}^{\Delta t}$ and $v_{N}^{\Delta t}$ for the fluid velocity and the structure velocity, defined by
\begin{equation}\label{uvtimeshift}
\boldsymbol{u}_{N}^{\Delta t}(t, \cdot) = \boldsymbol{u}^{n}_{N}, \qquad v_{N}^{\Delta t}(t, \cdot) = v^{n}_{N}, \qquad \text{ for } t \in ((n - 1)\Delta t, n\Delta t].
\end{equation}
We note that $\boldsymbol{u}_{N}^{\Delta t}$ and $v_{N}^{\Delta t}$ are time-shifted versions of $\boldsymbol{u}_{N}$ and $v_{N}$. We will need these time-shifted functions because the fluid dissipation estimate in Proposition \ref{uniformenergy} implies that $\boldsymbol{u}_{N}^{\Delta t}$, rather than $\boldsymbol{u}_{N}$, is uniformly bounded in $L^{2}(\Omega; L^{2}(0, T; H^{1}(\Omega_{f})))$. See Proposition \ref{uniformbound}.

We make the following important observation. Unlike $\overline{\eta}_{N}$, we note that $\overline{\boldsymbol{u}}_{N}$ and $\overline{v}_{N}$ are \textit{not} necessarily adapted to the filtration $\{\mathcal{F}_{t}\}_{t \ge 0}$, even though they can still be considered as random variables taking values in their appropriate path spaces. Similarly, $\boldsymbol{u}^{\Delta t}_{N}$ and $v^{\Delta t}_{N}$, unlike $\boldsymbol{u}_{N}$ and $v_{N}$, are \textit{not} necessarily adapted to the filtration $\{\mathcal{F}_{t}\}_{t \ge 0}$. However, this will not be an issue, because we will see later in Lemma \ref{almostsuresub} that $\overline{\boldsymbol{u}}_{N}$, $\boldsymbol{u}_{N}^{\Delta t}$, $\overline{v}_{N}$, and $v^{\Delta t}_{N}$ are almost surely ``close to" the random processes $\boldsymbol{u}_{N}$ and $\boldsymbol{v}_{N}$, which are adapted to the filtration $\{\mathcal{F}_{t}\}_{t \ge 0}$, as $N \to \infty$ along a subsequence.


We summarize some of the previously discussed measure theoretic properties of the stochastic approximate solutions in the following proposition, for future reference.

\begin{proposition}
Recall that $W$ is a one dimensional Brownian motion with respect to the probability space with complete filtration, $(\Omega, \mathcal{F}, \{\mathcal{F}_{t}\}_{t \ge 0}, \mathbb{P})$. For all $N \in \mathbb{N}$, $\boldsymbol{u}_{N}$, $v_{N}$, $v_{N}^{*}$, $\eta_{N}$, and $\overline{\eta}_{N}$ are adapted to the filtration $\{\mathcal{F}_{t}\}_{t \ge 0}$ with left continuous paths, with $\overline{\eta}_{N}$ having continuous paths. In addition, for some fixed $t > 0$ and for each $N$, define $n_{0} = \lfloor\frac{t}{\Delta t} \rfloor + 1$. Then, $W_{\tau} - W_{t}$ is independent of each of the random variables in the following collection of random variables for each $N$ and for each $\tau > t$:
\begin{equation*}
\{\boldsymbol{u}_{N}^{n - 1}, v^{n - 1}_{N}, v^{n - \frac{2}{3}}_{N} : 1 \le n \le n_{0}\}, \{\eta^{n}_{N} : 0 \le n \le n_{0}\}, \{\overline{\eta}_{N}(s) : s \in [0, n_{0}\Delta t]\}.
\end{equation*}
\end{proposition}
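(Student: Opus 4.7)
The plan is to reduce everything to the per-step measurability properties already recorded in Section~\ref{split_scheme}. The key observation is that the splitting order is structure $\to$ stochastic $\to$ fluid: the structure subproblem produces $(\eta^{n+\frac{1}{3}}_N, v^{n+\frac{1}{3}}_N)$ that are $\mathcal{F}_{t^n_N}$-measurable; the stochastic step increments $v$ by $W(t^{n+1}_N) - W(t^n_N)$, pushing $v^{n+\frac{2}{3}}_N$ into $\mathcal{F}_{t^{n+1}_N}$; and the fluid step outputs $(\boldsymbol{u}^{n+1}_N, v^{n+1}_N) \in \mathcal{F}_{t^{n+1}_N}$. Crucially, since neither the stochastic nor the fluid step touches $\eta$, the identities $\eta^{n+1}_N = \eta^{n+\frac{2}{3}}_N = \eta^{n+\frac{1}{3}}_N$ imply the \emph{sharper} statement that $\eta^{n+1}_N$ is already $\mathcal{F}_{t^n_N}$-measurable, i.e., the structure displacement acquires its measurability one time step earlier than one might naively expect. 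This sharper fact is what makes the interpolated $\overline{\eta}_N$ adapted.

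For adaptedness and path regularity I would proceed process by process. For any piecewise constant shifted process such as $\boldsymbol{u}_N(t) = \boldsymbol{u}^{n-1}_N$ on $t \in ((n-1)\Delta t, n\Delta t]$, $\mathcal{F}_t$-measurability is immediate because the value is $\mathcal{F}_{(n-1)\Delta t}$-measurable and $(n-1)\Delta t \leq t$; the half-open-on-the-left interval immediately gives left-continuity with a jump at grid points. The same template handles $v_N$, $\eta_N$, and $v^*_N$, using that $v^{n-\frac{2}{3}}_N = v^{(n-1)+\frac{1}{3}}_N$ is $\mathcal{F}_{t^{n-1}_N}$-measurable. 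For $\overline{\eta}_N$, on $[(n-1)\Delta t, n\Delta t]$ it is the linear interpolation between $\eta^{n-1}_N$ and $\eta^n_N$, both of which are $\mathcal{F}_{(n-1)\Delta t}$-measurable by the sharper statement above; continuity in time is built into the piecewise linear construction.

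For the independence claim, the point is that $W_\tau - W_t$ is independent of $\mathcal{F}_t$ by the definition of a Brownian motion with respect to the filtration, so it suffices to check that each listed random variable is $\mathcal{F}_t$-measurable. Setting $n_0 = \lfloor t/\Delta t\rfloor + 1$ we have $(n_0 - 1)\Delta t \leq t < n_0 \Delta t$. Every element of the first two collections is $\mathcal{F}_{t^{n-1}_N}$-measurable for some $1 \leq n \leq n_0$, so it is $\mathcal{F}_{(n_0-1)\Delta t} \subseteq \mathcal{F}_t$-measurable. The subtle --- and, I expect, the main --- point is $\overline{\eta}_N(s)$ for $s$ in the gap $(t, n_0 \Delta t]$, which sits \emph{after} $t$. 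But on any cell $[(n-1)\Delta t, n\Delta t]$ with $n \leq n_0$, $\overline{\eta}_N(s)$ is still $\mathcal{F}_{(n-1)\Delta t}$-measurable because both interpolation endpoints are, and $(n-1)\Delta t \leq (n_0-1)\Delta t \leq t$; thus $\mathcal{F}_t$-measurability persists across the whole interval $[0, n_0\Delta t]$. The genuine obstacle here is not technical difficulty but careful bookkeeping: tracking which of the fractional-step indices sit in $\mathcal{F}_{t^n_N}$ versus $\mathcal{F}_{t^{n+1}_N}$, and using the $\eta$-preservation in the fluid step to gain the extra time-step of measurability that is needed to cover the interval beyond $t$.
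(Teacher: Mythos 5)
Your proof is correct, and it is precisely the argument the paper has in mind: the proposition is stated as a summary of measurability facts established in the subproblem propositions of Section~\ref{split_scheme}, and the paper gives no separate proof, so the content is exactly the bookkeeping you carry out. You correctly identify the key technical point --- that $\eta^{n+1}_N = \eta^{n+\frac{1}{3}}_N$ is already $\mathcal{F}_{t^n_N}$-measurable because the stochastic and fluid steps leave $\eta$ unchanged, which is what makes $\overline{\eta}_N$ adapted and extends the independence to the collection $\{\overline{\eta}_N(s): s \in [0, n_0\Delta t]\}$ even though $n_0\Delta t > t$.
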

\subsection{Uniform boundedness of approximate solutions}

Using the previous discrete energy estimates, 
we establish uniform boundedness of the approximate solutions in the following proposition. We note that in contrast to the case of deterministic FSI, the uniform boundedness of these (random) approximate solutions is only in \textit{expectation}.

\begin{proposition}\label{uniformbound}
The following uniform boundedness results hold:
\begin{itemize}
\item $(\eta_{N})_{N \in \mathbb{N}}$ is uniformly bounded in $L^{2}(\Omega; L^{\infty}(0, T; H^{1}_{0}(\Gamma)))$.
\item $(v_{N})_{N \in \mathbb{N}}$ is uniformly bounded in $L^{2}(\Omega; L^{\infty}(0, T; L^{2}(\Gamma)))$.
\item $(v_{N}^{\Delta t})_{N \in \mathbb{N}}$ is uniformly bounded in $L^{2}(\Omega; L^{2}(0, T; H^{1/2}(\Gamma)))$. 
\item $(v^{*}_{N})_{N \in \mathbb{N}}$ is uniformly bounded in $L^{2}(\Omega; L^{\infty}(0, T; L^{2}(\Gamma)))$. 
\item $(\boldsymbol{u}_{N})_{N \in \mathbb{N}}$ is uniformly bounded in $L^{2}(\Omega; L^{\infty}(0, T; L^{2}(\Omega_{f})))$.
\item $(\boldsymbol{u}_{N}^{\Delta t})_{N \in \mathbb{N}}$ is uniformly bounded in $L^{2}(\Omega; L^{2}(0, T; H^{1}(\Omega_{f})))$. 
\end{itemize}
\end{proposition}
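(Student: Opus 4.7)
The plan is to reduce each of the six bounds to a quantity already controlled by Proposition~\ref{uniformenergy}, exploiting the fact that each approximate solution is piecewise constant (or piecewise affine) in time, so that its space-time norm is an explicit discrete expression in the semidiscrete values $\boldsymbol{u}^{n+i/3}_N$, $v^{n+i/3}_N$, $\eta^{n+i/3}_N$. The four $L^\infty_t$ bounds (items 1, 2, 4, 5) will follow directly from the uniform kinetic and elastic energy estimate, while the two bounds involving a spatial $H^1$ or $H^{1/2}$ norm (items 6 and 3) will follow from the uniform viscous fluid dissipation estimate, combined with Korn--Poincar\'e and the trace theorem respectively.

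First I would dispatch the $L^\infty_t$ bounds. For $\eta_N$, since $\eta_N(t,\cdot) = \eta^{n-1}_N$ on $((n-1)\Delta t, n\Delta t]$ and $\|\cdot\|_{H^1_0(\Gamma)}$ is equivalent to $\|\nabla\cdot\|_{L^2(\Gamma)}$ by Poincar\'e, one has
\[ \|\eta_N\|^2_{L^\infty(0,T;H^1_0(\Gamma))} \le C\max_{0 \le n \le N-1}\|\nabla\eta^n_N\|^2_{L^2(\Gamma)} \le 2C \max_{n} E^n_N. \]
Taking expectation and invoking Proposition~\ref{uniformenergy} gives a uniform bound in $L^2(\Omega;L^\infty(0,T;H^1_0(\Gamma)))$. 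An identical one-line argument handles $v_N$ and $\boldsymbol{u}_N$. For $v^*_N$, whose value on $((n-1)\Delta t, n\Delta t]$ equals $v^{(n-1)+1/3}_N$, the squared $L^\infty_tL^2_x$-norm is bounded by $2\max_n E^{n+1/3}_N$, which is again controlled by Proposition~\ref{uniformenergy}.

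Next I would handle item 6. Writing
\[ \|\boldsymbol{u}^{\Delta t}_N\|^2_{L^2(0,T;H^1(\Omega_f))} = \sum_{n=1}^{N}\Delta t\,\|\boldsymbol{u}^n_N\|^2_{H^1(\Omega_f)}, \]
I apply Korn's inequality together with Poincar\'e on the space $\mathcal{V}_F$, whose boundary conditions $u_z|_\Gamma = 0$ and $u_r|_{\partial\Omega_f\setminus\Gamma} = 0$ are precisely what is needed to obtain a constant $C_K$ independent of $N$ with $\|\boldsymbol{u}^n_N\|^2_{H^1(\Omega_f)} \le C_K\|\boldsymbol{D}(\boldsymbol{u}^n_N)\|^2_{L^2(\Omega_f)}$. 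Multiplying by $\Delta t$, summing over $n$, and taking expectation converts the right-hand side into $(C_K/\mu)\sum_n \mathbb{E}(D^n_N)$, which is uniformly bounded by Proposition~\ref{uniformenergy}. Item 3 then reduces to item 6 via the kinematic coupling: the fluid subproblem forces $\boldsymbol{u}^n_N|_\Gamma = v^n_N\boldsymbol{e}_r$ in $\mathcal{V}$, so by the continuity of the trace map $H^1(\Omega_f) \to H^{1/2}(\Gamma)$ we obtain $\|v^n_N\|^2_{H^{1/2}(\Gamma)} \le C_T \|\boldsymbol{u}^n_N\|^2_{H^1(\Omega_f)}$, and multiplying by $\Delta t$ and summing reduces the $v^{\Delta t}_N$ estimate to the $\boldsymbol{u}^{\Delta t}_N$ estimate already established.

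There is no genuine obstacle here once Proposition~\ref{uniformenergy} is in hand; the argument is essentially bookkeeping. The one subtlety worth flagging is that the $H^{1/2}$ bound on $v^{\Delta t}_N$ in item~3 crucially uses that this is the post-\emph{fluid}-step velocity, so that it is genuinely the trace of $\boldsymbol{u}^n_N \in \mathcal{V}_F$; the same argument would \emph{not} apply to $v^*_N$, which is the intermediate post-stochastic-step velocity and need not be $H^{1/2}$-regular uniformly in $N$ since it encodes the rough Brownian increment $W((n+1)\Delta t) - W(n\Delta t)$. This explains why only an $L^\infty_tL^2_x$ bound is asserted for $v^*_N$ in item~4, even though a stronger bound is available for $v^{\Delta t}_N$.
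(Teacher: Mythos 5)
Your proposal is correct and follows essentially the same route as the paper: the four $L^\infty_t$ bounds are read off directly from the maximum-energy estimate in Proposition~\ref{uniformenergy}, the $\boldsymbol{u}^{\Delta t}_N$ bound is obtained by rewriting the $L^2_t H^1_x$ norm as a discrete sum and applying Korn--Poincar\'e to convert it to the dissipation sum $\sum_n\mathbb{E}(D^n_N)$, and the $v^{\Delta t}_N$ bound then follows by taking traces. The closing remark you add, explaining why only an $L^\infty_t L^2_x$ bound can be asserted for $v^*_N$ (it carries the rough Brownian increment and is \emph{not} the trace of an $H^1$ fluid velocity), is a correct and useful observation that the paper leaves implicit.
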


\begin{proof}
The only part of this result that does not follow directly from Proposition~\ref{uniformenergy} is to show that $(\boldsymbol{u}_{N}^{\Delta t})_{N \in \mathbb{N}}$ is uniformly bounded in $L^{2}(\Omega; L^{2}(0, T; H^{1}(\Omega_{f})))$. We compute
\begin{small}
\begin{multline*}
||\boldsymbol{u}_{N}^{\Delta t}||^{2}_{L^{2}(\Omega; L^{2}(0, T; H^{1}(\Omega_{f})))} = \mathbb{E} \left(\int_{0}^{T} ||\boldsymbol{u}_{N}^{\Delta t}||^{2}_{H^{1}(\Omega_{f})} dt\right) = (\Delta t) \mathbb{E} \left(\sum_{k = 1}^{N} ||\boldsymbol{u}^{k}_{N}||^{2}_{H^{1}(\Omega_{f})}\right) 
\le C(\Delta t) \mathbb{E}\left(\sum_{k = 1}^{N} ||\boldsymbol{D}(\boldsymbol{u}^{k}_{N})||^{2}_{L^{2}(\Omega_{f})}\right).
\end{multline*}
\end{small}
The result follows from the uniform boundedness of the sum of the dissipation terms (recall that the $(\Delta t)$ term is included in the definition of the energy dissipation \eqref{discretedissipation}). By taking the trace of the $r$ component of the fluid velocity $\boldsymbol{u}^{n}_{N}$, which is in $H^{1/2}(\Gamma)$, we get the corresponding boundedness of $(v_{N}^{\Delta t})_{N \in \mathbb{N}}$ in $L^{2}(\Omega; L^{2}(0, T; H^{1/2}(\Gamma)))$. 
\end{proof}

We also state the corresponding uniform boundedness property for the linear interpolations $(\overline{\eta}_{N})_{N \in \mathbb{N}}$. Note that in terms of distributional derivatives, $\partial_{t} \overline{\eta}_{N} = v_{N}^{*}$
holds pathwise for $\omega \in \Omega$.

Therefore, we have:
\begin{proposition}\label{uniformetabound}
The sequence of linear interpolations of the structure displacements $(\overline{\eta}_{N})_{N \in \mathbb{N}}$ is uniformly bounded in $L^{2}(\Omega; L^{\infty}(0, T; H^{1}_{0}(\Gamma))) \cap L^{2}(\Omega; W^{1, \infty}(0, T; L^{2}(\Gamma)))$. 
\end{proposition}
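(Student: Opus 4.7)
The plan is to upgrade the pointwise (in time) estimates on $\eta_N^n$ and $v_N^{n+\frac{1}{3}}$ supplied by Proposition~\ref{uniformenergy} to norm estimates on the piecewise linear interpolation $\overline{\eta}_N$, by exploiting the convexity of the interpolation in the spatial norm and the explicit formula for its time derivative. Both required norms split naturally into a spatial part controlled by the nodal values and a temporal part controlled by $\partial_t \overline{\eta}_N$.

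For the $L^2(\Omega; L^\infty(0,T; H_0^1(\Gamma)))$ bound, I would fix $\omega \in \Omega$ and $t \in (n\Delta t, (n+1)\Delta t]$, write
\[
\overline{\eta}_N(t) = \eta_N^n + \frac{t - n\Delta t}{\Delta t}\bigl(\eta_N^{n+1} - \eta_N^n\bigr),
\]
and apply the triangle inequality (or convexity of the squared norm) to get $\|\nabla \overline{\eta}_N(t)\|_{L^2(\Gamma)} \le \max(\|\nabla \eta_N^n\|_{L^2(\Gamma)}, \|\nabla \eta_N^{n+1}\|_{L^2(\Gamma)})$. Taking the maximum over $t$ yields $\max_{t \in [0,T]} \|\nabla \overline{\eta}_N(t)\|_{L^2(\Gamma)}^2 \le \max_{0 \le n \le N} \|\nabla \eta_N^n\|_{L^2(\Gamma)}^2$. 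Taking expectation and using part~1 of Proposition~\ref{uniformenergy} together with the identity $\eta_N^{n+1} = \eta_N^{n+\frac{1}{3}}$ (the stochastic and fluid subproblems do not modify the displacement) gives the desired uniform bound in expectation. Poincar\'e's inequality on $\Gamma$ then promotes this to an $H_0^1$ bound.

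For the $L^2(\Omega; W^{1,\infty}(0,T; L^2(\Gamma)))$ bound, the $L^\infty(L^2)$ piece follows at once from the previous step via the embedding $H_0^1(\Gamma) \hookrightarrow L^2(\Gamma)$. For the time-derivative piece, I would invoke the identity $\partial_t \overline{\eta}_N = v_N^\ast$ from \eqref{v_star}, which in turn follows from the first equation of the structure subproblem \eqref{structuresubp} that gives $\frac{\eta_N^{n+1}-\eta_N^n}{\Delta t} = v_N^{n+\frac{1}{3}}$ pathwise. Since $v_N^\ast$ is the piecewise constant function taking the value $v_N^{n+\frac{1}{3}}$ on each sub-interval, its pathwise $L^\infty(0,T; L^2(\Gamma))$ norm is exactly $\max_n \|v_N^{n+\frac{1}{3}}\|_{L^2(\Gamma)}$, whose expected square is controlled by $2\,\mathbb{E}(\max_n E_N^{n+\frac{1}{3}}) \le 2C$ from Proposition~\ref{uniformenergy}.

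There is no serious obstacle here; the one point that must be handled with care is the correspondence between the discrete half-integer indices used in the splitting scheme and the nodal values of $\overline{\eta}_N$, which is what guarantees both $\eta_N^{n+1}=\eta_N^{n+\frac{1}{3}}$ (so that nodal $H^1$ bounds from the structure sub-step transfer to $\overline{\eta}_N$) and $\partial_t \overline{\eta}_N = v_N^{\ast}$ (so that $W^{1,\infty}_t L^2_x$ bounds follow from the $L^\infty_t L^2_x$ bound on $v_N^\ast$ already stated in Proposition~\ref{uniformbound}). Once these identifications are made explicit, the proposition is an immediate consequence of the uniform energy estimates.
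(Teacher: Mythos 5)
Your proof is correct and follows essentially the same approach the paper uses, which it compresses into the single preceding remark that $\partial_t \overline{\eta}_N = v_N^*$ holds pathwise and then cites the uniform energy estimates; you supply the intermediate details (convexity of the piecewise linear interpolant in the nodal $H^1_0$ norms, the identification $\eta_N^{n+1}=\eta_N^{n+\frac{1}{3}}$, and the bound on $v_N^*$) that the paper leaves implicit.
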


\begin{remark}
To be very precise, one must check that the stochastic approximate solutions are measurable, as random variables taking values in a given path space. 
The measurability of these stochastic processes is easy to see by using the measurability properties of the functions $\boldsymbol{u}^{n}_{N}$, $v^{n + \frac{i}{3}}_{N}$, and $\eta^{n}_{N}$. For example, $\eta_{N}$ is measurable as a map from the probability space $\Omega$ to $L^{\infty}(0, T; H_{0}^{1}(\Gamma))$ because $\eta_{N}$ can be considered as the composition of a measurable map $F_{1}$ with a continuous map $F_{2}$.
$F_1$ is the map from $\omega \in \Omega$ to the space of bounded sequences of length $N$ with values in $H_{0}^{1}(\Gamma)$, given by
$
F_{1}: \omega \to (\eta^{0}_{N}, \eta^{1}_{N}, ..., \eta^{N - 1}_{N}),
$
which is measurable by the measurability properties of each $\eta^{n}_{N}$. $F_2$ is 
the map from the space of bounded sequences of length $N$ with values in $H_{0}^{1}(\Gamma)$ to $L^{\infty}(0, T; H_{0}^{1}(\Gamma))$, given by
$
F_{2}: (\eta^{0}_{N}, \eta^{1}_{N}, ..., \eta^{N - 1}_{N}) \to \sum_{k = 0}^{N - 1} \eta^{k}_N \cdot 1_{(k\Delta t, (k + 1)\Delta t]}(t),
$
which is continuous.
\end{remark}

\section{Passage to the limit}
We would like to show that our approximate solution sequences converge in a certain sense, to a 
weak solution of the original problem. In the deterministic case, uniform boundedness in the energy norms
associated with the approximate structure displacements and fluid and structure velocities is typically sufficient to
obtain weakly or weakly* convergent
subsequences which can be shown to converge to a weak solution of the original continuous problem. 
No compactness result is needed in the deterministic linear case.
In the stochastic case, we only have uniform boundedness of our approximate solution sequences {\emph{in expectation}},
and so we cannot deduce existence of weakly or weakly* convergent subsequences pathwise. 
We need a {\emph{compactness type argument}} to be able to get to a convergent subsequence, even though we
are working with a linear FSI problem. A compactness argument will first imply the existence of a
convergent subsequence of the {\emph{probability measures}} which describe the {\emph{laws}} or equivalently, the {\emph{distributions}} of the approximate solutions. From here, we will eventually be able to get to {\emph{almost sure}} convergence of the stochastic approximate solutions themselves. 

We start by designing compactness arguments that will provide weak convergence of the probability measures describing the laws of our random approximate solutions.

\subsection{Weak convergence of measures}\label{sec:measures_weak_conv}
We first show that along subsequences, the probability measures, or the laws describing the distributions of our
 stochastic approximate solutions constructed earlier, converge to a probability measure, as the time step $\Delta t \to 0$, or $N \to \infty$.  
For this purpose,
we recall that we are given a probability space with complete filtration $(\Omega, \mathcal{F}, \{\mathcal{F}_{t}\}_{t \ge 0}, \mathbb{P})$,
with a one dimensional Brownian motion $W$ with respect to the given filtration. 
For each $N$, we define the probability measure (or the law) $\mu_{N}$:
\begin{equation}\label{muN}
\mu_{N} = \mu_{\eta_{N}} \times \mu_{\overline{\eta}_{N}} \times \mu_{\eta_{N}^{\Delta t}} \times \mu_{\boldsymbol{u}_{N}} \times \mu_{v_{N}} \times \mu_{\boldsymbol{u}_{N}} \times \mu_{v^{*}_{N}} \times \mu_{\overline{\boldsymbol{u}}_{N}} \times \mu_{\overline{v}_{N}} \times \mu_{\boldsymbol{u}^{\Delta t}_{N}} \times \mu_{v^{\Delta t}_{N}} \times \mu_{W},
\end{equation}
defined on the phase space $\mathcal{X}$:
\begin{equation}\label{phase}
\mathcal{X} = [L^{2}(0, T; L^{2}(\Gamma))]^{3} \times [L^{2}(0, T; L^{2}(\Omega_{f})) \times L^{2}(0, T; L^{2}(\Gamma))]^{4} \times C(0, T; \mathbb{R}).
\end{equation}

Here, $\mu_{\eta_{N}}$ denotes the law of $\eta_{N}$ on $L^{2}(0, T; L^{2}(\Gamma))$, $\mu_{\boldsymbol{u}_{N}}$ denotes the law of $\boldsymbol{u}_{N}$ on $L^{2}(0, T; L^{2}(\Omega_{f}))$, $\mu_{W}$ denotes the law of $W$ on $C(0, T; \mathbb{R})$, and so on. Thus, $\mu_{N}$ is the joint law of the random variables $\eta_{N}$, $\overline{\eta}_{N}$, $\eta^{\Delta t}_{N}$, $\boldsymbol{u}_{N}$, $v_{N}$, $\boldsymbol{u}_{N}$,$v^{*}_{N}$, $\overline{\boldsymbol{u}}_{N}$, $\overline{v}_{N}$, $\boldsymbol{u}^{\Delta t}_{N}$, $v^{\Delta t}_{N}$, and $W$.
As we shall see below, it is easier to work with the fluid velocity and the structure velocity in pairs, which is the reason why in \eqref{muN} above,
we consider $(\mu_{\boldsymbol{u}_{N}}, \mu_{v_{N}}), (\mu_{\boldsymbol{u}_{N}},\mu_{v^{*}_{N}})$, 
$(\mu_{\overline{\boldsymbol{u}}_{N}},\mu_{\overline{v}_{N}})$, and $(\mu_{\boldsymbol{u}^{\Delta t}_{N}}, \mu_{v^{\Delta t}_{N}})$.
The main result of this subsection is the following.

\begin{theorem}\label{weakconv}
Along a subsequence (which we will continue to denote by $N$), $\mu_{N}$ converges weakly as probability measures to a probability measure $\mu$ on $\mathcal{X}$. 
\end{theorem}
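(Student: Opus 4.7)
I would proceed via Prokhorov's theorem: since the phase space $\mathcal{X}$ in \eqref{phase} is a finite product of Polish spaces, hence Polish, weak relative compactness of probability measures on $\mathcal{X}$ is equivalent to tightness. Tightness of the joint law $\mu_N$ reduces, by a union bound on complements of compact product sets, to tightness of each of its marginals. The Wiener marginal $\mu_W$ is a single fixed measure on $C(0, T; \mathbb{R})$ and is automatically tight. For each of the remaining marginals I would use Markov's inequality, applied to the uniform expectation estimates in Propositions~\ref{uniformenergy}, \ref{uniformbound}, and \ref{uniformetabound}, to isolate a high-probability event on which a deterministic compactness criterion applies.

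The cleanest marginal is $\mu_{\overline{\eta}_N}$. By Proposition~\ref{uniformetabound}, $\overline{\eta}_N$ is bounded in $L^2(\Omega; L^\infty(0, T; H^1_0(\Gamma)) \cap W^{1,\infty}(0, T; L^2(\Gamma)))$, and pathwise $\partial_t \overline{\eta}_N = v_N^*$. For each $\epsilon > 0$, Markov's inequality yields $M = M(\epsilon)$ such that, uniformly in $N$, with probability at least $1 - \epsilon$, $\overline{\eta}_N$ lies in the ball of radius $M$ in the intersection norm. This ball is compact in $L^2(0, T; L^2(\Gamma))$ by Arzel\`a--Ascoli: its elements are uniformly Lipschitz in time into $L^2(\Gamma)$ with values uniformly bounded in $H^1_0(\Gamma) \hookrightarrow\hookrightarrow L^2(\Gamma)$.

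For the piecewise constant marginals $\mu_{\eta_N}, \mu_{\eta_N^{\Delta t}}, \mu_{\boldsymbol{u}_N}, \mu_{\boldsymbol{u}_N^{\Delta t}}, \mu_{v_N}, \mu_{v_N^{\Delta t}}, \mu_{v_N^*}$ I would apply a Fr\'echet--Kolmogorov compactness criterion, powered by the uniform numerical dissipation estimates in Proposition~\ref{uniformenergy}. For $\eta_N$, the third dissipation bound combined with Poincar\'e gives $\sum_n \mathbb{E}\|\eta_N^{n+1} - \eta_N^n\|_{L^2(\Gamma)}^2 \leq C$, from which one obtains
\[
\mathbb{E}\|\eta_N(\cdot + h) - \eta_N(\cdot)\|_{L^2(0, T-h; L^2(\Gamma))}^2 \leq C h \qquad \text{for all } h \in (0, T),
\]
uniformly in $N$: this is verified directly for $h \leq \Delta t$ by a single-jump computation, and for $h > \Delta t$ by the triangle inequality through $\overline{\eta}_N$, using the elementary bound $\mathbb{E}\|\eta_N - \overline{\eta}_N\|_{L^2(0, T; L^2)}^2 \leq C\Delta t$ together with the Lipschitz bound on $\overline{\eta}_N$ in $L^2(\Gamma)$. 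Combined with Markov's inequality along a dyadic sequence $h_k = 2^{-k}$ (so that the failure probabilities are summable) and with the $L^2$-in-$\omega$ bound on $\|\eta_N\|_{L^2(0, T; H^1_0)}$, this yields a Fr\'echet--Kolmogorov compact subset of $L^2(0, T; L^2(\Gamma))$ capturing $\eta_N$ with probability $\geq 1 - \epsilon$. The remaining piecewise-constant marginals follow by the same template: for $\boldsymbol{u}_N, \boldsymbol{u}_N^{\Delta t}$ I use that the structure and stochastic substeps do not update the fluid, so $\boldsymbol{u}_N^{n+1} - \boldsymbol{u}_N^n = \boldsymbol{u}_N^{n+1} - \boldsymbol{u}_N^{n+\frac{2}{3}}$, which is controlled by the last numerical dissipation estimate of Proposition~\ref{uniformenergy}; the analogous identity for $v_N$ and the explicit Brownian increment in \eqref{stochdef} handle the structure velocities.

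The main obstacle is the pair marginal $\mu_{(\overline{\boldsymbol{u}}_N, \overline{v}_N)}$. Unlike $\overline{\eta}_N$, the discrete time derivative $\partial_t \overline{\boldsymbol{u}}_N = (\boldsymbol{u}_N^{n+1} - \boldsymbol{u}_N^n)/\Delta t$ has $L^2(0, T; L^2(\Omega_f))$ norm scaling like $(\Delta t)^{-1/2}$, so one cannot close an Aubin--Lions argument in a symmetric scale. I would extract time regularity from the coupled semidiscrete momentum equation \eqref{semi1}: testing against $(\boldsymbol{q}, \psi) \in \mathcal{Q}$ produces a uniform bound on $\partial_t(\overline{\boldsymbol{u}}_N, \overline{v}_N)$ in $L^p(\Omega; L^p(0, T; \mathcal{Q}'))$ for some $p > 1$, where the pressure and viscous contributions are controlled by Proposition~\ref{uniformenergy} and the stochastic contribution by the Burkholder--Davis--Gundy inequality applied to the adapted integrand from Remark~\ref{stochastic_increment}. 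An Aubin--Lions argument in the asymmetric scale $\mathcal{V} \hookrightarrow\hookrightarrow L^2(\Omega_f) \times L^2(\Gamma) \hookrightarrow \mathcal{Q}'$ (the first embedding being compact because $\mathcal{V}_F \hookrightarrow\hookrightarrow L^2(\Omega_f)$ by Rellich and the coupling forces the trace $v$ to lie in $H^{1/2}(\Gamma) \hookrightarrow\hookrightarrow L^2(\Gamma)$), applied pathwise on a high-probability event provided by Markov, then yields the required compact subset of $L^2(0, T; L^2(\Omega_f)) \times L^2(0, T; L^2(\Gamma))$. With every marginal shown tight, Prokhorov's theorem furnishes the weakly convergent subsequence $\mu_N \rightharpoonup \mu$ claimed in the theorem.
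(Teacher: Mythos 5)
Your overall strategy (reduce weak compactness of the joint laws to tightness via Prokhorov and then prove tightness marginal by marginal using Markov's inequality plus a deterministic compactness criterion) is the same as the paper's, and your treatment of $\mu_{\overline{\eta}_N}$ via Aubin--Lions is correct and essentially identical to the paper's Lemma~\ref{structurecompact}. The difficulty is concentrated in the velocity marginals, and there your proposal has two genuine gaps, both stemming from the same source: the white noise forcing is too rough for any argument that controls a time derivative or a linear-in-$h$ time-shift modulus.

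First, the Fr\'echet--Kolmogorov argument for the piecewise constant velocities does not close in $L^2(0,T;L^2)$. The numerical dissipation bounds in Proposition~\ref{uniformenergy} give $\sum_n \mathbb{E}\|v_N^{n+1}-v_N^n\|_{L^2(\Gamma)}^2 \le C$, which is a bound on the sum of squared increments, not a bound $\mathbb{E}\|v_N^{n+1}-v_N^n\|^2 \le C\Delta t$ on each increment. For a shift $h = l\Delta t$ one must estimate $\Delta t\sum_n \mathbb{E}\|v_N^{n+l}-v_N^n\|^2$; telescoping and Cauchy--Schwarz on the deterministic (fluid and structure sub-step) increments produces a factor $l$, giving $\Delta t \cdot l^2 \cdot C = C\,h^2/\Delta t$, which diverges as $\Delta t \to 0$ for fixed $h$. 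The independence trick only helps for the Brownian piece $\sum_k [W((n{+}k{+}1)\Delta t) - W((n{+}k)\Delta t)]$; the deterministic sub-step increments do not cancel. The same obstruction kills the triangle-inequality-through-$\overline{v}_N$ route for $h > \Delta t$: $\overline{v}_N$ is not uniformly Lipschitz into $L^2(\Gamma)$, because its slope on $(n\Delta t,(n{+}1)\Delta t]$ is $(v_N^{n+1}-v_N^n)/\Delta t \sim (\Delta t)^{-1/2}$. The analogous point applies to $\boldsymbol{u}_N$; only $\eta_N$, whose increments $\eta_N^{n+1}-\eta_N^n = \Delta t\, v_N^{n+\frac13}$ are genuinely $O(\Delta t)$, has a Lipschitz interpolation.

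Second, and more decisively, the claimed uniform bound on $\partial_t(\overline{\boldsymbol{u}}_N,\overline{v}_N)$ in $L^p(\Omega;L^p(0,T;\mathcal{Q}'))$ for some $p>1$ simply does not hold for any $p>0$. On $(n\Delta t,(n{+}1)\Delta t]$ the derivative $\partial_t\overline{v}_N$ contains the term $[W((n{+}1)\Delta t)-W(n\Delta t)]/\Delta t$, a centered Gaussian with standard deviation $(\Delta t)^{-1/2}$; its $p$-th moment is $\sim (\Delta t)^{-p/2}$, so $\mathbb{E}\int_0^T\|\partial_t\overline{v}_N\|_{\mathcal{Q}'}^p\,dt \sim N\cdot \Delta t\cdot (\Delta t)^{-p/2} = N^{p/2}T^{1-p/2}\to\infty$. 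The Burkholder--Davis--Gundy inequality bounds running suprema of stochastic integrals, not their formal time derivatives, so it cannot rescue this. The Aubin--Lions scheme you describe, which wants a derivative in a dual space, is structurally inapplicable here.

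What the paper does instead (Lemmas~\ref{KRcompact} and \ref{almostsuresub}) is to avoid derivatives altogether and instead bound \emph{increments} at the level of $\mathcal{Q}'$. Summing the semidiscrete momentum equation \eqref{semi1} over $l$ steps, and estimating the accumulated Brownian piece pathwise using the bounded $1/4$-H\"older exponent built into the set $\mathcal{K}_R$, yields the H\"older-type estimate $\|(\boldsymbol{u}_N^{n+l},v_N^{n+l})-(\boldsymbol{u}_N^n,v_N^n)\|_{\mathcal{Q}'} \le C_R(l\Delta t)^{1/4}$ uniformly in $N$. Simon's compactness theorem is then applied to $(\boldsymbol{u}_N,v_N)$ using the compact chain $H^1(\Omega_f)\times H^{1/2}(\Gamma)\hookrightarrow\hookrightarrow L^2(\Omega_f)\times L^2(\Gamma)\hookrightarrow \mathcal{Q}'$ and an Ehrling interpolation to convert the $\mathcal{Q}'$-modulus plus the $H^1\times H^{1/2}$ bound into the required $L^2\times L^2$ time-shift equicontinuity. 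Tightness of the remaining interpolation/shift marginals is then obtained not by separate compactness arguments but by transfer: Lemma~\ref{almostsuresub} shows the pairwise differences converge to zero almost surely in $L^2(0,T;L^2)$, so the marginal laws converge to the same limit. You will want to reorganize your argument around this increment-estimate/Ehrling scheme rather than around time-derivative bounds.
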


To show weak convergence of these probability measures along a subsequence, stated in Theorem~\ref{weakconv},
we must show that the probability measures are \textbf{tight}. 
\begin{definition}\label{tight}
The probability measures $\mu_N$ are {\bf{tight}} if  for every $\epsilon > 0$, 
there exists a compact set $A_{\epsilon}$, compact in $\mathcal{X}$, such that
\begin{equation*}
\mu_{N}(A_{\epsilon}) > 1 - \epsilon, \qquad \text{ for all } N.
\end{equation*}
\end{definition}
To get a hold of the compact subset $A_\epsilon$, we will need the following 
two deterministic compactness results for the 
structure displacements $\{\eta_N(\omega)\}$ and for the fluid and structure velocities $\{\boldsymbol{u}_{N}(\omega)\}$
and $\{ v_{N}(\omega)\}$. The two results are obtained in the following two lemmas.

The first lemma, which will be applied to the structure displacements $\{\eta_N(\omega)\}$, is a direct 
consequence of the classical Aubin-Lions compactness lemma \cite{aubin1963theoreme,lions1969quelques}:

\begin{lemma}\label{structurecompact}
The following  holds:
$
[W^{1, \infty}(0, T; L^{2}(\Gamma)) \cap L^{\infty}(0, T; H_{0}^{1}(\Gamma))] \subset \subset L^{\infty}(0, T; L^{2}(\Gamma)).
$
\end{lemma}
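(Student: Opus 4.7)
The plan is to prove this compact embedding via a vector-valued Arzelà--Ascoli argument, using the fact that one factor controls the time regularity (equicontinuity) and the other controls the spatial regularity (pointwise compactness). Let $(f_n)$ be any bounded sequence in $W^{1,\infty}(0,T;L^{2}(\Gamma)) \cap L^{\infty}(0,T;H_{0}^{1}(\Gamma))$ with common bound $M$. I would like to show that a subsequence converges in $L^{\infty}(0,T;L^{2}(\Gamma))$.

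First, using the $W^{1,\infty}(0,T;L^{2}(\Gamma))$ bound, I pass to the (unique) Lipschitz representative of each $f_n$, so that
$$\|f_n(t) - f_n(s)\|_{L^{2}(\Gamma)} \le M |t-s|, \qquad \forall\, t,s \in [0,T], \ \forall\, n.$$
This gives uniform equicontinuity of the family $\{f_n\}$ as maps from $[0,T]$ into $L^{2}(\Gamma)$. Second, for each fixed $t \in [0,T]$, the bound in $L^{\infty}(0,T;H_{0}^{1}(\Gamma))$ implies that $\{f_n(t)\}_n$ lies in a ball of $H_{0}^{1}(\Gamma)$. Since $\Gamma$ is a bounded one-dimensional interval, the Rellich--Kondrachov theorem yields the compact embedding $H_{0}^{1}(\Gamma) \subset\subset L^{2}(\Gamma)$, hence $\{f_n(t)\}_n$ is relatively compact in $L^{2}(\Gamma)$ for each $t$.

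Third, I invoke the vector-valued Arzelà--Ascoli theorem: a family of continuous functions from a compact metric space $[0,T]$ into a Banach space is relatively compact in $C([0,T]; L^{2}(\Gamma))$ if and only if it is equicontinuous and pointwise relatively compact. Both conditions have been verified, so $(f_n)$ admits a subsequence converging in $C([0,T]; L^{2}(\Gamma))$. The continuous embedding $C([0,T]; L^{2}(\Gamma)) \hookrightarrow L^{\infty}(0,T; L^{2}(\Gamma))$ then immediately gives convergence in the target space, proving the desired compact embedding.

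The argument is routine and essentially contains no obstacle, since it is a standard specialization of Aubin--Lions (or more precisely, of Simon's version \emph{loc.\ cit.}) to the setting where the time derivative control already lives in $L^{\infty}_t L^{2}_x$ rather than some dual space; the only mildly delicate point is the initial choice of Lipschitz representative, which is needed to speak of pointwise values $f_n(t)$ and to apply Arzelà--Ascoli cleanly.
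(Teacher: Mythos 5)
Your Arzelà--Ascoli argument is correct in structure and is essentially the proof one would write out for the $C([0,T];\cdot)$ variant of the Aubin--Lions/Simon lemma that the paper cites without proof; the paper even remarks that the embedding is really into $C(0,T;L^2(\Gamma))$ and only downgrades the target to $L^{\infty}(0,T;L^2(\Gamma))$ so that $\eta_N$ and $\overline{\eta}_N$ live in the same path space, which is exactly the step you take at the end. The one point worth tightening is your second step: the bound in $L^{\infty}(0,T;H_0^{1}(\Gamma))$ controls $\|f_n(t)\|_{H_0^1(\Gamma)}$ only for \emph{almost every} $t$, whereas the vector-valued Arzelà--Ascoli theorem as you invoke it wants pointwise relative compactness at \emph{every} $t$, and the Lipschitz representative you chose was selected using only the $W^{1,\infty}(0,T;L^2(\Gamma))$ structure, so it does not automatically inherit the pointwise $H_0^1$ bound. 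This is easily repaired in either of two ways: (i) upgrade the a.e.\ bound to an everywhere bound for the Lipschitz representative by taking $t_k\to t_0$ along good times, extracting a weakly $H_0^1$-convergent subsequence of $f_n(t_k)$, identifying its weak $L^2$-limit with the strong $L^2$-limit $f_n(t_0)$, and invoking weak lower semicontinuity of the $H_0^1$-norm; or (ii) observe that equicontinuity together with pointwise relative compactness on a dense set of times (which the a.e.\ bound already supplies) is sufficient for the $\varepsilon$-net/diagonal extraction in Arzelà--Ascoli. With either fix the proof is complete, and it is in effect a self-contained derivation of the special case of Simon's theorem that the paper merely cites.
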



The Aubin-Lions compactness lemma actually gives a stronger compact embedding of 
$W^{1, \infty}(0, T; L^{2}(\Gamma)) \cap L^{\infty}(0, T; H_{0}^{1}(\Gamma))$ into $C(0, T; L^{2}(\Gamma))$, but since we 
want $\eta_{N}$ and $\overline{\eta}_{N}$ to take values in the same path space, we use 
$L^{\infty}(0, T; L^{2}(\Gamma))$ since $\eta_{N}$ is not continuous. 

To handle the compactness argument for the structure and fluid velocities, we consider the subsets $\mathcal{K}$ and $\mathcal{K}_{R}$ in $L^{2}(0, T; L^{2}(\Omega_{f})) \times L^{2}(0, T; L^{2}(\Gamma))$, defined as follows. 

\begin{definition}[{\bf{Definition of $\mathcal{K}$ and $\mathcal{K}_{R}$}}]\label{KR}
The sets $\mathcal{K}$ and $\mathcal{K}_{R}$ of paths (or realizations) are defined as follows:

$\bullet$   For the pathwise left continuous approximate functions  $\boldsymbol{u}_{N}(\omega), v_{N}(\omega)$ on $[0, T]$, 
we define:
\begin{equation*}
\mathcal{K} = \{(\boldsymbol{u}, v) \in L^{2}(0, T; L^{2}(\Omega_{f})) \times L^{2}(0, T; L^{2}(\Gamma)): 
\boldsymbol{u} = \boldsymbol{u}_{N}(\omega) \text{ and } v = v_{N}(\omega) \text{ for some $\omega \in \Omega$ and $N \in \mathbb{N}$}\}.
\end{equation*}
%
%
%

$\bullet$ For any arbitrary positive constant $R$, define $\mathcal{K}_{R}$ to be the subset of paths $(\boldsymbol{u}_{N}(\omega), v_{N}(\omega)) \in \mathcal{K}$ where $\omega$ and $N$ satisfy the following properties.
\begin{enumerate}
\item 
{\emph{Uniform\  boundedness:}} $||(\boldsymbol{u}_{N}^{\Delta t}, v_{N}^{\Delta t})||_{L^{2}(0, T; H^{1}(\Omega_{f})) \times L^{2}(0, T; H^{1/2}(\Gamma))} \le R$, $||\boldsymbol{u}_{N}||_{L^{\infty}(0, T; L^{2}(\Omega_{f}))} \le R$, \newline$||v_{N}||_{L^{\infty}(0, T; L^{2}(\Gamma))} \le R$,
$||\eta_{N}||_{L^{\infty}(0, T; H_{0}^{1}(\Gamma))} \le R$.
\item {\emph{Boundedness of numerical dissipation:}} 
$\sum_{n = 0}^{N - 1} ||\boldsymbol{u}^{n + 1}_{N} - \boldsymbol{u}^{n + \frac{2}{3}}_{N}||^{2}_{L^{2}(\Omega_{f})} \le R$, 
$\sum_{n = 0}^{N - 1} ||v^{n + \frac{1}{3}}_{N} - v^{n}_{N}||^{2}_{L^{2}(\Gamma)} \le R$, 
$\sum_{n = 0}^{N - 1} ||v^{n + \frac{2}{3}}_{N} - v^{n + \frac{1}{3}}_{N}||^{2}_{L^{2}(\Gamma)} \le R$, 
$\sum_{n = 0}^{N - 1} ||v^{n + 1}_{N} - v^{n + \frac{2}{3}}_{N}||^{2}_{L^{2}(\Gamma)} \le R$.
\item {\emph{Boundedness of fluid dissipation:}}
$(\Delta t) \sum_{n = 1}^{N} \int_{\Omega_{f}} |\boldsymbol{D}(\boldsymbol{u}^{n}_{N})|^{2} d\boldsymbol{x} \le R.$
\item {\emph{Boundedness of $1/4$-H\"{o}lder exponent of Brownian motion:}}
$\displaystyle{\sup_{s, t \in [0, T], s \ne t} \frac{|W(t) - W(s)|}{|t - s|^{1/4}} \le R}.$
\end{enumerate}
\end{definition}

\begin{remark}
In the fourth condition above, any positive H\"{o}lder exponent that is strictly less than $1/2$ would suffice, since Brownian motion is ``almost" $1/2$-H\"{o}lder continuous, but we have fixed $1/4$ for concreteness.
\end{remark}
The following lemma provides the desired compactness result for $(\boldsymbol{u}_{N}, v_{N})$.
\begin{lemma}\label{KRcompact}
For any arbitrary positive constant $R$, the set $\mathcal{K}_{R}$ is precompact in $L^{2}(0, T; L^{2}(\Omega_{f})) \times L^{2}(0, T; L^{2}(\Gamma))$.
\end{lemma}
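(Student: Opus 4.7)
The strategy is to establish precompactness of the time-shifted approximations $\boldsymbol{u}^{\Delta t}_N$ and $v^{\Delta t}_N$ first, since those are the objects that inherit spatial regularity from the uniform bounds in Definition~\ref{KR}, and then transfer the precompactness to $\boldsymbol{u}_N$ and $v_N$ using the numerical dissipation. The argument is deterministic throughout: once an element of $\mathcal{K}_R$ is fixed, all four conditions in the definition of $\mathcal{K}_R$ reduce to bounds on deterministic piecewise-constant functions of time, so no probabilistic ingredient is required in what follows.

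First I would extract the relevant deterministic bounds. Condition 1 together with Condition 3 and Korn's inequality gives uniform boundedness of $\boldsymbol{u}^{\Delta t}_N$ in $L^2(0, T; H^1(\Omega_f)) \cap L^\infty(0, T; L^2(\Omega_f))$, and by the trace theorem of $v^{\Delta t}_N$ in $L^2(0, T; H^{1/2}(\Gamma)) \cap L^\infty(0, T; L^2(\Gamma))$. Using $\boldsymbol{u}^{n+2/3}_N = \boldsymbol{u}^{n+1/3}_N = \boldsymbol{u}^n_N$ (the fluid velocity is not updated in the structure or stochastic subproblems), Condition 2 rewrites as $\sum_n \|\boldsymbol{u}^{n+1}_N - \boldsymbol{u}^n_N\|_{L^2(\Omega_f)}^2 \leq R$, and an application of the triangle inequality to the three numerical dissipation sums involving $v$ yields $\sum_n \|v^{n+1}_N - v^n_N\|_{L^2(\Gamma)}^2 \leq 9R$.

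Next I would establish precompactness of the shifted sequences. Because these are only piecewise-constant in time, the standard Aubin-Lions lemma does not apply directly; the appropriate tool is the version of Aubin-Lions-Simon adapted to time-discrete sequences, in the spirit of the Dreher-J\"{u}ngel lemma or the compactness result of Muha-\v{C}ani\'{c} in \cite{MuhaCanicCompactness}. Its hypotheses require, in addition to the $L^2(H^1)$ spatial bound, a uniform-in-$N$ time-translation estimate in a weaker norm. Using the telescoping identity $\boldsymbol{u}^m_N - \boldsymbol{u}^n_N = \sum_{k=n}^{m-1}(\boldsymbol{u}^{k+1}_N - \boldsymbol{u}^k_N)$ tested against $H^1$ functions, and combining with the compact embedding $H^1(\Omega_f) \subset \subset L^2(\Omega_f)$ via Ehrling's lemma, one obtains
\[
\|\boldsymbol{u}^{\Delta t}_N(\cdot + h) - \boldsymbol{u}^{\Delta t}_N\|_{L^2(0, T-h; H^{-1}(\Omega_f))} \to 0 \quad \text{as } h \to 0,
\]
uniformly in $N$, and analogously for $v^{\Delta t}_N$ with $H^{-1/2}(\Gamma)$ in place of $H^{-1}(\Omega_f)$. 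This delivers precompactness of $\{\boldsymbol{u}^{\Delta t}_N\}$ in $L^2(0, T; L^2(\Omega_f))$ and of $\{v^{\Delta t}_N\}$ in $L^2(0, T; L^2(\Gamma))$.

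Finally I would transfer precompactness to the unshifted approximations via the elementary identity
\[
\|\boldsymbol{u}^{\Delta t}_N - \boldsymbol{u}_N\|_{L^2(0, T; L^2(\Omega_f))}^2 = \Delta t \sum_{n=1}^{N} \|\boldsymbol{u}^n_N - \boldsymbol{u}^{n-1}_N\|_{L^2(\Omega_f)}^2 \leq R\, \Delta t \to 0,
\]
and the analogous identity for $v^{\Delta t}_N - v_N$, concluding that any sequence in $\mathcal{K}_R$ admits a convergent subsequence in $L^2(0, T; L^2(\Omega_f)) \times L^2(0, T; L^2(\Gamma))$. The main obstacle is the uniform-in-$N$ time-translation estimate in the third step: a direct telescoping in the $L^2$ norm only produces a bound of order $h(h/\Delta t + 1)$, which fails to vanish uniformly in $N$ as $h \to 0$, and it is for this reason that the translation estimate must be routed through the weaker dual norm and paired with the spatial compact embedding to recover genuine equicontinuity in time.
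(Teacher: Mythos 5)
Your architecture is in the right spirit and largely parallels the paper's: the paper applies Simon's compactness theorem and, for the time-translation estimate, splits $h = s + l\Delta t$, handles the sub-grid shift $s$ with the numerical dissipation, and handles the aligned shift $l\Delta t$ via an Ehrling argument through a dual norm. Your reorganization — shifting first to the grid-aligned $\boldsymbol{u}^{\Delta t}_N, v^{\Delta t}_N$ and then transferring back — is a legitimate cosmetic variant that absorbs the $s$-shift step. Your bookkeeping on the increment bounds ($\sum_n \|v^{n+1}_N - v^n_N\|^2 \le 9R$) and the transfer estimate $\|\boldsymbol{u}^{\Delta t}_N - \boldsymbol{u}_N\|^2 \le R\Delta t$ is also correct.

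However, there is a genuine gap in the central step, and it is exactly where the stochastic structure enters. You write that the time-translation estimate follows from ``the telescoping identity \ldots tested against $H^1$ functions, combined with the compact embedding via Ehrling's lemma,'' but you never say where the uniform-in-$N$ bound on the telescoped increments in the dual norm comes from. It cannot come from the dissipation bounds in $\mathcal{K}_R$ alone: those control $\sum_n \|\boldsymbol{u}^{n+1}_N - \boldsymbol{u}^n_N\|_{L^2}^2$, and telescoping that — in $L^2$ or in any dual norm — still produces a factor of $l = h/\Delta t$ that blows up as $N \to \infty$. The estimate $\|(\boldsymbol{u}^{n+l}_N, v^{n+l}_N) - (\boldsymbol{u}^n_N, v^n_N)\|_{\mathcal{Q}'} \le C(l\Delta t)^{1/4}$ that the paper actually proves (equation \eqref{Qprimeest}) requires substituting into the semidiscrete weak formulation \eqref{semi1}, telescoping the \emph{equation}, and then estimating each right-hand-side term: the pressure integrals, the viscous dissipation, the gradient of $\eta_N$, and — crucially — the Brownian increments $W((n+l)\Delta t) - W(n\Delta t)$, whose control relies on the $1/4$-H\"older bound that is Condition 4 in Definition \ref{KR}. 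Your proposal never invokes the weak formulation or the Brownian modulus of continuity at all, so as written the key inequality is simply asserted rather than derived, and the one ingredient that distinguishes this lemma from its deterministic FSI counterpart is absent.

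A secondary but real issue is your choice of dual spaces: you propose translation estimates separately in $H^{-1}(\Omega_f)$ for $\boldsymbol{u}$ and $H^{-1/2}(\Gamma)$ for $v$, but the semidiscrete weak formulation only controls the increments paired against \emph{coupled} divergence-free test pairs $(\boldsymbol{q}, \psi) \in \mathcal{Q}$ with $\boldsymbol{q}|_{\Gamma} = \psi\boldsymbol{e}_r$. You cannot decouple: the natural (and only accessible) dual norm is $\mathcal{Q}'$, as in \eqref{Qprimeest}, and the Ehrling chain has to be $H^1(\Omega_f) \times H^{1/2}(\Gamma) \subset\subset L^2(\Omega_f) \times L^2(\Gamma) \subset \mathcal{Q}'$ applied to the pair.
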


\begin{proof}
We use the Simon's compactness theorem \cite{Simon,MuhaCanicCompactness}. 
According to Simon's theorem, it suffices to check two conditions.

\vspace{0.1in}

\noindent \textbf{\textit{First condition:}} We must first show that for any $0 < t_{1} < t_{2} < T$, the collection
$
\left\{\int_{t_{1}}^{t_{2}} f(t)dt : f \in \mathcal{K}_{R}\right\}
$
is relatively compact in $L^{2}(\Omega_{f}) \times L^{2}(\Gamma)$. Consider a sequence $\{f_{m}(t, \cdot)\}_{m = 1}^{\infty}$ in $\mathcal{K}_{R}$, where $f_{m}(t, \cdot) = (\boldsymbol{u}_{m}(t, \cdot), v_{m}(t, \cdot))$. We want to show that there is a subsequence $\left\{\int_{t_{1}}^{t_{2}} f_{m_{k}}(t) dt\right\}_{k = 1}^{\infty}$ that converges in $L^{2}(\Omega_{f}) \times L^{2}(\Gamma)$. 

For each $m$, there exists some $N_{m}$ and $\omega_{m} \in \Omega$ (both depending on $m$) such that 
\begin{align*}
\boldsymbol{u}_{m}(t) &= \boldsymbol{u}_{0} \cdot 1_{t \in [0, (\Delta t)_{m}]} + \sum_{n = 1}^{N_{m} - 1} \boldsymbol{u}^{n}_{N_{m}}(\omega_{m}) \cdot 1_{t \in (n(\Delta t)_{m}, (n + 1)(\Delta t)_{m}]}, \\
v_{m}(t) &= v_{0} \cdot 1_{t \in [0, (\Delta t)_{m}]} + \sum_{n = 1}^{N_{m} - 1} v^{n}_{N_{m}}(\omega_{m}) \cdot 1_{t \in (n(\Delta t)_{m}, (n + 1)(\Delta t)_{m}]},
\end{align*}
where $(\Delta t)_{m} = T/N_{m}$. Therefore, we have that 
\begin{equation*}
\int_{t_{1}}^{t_{2}} \boldsymbol{u}_{m}(t) dt = a_{m} \boldsymbol{u}_{0} + \int_{\max(t_{1}, (\Delta t)_{m})}^{\max(t_{2}, (\Delta t)_{m})} \boldsymbol{u}_{m}(t) dt, \qquad \int_{t_{1}}^{t_{2}} v_{m}(t) dt = a_{m} v_{0} + \int_{\max(t_{1}, (\Delta t)_{m})}^{\max(t_{2}, (\Delta t)_{m})} v_{m}(t) dt,
\end{equation*}
where $a_{m} = \max(0, (\Delta t)_{m} - t_{1})$. Because $a_{m} \in [0, T]$, we can find a subsequence $\{m_{k}\}_{k = 1}^{\infty}$ such that $a_{m_{k}} \to a$ as $k \to \infty$, for some $a \in [0, T]$. Because $\boldsymbol{u}_{0}$ and $v_{0}$ are the fixed initial data for the fluid velocity and the structure velocity, $a_{m_{k}} \boldsymbol{u}_{0}$ and $a_{m_{k}} v_{0}$ converge along this subsequence in $L^{2}(\Omega_{f})$ and $L^{2}(\Gamma)$.

It remains to show that the sequences in $k$ given by
\begin{equation}\label{shiftintegrals}
\int_{\max\left(t_{1}, (\Delta t)_{m_{k}}\right)}^{\max\left(t_{2}, (\Delta t)_{m_{k}}\right)} \boldsymbol{u}_{m_{k}}(t) dt \qquad \text{ and }\qquad \int_{\max\left(t_{1}, (\Delta t)_{m_{k}}\right)}^{\max\left(t_{2}, (\Delta t)_{m_{k}}\right)} v_{m_{k}}(t) dt
\end{equation}
converge in $L^{2}(\Omega_{f})$ and $L^{2}(\Gamma)$ respectively along a further subsequence.  Because of the compact embedding 
$
H^{1}(\Omega_{f}) \times H^{1/2}(\Gamma) \subset \subset L^{2}(\Omega_{f}) \times L^{2}(\Gamma),
$
it suffices to show that the two sequences in $k$ given in \eqref{shiftintegrals} are uniformly bounded in $H^{1}(\Omega_{f})$ and $H^{1/2}(\Gamma)$.
This can be easily verified by using the uniform boundedness property of functions in $\mathcal{K}_{R}$ in Definition \ref{KR}:
\begin{align*}
&\left|\left|\int_{\max\left(t_{1}, (\Delta t)_{m_{k}}\right)}^{\max\left(t_{2}, (\Delta t)_{m_{k}}\right)} \boldsymbol{u}_{m_{k}}(t) dt\right|\right|_{H^{1}(\Omega_{f})} + \left|\left|\int_{\max\left(t_{1}, (\Delta t)_{m_{k}}\right)}^{\max\left(t_{2}, (\Delta t)_{m_{k}}\right)} v_{m_{k}}(t) dt\right|\right|_{H^{1/2}(\Gamma)} \\
&\le \int_{(\Delta t)_{m_{k}}}^{T} ||\boldsymbol{u}_{m_{k}}(t)||_{H^{1}(\Omega_{f})} dt + \int_{(\Delta t)_{m_{k}}}^{T} ||v_{m_{k}}(t)||_{H^{1/2}(\Gamma)} dt \\
&\le T^{1/2} \left(\int_{(\Delta t)_{m_{k}}}^{T} ||\boldsymbol{u}_{m_{k}}(t)||^{2}_{H^{1}(\Omega_{f})} dt\right)^{1/2} + T^{1/2}\left(\int_{(\Delta t)_{m_{k}}}^{T} ||v_{m_{k}}(t)||^{2}_{H^{1/2}(\Gamma)} dt\right)^{1/2} \le 2T^{1/2}R.
\end{align*}
Thus, we can further refine the subsequence $\{m_{k}\}_{k = 1}^{\infty}$ to obtain that $\left\{\int_{t_{1}}^{t_{2}} \left(\boldsymbol{u}_{m_{k}}(t), v_{m_{k}}(t)\right) dt\right\}_{k = 1}^{\infty}$ converges in $L^{2}(\Omega_{f}) \times L^{2}(\Gamma)$, where we continue to denote the refined subsequence by $\{m_{k}\}_{k = 1}^{\infty}$. 

\vspace{0.1in}

\noindent \textbf{\textit{Second condition:}} We must show that $||\tau_{h}f - f||_{L^{2}(h, T; L^{2}(\Omega_{f}) \times L^{2}(\Gamma))} \to 0$ uniformly for all $f = (\boldsymbol{u}, v) \in \mathcal{K}_{R}$, as $h \to 0$. Here $\tau_{h}$ for $h > 0$ denotes the time shift map
$
(\tau_{h}f)(t, \cdot) = f(t - h, \cdot).
$
Consider an arbitrary $\epsilon > 0$. We want to find $h > 0$ sufficiently small such that 
\begin{equation*}
||\tau_{h}\boldsymbol{u} - \boldsymbol{u}||_{L^{2}(h, T; L^{2}(\Omega_{f}))} < \epsilon
\quad {\rm and} \quad 
||\tau_{h}v - v||_{L^{2}(h, T; L^{2}(\Gamma))} < \epsilon\quad \forall (\boldsymbol{u}, v) \in \mathcal{K}_{R}.
\end{equation*}

To verify this, we can write $h = l(\Delta t) + s$, for each $\Delta t = \frac{T}{N}$, where $0 \le s < \Delta t$, so that
\begin{equation*}
||\tau_{h}\boldsymbol{u} - \boldsymbol{u}||_{L^{2}(h, T; L^{2}(\Omega_{f}))} \le ||\tau_{s}\tau_{l\Delta t} \boldsymbol{u} - \tau_{l\Delta t} \boldsymbol{u}||_{L^{2}(h, T; L^{2}(\Omega_{f}))} + ||\tau_{l\Delta t} \boldsymbol{u} - \boldsymbol{u}||_{L^{2}(h, T; L^{2}(\Omega_{f}))},
\end{equation*}
\begin{equation*}
||\tau_{h}v - v||_{L^{2}(h, T; L^{2}(\Gamma))} \le ||\tau_{s} \tau_{l\Delta t} v - \tau_{l\Delta t} v||_{L^{2}(h, T; L^{2}(\Gamma))} + ||\tau_{l\Delta t} v - v||_{L^{2}(h, T; L^{2}(\Gamma))}.
\end{equation*}
The above estimates require one estimate for the small $s$ time shift, and one for the larger $l\Delta t$ time shift.
We will handle the first time shift estimate using the numerical dissipation estimate holding for  $\mathcal{K}_{R}$, specified in Definition \ref{KR},
 and we will handle the second time shift estimate using an Ehrling property.

{\emph{Estimate for time shift by $s$:}} Consider arbitrary $(\boldsymbol{u}_{N}, v_{N}) \in \mathcal{K}_{R}$. Recalling that $0 \le s < \Delta t$, we compute
\begin{small}
\begin{align*}
||\tau_{s}\tau_{l\Delta t}\boldsymbol{u}_{N} - \tau_{l\Delta t} \boldsymbol{u}_{N}||^{2}_{L^{2}(h, T; L^{2}(\Omega_{f}))} = s \sum_{n = 0}^{N - l - 2} ||\boldsymbol{u}^{n + 1}_{N} - \boldsymbol{u}^{n}_{N}||^{2}_{L^{2}(\Omega_{f})} 
\le s \sum_{n = 0}^{N - 1} ||\boldsymbol{u}^{n + 1}_{N} - \boldsymbol{u}^{n}_{N}||^{2}_{L^{2}(\Omega_{f})} 
 \le sR.
\end{align*}
\end{small} where we used that $\boldsymbol{u}^{n}_{N}=\boldsymbol{u}^{n + \frac{2}{3}}_{N}$ and the numerical dissipation estimate in the last inequality.
Similarly,
\begin{small}
\begin{align*}
||\tau_{s}&\tau_{l\Delta t}v_{N} - \tau_{l\Delta t} v_{N}||^{2}_{L^{2}(h, T; L^{2}(\Gamma))} = s \sum_{n = 0}^{N - l - 2} ||v^{n + 1}_{N} - v^{n}_{N}||^{2}_{L^{2}(\Gamma)} \le s \sum_{n = 0}^{N - 1} ||v^{n + 1}_{N} - v^{n}_{N}||^{2}_{L^{2}(\Gamma)} \\
&\le 3s \left(\sum_{n = 0}^{N - 1} ||v^{n + \frac{1}{3}}_{N} - v^{n}_{N}||^{2}_{L^{2}(\Gamma)} + \sum_{n = 0}^{N - 1} ||v^{n + \frac{2}{3}}_{N} - v^{n + \frac{1}{3}}_{N}||^{2}_{L^{2}(\Gamma)} + \sum_{n = 0}^{N - 1} ||v^{n + 1}_{N} - v^{n + \frac{2}{3}}_{N}||^{2}_{L^{2}(\Gamma)}\right) \le 9sR.
\end{align*}
\end{small}
Recalling that $h = s + l\Delta t$ so that $0 < s \le h$, we can make these quantities arbitrarily small by taking $h$ sufficiently small, since $R$ is a fixed arbitrary positive constant.

{\emph{Estimate for time shift by $l\Delta t$:}} Consider arbitrary $(\boldsymbol{u}_{N}, v_{N}) \in \mathcal{K}_{R}$. We want to estimate 
\begin{equation*}
||\tau_{l\Delta t} \boldsymbol{u}_{N} - \boldsymbol{u}_{N}||_{L^{2}(h, T; L^{2}(\Omega_{f}))} + ||\tau_{l\Delta t} v_{N} - v_{N}||_{L^{2}(h, T; L^{2}(\Gamma))}.
\end{equation*}
This is identically zero if $h < \Delta t$, so we assume for the following estimate that $h \ge \Delta t$. We use the chain of embeddings
$
H^{1}(\Omega_{f}) \times H^{1/2}(\Gamma) \subset \subset L^{2}(\Omega_{f}) \times L^{2}(\Gamma) \subset \mathcal{Q}',
$
where $\mathcal{Q}$ is the test space defined in \eqref{Q}. Applying the uniform Ehrling property, see e.g., \cite{RenardyPDE,MuhaCanicCompactness},
we obtain 
\begin{align*}
&||\tau_{l\Delta t} \boldsymbol{u}_{N} - \boldsymbol{u}_{N}||_{L^{2}(h, T; L^{2}(\Omega_{f}))} + ||\tau_{l\Delta t} v_{N} - v_{N}||_{L^{2}(h, T; L^{2}(\Gamma))} \\
&\le 2||\tau_{l\Delta t}(\boldsymbol{u}_{N}, v_{N}) - (\boldsymbol{u}_{N}, v_{N})||_{L^{2}(h, (l + 1)\Delta t; L^{2}(\Omega_{f}) \times L^{2}(\Gamma))} + 2||\tau_{l\Delta t}(\boldsymbol{u}_{N}, v_{N}) - (\boldsymbol{u}_{N}, v_{N})||_{L^{2}((l + 1)\Delta t, T; L^{2}(\Omega_{f}) \times L^{2}(\Gamma))} \\
&\le 2||\tau_{l\Delta t}(\boldsymbol{u}_{N}, v_{N}) - (\boldsymbol{u}_{N}, v_{N})||_{L^{2}(h, (l + 1)\Delta t; L^{2}(\Omega_{f}) \times L^{2}(\Gamma))} + \delta ||\tau_{l\Delta t}(\boldsymbol{u}_{N}, v_{N}) - (\boldsymbol{u}_{N}, v_{N})||_{L^{2}((l + 1)\Delta t, T; H^{1}(\Omega_{f}) \times H^{1/2}(\Gamma))} \\
&+ C(\delta) ||\tau_{l\Delta t}(\boldsymbol{u}_{N}, v_{N}) - (\boldsymbol{u}_{N}, v_{N})||_{L^{2}((l + 1)\Delta t, T; \mathcal{Q}')} 
 := I_{1} + I_{2} + I_{3}.
\end{align*}
To estimate $I_{1}$, we use the triangle inequality, the assumption that $h \ge \Delta t$, and the uniform boundedness property of $\mathcal{K}_{R}$ in Definition \ref{KR}:
\begin{equation*}
I_{1} \le 2||\tau_{l\Delta t}(\boldsymbol{u}_{N}, v_{N})||_{L^{2}(h, (l + 1)\Delta t; L^{2}(\Omega_{f}) \times L^{2}(\Gamma))} + 2||(\boldsymbol{u}_{N}, v_{N})||_{L^{2}(h, (l + 1)\Delta t; L^{2}(\Omega_{f}) \times L^{2}(\Gamma))} \le 8(\Delta t)^{1/2}R \le 8h^{1/2}R.
\end{equation*}
To estimate $I_{2}$, we use the triangle inequality
and the uniform boundedness property of $\mathcal{K}_{R}$ in Definition \ref{KR}:
\begin{align*}
I_{2} &\le \delta \left(||\tau_{l\Delta t}(\boldsymbol{u}_{N}, v_{N})||_{L^{2}((l + 1)\Delta t, T; H^{1}(\Omega_{f}) \times H^{1/2}(\Gamma))} + ||(\boldsymbol{u}_{N}, v_{N})||_{L^{2}((l + 1)\Delta t, T; H^{1}(\Omega_{f}) \times H^{1/2}(\Gamma))}\right) \\
&\le 2\delta ||(\boldsymbol{u}_{N}, v_{N})||_{L^{2}(\Delta t, T; H^{1}(\Omega_{f}) \times H^{1/2}(\Gamma))} \le 2\delta R.
\end{align*}
To estimate $I_{3}$, we multiply the first equation in  the weak formulation \eqref{semi1} by $\Delta t$ to obtain:
\begin{multline*}
\int_{\Omega_{f}} (\boldsymbol{u}^{n + l}_{N} - \boldsymbol{u}^{n}_{N}) \cdot \boldsymbol{q} d\boldsymbol{x} 
+ \int_{\Gamma} (v^{n + l}_{N} - v^{n}_{N}) \psi dz 
= \int_{\Gamma} [W((n + l)\Delta t) - W(n\Delta t)] \psi dz 
+ (\Delta t) \sum_{k = 1}^{l} \Bigg(P^{n + k - 1}_{N, in}\int_{0}^{R} (q_{z})|_{z = 0} dr \\
- P^{n + k - 1}_{N, out} \int_{0}^{R} (q_{z})|_{z = L} dr 
- 2\mu \int_{\Omega_{f}} \boldsymbol{D}(\boldsymbol{u}^{n + k}_{N}) : \boldsymbol{D}(\boldsymbol{q}) d\boldsymbol{x} - \int_{\Gamma} \nabla \eta^{n + k}_{N} \cdot \nabla \psi dz\Bigg),\qquad  \forall(\boldsymbol{q}, \psi) \in \mathcal{Q}.
\end{multline*}
We estimate the terms on the right hand side as follows. For $(\boldsymbol{q}, \psi) \in \mathcal{Q}$, where $\mathcal{Q}$ is defined in \eqref{Q},
with 
$
||(\boldsymbol{q}, \psi)||_{\mathcal{Q}} \le 1,
$
we have the following estimates. 
\begin{itemize}
\item Using Cauchy-Schwarz and the boundedness of the $1/4$-Holder exponent of Brownian motion  in the definition of $\mathcal{K}_{R}$, 
see Definition~\ref{KR}, we obtain 
\begin{small}
\begin{align*}
\left|\int_{\Gamma} [W((n + l)\Delta t) - W(n\Delta t)]\psi dz\right| \le \left(\int_{\Gamma} |W((n + l)\Delta t) - W(n\Delta t)|^{2} dz\right)^{1/2} 
\le \left(\int_{\Gamma} \left|R(l\Delta t)^{1/4}\right|^{2} dz\right)^{1/2} \le C(l\Delta t)^{1/4}.
\end{align*}
\end{small}
\item Next, we recall the definition of the discretized pressure $P^{n}_{N, in/out} = \frac{1}{\Delta t}\int_{n\Delta t}^{(n + 1)\Delta t} P_{in/out}(t) dt$,
and use the trace inequality on the integral involving $q_{z}$ to obtain
\begin{align*}
(\Delta t) &\left|\sum_{k = 1}^{l} P^{n + k - 1}_{N, in} \int_{0}^{R} (q_{z})|_{z = 0} dr\right| = (\Delta t) \left|\sum_{k = 1}^{l} P^{n + k - 1}_{N, in}\right| \cdot \left|\int_{0}^{R} (q_{z})|_{z = 0} dr\right| 
\le C(\Delta t) \left|\sum_{k = 1}^{l} P^{n + k - 1}_{N, in}\right| \\
&= C\left|\int_{n\Delta t}^{(n + l)\Delta t} P_{in}(t) dt\right| 
 \le C(l\Delta t)^{1/2} ||P_{in}||_{L^{2}(n\Delta t, (n + l)\Delta t)} 
 \le C(l\Delta t)^{1/2} ||P_{in}||_{L^{2}(0, T)} = C(l\Delta t)^{1/2}.
\end{align*}
The same estimate holds for the outlet pressure term.
\item Using Cauchy-Schwarz and the uniform fluid dissipation estimate in Definition~\ref{KR} of $\mathcal{K}_{R}$, we get
\begin{align*}
(\Delta t) &\left|\sum_{k = 1}^{l} 2\mu \int_{\Omega_{f}} \boldsymbol{D}(\boldsymbol{u}^{n + k}_{N}) : \boldsymbol{D}(\boldsymbol{q}) d\boldsymbol{x}\right| 
\le C(\Delta t) \sum_{k = 1}^{l} \left(\int_{\Omega_{f}} |\boldsymbol{D}(\boldsymbol{u}^{n + k}_{N})|^{2} d\boldsymbol{x}\right)^{1/2} \\
&\le Cl^{1/2}(\Delta t) \left(\sum_{k = 1}^{l} \int_{\Omega_{f}} |\boldsymbol{D}(\boldsymbol{u}^{n + k}_{N})|^{2} d\boldsymbol{x}\right)^{1/2} 
\le Cl^{1/2}(\Delta t) \left(\sum_{k = 1}^{N} \int_{\Omega_{f}} |\boldsymbol{D}(\boldsymbol{u}^{k}_{N})|^{2} d\boldsymbol{x}\right)^{1/2} \le C(l\Delta t)^{1/2}.
\end{align*}
\item Using Cauchy-Schwarz and the uniform boundedness of $\eta_{N}$ in Definition~\ref{KR} of $\mathcal{K}_{R}$, we get:
\begin{equation*}
(\Delta t) \left|\sum_{k = 1}^{l} \int_{\Gamma} \nabla \eta_{N}^{n + k} \cdot \nabla \psi dz\right| \le (\Delta t) \sum_{k = 1}^{l} \int_{\Gamma} |\nabla \eta_{N}^{n + k} \cdot \nabla \psi| dz \le C(\Delta t)\sum_{k = 1}^{l} ||\eta^{n + k}_{N}||_{H_{0}^{1}(\Gamma)} \le Cl(\Delta t).
\end{equation*}
\end{itemize}
Here, all constants $C$ are independent of $n$, $l$, and $\Delta t$ and hence $N$, but can depend on the fixed, arbitrary constant $R$, and on the given parameters of the problem.
Combining all of these estimates together, we obtain that
\begin{equation}\label{Qprimeest}
||(\boldsymbol{u}^{n + l}_{N}, v^{n + l}_{N}) - (\boldsymbol{u}^{n}_{N}, v^{n}_{N})||_{\mathcal{Q}'} \le C(l\Delta t)^{1/4},
\end{equation}
where we use the estimate $0 \le l(\Delta t) \le T$ to reduce all exponents on $(l\Delta t)$ to the smallest one, which is $1/4$. 
Hence,
\begin{align*}
&||\tau_{l\Delta t}(\boldsymbol{u}_{N}, v_{N}) - (\boldsymbol{u}_{N}, v_{N})||^{2}_{L^{2}((l + 1)\Delta t, T; \mathcal{Q}')} \\
&= (\Delta t) \sum_{n = 1}^{N - 1 - l} ||(\boldsymbol{u}^{n + l}_{N}, v^{n + l}_{N}) - (u^{n}_{N}, v^{n}_{N})||_{\mathcal{Q}'}^{2} 
\le C(\Delta t) \sum_{n = 0}^{N - 1} (l\Delta t)^{1/2} \le C(l\Delta t)^{1/2}.
\end{align*}
and so 
$
\displaystyle{I_{3} := C(\delta) ||\tau_{l\Delta t}(\boldsymbol{u}_{N}, v_{N}) - (\boldsymbol{u}_{N}, v_{N})||_{L^{2}((l + 1)\Delta t, T; \mathcal{Q}')} \le C(\delta)(l\Delta t)^{1/4}.}
$

Combining the estimates for $I_{1}$, $I_{2}$, and $I_{3}$, we obtain
\begin{equation*}
||\tau_{l\Delta t} \boldsymbol{u}_{N} - \boldsymbol{u}_{N}||_{L^{2}(h, T; L^{2}(\Omega_{f}))} + ||\tau_{l\Delta t} v_{N} - v_{N}||_{L^{2}(h, T; L^{2}(\Gamma))} \le 8h^{1/2} R + 2\delta R + C(\delta)(l\Delta t)^{1/4}.
\end{equation*}

\vspace{0.1in}

We can now conclude the verification of the second condition of Simon's compactness result. Namely, we have shown that
\begin{align*}
||\tau_{h}\boldsymbol{u} - \boldsymbol{u}||_{L^{2}(h, T; L^{2}(\Omega_{f}))} &\le (sR)^{1/2} + 8h^{1/2}R + 2\delta R + C(\delta)(l\Delta t)^{1/4}, \\ 
||\tau_{h}v - v||_{L^{2}(h, T; L^{2}(\Gamma))} &\le 3(sR)^{1/2} + 8h^{1/2}R +  2\delta R + C(\delta)(l\Delta t)^{1/4}.
\end{align*}
Now, since $h = s + l\Delta t$ and $s, l\Delta t \in [0, h]$, we get
\begin{align*}
||\tau_{h}\boldsymbol{u} - \boldsymbol{u}||_{L^{2}(h, T; L^{2}(\Omega_{f}))} &\le (hR)^{1/2} + 8h^{1/2}R + 2\delta R + C(\delta) h^{1/4},
\\ 
||\tau_{h}v - v||_{L^{2}(h, T; L^{2}(\Gamma))} &\le 3(hR)^{1/2} + 8h^{1/2}R + 2\delta R + C(\delta) h^{1/4}.
\end{align*}
Therefore, given $\epsilon > 0$, we can first choose $\delta>0$ so that $2\delta R < \frac{\epsilon}{2}$, which fixes a value for $C(\delta)$. Then, we can choose $h>0$ sufficiently small so that
\begin{equation*}
3(hR)^{1/2} + 8h^{1/2}R + C(\delta)h^{1/4} < \frac{\epsilon}{2}.
\end{equation*}
This establishes the desired equicontinuity estimate, and hence Lemma~\ref{KRcompact} follows from Simon's compactness theorem. 
\end{proof}

Finally, we note that we have obtained compactness results only for the velocity approximate function $v_{N}$ and not $v_{N}^{*}$. In addition, when passing to the limit, we will consider the linear interpolations and time-shifted versions of the fluid velocity and of the structure displacement and velocity. We recall that the linear interpolations are piecewise linear functions defined by \eqref{eta_bar}, \eqref{uv_bar}, and the time-shifted functions are piecewise constant functions defined by \eqref{etatimeshift}, \eqref{uvtimeshift}. Hence, we will need the following result. 

\begin{lemma}\label{almostsuresub}
For an appropriate subsequence, which we continue to denote by $N$, 
\begin{align*}
||v_{N} - v_{N}^{*}||_{L^{2}(0, T; L^{2}(\Gamma))} \to 0, \qquad \text{ as $N \to \infty$, almost surely,}\\
||v_{N} - \overline{v}_{N}||_{L^{2}(0, T; L^{2}(\Gamma))} \to 0, \qquad \text{ as $N \to \infty$, almost surely,}\\
||v_{N} - v^{\Delta t}_{N}||_{L^{2}(0, T; L^{2}(\Gamma))} \to 0, \qquad \text{ as $N \to \infty$, almost surely,}\\
||\boldsymbol{u}_{N} - \overline{\boldsymbol{u}}_{N}||_{L^{2}(0, T; L^{2}(\Omega_{f}))} \to 0, \qquad \text{ as $N \to \infty$, almost surely,}\\
||\boldsymbol{u}_{N} - \boldsymbol{u}^{\Delta t}_{N}||_{L^{2}(0, T; L^{2}(\Omega_{f}))} \to 0, \qquad \text{ as $N \to \infty$, almost surely,}\\
||\eta_{N} - \overline{\eta}_{N}||_{L^{2}(0, T; L^{2}(\Gamma))} \to 0, \qquad \text{ as $N \to \infty$, almost surely,} \\
||\eta_{N} - \eta^{\Delta t}_{N}||_{L^{2}(0, T; L^{2}(\Gamma))} \to 0, \qquad \text{ as $N \to \infty$, almost surely.} \\
\end{align*}
\end{lemma}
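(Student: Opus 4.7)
The plan is to show that each of the seven differences converges to zero in the $L^{2}(\Omega)$-expectation of the squared norm, at rate $O(\Delta t)$, and then upgrade $L^{2}(\Omega)$ convergence to almost-sure convergence along a subsequence. Since convergence in $L^{2}(\Omega)$ implies convergence in probability, which in turn yields almost-sure convergence along a subsequence, the only delicate point is extracting a single subsequence that works simultaneously for all seven quantities. I would do this by summing the seven squared norms together into one nonnegative random variable $S_N$, noting $\mathbb{E}(S_N)\to 0$, and then extracting a subsequence along which $S_N\to 0$ a.s.; on that subsequence each of the seven terms separately vanishes a.s.

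The heart of the argument is to rewrite every difference as a piecewise combination of the fractional-step jumps that appear in the numerical dissipation estimates of Proposition~\ref{uniformenergy}. For $v_N^{*}$, on the interval $((n-1)\Delta t, n\Delta t]$ one has $v_N=v_N^{n-1}$ and $v_N^{*}=v_N^{(n-1)+\frac{1}{3}}$, so
\begin{equation*}
\|v_N-v_N^{*}\|_{L^{2}(0,T;L^{2}(\Gamma))}^{2} \;=\; \Delta t\sum_{n=0}^{N-1}\|v_N^{n+\frac{1}{3}}-v_N^{n}\|_{L^{2}(\Gamma)}^{2},
\end{equation*}
which is bounded in expectation by $C\Delta t$ using the first numerical dissipation estimate. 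The differences $v_N - v_N^{\Delta t}$ and $\eta_N - \eta_N^{\Delta t}$ reduce to $\Delta t\sum\|v_N^{n+1}-v_N^{n}\|^{2}$ and $\Delta t\sum\|\eta_N^{n+1}-\eta_N^{n}\|_{L^{2}(\Gamma)}^{2}$ respectively; the first is controlled by splitting $v_N^{n+1}-v_N^{n}$ into the three fractional jumps and applying the triangle inequality together with all three numerical dissipation bounds, while the second uses the identity $\eta_N^{n+1}=\eta_N^{n+\frac{1}{3}}$ (since the stochastic and fluid subproblems leave $\eta$ fixed) and the Poincar\'e inequality on $H_{0}^{1}(\Gamma)$ applied to $\sum\mathbb{E}\|\nabla(\eta_N^{n+\frac{1}{3}}-\eta_N^{n})\|_{L^{2}(\Gamma)}^{2}\le C$. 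For the fluid velocity, $\boldsymbol{u}_N^{n+\frac{1}{3}}=\boldsymbol{u}_N^{n+\frac{2}{3}}=\boldsymbol{u}_N^{n}$ yields $\boldsymbol{u}_N^{n+1}-\boldsymbol{u}_N^{n}=\boldsymbol{u}_N^{n+1}-\boldsymbol{u}_N^{n+\frac{2}{3}}$, which is directly controlled by the numerical dissipation bound on $\sum\mathbb{E}\|\boldsymbol{u}_N^{n+1}-\boldsymbol{u}_N^{n+\frac{2}{3}}\|_{L^{2}(\Omega_{f})}^{2}$.

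For the piecewise-linear interpolations, on $(n\Delta t, (n+1)\Delta t]$ we have $\overline{v}_N(t)-v_N(t)=\tfrac{t-n\Delta t}{\Delta t}(v_N^{n+1}-v_N^{n})$, so integrating in time gives
\begin{equation*}
\|\overline{v}_N - v_N\|_{L^{2}(0,T;L^{2}(\Gamma))}^{2} \;=\; \frac{\Delta t}{3}\sum_{n=0}^{N-1}\|v_N^{n+1}-v_N^{n}\|_{L^{2}(\Gamma)}^{2},
\end{equation*}
which is bounded by $C\Delta t$ in expectation by the same triangle-inequality trick used above. The identical computation applies to $\overline{\boldsymbol{u}}_N-\boldsymbol{u}_N$ and $\overline{\eta}_N-\eta_N$, with the Poincar\'e inequality again used in the $\eta$ case to pass from the $H_{0}^{1}$-control in the numerical dissipation estimate to the required $L^{2}(\Gamma)$-norm.

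Assembling these seven bounds shows $\mathbb{E}(S_N)\le C\Delta t$, where $S_N$ is the sum of the seven squared norms on the left-hand sides. Markov's inequality gives $S_N\to 0$ in probability on $(\Omega,\mathcal{F},\mathbb{P})$; passing to a subsequence yields $S_N\to 0$ almost surely, which is exactly the conclusion of the lemma. The entire argument is essentially deterministic once the numerical dissipation bounds are granted in expectation, so I do not expect any serious obstacle beyond bookkeeping of the fractional-step indices and consistent use of the Poincar\'e inequality for the displacement differences.
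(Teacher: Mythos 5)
Your proposal is correct and follows essentially the same approach as the paper: establish $\mathbb{E}(\|\cdot\|^2_{L^2(0,T;\cdot)}) \le C\Delta t$ for each of the seven differences via the numerical dissipation estimates of Proposition~\ref{uniformenergy} (with the triangle inequality to assemble the full jump $v_N^{n+1}-v_N^n$ from the three fractional jumps, $\boldsymbol{u}_N^{n+1}-\boldsymbol{u}_N^n = \boldsymbol{u}_N^{n+1}-\boldsymbol{u}_N^{n+2/3}$, and Poincar\'e's inequality for the $\eta$ terms), and then pass to an a.s.\ convergent subsequence by Chebyshev/Markov and Borel--Cantelli. The paper spells out the Borel--Cantelli step for the first term (with the explicit choice $N_j = j^4$) and then says ``the same argument'' for the rest; your device of summing the seven squared norms into a single $S_N$ with $\mathbb{E}(S_N)\le C\Delta t$ is a clean, equivalent way of extracting one subsequence that works for all terms simultaneously.
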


\begin{proof}
We start by showing the first convergence result. 
To do that, we introduce the events 
\begin{equation*}
E_{j, N} = \left\{||v_{N} - v_{N}^{*}||_{L^{2}(0, T; L^{2}(\Gamma))} \le \frac{1}{j}\right\},\quad j \ge 1,
\end{equation*}
and show that the probability that the complements of $E_{j, N}$ occur for infinitely many $j$, is zero.
Indeed, by multiplying by 
$\Delta t$ the uniform numerical dissipation estimate from Proposition \ref{uniformenergy} and keeping only the first term on the left hand side, we obtain
\begin{equation}\label{vstardiff}
\mathbb{E}\left(\Delta t \sum_{n = 0}^{N - 1} ||v^{n + \frac{1}{3}}_{N} - v^{n}_{N}||^{2}_{L^{2}(\Gamma)}\right) = \mathbb{E}\left(||v_{N} - v_{N}^{*}||^{2}_{L^{2}(0, T; L^{2}(\Gamma))}\right) \le C(\Delta t).
\end{equation}
By Chebychev's inequality, we get
$\mathbb{P}(E_{j, N}^{c}) \le C(\Delta t)j^{2} = CTN^{-1}j^{2}.$
Thus, for the events $E_{j, N = j^{4}}$, we have
$
\sum_{j = 1}^{\infty} \mathbb{P}(E_{j, N = j^{4}}^{c}) \le CT\sum_{j = 1}^{\infty} \frac{1}{j^{2}} < \infty.
$
Therefore, by the Borel-Cantelli lemma, 
\begin{equation*}
\mathbb{P}\left(E^{c}_{j, N = j^{4}} \text{ occurs for infinitely many $j$}\right) = 0.
\end{equation*}
This implies that for almost every $\omega \in \Omega$, there exists $j_{0}(\omega)$ such that 
$||v_{N_j} - v_{N_j}^{*}||_{L^{2}(0, T; L^{2}(\Gamma))} \le \frac{1}{j}$ for all $j \ge j_{0}(\omega)$,
where  $N_{j} := j^{4}$, which implies the desired result, where our subsequence $N_{j}$ will continue to be denoted by $N$ for simplicity of notation. 

To show the the remaining convergence results, we use Proposition \ref{uniformenergy} to conclude that there exists a uniform constant $C$ independent of $N$ such that
\begin{equation*}
\sum_{n = 0}^{N - 1} \mathbb{E}\left(||v_{N}^{n + 1} - v^{n}_{N}||^{2}_{L^{2}(\Gamma)}\right) \le C, \quad \sum_{n = 0}^{N - 1} \mathbb{E}\left(||\boldsymbol{u}^{n + 1}_{N} - \boldsymbol{u}^{n}_{N}||_{L^{2}(\Omega_{f})}^{2}\right) \le C,
\quad
\sum_{n = 0}^{N - 1} \mathbb{E}\left(||\nabla \eta^{n + 1}_{N} - \nabla \eta^{n}_{N}||_{L^{2}(\Gamma)}^{2}\right) \le C,
\end{equation*}
where we recall that $\boldsymbol{u}^{n + \frac{2}{3}}_{N} = \boldsymbol{u}^{n}_{N}$ and $\eta^{n + \frac{1}{3}}_{N} = \eta^{n + 1}_{N}$, and where we used the triangle inequality to obtain the first estimate. Then, the same argument as above gives the desired result, once we note that
\begin{align}\label{vlindiff}
&\mathbb{E}\left(||\overline{v}_{N} - v_{N}||^{2}_{L^{2}(0, T; L^{2}(\Gamma))}\right) \le (\Delta t) \sum_{n = 0}^{N - 1} \mathbb{E}\left(||v_{N}^{n + 1} - v^{n}_{N}||^{2}_{L^{2}(\Gamma)}\right) \le C(\Delta t) \to 0,
\quad& \text{ as } N \to \infty,
\\
\label{vshiftdiff}
&\mathbb{E}\left(||v^{\Delta t}_{N} - v_{N}||^{2}_{L^{2}(0, T; L^{2}(\Gamma))}\right) = (\Delta t) \sum_{n = 0}^{N - 1} \mathbb{E}\left(||v_{N}^{n + 1} - v^{n}_{N}||^{2}_{L^{2}(\Gamma)}\right) \le C(\Delta t) \to 0,
\quad& \text{ as } N \to \infty,
\\
\label{ulindiff}
&\mathbb{E}\left(||\overline{\boldsymbol{u}}_{N} - \boldsymbol{u}_{N}||^{2}_{L^{2}(0, T; L^{2}(\Omega_{f}))}\right) \le (\Delta t) \sum_{n = 0}^{N - 1} \mathbb{E}\left(||\boldsymbol{u}^{n + 1}_{N} - \boldsymbol{u}^{n}_{N}||_{L^{2}(\Omega_{f})}^{2}\right) \le C(\Delta t) \to 0, \quad &\text{ as } N \to \infty,
\\
\label{ushiftdiff}
&\mathbb{E}\left(||\boldsymbol{u}^{\Delta t}_{N} - \boldsymbol{u}_{N}||^{2}_{L^{2}(0, T; L^{2}(\Omega_{f}))}\right) = (\Delta t) \sum_{n = 0}^{N - 1} \mathbb{E}\left(||\boldsymbol{u}^{n + 1}_{N} - \boldsymbol{u}^{n}_{N}||_{L^{2}(\Omega_{f})}^{2}\right) \le C(\Delta t) \to 0, \quad &\text{ as } N \to \infty,
\\
\label{etalindiff}
&\mathbb{E}\left(||\overline{\eta}_{N} - \eta_{N}||^{2}_{L^{2}(0, T; L^{2}(\Gamma))}\right) \le (\Delta t) \sum_{n = 0}^{N - 1} \mathbb{E}\left(||\eta_{N}^{n + 1} - \eta^{n}_{N}||^{2}_{L^{2}(\Gamma)}\right) \le C'(\Delta t) \to 0, 
\quad &\text{ as } N \to \infty,
\\
\label{etashiftdiff}
&\mathbb{E}\left(||\eta^{\Delta t}_{N} - \eta_{N}||^{2}_{L^{2}(0, T; L^{2}(\Gamma))}\right) = (\Delta t) \sum_{n = 0}^{N - 1} \mathbb{E}\left(||\eta_{N}^{n + 1} - \eta^{n}_{N}||^{2}_{L^{2}(\Gamma)}\right) \le C'(\Delta t) \to 0, 
\quad &\text{ as } N \to \infty,
\end{align}
where we used Poincar\'{e}'s inequality to deduce \eqref{etalindiff} and \eqref{etashiftdiff}.
\end{proof}
\if 1 = 0
From the inequalities on pg.~13, we obtain the following pathwise inequality,
\begin{multline*}
\max_{i = 1, 2, 3} \left(\max_{n = 0, 1, ..., N - 1} E^{n + \frac{i}{3}}\right) + \sum_{k = 0}^{N - 1} \Bigg(2\mu(\Delta t) \int_{\Omega} |\boldsymbol{D}(\boldsymbol{u}_{N}^{k + 1})|^{2} d\boldsymbol{x} + \frac{1}{2} ||\boldsymbol{u}^{k + 1} - \boldsymbol{u}^{k + \frac{2}{3}}||_{L^{2}(\Omega)}^{2} \\
+ \frac{1}{2} ||v^{k + 1} - v^{k + \frac{2}{3}}||_{L^{2}(\Gamma)}^{2} + \frac{1}{2} ||v^{k + \frac{1}{3}}_{N} - v^{k}_{N}||^{2}_{L^{2}(\Gamma)} + \frac{1}{2} ||\nabla \eta^{k + \frac{1}{3}}_{N} - \nabla \eta^{k}_{N}||^{2}_{L^{2}(\Gamma)}\Bigg) \\
\le E_{0} + \left[\max_{n = 0, 1, ..., N - 1} (\Delta t) \sum_{k = 0}^{n} \left(P^{k}_{in}\int_{0}^{R} (\boldsymbol{u}^{k + 1}_{N})_{z}|_{z = 0} dr - P^{k}_{out} \int_{0}^{R} (\boldsymbol{u}^{k + 1}_{N})_{z}|_{z = L} dr\right)\right] \\
+ \left[\max_{n = 0, 1, ..., N - 1} \sum_{k = 0}^{n} \left([W((k + 1) \Delta t) - W(k \Delta t)]\int_{\Gamma} v^{k + \frac{1}{3}} dz + \frac{L}{2}[W((k + 1) \Delta t) - W(k \Delta t)]^{2}dr\right)\right].
\end{multline*}

We estimate the terms on the right hand side similarly to before, without expectation so that we keep the estimate pathwise. We begin with
\begin{multline*}
\max_{n = 0, 1, ..., N - 1} (\Delta t) \sum_{k = 0}^{n} \left(P^{k}_{in}\int_{0}^{R} (\boldsymbol{u}^{k + 1}_{N})_{z}|_{z = 0} dr\right) \le \sum_{k = 0}^{N - 1} \left[(\Delta t) \frac{1}{4\epsilon} |P^{k}_{in}|^{2} + \epsilon(\Delta t)\left(\int_{0}^{R} (\boldsymbol{u}^{k + 1}_{N})_{z}|_{z = 0} dr\right)^{2}\right] \\
\le \sum_{k = 0}^{N - 1} \left(\frac{1}{4\epsilon} \cdot \frac{1}{\Delta t} \left(\int_{k\Delta t}^{(k + 1)\Delta t} P_{in}(t) dt\right)^{2} + C\epsilon(\Delta t) \int_{0}^{R} (\boldsymbol{u}^{k + 1}_{N})_{z}^{2}|_{z = 0} dr\right) \\
\le \sum_{k = 0}^{N - 1} \left(\frac{1}{4\epsilon} ||P_{in}||_{L^{2}(k\Delta t, (k + 1)\Delta t)}^{2} + C\epsilon(\Delta t) \int_{\Omega} |\boldsymbol{D}(\boldsymbol{u}^{k + 1}_{N})|^{2} d\boldsymbol{x} \right).
\end{multline*}
We emphasize that this holds pathwise. For the next term, we use the fact that Brownian motion is almost surely $1/4$-H\"{o}lder continuous. Therefore, a.s.
\begin{equation*}
|W(t_{1}) - W(t_{2})| \le C(\omega)|t_{1} - t_{2}|^{1/4}, \qquad \text{ for $t_{1}, t_{2} \in [0, T]$},
\end{equation*}
for a constant $C(\omega)$ depending only on $\omega \in \Omega$, which is finite almost surely. In particular, $C(\omega)$ does not depend on $t_{1}, t_{2} \in [0, T]$. Thus,
\begin{multline*}
\max_{n = 0, 1, ..., N - 1} \sum_{k = 0}^{n} \left([W((k + 1) \Delta t) - W(k \Delta t)]\int_{\Gamma} v^{k + \frac{1}{3}}_{N} dz + \frac{L}{2}[W((k + 1) \Delta t) - W(k \Delta t)]^{2}dr\right) \\
\le L^{1/2} \sum_{k = 0}^{N - 1} |W((k + 1)\Delta t) - W(k\Delta t)| \cdot \left(\int_{\Gamma} \left|v^{k + \frac{1}{3}}_{N} \right|^{2} dz\right)^{1/2} + \frac{L}{2} \sum_{k = 0}^{N - 1} |W((k + 1)\Delta t) - W(k\Delta t)|^{2} \\
\le L^{1/2} \left(\max_{k = 0, 1, ..., N - 1} ||v^{k + \frac{1}{3}}_{N}||_{L^{2}(\Gamma)}\right) \sum_{k = 0}^{N - 1} |W((k + 1)\Delta t) - W(k\Delta t)| + \frac{L}{2} \sum_{k = 0}^{N - 1} |W((k + 1)\Delta t) - W(k\Delta t)|^{2} 
\end{multline*}
\fi

Notice that this result follows from the numerical dissipation estimates in Proposition \ref{uniformenergy},
which imply convergence to zero in expectation, of the numerical dissipation terms, shown in \eqref{vlindiff}, \eqref{vshiftdiff}, \eqref{ulindiff}, \eqref{ushiftdiff}, \eqref{etalindiff}, and \eqref{etashiftdiff},
from which we were able to deduce the almost sure convergence.

\begin{proof}[Proof of Theorem \ref{weakconv}]
To show weak convergence of probability measures along a subsequence, we must show that the probability measures $\mu_N$ are {tight},
see Definition~\ref{tight}. 
Here, we note that for reasons that will be clear later (see Step 2 below), we will take $N$ to be the subsequence provided by Lemma \ref{almostsuresub} and begin with this indexing convention of $N$. 


\vspace{0.1in}

\noindent \textbf{Step 1: Weak convergence of $\mu_{\overline{\eta}_{N}}$ and $\mu_{\boldsymbol{u}_{N}} \times \mu_{v_{N}}$ along a subsequence.} We show this by showing that $\mu_{\overline{\eta}_{N}}$ and $\mu_{\boldsymbol{u}_{N}} \times \mu_{v_{N}}$ are tight. 
To show the tightness of $\mu_{\overline{\eta}_{N}}$, we define the set
\begin{equation*}
A_{R} = \{\overline{\eta} \in W^{1, \infty}(0, T; L^{2}(\Gamma)) \cap L^{\infty}(0, T; H_{0}^{1}(\Gamma)) : ||\overline{\eta}||_{W^{1, \infty}(0, T; L^{2}(\Gamma))} \le R, ||\overline{\eta}||_{L^{\infty}(0, T; H_{0}^{1}(\Gamma))} \le R\}.
\end{equation*}
By Lemma \ref{structurecompact}, $\overline{A_{R}}$ is a compact set in $L^{2}(0, T; L^{2}(\Gamma))$ since $L^{\infty}(0, T; L^{2}(\Gamma))$ embeds continuously into $L^{2}(0, T; L^{2}(\Gamma))$, where the closure is taken in the topology of $L^{2}(0, T; L^{2}(\Gamma))$. So by Chebychev's inequality and the previous uniform boundedness results, we have that for an arbitrary $\epsilon > 0$,
\begin{equation*}
\mu_{\bar\eta_N}(\overline{A_{R}}) > 1 - \epsilon,
\end{equation*}
if $R$ is chosen sufficiently large. So there exists a subsequence, which we continue to denote by $N$, for which $\mu_{\overline{\eta}_{N}}$ converges weakly to some probability measure $\mu_{\eta}$ on $L^{2}(0, T; L^{2}(\Gamma))$. 

To show the tightness of $\mu_{\boldsymbol{u}_{N}} \times \mu_{v_{N}}$, recall the definition of the set $\mathcal{K}_{R}$, and note that by Lemma \ref{KRcompact}, $\overline{\mathcal{K}_{R}}$ is a compact set in $L^{2}(0, T; L^{2}(\Omega_{f})) \times L^{2}(0, T; L^{2}(\Gamma))$. Furthermore, using the uniform boundedness estimates from Proposition \ref{uniformbound}
combined with Chebychev's inequality, we have that for any $\epsilon > 0$, we can find $R$ sufficiently large such that
\begin{equation*}
(\mu_{\boldsymbol{u}_{N}} \times \mu_{v_{N}})(\overline{\mathcal{K}_{R}}) > 1 - \epsilon.
\end{equation*}
Hence, there exists a subsequence, which we continue to denote by $N$, 
for which the measures $\mu_{\boldsymbol{u}_{N}} \times \mu_{v_{N}}$ converge weakly to some limiting probability measure on $L^{2}(0, T; L^{2}(\Omega_{f})) \times L^{2}(0, T; L^{2}(\Gamma))$, which we denote by $\mu_{\boldsymbol{u}} \times \mu_{v}$.

\vspace{0.1in}

\noindent \textbf{Step 2: Weak convergence of $\mu_{\boldsymbol{u}_{N}} \times \mu_{v^{*}_{N}}$, $\mu_{\overline{\boldsymbol{u}}_{N}} \times \mu_{\overline{v}_{N}}$, $\mu_{\boldsymbol{u}^{\Delta t}_{N}} \times \mu_{v^{\Delta t}_{N}}$, $\mu_{\eta_{N}}$, and $\mu_{\eta^{\Delta t}_{N}}$ along the subsequence obtained from Step 1.} 
Since 
$
\mu_{\boldsymbol{u}_{N}} \times \mu_{v_{N}} \Longrightarrow \mu_{\boldsymbol{u}} \times \mu_{v},
$
by the definition of weak convergence, we have
\begin{equation*}
\mathbb{E}[f(\boldsymbol{u}_{N}, v_{N})] \to \int_{L^{2}(0, T; L^{2}(\Omega_{f})) \times L^{2}(0, T; L^{2}(\Gamma))} f d(\mu_{\boldsymbol{u}} \times \mu_{v}),
\end{equation*}
for all bounded, Lipschitz continuous functions $f: L^{2}(0, T; L^{2}(\Omega_{f})) \times L^{2}(0, T; L^{2}(\Gamma)) \to \mathbb{R}$. However, because 
$
||v_{N} - v_{N}^{*}||_{L^{2}(0, T; L^{2}(\Gamma))} \to 0 \ \text{ a.s.}
$
due to Lemma \ref{almostsuresub}, 
we have that by the Lipschitz continuity of $f$, 
\begin{equation*}
|f(\boldsymbol{u}_{N}, v_{N}) - f(\boldsymbol{u}_{N}, v_{N}^{*})| \le \text{Lip}(f) ||v_{N} - v_{N}^{*}||_{L^{2}(0, T; L^{2}(\Gamma))} \to 0, \ \text{ a.s. as $N \to \infty$.}
\end{equation*}
Hence, by the bounded convergence theorem,
$\mathbb{E}[f(\boldsymbol{u}_{N}, v_{N})] - \mathbb{E}[f(\boldsymbol{u}_{N}, v^{*}_{N})] \to 0,$ { as } $N \to \infty$,
and hence,
\begin{equation*}
\mathbb{E}[f(\boldsymbol{u}_{N}, v^{*}_{N})] \to \int_{L^{2}(0, T; L^{2}(\Omega_{f})) \times L^{2}(0, T; L^{2}(\Gamma))} f d(\mu_{\boldsymbol{u}} \times \mu_{v}),
\end{equation*}
for all bounded, Lipschitz continuous functions $f: L^{2}(0, T; L^{2}(\Omega_{f})) \times L^{2}(0, T; L^{2}(\Gamma)) \to \mathbb{R}$. Thus, along the subsequence generated from Step 1, we have that both $\mu_{\boldsymbol{u}_{N}} \times \mu_{v_{N}}$ and $\mu_{\boldsymbol{u}_{N}} \times \mu_{v_{N}^{*}}$ converge weakly to the same limiting probability measure $\mu_{\boldsymbol{u}} \times \mu_{v}$ on $L^{2}(0, T; L^{2}(\Omega_{f})) \times L^{2}(0, T; L^{2}(\Gamma))$. 

The same argument can be used to show that $\mu_{\overline{\boldsymbol{u}}_{N}} \times \mu_{\overline{v}_{N}}$ and $\mu_{\boldsymbol{u}^{\Delta t}_{N}} \times \mu_{v^{\Delta t}_{N}}$ also converge weakly to 
$\mu_{\boldsymbol{u}} \times \mu_{v}$. This follows from the result on
a.s. convergence of $||\overline{\boldsymbol{u}}_{N} - \boldsymbol{u}_{N}||_{L^{2}(0, T; L^{2}(\Omega_{f}))}$, 
$||\overline{v}_{N} - v_{N}||_{L^{2}(0, T; L^{2}(\Gamma))}$, $||\boldsymbol{u}^{\Delta t}_{N} - \boldsymbol{u}_{N}||_{L^{2}(0, T; L^{2}(\Omega_{f}))}$, and $||v^{\Delta t}_{N} - v_{N}||_{L^{2}(0, T; L^{2}(\Gamma))}$
in Lemma \ref{almostsuresub}.


Finally, we have from Step 1 that $\mu_{\overline{\eta}_{N}}$ converges weakly to some probability measure $\mu_{\eta}$, as probability measures on $L^{2}(0, T; L^{2}(\Gamma))$. 
Then, the weak convergence of $\mu_{\eta_{N}}$ and $\mu_{\eta^{\Delta t}_{N}}$ to this same weak limit $\mu_{\eta}$ follows from the same argument as above, and the result from Lemma \ref{almostsuresub} that 
$
||\overline{\eta}_{N} - \eta_{N}||_{L^{2}(0, T; L^{2}(\Gamma))} \to 0
$
and
$
||\eta^{\Delta t}_{N} - \eta_{N}||_{L^{2}(0, T; L^{2}(\Gamma))} \to 0, \text{ as } N \to \infty,
$
a.s.

\vspace{0.1in}

\noindent \textbf{Step 3: Tightness of full measures $\mu_{N}$ along the subsequence obtained from Step 1.} We now consider the full probability measures 
$\mu_{N}$ specified in \eqref{muN} on the phase space $\mathcal{X}$ specified in \eqref{phase}.
We want to show that these probability measures $\mu_{N}$ are tight along the subsequence $N$ constructed as a result of Step 1. 

Consider $\epsilon > 0$. We want to construct a compact set in the phase space $\mathcal{X}$ for which the probability measure $\mu_{N}$ has probability greater than $1 - \epsilon$ on this compact set, for all $N$. We will construct this compact set component-wise,
using $\pi_{1},\dots,\pi_{12}$ to denote the projections  onto the components 1 through 12 of  $\mu_N$.

By the weak convergence of the measures $\mu_{\overline{\eta}_{N}}$, $\mu_{\eta_{N}}$, and $\mu_{\eta^{\Delta t}_{N}}$, by Prohorov's theorem (see for example Proposition 6.1 in \cite{LNT}), there exist compact sets $B_{1}$, $B_{2}$, and $B_{3}$ in $L^{2}(0, T; L^{2}(\Gamma))$ such that 
\begin{equation*}
\pi_{1}(\mu_{N})(B_{1}) > 1 - \frac{\epsilon}{8}, \qquad \pi_{2}(\mu_{N})(B_{2}) > 1 - \frac{\epsilon}{8}, \qquad \pi_{3}(\mu_{N})(B_{3}) > 1 - \frac{\epsilon}{8}, \qquad \text{ for all } N.
\end{equation*}

Similarly, because $(\boldsymbol{u}_{N}, v_{N})$, $(\boldsymbol{u}_{N}, v_{N}^{*})$, $(\overline{\boldsymbol{u}}_{N}, \overline{v}_{N})$, and $(\boldsymbol{u}^{\Delta t}_{N}, v^{\Delta t}_{N})$ converge weakly along this subsequence $N$ by Step 1 and Step 2, there exist compact sets $B_{4, 5}$, $B_{6, 7}$, $B_{8, 9}$, and $B_{10, 11}$ in $L^{2}(0, T; L^{2}(\Omega_{f})) \times L^{2}(0, T; L^{2}(\Gamma))$ such that
\begin{align*}
&\pi_{4, 5}(\mu_{N})(B_{4, 5}) > 1 - \frac{\epsilon}{8}, \qquad \pi_{6, 7}(\mu_{N})(B_{6, 7}) > 1 - \frac{\epsilon}{8}, \\
&\pi_{8, 9}(\mu_{N})(B_{8, 9}) > 1 - \frac{\epsilon}{8}, \qquad \pi_{10, 11}(\mu_{N})(B_{10, 11}) > 1 - \frac{\epsilon}{8}, \qquad \text{ for all } N.
\end{align*}

Finally, the last component of $\mu_{N}$, which is $\mu_{W}$, is constant in $N$. Hence, the probability measures $\pi_{12}(\mu_{N})$ defined on $C(0, T; \mathbb{R})$ are trivially, weakly compact. 
Therefore, the collection $\pi_{12}(\mu_{N})$ for all $N$ is tight, and hence, there exists a compact set $B_{12} \subset C(0, T; \mathbb{R})$ such that 
\begin{equation*}
\pi_{12}(\mu_{N})(B_{12}) > 1 - \frac{\epsilon}{8}, \qquad \text{ for all } N.
\end{equation*}

Based on this construction, we have the set
$
M_{\epsilon} := B_{1} \times B_{2} \times B_{3} \times B_{4, 5} \times B_{6, 7} \times B_{8, 9} \times B_{10, 11} \times B_{12},
$
which is a compact subset of the phase space $\mathcal{X}$, satisfying
$
\mu_{N}(M_{\epsilon}) > 1 - \epsilon,  \text{ for all } N.
$
This establishes the desired tightness of the probability measures, and completes the proof of Proposition \ref{structurecompact}.
\end{proof}

\subsection{Continuity properties of the weak limit}

To be able to prove appropriate measure theoretic properties of the limiting solutions, we need to establish continuity properties of the limiting solution. This is because many measure theoretic properties are simpler for stochastic processes with continuous paths in time. This is simple to do for the structure displacements, since the approximate structure displacements $\overline{\eta}_{N}$ all have Lipschitz continuous paths. However, because the approximate fluid and structure velocities $\boldsymbol{u}_{N}$ and $v_{N}$ have paths that are not continuous, we want to establish that the limiting solutions for the fluid and structure velocities have continuous paths in time, with an appropriate notion of continuity.

 First, we introduce the following definition, which will be used throughout the remainder of the manuscript.

\begin{definition}
Let $B$ be an arbitrary Banach space and let $f, g: [0, T] \to B$. The function $g: [0, T] \to B$ is a \textbf{version} of $f$ if $f = g$ a.e. on $[0, T]$.
\end{definition}

The goal is to show that the limit function $(\boldsymbol{u}, v)$ is in $C(0, T; \mathcal{Q}')$ almost surely, or more precisely, that the limiting measure $\mu$ is supported on a measurable subset of phase space $\mathcal{X}$, with the projections onto the functions involving the fluid and structure velocities being in $C(0, T; \mathcal{Q}')$ almost surely. This continuity property will allow us to conclude later that the resulting limit process is well-behaved in a stochastic measure theoretic sense. 

To do this, we will use the idea of $p$-variation for functions in time taking values in $\mathcal{Q}'$. The notion of considering total variations of functions is a classical idea \cite{Orlicz}, \cite{Young}. We remark however that our definition below differs slightly from classical definitions of total $p$-variation.

\begin{definition}\label{variation}
For any real number $p \ge 1$ and any $\delta > 0$, we define the \textit{$p$-variation of length scale $\delta$} of a given function $(\boldsymbol{u}, v): [0, T] \to \mathcal{Q}'$ by
\begin{equation*}
V^{\delta}_{p}(\boldsymbol{u}, v) = \sup_{|P| \le \delta} \sum_{i = 1}^{M} ||(\boldsymbol{u}(t_{i}), v(t_{i})) - (\boldsymbol{u}(t_{i - 1}), v(t_{i - 1}))||^{p}_{\mathcal{Q}'},
\end{equation*}
where $P$ denotes a partition $0 \le t_{0} < t_{1} < ... < t_{M} \le T$ for some positive integer $M$, and the condition $|P| \le \delta$ means that $|t_{i} - t_{i - 1}| \le \delta$ for all $i = 1, 2, ..., M$. 
\end{definition}
We introduce this definition of the $p$-variation of length scale $\delta$ because we will invoke estimates on the time shifts, as in \eqref{Qprimeest}, in order to deduce continuity in $\mathcal{Q}'$. The strategy will be to show that almost surely, the limiting  fluid velocity and structure velocity,
denoted by the pair $(\boldsymbol{u}, v)$, has a variation that goes to zero as the length scale $\delta$ goes to zero, which would imply that 
the pair $(\boldsymbol{u}, v)$ cannot have any discontinuities and is hence continuous in $\mathcal{Q}'$. We hence want to define and examine the subset of functions whose $p$-variation of length scale $\delta$ is bounded above by a certain parameter $\epsilon$. We do this in the following lemma.

\begin{lemma}\label{Aclosed}
Let $A_{p, \delta, \epsilon}$ be the set of functions $(\boldsymbol{u}, v): [0, T] \to \mathcal{Q}'$ in 
$
X = L^{2}(0, T; L^{2}(\Omega_{f})) \times L^{2}(0, T; L^{2}(\Gamma))
$
such that the following properties hold:
\begin{enumerate}
\item $(\boldsymbol{u}, v)$ has a version that is left continuous on $[0, T]$ as a function of time, taking values in $\mathcal{Q}'$. 
\item This version of $(\boldsymbol{u}, v)$ is also right continuous at $t = 0$ as a function taking values in $\mathcal{Q}'$.
\item For this (necessarily unique) left continuous version, $V^{\delta}_{p}(\boldsymbol{u}, v) \le \epsilon$. 
\end{enumerate}
Then, for any $p \ge 1$, $\delta > 0$, and $\epsilon > 0$, $A_{p, \delta, \epsilon}$ is a closed set in $X$.
\end{lemma}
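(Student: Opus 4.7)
To show $A_{p,\delta,\epsilon}$ is closed in $X := L^{2}(0, T; L^{2}(\Omega_{f})) \times L^{2}(0, T; L^{2}(\Gamma))$, consider a sequence $(\boldsymbol{u}_{n}, v_{n}) \in A_{p,\delta,\epsilon}$ converging in $X$ to some $(\boldsymbol{u}, v)$, and let $\phi_{n}$ denote the unique left-continuous $\mathcal{Q}'$-valued version of $(\boldsymbol{u}_{n}, v_{n})$ guaranteed by the defining property. The strategy is to construct by hand a left-continuous version $\tilde{\phi}$ of $(\boldsymbol{u}, v)$ satisfying the $p$-variation bound. Setting $\mathcal{B} := L^{2}(\Omega_{f}) \times L^{2}(\Gamma)$, passage to a subsequence (still labelled $n$) gives $\phi_{n}(t) \to \phi(t)$ in $\mathcal{B}$ for every $t$ in a full measure set $E \subset [0, T]$, where $\phi$ is the pointwise a.e.\ limit. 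Since $\mathcal{B} \hookrightarrow \mathcal{Q}'$ continuously, the uniform $p$-variation bound on $\phi_{n}$ transfers to $\phi|_{E}$: for any partition $s_{0} < s_{1} < \cdots < s_{M}$ of points of $E$ with $s_{i} - s_{i-1} \leq \delta$,
\begin{equation*}
\sum_{i=1}^{M} \|\phi(s_{i}) - \phi(s_{i-1})\|_{\mathcal{Q}'}^{p} = \lim_{n \to \infty} \sum_{i=1}^{M} \|\phi_{n}(s_{i}) - \phi_{n}(s_{i-1})\|_{\mathcal{Q}'}^{p} \leq \epsilon.
\end{equation*}

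This local $p$-variation bound forces a Cauchy property at every $t \in (0, T]$: for each $\eta > 0$ there exists $\beta > 0$ with $\|\phi(s_{1}) - \phi(s_{2})\|_{\mathcal{Q}'} < \eta$ for all $s_{1}, s_{2} \in E \cap (t - \beta, t)$, because otherwise one could inductively extract arbitrarily many strictly increasing disjoint pairs in $E \cap (t - \delta, t)$ each contributing at least $\eta^{p}$ to a single admissible partition sum, violating the $\epsilon$ bound. Consequently $\tilde{\phi}(t) := \lim_{s \to t^{-},\, s \in E} \phi(s)$ is well defined in $\mathcal{Q}'$ for every $t \in (0, T]$, and the analogous one-sided Cauchy property at $0$ defines $\tilde{\phi}(0) := \lim_{s \to 0^{+},\, s \in E} \phi(s)$; a triangle-inequality argument using the same Cauchy property then shows $\tilde{\phi}$ is left continuous on $(0, T]$ and right continuous at $0$. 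For the $p$-variation estimate on $\tilde{\phi}$, fix a partition $0 \leq t_{0} < t_{1} < \cdots < t_{M} \leq T$ with $t_{i} - t_{i-1} \leq \delta$ and apply a \emph{uniform shift} trick: since each of the sets $\{\alpha > 0 : t_{i} - \alpha \notin E\}$ (and $\{\alpha : \alpha \notin E\}$ in the edge case $t_{0} = 0$) has Lebesgue measure zero, one may choose $\alpha_{k} \to 0^{+}$ outside their finite union, and set $s_{i}^{k} := t_{i} - \alpha_{k}$ for $i$ with $t_{i} > 0$ and $s_{0}^{k} := \alpha_{k}$ when $t_{0} = 0$. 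These points lie in $E$, are strictly increasing for small $\alpha_{k}$, and satisfy $s_{i}^{k} - s_{i-1}^{k} \leq t_{i} - t_{i-1} \leq \delta$; passing $k \to \infty$ in the $E$-partition bound and using $\phi(s_{i}^{k}) \to \tilde{\phi}(t_{i})$ in $\mathcal{Q}'$ yields $\sum_{i} \|\tilde{\phi}(t_{i}) - \tilde{\phi}(t_{i-1})\|_{\mathcal{Q}'}^{p} \leq \epsilon$, hence $V_{p}^{\delta}(\tilde{\phi}) \leq \epsilon$.

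The remaining task is to show $\tilde{\phi} = \phi$ a.e.\ as $\mathcal{B}$-valued functions, so that $\tilde{\phi}$ is an honest version of $(\boldsymbol{u}, v)$ in $X$; this is the main obstacle, because the natural attempt to interchange $\lim_{n}$ with $\lim_{s \to t^{-},\, s \in E}$ on each $t \in E$ fails: the $p$-variation bound controls only sums of jumps uniformly in $n$, not individual differences $\phi_{n}(s) - \phi_{n}(t)$, so no uniform-in-$n$ modulus-of-continuity estimate is available to legitimize the interchange. I sidestep this via Lebesgue differentiation applied to $\phi \in L^{1}(0, T; \mathcal{B})$: at every Lebesgue point $t$ of $\phi$ in the Bochner sense, $\phi(t) = \lim_{h \to 0^{+}} h^{-1} \int_{t-h}^{t} \phi(s)\, ds$ in $\mathcal{B}$, hence also in $\mathcal{Q}'$. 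On the other hand, at every continuity point of $\tilde{\phi}$ in $\mathcal{Q}'$ — a full measure set, since the $p$-variation bound forces $\tilde{\phi}$ to have at most countably many jumps — the Cauchy property gives $\|\phi(s) - \tilde{\phi}(t)\|_{\mathcal{Q}'} \to 0$ uniformly as $s \to t^{-}$ through $E$, so the Bochner averages above also converge to $\tilde{\phi}(t)$ in $\mathcal{Q}'$. Uniqueness of limits in $\mathcal{Q}'$ then forces $\phi(t) = \tilde{\phi}(t)$ on the intersection of these two full measure sets, completing the verification that $\tilde{\phi}$ witnesses membership of $(\boldsymbol{u}, v)$ in $A_{p,\delta,\epsilon}$.
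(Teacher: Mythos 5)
Your proof is correct but reaches the left-continuous version by a different route from the paper's. Both arguments start the same way: pass to a subsequence with a.e.\ pointwise convergence in $L^{2}(\Omega_{f}) \times L^{2}(\Gamma)$, hence in $\mathcal{Q}'$, on a full-measure set (your $E$, the paper's $S$), and transfer the uniform $p$-variation bound to partitions with points in that set. The paper then proves left and right limits along $S$ exist at every $t$, shows there are only countably many points of $S$ where the original value disagrees with its left limit along $S$, discards them to obtain a still-full-measure set $S^{*}$, and constructs the version by keeping the original values on $S^{*}$ and filling in left limits only on the complementary null set --- so the version relationship is built in. You instead define $\tilde{\phi}$ as the left limit along $E$ \emph{everywhere}, which makes left continuity and the $p$-variation bound immediate (your uniform-shift argument also cleanly preserves the mesh constraint $|P| \le \delta$, which the paper's pointwise approximation of partition points handles less explicitly), but this forces you to verify separately that $\tilde{\phi} = \phi$ a.e., which you do by comparing one-sided Bochner--Lebesgue averages against both $\phi$ (at Lebesgue points) and $\tilde{\phi}$ (via the Cauchy property). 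You thus trade the paper's jump-counting steps for Lebesgue differentiation; both are valid and of comparable difficulty. Two minor remarks: the restriction to continuity points of $\tilde{\phi}$ in your final step is not needed, since $\tilde{\phi}(t)$ is by construction the left limit of $\phi$ through $E$ at \emph{every} $t > 0$, so the averages converge to $\tilde{\phi}(t)$ regardless, and intersecting with Lebesgue points alone already yields a full-measure set; and your assertion that $\tilde{\phi}$ has at most countably many jumps, while true for functions of bounded $p$-variation, is stated without proof --- but it is not used once the superfluous restriction is dropped.
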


\begin{proof}
To show that $A_{p, \delta, \epsilon}$ is a closed set in $X$, we consider a sequence $\{(\boldsymbol{u}_{n}, v_{n})\}_{n = 1}^{\infty}$ in $A_{p, \delta, \epsilon}$ that converges to some element $(\boldsymbol{u}, v) \in X$ in the norm of $X$. We claim that $(\boldsymbol{u}, v) \in A_{p, \delta, \epsilon}$. 

We start by showing Property 3 above, namely $V^{\delta}_{p}(\boldsymbol{u}, v) \le \epsilon$. Because 
$(\boldsymbol{u}_{n}, v_{n}) \to (\boldsymbol{u}, v)$ in $L^{2}(0, T; L^{2}(\Omega_{f}) \times L^{2}(\Gamma))$,
we have that along a subsequence, which we will continue to denote by the same index, we have
$(\boldsymbol{u}_{n}(t), v_{n}(t)) \to (\boldsymbol{u}(t), v(t))$ in $L^{2}(\Omega_{f}) \times L^{2}(\Gamma)$, for a.e. $t \in [0, T]$.
Since $L^{2}(\Omega_{f}) \times L^{2}(\Gamma)$ embeds continuously into $\mathcal{Q}'$,
\begin{equation}\label{aeconvQprime}
(\boldsymbol{u}_{n}(t), v_{n}(t)) \to (\boldsymbol{u}(t), v(t)), \ {\rm in} \ \mathcal{Q}', \  \forall t \in S,
\end{equation}
where $S$ is some measurable subset of $[0, T]$,
 which consists of the points $t \in [0, T]$ for which the convergence above holds. Note that $S$ has almost every $t \in [0, T]$. 

Consider any partition $P$ with $|P| \le \delta$, \textit{consisting only of points in $S$.} Now, from the fact that $V^{\delta}_{p}(\boldsymbol{u}_{n}, v_{n}) \le \epsilon$ for all $n$, we have that
$
\sum_{i = 1}^{M} ||(\boldsymbol{u}_{n}(t_{i}), v_{n}(t_{i})) - (\boldsymbol{u}_{n}(t_{i - 1}), v_{n}(t_{i - 1}))||^{p}_{\mathcal{Q}'} \le \epsilon.
$
Because the partition is finite and because the partition consists of points in $S$ for which the convergence \eqref{aeconvQprime} holds, in the limit as $n \to \infty$:
\begin{equation}\label{varS}
\sum_{i = 1}^{M} ||(\boldsymbol{u}(t_{i}), v(t_{i})) - (\boldsymbol{u}(t_{i - 1}), v(t_{i - 1}))||^{p}_{\mathcal{Q}'} \le \epsilon.
\end{equation}
This verifies Property 3 for partitions $|P| \le \delta$ consisting of points in $S$. To show Properties 1 and 2, we use the above inequality \eqref{varS} and construct a version of $(\boldsymbol{u}, v)$ which will satisfy all the properties of the set 
$A_{p, \delta, \epsilon}$. Then, we will conclude the proof by verifying Property 3 for this new version and extending the verification of Property 3 to general partitions $|P| \le \delta$ consisting of any points in $[0, T]$.
We start with Property 1 above, namely that $(\boldsymbol{u}, v)$ must have a version that is left continuous. We do this in the following steps.

\vspace{0.1in}

\noindent \textbf{Step 1:} First, we show that at each point $t \in [0, T]$, the left and right limits of $(\boldsymbol{u}, v)$ along points in $S$ must exist. This will be useful, as $S$ is dense in $[0, T]$. In addition, the density of $S$ in $[0, T]$ means that for all $t \in [0, T]$, the notion of a left and right limit along points in $S$ makes sense.

To show this, consider any point $t_{0} \in [0, T]$. We emphasize that $t_{0}$ is not necessarily in the set $S$. We claim that the left and right limit at $t_{0}$ along points in $S$ must exist. In particular, for any sequence $\{t_{n}\}_{n = 1}^{\infty}$ with $t_{n} \in S$ that increases to $t_{0}$ or decreases to $t_{0}$, we claim that
$\lim_{n \to \infty} (\boldsymbol{u}(t_{n}), v(t_{n}))$ exists in $\mathcal{Q}'$.

We show this by contradiction. Suppose there exists a strictly increasing sequence $\{t_{n}\}_{n = 1}^{\infty}$ with $t_{n} \in S$ and $t_{n} \nearrow t_{0}$, such that
$\lim_{n \to \infty} (\boldsymbol{u}(t_{n}), v(t_{n}))$ does not exist in $\mathcal{Q}'$.
The same argument will hold in the case of a decreasing sequence. This implies that $\{(\boldsymbol{u}(t_{n}), v(t_{n}))\}_{n = 1}^{\infty}$ does not converge in $\mathcal{Q}'$, and hence is not a Cauchy sequence. Thus, there exists $\epsilon_{0} > 0$, such that given any $N$, there exists $n_{1}, n_{2} \ge N$ such that
\begin{equation*}
||(\boldsymbol{u}(t_{n_{1}}), v(t_{n_{1}})) - (\boldsymbol{u}(t_{n_{2}}), v(t_{n_{2}}))||_{\mathcal{Q}'} \ge \epsilon_{0}.
\end{equation*}
Note that we have called this constant $\epsilon_{0}$ to distinguish it from the $\epsilon$ in the definition of $A_{p, \delta, \epsilon}$. 
Now, choose $M$ sufficiently large such that 
\begin{equation*}
M(\epsilon_{0})^{p} > \epsilon.
\end{equation*}
Choose a partition $P$ consisting of points $s_{0}, ..., s_{2M - 1}$ in $S$ with the following properties.
\begin{enumerate}
\item For each $i = 0, 1, ..., 2M - 1$, we have that $t_{0} - \delta < s_{i} < t_{0}$.
\item The sequence $s_{0}, ..., s_{2M - 2}, s_{2M - 1}$ is strictly increasing.
\item For even $i = 0, 2, ..., 2M - 2$,
$
||(\boldsymbol{u}(s_{i}), v(s_{i})) - (\boldsymbol{u}(s_{i + 1}), v(s_{i + 1}))||_{\mathcal{Q}'} \ge \epsilon_{0}.
$
This can be accomplished by using the non-convergent sequence $\{(\boldsymbol{u}(t_{n}), v(t_{n}))\}_{n = 1}^{\infty}$ and the definition of $\epsilon_{0}$ to choose the $s_{i}$ from the sequence $\{t_{n}\}_{n = 1}^{\infty}$, as $t_{n} \in S$ for all $n$. 
\end{enumerate}
Since $M$ was chosen so that $M(\epsilon_{0})^{p} > \epsilon$, for this partition $P$, consisting of points in $S$ with $|P| \le \delta$, we have 
\begin{equation*}
\sum_{i = 1}^{2M - 1} ||(\boldsymbol{u}(s_{i}), v(s_{i})) - (\boldsymbol{u}(s_{i - 1}), v(s_{i - 1}))||^{p}_{\mathcal{Q}'} > \epsilon
\end{equation*}
which is a contradiction.

Furthermore, the left and right limits along points in $S$ are well-defined. Suppose for contradiction that
\begin{equation*}
\lim_{n \to \infty} (\boldsymbol{u}(s_{n}), v(s_{n})) = L_{1} \ne L_{2} = \lim_{n \to \infty} (\boldsymbol{u}(t_{n}), v(t_{n})),
\end{equation*}
for two increasing sequences $s_{n} \nearrow t_{0}$ and $t_{n} \nearrow t_{0}$, consisting of points in $S$. Then, we can construct a new sequence $\{r_{k}\}_{k = 1}^{\infty}$, where we set $r_{0} = s_{0}$. Then, we set $r_{1}$ to be any $t_{n}$ for which $t_{n} \in (s_{0}, t_{0})$. We continue, creating an interlaced sequence where all odd indices of $r_{k}$ come from the sequence of $s_{n}$ points, and all even indices of $r_{k}$ come from the sequence of $t_{n}$ points, where along the odd sequence, the indices of the corresponding $s_{n}$ points is strictly increasing, and similarly for the even sequence of the $t_{n}$ points. We can also perform this construction so that the points in $r_{k}$ are strictly increasing to $t_{0}$. However, one can see that $\lim_{n \to \infty} (\boldsymbol{u}(r_{n}), v(r_{n}))$ does not exist, which contradicts our earlier result. So the left and right limits along points in $S$ are well-defined. 

\vspace{0.1in}

\noindent \textbf{Step 2:} In Step 1, we have shown that
$\lim_{t \to t_{0}^{-}, t \in S} (\boldsymbol{u}(t), v(t))$ and $\lim_{t \to t_{0}^{+}, t \in S} (\boldsymbol{u}(t), v(t))$
both exist for all $t \in [0, T]$, where these are limits in $\mathcal{Q}'$. We show that there can only be countably many points $t_{0} \in (0, T)$ for which these limits, which take values in $\mathcal{Q}'$, do not agree. 

To do this, we argue by contradiction. Suppose that there are uncountably many points in $(0, T)$ for which these limits do not agree. Then, there exists $\rho > 0$ sufficiently small such that there are infinitely many points $t_{0} \in (0, T)$ for which
\begin{equation*}
||\lim_{t \to t_{0}^{-}, t \in S} (\boldsymbol{u}(t), v(t)) - \lim_{t \to t_{0}^{+}, t \in S} (\boldsymbol{u}(t), v(t))||_{\mathcal{Q}'} \ge \rho.
\end{equation*}
Let $M$ be sufficiently large such that $M\left(\frac{\rho}{2}\right)^{p} > \epsilon$ and select $t_{1}, ..., t_{M}$ points of discontinuity in $(0, T)$ with 
\begin{equation*}
||\lim_{t \to t_{n}^{-}, t \in S} (\boldsymbol{u}(t), v(t)) - \lim_{t \to t_{n}^{+}, t \in S} (\boldsymbol{u}(t), v(t))||_{\mathcal{Q}'} \ge \rho, \qquad \text{ for } n = 1, 2, ..., M.
\end{equation*}
We can order these points as $t_{1} < t_{2} < ... < t_{M}$, and select $2M$ points $\{s_{n, i}\}_{1 \le n \le M, i = 1, 2}$ in $S$, such that
\begin{enumerate}
\item $s_{1, 1} < s_{1, 2} < s_{2, 1} < s_{2, 2} < ... < s_{M, 1} < s_{M, 2}$.
\item For each $n = 1, 2, ..., M$, 
$
t_{n} - \frac{\delta}{2} < s_{n, 1} < t_{n} < s_{n, 2} < t_{n} + \frac{\delta}{2}.
$
\item For each $n = 1, 2, ..., M$,
$
||(\boldsymbol{u}(s_{n, 1}), v(s_{n, 1})) - \lim_{t \to t_{n}^{-}, t \in S} (\boldsymbol{u}(t), v(t))||_{\mathcal{Q}'} < \frac{\rho}{4},
$
and
$
||(\boldsymbol{u}(s_{n, 2}), v(s_{n, 2})) - \lim_{t \to t_{n}^{+}, t \in S} (\boldsymbol{u}(t), v(t))||_{\mathcal{Q}'} < \frac{\rho}{4}.
$
\end{enumerate}
Then, we can form a partition of points in $S$ that interlaces the sequence $s_{1, 1} < s_{1, 2} < s_{2, 1} < s_{2, 2} < ... < s_{M, 1} < s_{M, 2}$ with additional points so that the resulting partition $P$ has $|P| < \delta$, since $S$ is dense in $[0, T]$. We can do this in a way that keeps the points $s_{n, i}$ for $i = 1, 2$ consecutive in the partition for each $n = 1, 2, ..., M$. Since $M\left(\frac{\rho}{2}\right)^{p} > \epsilon$, we have that the variation for this resulting partition is greater than $\epsilon$, which is a contradiction.

The same argument as above implies that \textit{there are only countably many points $t_{0} \in S$ for which}
\begin{equation*}
\lim_{t \to t_{0}^{-}, t \in S} (\boldsymbol{u}(t), v(t)) \ne (\boldsymbol{u}(t_{0}), v(t_{0})).
\end{equation*}
Thus, we define $S^{*}$ to be the set of points $t_{0} \in S$ for which
\begin{equation*}
\lim_{t \to t_{0}^{-}, t \in S} (\boldsymbol{u}(t), v(t)) = (\boldsymbol{u}(t_{0}), v(t_{0})).
\end{equation*}
Since countable sets have measure zero, $S^{*}$ still has the property that $[0, T] - S^{*}$ is of measure zero. So in particular, $S^{*}$ is still dense in $[0, T]$. \textit{We emphasize that now, $(\boldsymbol{u}(t), v(t))$ has the useful property that it is left continuous on $S^{*}$.}

\vspace{0.1in}

\noindent \textbf{Step 3:} Because $S^{*} \subset S$ and is still a dense set in $[0, T]$, the result from Step 1 implies that:\\
\centerline{$\lim_{t \to t_{0}^{-}, t \in S^{*}} (\boldsymbol{u}(t), v(t))$ and $\lim_{t \to t_{0}^{+}, t \in S^{*}} (\boldsymbol{u}(t), v(t))$ exist
for all $t_{0} \in [0, T]$.}
However, these limits are only along points in $S^{*}$. By the density of $S^{*}$ in $[0, T]$ and the fact that $[0, T] - S^{*}$ has measure zero, we can redefine $(\boldsymbol{u}, v)$ up to a version, so that
\begin{equation}\label{version1and2}
(\boldsymbol{u}(t_{0}), v(t_{0})) \text{ is unchanged if } t_{0} \in S^{*},\ {\rm and} \ 
(\boldsymbol{u}(t_{0}), v(t_{0})) = \lim_{t \to t_{0}^{-}, t \in S^{*}} (\boldsymbol{u}(t), v(t)) \ \text{ if } t_{0} \in [0, T] - S^{*}. 
\end{equation}
For the remainder of this proof, $(\boldsymbol{u}, v)$ will denote this newly defined version in \eqref{version1and2}. We then claim that for this version,
\begin{equation}\label{step3}
\lim_{t \to t_{0}^{-}, t \in S^{*}} (\boldsymbol{u}(t), v(t)) = \lim_{t \to t_{0}^{-}} (\boldsymbol{u}(t), v(t)) \qquad \text{ and } \qquad \lim_{t \to t_{0}^{+}, t \in S^{*}} (\boldsymbol{u}(t), v(t)) = \lim_{t \to t_{0}^{+}} (\boldsymbol{u}(t), v(t)),
\end{equation}
for all $t \in [0, T]$. We will just prove the first statement, for the limit from the left, as the statement for the limit from the right is proved analogously. To see this, note that by the definition of the version and by the definition of $S^{*}$ in Step 2,
\begin{equation}\label{leftcont}
(\boldsymbol{u}(t_{0}), v(t_{0})) = \lim_{t \to t_{0}^{-}, t \in S^{*}} (\boldsymbol{u}(t), v(t)), \qquad \text{ for all } t_{0} \in [0, T].
\end{equation}
So given any strictly increasing sequence $t_{n} \nearrow t_{0}$ where $t_{n}$ is not necessarily in $S^{*}$, we want to show that
\begin{equation*}
\lim_{n \to \infty} (\boldsymbol{u}(t_{n}), v(t_{n})) = \lim_{t \to t_{0}^{-}, t \in S^{*}} (\boldsymbol{u}(t), v(t)).
\end{equation*}
To do this, we use the density of $S^{*}$ in $[0, T]$ along with \eqref{leftcont} to construct a sequence $s_{n}$ such that 
\begin{enumerate}
\item $s_{0} \le t_{0}$ and $t_{n - 1} < s_{n} \le t_{n}$ for all $n \ge 1$.
\item $|s_{n} - t_{n}| < 2^{-n}$ for all $n$. 
\item $||(\boldsymbol{u}(s_{n}), v(s_{n})) - (\boldsymbol{u}(t_{n}), v(t_{n}))||_{\mathcal{Q}'} < 2^{-n}$ for all $n$. 
\item $s_{n} \in S^{*}$ for all $n$.
\end{enumerate}
This is possible because $S^{*}$ is dense in $[0, T]$, and shows the desired result, as $s_{n}$ is a strictly increasing sequence converging to $t_{0}$ by Property 1 and 2, and by Property 3 and 4 we have
\begin{equation*}
\lim_{n \to \infty} (\boldsymbol{u}(t_{n}), v(t_{n})) = \lim_{n \to \infty} (\boldsymbol{u}(s_{n}), v(s_{n})) = \lim_{t \to t_{0}^{-}, t \in S^{*}} (\boldsymbol{u}(t), v(t)).
\end{equation*}
Note that this version of $(\boldsymbol{u}(t), v(t))$ on $[0, T]$  is left continuous by \eqref{step3} and \eqref{leftcont}, with only countably many points of discontinuity by Step 2.

\vspace{0.1in}

\noindent \textbf{Conclusion:} We have constructed a left continuous version of $(\boldsymbol{u}(t), v(t))$ on $[0, T]$ taking values in $\mathcal{Q}'$ in Step 3. At the left boundary, $t = 0$, we can set the version of $(\boldsymbol{u}, v)$ so that $(\boldsymbol{u}(0), v(0)) = \lim_{t \to 0^{+}} (\boldsymbol{u}(t), v(t))$, so that we have right continuity at $t = 0$. This is possible since this limit exists by Step 1 and \eqref{step3}. For the newly defined version of $(\boldsymbol{u}(t), v(t))$, we have that 
\begin{equation*}
\sum_{i = 1}^{N} ||(\boldsymbol{u}(x_{i}), v(x_{i})) - (\boldsymbol{u}(x_{i - 1}), v(x_{i - 1}))||^{p}_{\mathcal{Q}'} \le \epsilon,
\end{equation*}
for all partitions $P$ consisting of points in $S^{*}$ with $|P| \le \delta$, since we did not change the original $(\boldsymbol{u}(t), v(t))$ on points of $S^{*}$, which is a subset of $S$. We can now show that this $p$-variation inequality holds more generally for all partitions $P$ with points in $[0, T]$ with $|P| \le \delta$. To do this, we note that since $S^{*}$ is dense in $[0, T]$, we can approximate any partition $P$ of arbitrary points in $[0, T]$ with $|P| \le \delta$ by a sequence of partitions $\{P_{k}\}_{k \ge 1}$ of points in $S^{*}$ with $|P_{k}| \le \delta$ containing the same number of points as $P$. We can do this by approaching any partition points of $P$ in $(0, T]$ from the left by points in $S^{*}$, and approaching $t = 0$ from the right by points in $S^{*}$ if $t = 0$ is a partition point in $P$. We then obtain the desired result by taking the limit in $k$ as the partitions $P_{k}$ approach $P$. This process of taking the limit uses the fact that the version of $(\boldsymbol{u}, v)$ as defined in Step 3 is left continuous on $[0, T]$ and right continuous at $t = 0$. Therefore, we conclude that $V^{\delta}_{p}(\boldsymbol{u}, v) \le \epsilon$.
\end{proof}

The next lemma, along with the weak convergence of the laws $\mu_{N}$, will allow us to use the result above to prove almost sure continuity in $\mathcal{Q}'$ of the limiting fluid and structure velocity. In particular, this next lemma will show that if the length scale $\delta$ is chosen appropriately, then eventually, for large enough $N$ (or equivalently small enough $\Delta t$), the approximate solutions will have $p$-variation (for $p > 4$) with length scale $\delta$ bounded above uniformly with high probability. This is to be expected, due to the time shift estimate \eqref{Qprimeest}, which is \textit{independent of $N$}.

For the following results, we recall the definition of $\mu_{N}$ on the phase space $\mathcal{X}$ from \eqref{muN} and \eqref{phase}, and we denote by $\pi_{4, 5}\mu_{N}$ the projection onto the fourth and fifth components of $\mathcal{X}$, which gives the law of $(\boldsymbol{u}_{N}, v_{N})$ on $L^{2}(0, T; L^{2}(\Omega_{f})) \times L^{2}(0, T; L^{2}(\Gamma))$.

\begin{lemma}\label{Abound}
For any $p > 4$ and any $\epsilon > 0$, there exists $\delta_{0} > 0$ sufficiently small and $N_{0}$ sufficiently large such that for all $0 < \delta \le \delta_{0}$,
\begin{equation*}
\pi_{4, 5}\mu_{N}(A_{p, \delta, \epsilon}) > 1 - \epsilon, \qquad \text{ for all } N \ge N_{0}.
\end{equation*}
\end{lemma}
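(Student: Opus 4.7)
The plan is to exploit the piecewise-constant structure of $(\boldsymbol{u}_N, v_N)$ together with the pathwise time-shift estimate \eqref{Qprimeest} that is valid on $\mathcal{K}_R$. Because $(\boldsymbol{u}_N, v_N)$ is piecewise constant on the intervals $((n-1)\Delta t, n\Delta t]$, it is automatically left continuous on $(0, T]$ as a map into $\mathcal{Q}'$; setting $(\boldsymbol{u}_N(0), v_N(0)) = (\boldsymbol{u}_0, v_0)$ then also gives right continuity at $t = 0$. Thus Properties 1 and 2 of the set $A_{p, \delta, \epsilon}$ hold deterministically for every $N$, and the only content to verify is the $p$-variation bound $V_p^\delta(\boldsymbol{u}_N, v_N) \le \epsilon$ with probability at least $1 - \epsilon$.

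Given $\epsilon > 0$, the first step is to reduce to paths lying in $\mathcal{K}_R$. Using Chebyshev's inequality applied to the uniform energy, dissipation, and numerical-dissipation bounds from Propositions~\ref{uniformenergy} and \ref{uniformbound}, together with the almost-sure finiteness of the $1/4$-H\"{o}lder seminorm of Brownian motion, one selects $R > 0$ (independent of $N$) such that $\mathbb{P}\!\left((\boldsymbol{u}_N, v_N) \in \mathcal{K}_R\right) > 1 - \epsilon$ for every $N$. Once $R$ is fixed, the constant $C = C(R)$ in \eqref{Qprimeest} is fixed as well, and it suffices to establish the pathwise bound $V_p^\delta(\boldsymbol{u}_N, v_N) \le \epsilon$ on $\mathcal{K}_R$ for all sufficiently small $\delta$ and sufficiently large $N$.

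To bound the variation on $\mathcal{K}_R$, fix a partition $P \colon 0 \le t_0 < t_1 < \cdots < t_M \le T$ with $|P| \le \delta$, and let $n_i$ denote the index of the sub-interval containing $t_i$ (with $n_0 = 1$ if $t_0 = 0$), so that $(\boldsymbol{u}_N(t_i), v_N(t_i)) = (\boldsymbol{u}_N^{n_i - 1}, v_N^{n_i - 1})$. Applying \eqref{Qprimeest} with $l = n_i - n_{i-1}$ yields
$$\|(\boldsymbol{u}_N(t_i), v_N(t_i)) - (\boldsymbol{u}_N(t_{i-1}), v_N(t_{i-1}))\|_{\mathcal{Q}'} \le C\bigl((n_i - n_{i-1})\Delta t\bigr)^{1/4}.$$
The constraint $|P| \le \delta$ forces $(n_i - n_{i-1})\Delta t \le \delta + \Delta t$, while the monotonicity $n_i \ge n_{i-1}$ combined with $n_M \le N$ produces the telescoping bound $\sum_i (n_i - n_{i-1})\Delta t \le T$. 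Since $p/4 > 1$, the elementary inequality $\sum_i a_i^{p/4} \le (\max_i a_i)^{p/4 - 1} \sum_i a_i$ then gives
$$V_p^\delta(\boldsymbol{u}_N, v_N) \le C^p T \, (\delta + \Delta t)^{p/4 - 1}.$$
Since $p > 4$, the exponent $p/4 - 1$ is strictly positive, so by first choosing $\delta_0$ so small that $C^p T (2\delta_0)^{p/4 - 1} \le \epsilon$ and then choosing $N_0 \ge T/\delta_0$, one obtains the pathwise bound $V_p^\delta(\boldsymbol{u}_N, v_N) \le \epsilon$ on $\mathcal{K}_R$ for every $\delta \le \delta_0$ and $N \ge N_0$. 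Combined with the probability estimate from the previous paragraph, this gives $\pi_{4, 5}\mu_N(A_{p, \delta, \epsilon}) > 1 - \epsilon$.

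The main subtlety is handling the trade-off between many small jumps and a few large jumps. The time-shift estimate \eqref{Qprimeest} provides only $1/4$-H\"{o}lder-type regularity in the $\mathcal{Q}'$ norm, so in order to convert this into a summable $p$-variation one needs $p/4 > 1$, which is precisely the role of the hypothesis $p > 4$ in the statement. The elementary $\sum a_i^{p/4}$ inequality captures exactly the balance: the telescoping constraint $\sum_i (n_i - n_{i-1})\Delta t \le T$ prevents accumulating many large jumps, while the partition constraint $(n_i - n_{i-1})\Delta t \le \delta + \Delta t$ forces each individual jump to be small.
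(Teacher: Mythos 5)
Your proof is correct and takes essentially the same approach as the paper: you reduce to $\mathcal{K}_R$ with high probability via Chebyshev, invoke the time-shift estimate \eqref{Qprimeest} on the increments, and exploit $p>4$ via the inequality $\sum_i a_i^{p/4} \le (\max_i a_i)^{p/4-1}\sum_i a_i$ combined with the telescoping bound $\sum_i (n_i-n_{i-1})\Delta t \le T$. One small presentational difference: the paper works in path space with $\overline{\mathcal{K}_{R,N}}$ and uses the closedness of $A_{p,\delta,\epsilon}$ (Lemma~\ref{Aclosed}) to pass from $\mathcal{K}_{R,N}\subset A_{p,\delta,\epsilon}$ to $\overline{\mathcal{K}_{R,N}}\subset A_{p,\delta,\epsilon}$, whereas you work with the measurable event $\{(\boldsymbol{u}_N,v_N)\in\mathcal{K}_R\}$ directly on $\Omega$ and avoid the closure step; both are legitimate.
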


\begin{proof}
We start by first introducing a set $\mathcal{K}_{R, N}$. 
Let $\mathcal{K}_{R, N}$ be the collection of paths in $\mathcal{K}_{R}$ 
(introduced in Definition \ref{KR}),
corresponding to path realizations of the random variables $(\boldsymbol{u}_{N}, v_{N})$ for fixed $N$, satisfying the properties in the definition of $\mathcal{K}_{R}$. In particular, $\mathcal{K}_{R} = \bigcup_{N = 1}^{\infty} \mathcal{K}_{R, N}$. Notice that we can choose $R$ large enough so that 
\begin{equation*}
\pi_{4, 5}\mu_{N}(\overline{\mathcal{K}_{R, N}}) > 1- \epsilon, \qquad \text{for all $N$},
\end{equation*}
where the closure is taken in $L^{2}(0, T; L^{2}(\Omega_{f})) \times L^{2}(0, T; L^{2}(\Gamma))$. Recall from \eqref{Qprimeest} that
\begin{equation*}
||(\boldsymbol{u}^{n + l}_{N}, v^{n + l}_{N}) - (\boldsymbol{u}^{n}_{N}, v^{n}_{N})||_{\mathcal{Q}'} \le C_{R}(l\Delta t)^{1/4},
\end{equation*}
where $C_{R}$ is a constant depending only on $R$ for all $(\boldsymbol{u}_{N}, v_{N}) \in \mathcal{K}_{R}$. In particular, $C_{R}$ is independent of $n$, $l$, and $N$ and hence $\Delta t$. We will use this estimate on the increments in $\mathcal{Q}'$ to choose $\delta_{0} > 0$ and $N_{0}$ such that 
$\mathcal{K}_{R, N} \subset A_{p, \delta, \epsilon},  \text{ for all } N \ge N_{0} \text{ and } 0 < \delta \le \delta_{0}$,
and ultimately, 
$\overline{\mathcal{K}_{R, N}} \subset A_{p, \delta, \epsilon}, \text{ for all } N \ge N_{0} \text{ and } 0 < \delta \le \delta_{0},$
from which the result 
$
\pi_{4, 5}\mu_{N}(A_{p, \delta, \epsilon}) > 1 - \epsilon,  \text{ for all } N \ge N_{0}
$
and $0 < \delta \le \delta_{0}$ will follow. Indeed,
for any given partition $P$ with $|P| \le \delta$, the following estimate holds:
\begin{equation*}
\sum_{i = 1}^{M} ||(\boldsymbol{u}(x_{i}), v(x_{i})) - (\boldsymbol{u}(x_{i - 1}), v(x_{i - 1}))||^{p}_{\mathcal{Q}'}
\le C_{R}\sum_{k = 1}^{l} n_{k}(k\Delta t)^{p/4},
\end{equation*}
for any $(\boldsymbol{u}, v) \in \mathcal{K}_{R}$, where $n_{k}$ is the number of increments that have indices $k$ apart and $l$ is the maximum integer for which $l \Delta t < \delta + \Delta t$. This is true by the fact that the paths $(\boldsymbol{u}, v)$ in $\mathcal{K}_{R}$ are defined as piecewise constant functions taking values $(\boldsymbol{u}^{n}_{N}, v^{n}_{N})$, and by inequality \eqref{Qprimeest}. Because the partition $P$ has $|P| \le \delta$, we have that $l$ must satisfy
\begin{equation}\label{lcond}
l\Delta t < \delta + \Delta t = \delta + N^{-1}T\quad {\rm and} \quad 
\sum_{k = 1}^{l} n_{k}(k\Delta t) \le (N - 1)\Delta t = T - \Delta t. 
\end{equation}
Therefore, since $p > 4$, we have that for any partition $P$ with $|P| \le \delta$ and for any $(\boldsymbol{u}, v) \in \mathcal{K}_{R}$,
\begin{align*}
\sum_{i = 1}^{M} ||(\boldsymbol{u}(x_{i}), v(x_{i})) - (\boldsymbol{u}(x_{i - 1}), v(x_{i - 1}))||^{p}_{\mathcal{Q}'} &\le C_{R}\sum_{k = 1}^{l} n_{k}(k\Delta t)^{p/4} \le C_{R}\left(\sum_{k = 1}^{l} n_{k}(k\Delta t)\right) (l\Delta t)^{\frac{p}{4} - 1} \\
&\le C_{R} T (l\Delta t)^{\frac{p}{4} - 1} \le  C_{R} T (\delta + N^{-1}T)^{\frac{p}{4} - 1},
\end{align*}
where we used \eqref{lcond}. The proof is complete once we choose $\delta_{0} > 0$ sufficiently small and $N_{0}$ sufficiently large such that
\begin{equation*}
C_{R}T(\delta_{0} + N_{0}^{-1}T)^{\frac{p}{4} - 1} < \epsilon.
\end{equation*}
Therefore, for $(\boldsymbol{u}_{N}, v_{N})$ in $\mathcal{K}_{R}$ for any $N \ge N_{0}$ and $0 < \delta \le \delta_{0}$, we have
$
V^{\delta}_{p}(\boldsymbol{u}_{N}, v_{N}) \le \epsilon.
$
Thus,
\begin{equation*}
\mathcal{K}_{R, N} \subset A_{p, \delta, \epsilon}, \qquad \text{ for all } N \ge N_{0} \text{ and } 0 < \delta \le \delta_{0}.
\end{equation*}
Since $A_{p, \delta, \epsilon}$ is closed in $L^{2}(0, T; L^{2}(\Omega_{f})) \times L^{2}(0, T; L^{2}(\Gamma))$ by Lemma \ref{Aclosed}, we conclude that
\begin{equation*}
\overline{\mathcal{K}_{R, N}} \subset A_{p, \delta, \epsilon}, \qquad \text{ for all } N \ge N_{0} \text{ and } 0 < \delta \le \delta_{0},
\end{equation*}
where the closure is taken with respect to the norm of $L^{2}(0, T; L^{2}(\Omega_{f})) \times L^{2}(0, T; L^{2}(\Gamma))$. Since $\pi_{4, 5}\mu_{N}(\overline{\mathcal{K}_{R, N}}) > 1 - \epsilon$ for all positive integers $N$ by the initial choice of $R$, this implies the result.
\end{proof}

\begin{lemma}\label{continuitylemma}
For the weak limit $\mu$,
\begin{equation*}
\pi_{4, 5}\mu(X \cap C(0, T; \mathcal{Q}')) = 1,
\end{equation*}
where $X := L^{2}(0, T; L^{2}(\Omega_{f})) \times L^{2}(0, T; L^{2}(\Gamma))$. Furthermore, $\pi_{4, 5}\mu$ is supported on a Borel measurable subset of $X$ such that every function has a version in $C(0, T; \mathcal{Q}')$ that is equal to $(\boldsymbol{u}_{0}, v_{0})$ at $t = 0$. 
\end{lemma}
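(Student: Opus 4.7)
The plan is to transfer the high-probability bounds provided by Lemma~\ref{Abound} from $\pi_{4,5}\mu_N$ to the limit $\pi_{4,5}\mu$ via the Portmanteau theorem, exploiting the closedness of $A_{p,\delta,\epsilon}$ established in Lemma~\ref{Aclosed}, and then use Borel--Cantelli to collapse the scale $\delta$ and the $p$-variation threshold $\epsilon$ simultaneously. Fix some $p > 4$ (say $p=5$). For each integer $k\ge 1$ I would apply Lemma~\ref{Abound} with $\epsilon = 2^{-k}$ to obtain $\delta_k>0$ (taken monotonically decreasing to $0$) and a threshold $N_k$ such that $\pi_{4,5}\mu_N(A_{p,\delta_k,2^{-k}}) > 1-2^{-k}$ for all $N\ge N_k$. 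Closedness of $A_{p,\delta_k,2^{-k}}$ in $X$ together with the weak convergence $\pi_{4,5}\mu_N \Rightarrow \pi_{4,5}\mu$ supplied by Theorem~\ref{weakconv} and the Portmanteau theorem (for closed sets) yield $\pi_{4,5}\mu(A_{p,\delta_k,2^{-k}}) \ge 1-2^{-k}$. Since $\sum_k 2^{-k} < \infty$, the Borel--Cantelli lemma gives a Borel set $E_1 \subset X$ of full $\pi_{4,5}\mu$-measure, namely $E_1 = \bigcup_{k_0} \bigcap_{k\ge k_0} A_{p,\delta_k,2^{-k}}$, such that every $(u,v)\in E_1$ lies in $A_{p,\delta_k,2^{-k}}$ for all $k$ past some $k_0(u,v)$. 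By Lemma~\ref{Aclosed}, each such $(u,v)$ admits a unique version that is left continuous on $[0,T]$, right continuous at $0$, and satisfies $V^{\delta_k}_p(u,v)\le 2^{-k}$ for all $k\ge k_0(u,v)$.

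Next I would argue that this canonical left-continuous version is in fact continuous on $[0,T]$ with values in $\mathcal{Q}'$. The one-sided limits at every interior point $t_0$ exist by the argument carried out in Step~1 of the proof of Lemma~\ref{Aclosed} (bounded $p$-variation at each fixed scale prevents oscillatory behavior). Suppose for contradiction that at some $t_0\in(0,T)$ the version had a jump of size $J = \|(u,v)(t_0^+) - (u,v)(t_0)\|_{\mathcal{Q}'} > 0$. For each $k$ sufficiently large, one can choose $\tau_k \in (t_0, t_0+\delta_k/2)$ with $\|(u,v)(\tau_k) - (u,v)(t_0^+)\|_{\mathcal{Q}'} < J/3$, so that the two-point partition $\{t_0,\tau_k\}$, which has mesh at most $\delta_k$, contributes at least $(2J/3)^p$ to $V^{\delta_k}_p(u,v)$, contradicting $V^{\delta_k}_p(u,v)\le 2^{-k}$. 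Continuity at the endpoints is automatic from the one-sided continuity guaranteed by membership in $A_{p,\delta,\epsilon}$. Hence $\pi_{4,5}\mu$-a.e.~path has a version in $C(0,T;\mathcal{Q}')$.

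Finally, to pin the value of this continuous version at $t=0$ to $(u_0,v_0)$, I would reuse the time-shift estimate \eqref{Qprimeest} specialized to $n=0$: every path in $\mathcal{K}_R$ satisfies $\|(u_N(t), v_N(t)) - (u_0,v_0)\|_{\mathcal{Q}'} \le C_R t^{1/4}$ for all $t\in[0,T]$. The set
\begin{equation*}
B_R = \Big\{(u,v)\in X : \|(u(t),v(t)) - (u_0,v_0)\|_{\mathcal{Q}'} \le C_R\, t^{1/4} \text{ for a.e.\ } t\in(0,T)\Big\}
\end{equation*}
is closed in $X$ (a sequence converging in $L^2$ has an almost-everywhere convergent subsequence and the $\mathcal{Q}'$-norm inequality passes to the limit), and $\mathcal{K}_{R,N}\subset B_R$ for every $N$. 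Portmanteau gives $\pi_{4,5}\mu(B_R)\ge 1-\epsilon$ whenever $R$ is chosen large (by the uniform bounds of Proposition~\ref{uniformbound}), hence $\pi_{4,5}\mu(\bigcup_R B_R) = 1$. Combined with the continuous version from the previous step, the inequality holds pointwise on $[0,T]$, so at $t=0$ it forces $(u(0),v(0)) = (u_0,v_0)$. The support is the intersection $E_1\cap \bigcup_R B_R$, a countable union of countable intersections of closed sets, hence Borel measurable.

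The main obstacle is the jump-ruling-out argument: one must verify that bounded $p$-variation at a fixed small scale $\delta$ really does produce a well-defined right limit in $\mathcal{Q}'$ at every interior point and that a two-point partition straddling a putative jump can be arranged within mesh $\delta$, while quantifying the contribution in $\mathcal{Q}'$ (not in the ambient $L^2(\Omega_f)\times L^2(\Gamma)$ norms used to build the topology on $X$). Fortunately, the bookkeeping has already been done in Lemma~\ref{Aclosed}, so this step amounts to reading off its internal structure; the rest of the argument is a clean Portmanteau/Borel--Cantelli reduction.
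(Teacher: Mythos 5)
Your proposal is correct and follows essentially the same structure as the paper's proof: Portmanteau applied to the closed sets $A_{p,\delta_k,\epsilon_k}$ of Lemma~\ref{Aclosed} to pass the high-probability bound of Lemma~\ref{Abound} to $\pi_{4,5}\mu$, Borel--Cantelli to get a full-measure set on which $A_{p,\delta_k,\epsilon_k}$ occurs eventually, deduction of continuity in $\mathcal{Q}'$ from the vanishing $p$-variation, and then a separate Portmanteau argument via the time-shift estimate \eqref{Qprimeest} to nail down the value at $t=0$. The two minor departures from the paper are cosmetic: for continuity, the paper reads off the direct two-point bound $\|(\boldsymbol{u}(t),v(t))-(\boldsymbol{u}(t_0),v(t_0))\|_{\mathcal{Q}'}\le\epsilon_k^{1/p}$ for $|t-t_0|<\delta_k$ rather than running your argument by contradiction ruling out a right jump $J$, and for the initial-value step the paper defines $B_R$ via the integrated bound $\|(\boldsymbol{u}(\cdot),v(\cdot))-(\boldsymbol{u}_0,v_0)\|_{L^2(0,h;\mathcal{Q}')}\le C_R h^{3/4}$ while you use the pointwise-a.e.\ bound $\|(\boldsymbol{u}(t),v(t))-(\boldsymbol{u}_0,v_0)\|_{\mathcal{Q}'}\le C_R t^{1/4}$; both sets are closed in $X$ and contain $\mathcal{K}_{R}$, so both work, and the integrated version is marginally more robust since it avoids any discussion of almost-everywhere representatives when checking closedness.
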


\begin{remark}
We remark that $X \cap C(0, T; \mathcal{Q}')$ is a Borel measurable subset of $X$, and hence the statement above makes sense. To see this, note that the inclusion map
\begin{equation*}
\iota: X \to L^{2}(0, T; \mathcal{Q}')
\end{equation*}
is continuous since $L^{2}(\Omega_{f}) \times L^{2}(\Gamma)$ embeds continuously into $\mathcal{Q}'$. It suffices to show that $C(0, T; \mathcal{Q}')$ is a Borel measurable subset of $L^{2}(0, T; \mathcal{Q}')$. Then, $X \cap C(0, T; \mathcal{Q}')$ is the preimage of $C(0, T; \mathcal{Q}')$ under $\iota$, and is hence measurable in $X$, which is the desired result. 

To show that $C(0, T; \mathcal{Q}')$ is a Borel measurable subset of $L^{2}(0, T; \mathcal{Q}')$, we note that a closed ball of arbitrary radius $R$ in $L^{\infty}(0, T; \mathcal{Q}')$ is Borel measurable in $L^{2}(0, T; \mathcal{Q}')$ since it is a closed set in $L^{2}(0, T; \mathcal{Q}')$. Since one can express an open ball as a countable union of closed balls, an open ball of arbitrary radius $R$ in $L^{\infty}(0, T; \mathcal{Q}')$ is also measurable in $L^{2}(0, T; \mathcal{Q}')$. Since $C(0, T; \mathcal{Q}')$ is closed in $L^{\infty}(0, T; \mathcal{Q}')$ in the topology of $L^{\infty}(0, T; \mathcal{Q}')$, and since closed and open balls of $L^{\infty}(0, T; \mathcal{Q}')$ are Borel measurable in $L^{2}(0, T; \mathcal{Q}')$, this implies that $C(0, T; \mathcal{Q}')$ is Borel measurable in $L^{2}(0, T; \mathcal{Q}')$.
\end{remark}

\begin{proof}
Fix $p > 4$ and set $\epsilon_{k} = 2^{-k}$. Then, by Lemma \ref{Abound}, there exists a decreasing sequence of positive real numbers $\{\delta_{k}\}_{k = 1}^{\infty}$ and an increasing sequence of positive integers $\{N_{k}\}_{k = 1}^{\infty}$, such that 
\begin{equation*}
\pi_{4, 5}\mu_{N}(A_{p, \delta_{k}, \epsilon_{k}}) > 1 - \epsilon_{k}, \qquad \text{ for all } N \ge N_{k}, \text{ and } k \in \mathbb{Z}^{+}.
\end{equation*}
Note that since $\mu_{N}$ converges weakly to $\mu$, we have that $\pi_{4, 5}\mu_{N}$ converges weakly to $\pi_{4, 5}\mu$. For each fixed positive integer $k$, since $A_{p, \delta_{k}, \epsilon_{k}}$ is a closed set in $X$, we have by Portmanteau's theorem for weak convergence of probability measures that
\begin{equation*}
\pi_{4, 5}\mu(A_{p, \delta_{k}, \epsilon_{k}}) \ge \limsup_{N \to \infty} \pi_{4, 5}\mu_{N}(A_{p, \delta_{k}, \epsilon_{k}}) \ge 1 - \epsilon_{k}.
\end{equation*}
By the Borel Cantelli lemma and by the choice of $\epsilon_{k} = \frac{1}{2^{k}}$ so that $\sum_{k = 1}^{\infty} \frac{1}{2^{k}} < \infty$, 
\begin{equation*}
\pi_{4, 5}\mu(A_{p, \delta_{k}, \epsilon_{k}}^{c} \text{occurs infinitely often in $k \in \mathbb{Z}^{+}$}) = 0.
\end{equation*}
So almost surely, $\pi_{4, 5}\mu$ takes values in the set $\{A_{p, \delta_{k}, \epsilon_{k}} \text{occurs for infinitely many $k$}\}$. However, one can show that
\begin{equation}\label{continuityinclusion}
\{A_{p, \delta_{k}, \epsilon_{k}} \text{ occurs for infinitely many $k$}\} \subset X \cap C(0, T; \mathcal{Q}'),
\end{equation}
which then implies the result. To see why this is true, suppose that 
\begin{equation*}
(\boldsymbol{u}, v) \in \{A_{p, \delta_{k}, \epsilon_{k}} \text{ occurs for infinitely many $k$}\}.
\end{equation*}
By the fact that $V^{\delta_{k}}_{p}(\boldsymbol{u}, v) \le \epsilon_{k}$, we must have that for every $t_{0} \in [0, T]$ (modified appropriately for the endpoint cases $t_{0} = 0$ and $t_{0} = T$),
\begin{equation*}
||(\boldsymbol{u}(t), v(t)) - (\boldsymbol{u}(t_{0}), v(t_{0}))||_{\mathcal{Q}'} \le \epsilon_{k}^{1/p}, \qquad \text{ for all } t \in (t_{0} - \delta_{k}, t_{0} + \delta_{k}) \cap [0, T],
\end{equation*}
for any $k$ such that $(\boldsymbol{u}, v) \in A_{p, \delta_{k}, \epsilon_{k}}$. But since $(\boldsymbol{u}, v) \in A_{p, \delta_{k}, \epsilon_{k}}$ for infinitely many $k$ and since $\epsilon_{k} = 2^{-k} \to 0$ as $k \to \infty$, this implies that 
\begin{equation*}
\lim_{t \to t_{0}} (\boldsymbol{u}(t), v(t)) \text{ exists and equals } (\boldsymbol{u}(t_{0}), v(t_{0})).
\end{equation*}
This shows the desired result in \eqref{continuityinclusion}. Therefore, we have shown the first part of the lemma, that $\pi_{4, 5}\mu(X \cap C(0, T; \mathcal{Q}')) = 1$. 

It remains to show that $\pi_{4, 5}\mu$ is supported more specifically on a Borel measurable subset of $X$ that consists entirely of functions that have a version that is in $C(0, T; \mathcal{Q}')$ with value $(\boldsymbol{u}_{0}, v_{0})$ at time $t = 0$. Define the set $B_{R}$ to be the set of functions $(\boldsymbol{u}, v) \in L^{2}(0, T; L^{2}(\Omega_{f}) \times L^{2}(\Gamma))$ such that
\begin{equation}\label{BRdef}
||(\boldsymbol{u}(\cdot), v(\cdot)) - (\boldsymbol{u}_{0}, v_{0})||_{L^{2}(0, h; \mathcal{Q}')} \le C_{R}h^{3/4}, \qquad \text{ for all } 0 < h \le T,
\end{equation}
where $C_{R}$ is the constant from the estimate \eqref{Qprimeest}.

One can check that for every $R > 0$, every element of $\mathcal{K}_{R}$ satisfies \eqref{BRdef} and hence is in $B_{R}$. This is because by using \eqref{Qprimeest}, which states that
\begin{equation*}
||(\boldsymbol{u}^{n + l}_{N}, v^{n + l}_{N}) - (\boldsymbol{u}^{n}_{N}, v^{n}_{N})||_{\mathcal{Q}'} \le C_{R}(l\Delta t)^{1/4}, \qquad \text{ for all } (\boldsymbol{u}_{N}, v_{N}) \in \mathcal{K}_{R},
\end{equation*}
we have that for all $0 < h \le T$ and for all $(\boldsymbol{u}, v) \in \mathcal{K}_{R}$,
\begin{equation*}
||(\boldsymbol{u}(\cdot), v(\cdot)) - (\boldsymbol{u}_{0}, v_{0})||^{2}_{L^{2}(0, h; \mathcal{Q}')} \le \int_{0}^{h} ||(\boldsymbol{u}(s), v(s)) - (\boldsymbol{u}_{0}, v_{0})||^{2}_{\mathcal{Q}'} ds \le C_{R}^{2} h \cdot \left(h^{1/4}\right)^{2} = C_{R}^{2} h^{3/2}.
\end{equation*}
Furthermore, one checks easily that $B_{R}$ is closed in $L^{2}(0, T; L^{2}(\Omega_{f}) \times L^{2}(\Gamma))$ since a sequence that converges in $L^{2}(0, T; L^{2}(\Omega_{f}) \times L^{2}(\Gamma))$ also converges in $L^{2}(0, T; \mathcal{Q}')$, in which case one can take the limit in \eqref{BRdef} to get the corresponding property for the limit function. Since $\mathcal{K}_{R} \subset B_{R}$ and $B_{R}$ is closed in $X$, we obtain that
\begin{equation*}
\overline{\mathcal{K}_{R}} \subset B_{R} \subset X, \qquad \text{ for all } R > 0.
\end{equation*}

Consider any $\epsilon > 0$. Choose $R$ sufficiently large so that
\begin{equation*}
\pi_{4, 5}\mu_{N}(\overline{K_{R}}) > 1 - \epsilon, \qquad \text{ for all } N.
\end{equation*}
Then, by Portmanteau's theorem,
\begin{equation*}
\pi_{4, 5}\mu(B_{R}) \ge \limsup_{N \to \infty} \pi_{4, 5}\mu_{N}(B_{R}) \ge \limsup_{n \to \infty} \pi_{4, 5}\mu_{N}(\overline{K_{R}}) \ge 1 - \epsilon.
\end{equation*}
So there exists an increasing sequence $\{R_{k}\}_{k = 1}^{\infty}$ such that 
$
\pi_{4, 5}\mu\left(\bigcup_{k = 1}^{\infty} B_{R_{k}}\right) = 1.
$
Thus,
\begin{equation*}
\pi_{4, 5}\mu\left[\left(\bigcup_{k = 1}^{\infty} B_{R_{k}}\right) \cap C(0, T; \mathcal{Q}')\right] = 1,
\quad 
{\rm where} 
\quad
\left(\bigcup_{k = 1}^{\infty} B_{R_{k}}\right) \cap C(0, T; \mathcal{Q}') = \left(\bigcup_{k = 1}^{\infty} B_{R_{k}}\right) \cap X \cap C(0, T; \mathcal{Q}')
\end{equation*}
is a Borel measurable subset of $X$. However, we note that any function in $\left(\bigcup_{k = 1}^{\infty} B_{R_{k}}\right) \cap C(0, T; \mathcal{Q}')$ must have the property that its (unique) continuous version taking values in $\mathcal{Q}'$ must be equal to $(\boldsymbol{u}_{0}, v_{0})$ at $t = 0$. To see this, if instead, $(\boldsymbol{u}(0), v(0)) \ne (\boldsymbol{u}_{0}, v_{0})$, let
\begin{equation*}
d = ||(\boldsymbol{u}(0), v(0)) - (\boldsymbol{u}_{0}, v_{0})||_{\mathcal{Q}'} > 0.
\end{equation*}
Then, one can show that there exists $h_{0}$ such that for all $0 < h \le h_{0}$, 
\begin{equation*}
||(\boldsymbol{u}(\cdot), v(\cdot)) - (\boldsymbol{u}_{0}, v_{0})||_{L^{2}(0, h; \mathcal{Q}')} \ge \frac{d}{2}h^{1/2}.
\end{equation*}
Therefore, this function cannot satisfy an estimate of the type
\begin{equation*}
||(\boldsymbol{u}(\cdot), v(\cdot)) - (\boldsymbol{u}_{0}, v_{0})||_{L^{2}(0, h; \mathcal{Q}')} \le Ch^{3/4}, \qquad \text{ for all } 0 < h \le h_{0},
\end{equation*}
for any $C$, and so this function cannot be in any $B_{R}$. This completes the proof. 
\end{proof}

\if 1 = 0
\begin{lemma}
Suppose that $f_{n} \in L^{\infty}(0, T; B)$ are left continuous functions on $[0, T]$, such that
\begin{equation*}
f_{n} \to f \qquad \text{ in } L^{\infty}(0, T; B).
\end{equation*}
Then, there exists a left continuous version of $f$, which we will denote by $f^{*}$. Furthermore, 
\begin{equation*}
\sup_{t \in [0, T]} ||f^{*}(t) - f_{n}(t)||_{B} \to 0 \qquad \text{ as } n \to \infty,
\end{equation*}
so that $f_{n}$ converges uniformly to the left continuous function $f^{*}$. 
\end{lemma}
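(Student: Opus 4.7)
The plan is to upgrade the convergence $f_n \to f$ in $L^\infty(0,T;B)$ --- which is an essential-supremum statement --- to true uniform convergence by exploiting the left continuity hypothesis. The critical observation is that for a function that is left continuous on $(0,T]$, the essential supremum coincides with the actual supremum.

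First I would establish the following pointwise fact: if $g:[0,T] \to \mathbb{R}$ is left continuous on $(0,T]$, then $\sup_{t \in (0,T]} g(t) = \operatorname{ess\,sup}_{t \in [0,T]} g(t)$. The inequality $\ge$ is automatic. For the reverse, if $g(t_0) > \operatorname{ess\,sup} g$ at some $t_0 \in (0,T]$, left continuity at $t_0$ yields a $\delta > 0$ with $g(t) > \operatorname{ess\,sup} g$ on the interval $(t_0 - \delta, t_0]$, a set of positive measure --- contradicting the definition of essential supremum.

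Applying this, for each pair $n,m$ the scalar function $t \mapsto \|f_n(t) - f_m(t)\|_B$ is left continuous on $(0,T]$, because $f_n - f_m$ is left continuous as a $B$-valued function and the norm is continuous on $B$. The $L^\infty$-Cauchy property therefore reads
\[
\sup_{t \in (0,T]} \|f_n(t) - f_m(t)\|_B \;=\; \operatorname{ess\,sup}_{t \in [0,T]} \|f_n(t) - f_m(t)\|_B \;\longrightarrow\; 0
\]
as $n,m \to \infty$, so $\{f_n\}$ is uniformly Cauchy on $(0,T]$. Let $f^*$ denote the uniform limit; as a uniform limit of left continuous $B$-valued functions, $f^*$ is itself left continuous on $(0,T]$. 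I would then extend $f^*$ to $t=0$ by its right limit (which exists by the uniform Cauchy property when combined with any available regularity at $0$), or arbitrarily --- the version property is unaffected because $\{0\}$ has Lebesgue measure zero.

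Finally, since $f_n \to f$ in $L^\infty(0,T;B)$ implies $f_n \to f$ pointwise almost everywhere on $(0,T]$ (after discarding a single common null set), while $f_n \to f^*$ pointwise on $(0,T]$ by the uniform convergence, we conclude $f^* = f$ almost everywhere. Thus $f^*$ is the desired left continuous version of $f$, and the uniform convergence $\sup_t \|f^*(t) - f_n(t)\|_B \to 0$ follows by passing to the limit in the uniform Cauchy estimate. The main obstacle is really only the identification of essential and true suprema for left continuous functions; once that observation is in hand, the remainder is standard functional analysis, with the minor bookkeeping concern being the treatment of $t = 0$, which cannot obstruct the version property.
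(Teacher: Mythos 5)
Your proof is correct and takes a genuinely different, cleaner route than the paper's. The paper's proof fixes a pointwise representative of $f$, builds a null exceptional set $A = \bigcup_n A_n$ on which the pointwise convergence escapes the $L^\infty$ bound, shows $f_n \to f$ uniformly on the dense complement $[0,T]\setminus A$, and then extends $f$ to a version $f^*$ on $A$ by a hands-on, point-by-point limiting construction, separately checking that the limits along approximating sequences exist, are independent of the chosen sequence, yield a left continuous function, and inherit the uniform estimate. Your argument replaces all of this machinery with one observation --- that for a scalar function that is left continuous on $(0,T]$ the essential supremum over $[0,T]$ coincides with the true supremum over $(0,T]$ --- applied to $t \mapsto \|f_n(t)-f_m(t)\|_B$. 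This immediately upgrades $L^\infty$-Cauchyness to honest uniform Cauchyness on $(0,T]$, after which the existence and left continuity of the uniform limit $f^*$, the identity $f^*=f$ a.e., and the uniform approximation all fall out by routine soft analysis. Your route is also more robust in a technical sense: by differencing two genuine pointwise-defined functions $f_n-f_m$ rather than $f_n-f$, you never evaluate the $L^\infty$ equivalence class $f$ at individual points, whereas the paper's sets $A_n$ a priori depend on a chosen representative of $f$.

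One loose end deserves more than the word ``bookkeeping'': the endpoint $t=0$. Left continuity at $0$ is vacuous, the $L^\infty(0,T;B)$ norm carries no information about $f_n(0)$, and your essential-sup-equals-sup observation, and hence your uniform Cauchy estimate, only covers $(0,T]$. Defining $f^*(0)$ arbitrarily is perfectly adequate for the conclusion that $f^*$ is a version of $f$, but the claimed uniform convergence $\sup_{t \in [0,T]}\|f^*(t)-f_n(t)\|_B \to 0$ includes $t=0$ and does not follow from the stated hypotheses without extra input. Either the statement should read $\sup_{t \in (0,T]}$, or one should add a normalization such as $f_n(0)=f_n(0^+)$, or note that in the paper's intended application the approximants all share a fixed value at $t=0$ (the initial datum), which makes the endpoint automatic. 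Worth flagging this explicitly when the supremum is asserted over the closed interval.
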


\begin{proof}
There exists a constant $M$ such that 
\begin{equation*}
||f - f_{n}||_{L^{\infty}(0, T; B)} \le M, \qquad \text{ for all } n.
\end{equation*}

Define a strictly increasing sequence of positive integers $\{N_{k}\}_{k = 1}^{\infty}$ as follows. Set $N_{1} = 1$ and define the sequence $\{N_{k}\}_{k = 1}^{\infty}$ inductively as follows. For each positive integer $k \ge 2$, let $N_{k}$ be the smallest positive integer such that $N_{k} > N_{k - 1}$ and 
\begin{equation*}
||f - f_{n}||_{L^{\infty}(0, T; B)} \le \frac{M}{k}, \qquad \text{ for all } n \ge N_{k}.
\end{equation*}

Next, we will define a sequence of sets $\{A_{n}\}_{n = 1}^{\infty}$ defined as follows. For a given $n$, define $k(n)$ to be the smallest positive integer such that $N_{k(n)} \le n < N_{k(n) + 1}$. Then, define
\begin{equation*}
A_{n} = \left\{t \in [0, T] : ||f(t) - f_{n}(t)||_{B} > \frac{M}{k(n)}\right\},
\end{equation*}
and define
\begin{equation*}
A = \bigcup_{n = 1}^{\infty} A_{n}.
\end{equation*}
Since the sets $A_{n}$ are sets of measure zero, $A$ is also a set of measure zero. We observe that the complement of $A$ is dense in $[0, T]$, since $A$ is a set of measure zero. Note that $f_{n} \to f$ uniformly on the set $[0, T] - A$. Since the uniform limit of left continuous functions is left continuous, we have that $f$ is a left continuous function on $[0, T] - A$. 

We have that $f$ is left continuous on $[0, T] - A$, but it is not necessarily left continuous on $[0, T]$. Therefore, we must construct a version $f^{*}$ of $f$ that is left continuous. We let
\begin{equation*}
f^{*}(t) = f(t), \qquad \text{ for all } t \in [0, T] - A.
\end{equation*}
We now have to define $f^{*}$ on the set $A$, which has measure zero. Consider $t \in A$. To define $f^{*}(t)$ for $t \in A$, consider a decreasing sequence $\{t_{i}\}_{i = 1}^{\infty}$ such that $t_{i} \to t$ and $t_{i} \in [0, T] - A$. We can do this since $[0, T] - A$ is dense in $[0, T]$. 

We claim that $\{f(t_{i})\}_{i = 1}^{\infty}$ is a Cauchy sequence in $B$. Assuming this claim for now, we then have that this sequence converges, so that we can define $f^{*}(t) = \lim_{i \to \infty} f(t_{i})$. We prove this claim now. Consider an arbitrary $\epsilon > 0$. We want to show that there exists $m$ sufficiently large such that 
\begin{equation*}
||f(t_{i}) - f(t_{j})||_{B} < \epsilon, \qquad \text{ for all } i, j \ge m.
\end{equation*}
To see this, choose $k_{0}$ such that 
\begin{equation*}
\frac{M}{k_{0}} < \frac{\epsilon}{4}.
\end{equation*}
Recall that $||f(s) - f_{N_{k_{0}}}(s)||_{B} < \frac{M}{k_{0}}$ for all $s \notin [0, T] - A$ and in particular, this holds for all $s = t_{i}$ since the sequence $\{t_{i}\}_{i = 1}^{\infty}$ was chosen so that $t_{i} \in [0, T] - A$. Because $f_{N_{k_{0}}}$ is left continuous on $[0, T]$, we can choose $\delta > 0$ such that 
\begin{equation*}
||f_{N_{k_{0}}}(s) - f_{N_{k_{0}}}(t)||_{B} < \frac{\epsilon}{4}, \qquad \text{ for all } t \le s < t + \delta. 
\end{equation*}
Because $t_{i} \searrow t$, there exists $m$ sufficiently large such that for all $i \ge m$, $t \le t_{i} < t + \delta$. Then, for all $i, j \ge m$,
\begin{multline*}
||f(t_{i}) - f(t_{j})||_{B} \\
\le ||f(t_{i}) - f_{N_{k_{0}}}(t_{i})||_{B} + ||f_{N_{k_{0}}}(t_{i}) - f_{N_{k_{0}}}(t)||_{B} + ||f_{N_{k_{0}}}(t_{j}) - f(t_{j})||_{B} + ||f(t_{j}) - f_{N_{k_{0}}}(t_{j})||_{B} \\
< \frac{\epsilon}{4} + \frac{\epsilon}{4} + \frac{\epsilon}{4} + \frac{\epsilon}{4} = \epsilon.
\end{multline*}

Next, we show that this definition of $f^{*}(t)$ for $t \in A$ is well-defined. Suppose that $\{s_{i}\}$ and $\{t_{i}\}$ are both sequences that decrease to $t \in A$. We have shown that $f(s_{i})$ and $f(t_{i})$ both converge as $i \to \infty$. Thus, we must show uniqueness of the limit, that
\begin{equation*}
\lim_{i \to \infty} f(s_{i}) = \lim_{i \to \infty} f(t_{i}).
\end{equation*}
We do this by showing that given any $\epsilon > 0$, there exists $m$ such that
\begin{equation*}
||f(s_{i}) - f(t_{i})||_{B} < \epsilon, \qquad \text{ for all } i \ge m.
\end{equation*}
To do this, choose $k_{0}$ such that
\begin{equation*}
\frac{M}{k_{0}} < \frac{\epsilon}{4},
\end{equation*}
so that 
\begin{equation*}
||f(s) - f_{N_{k_{0}}}(s)||_{B} < \frac{M}{k_{0}} < \frac{\epsilon}{4}, \qquad \text{ for all } s \in [0, T] - A.
\end{equation*}
In particular, this holds for $s = s_{i}$ and $s = t_{i}$ since $s_{i}$ and $t_{i}$ are chosen to be in $[0, T] - A$. Then, by the left continuity of $f_{N_{k_{0}}}$, there exists $\delta > 0$ such that 
\begin{equation*}
||f_{N_{k_{0}}}(s) - f_{N_{k_{0}}}(t)||_{B} < \frac{\epsilon}{4}, \qquad \text{ for all } t \le s < t + \delta.
\end{equation*} 
Choose $m$ sufficiently large such that
\begin{equation*}
t \le s_{i} < t + \delta \qquad \text{ and } t \le t_{i} < t + \delta, \qquad \text{ for all } i \ge m.
\end{equation*}
Then, for all $i \ge m$,
\begin{multline*}
||f(s_{i}) - f(t_{i})||_{B} \\
\le ||f(s_{i}) - f_{N_{k_{0}}}(s_{i})||_{B} + ||f_{N_{k_{0}}}(s_{i}) - f_{N_{k_{0}}}(t)||_{B} + ||f_{N_{k_{0}}}(t_{i}) - f_{N_{k_{0}}}(t)||_{B} + ||f(t_{i}) - f_{N_{k_{0}}}(t_{i})||_{B} \\
\le \frac{\epsilon}{4} + \frac{\epsilon}{4} + \frac{\epsilon}{4} + \frac{\epsilon}{4} = \epsilon.
\end{multline*}
Therefore, 
\begin{equation*}
\lim_{i \to \infty} f(s_{i}) = \lim_{i \to \infty} f(t_{i}),
\end{equation*}
and the definition of $f^{*}(t)$ for $t \in A$ is well-defined.

Now that we have defined $f^{*}$, we must check that $f^{*}$ is indeed left continuous. Let $\{t_{i}\}_{i = 1}^{\infty}$ be any sequence $t_{i} \to t$ that decreases to $t$. We want to show that
\begin{equation*}
f^{*}(t) = \lim_{i \to \infty} f(t_{i}).
\end{equation*}
We note that this is not immediate if any of the points in the sequence $t_{i}$ belong to $A$. However, since the complement of $A$ in $[0, T]$ is dense in $[0, T]$, we can choose a sequence $\{\tau_{i}\}_{i = 1}^{\infty}$ such that 
\begin{equation*}
t_{i} \le \tau_{i} < t_{i} + \frac{1}{i}, \qquad ||f(t_{i}) - f(\tau_{i})||_{B} < \frac{1}{i}, \qquad \tau_{i} \notin A.
\end{equation*}
(Note that if $t = T$, then we can set $f^{*}(T)$ to anything to get left continuity so we are only considering the case where $0 \le t < T$. Then, if any of the $t_{i} = T$, remove these from the list). By the previous step, $\lim_{i \to \infty} f(\tau_{i}) = f^{*}(t)$ since $\tau_{i} \to t$ also. To show that $\lim_{i \to \infty} f(t_{i}) = f^{*}(t)$ also, given $\epsilon > 0$, there exists $m$ such that for all $i \ge m$,
\begin{equation*}
||f^{*}(t) - f(\tau_{i})||_{B} < \frac{\epsilon}{2}, \qquad \text{ for all } i \ge m.
\end{equation*}
We can also take $m$ sufficiently large such that 
\begin{equation*}
\frac{1}{m} < \frac{\epsilon}{2}.
\end{equation*}
Then, for all $i \ge m$,
\begin{equation*}
||f^{*}(t) - f(t_{i})||_{B} \le ||f^{*}(t) - f(\tau_{i})||_{B} + ||f(\tau_{i}) - f(t_{i})||_{B} \le \frac{\epsilon}{2} + \frac{1}{i} \le \frac{\epsilon}{2} + \frac{1}{m} < \epsilon.
\end{equation*}
Hence, $f^{*}$ is a left continuous version of $f$. 

Finally, we must show that 
\begin{equation*}
\sup_{t \in [0, T]} ||f^{*}(t) - f_{n}(t)||_{B} \to 0 \qquad \text{ as } n \to \infty.
\end{equation*}
We have that
\begin{equation*}
\sup_{t \in [0, T], t \notin A} ||f^{*}(t) - f_{n}(t)||_{B} \le \frac{M}{k(n)},
\end{equation*}
by construction. We claim that for all $n$,
\begin{equation*}
\sup_{t \in [0, T]} ||f^{*}(t) - f_{n}(t)||_{B} \le \frac{M}{k(n)} + \frac{1}{n}.
\end{equation*}
This is due to the left continuity of all of the $f_{n}$ and of $f^{*}$. In particular, given any $t \in A$, choose $t_{0} > t$ such that $t_{0} \in [0, T] - A$ and
\begin{equation*}
||f^{*}(t_{0}) - f^{*}(t)||_{B} < \frac{1}{2n}, \qquad ||f_{n}(t_{0}) - f(t)||_{B} < \frac{1}{2n}.
\end{equation*}
Then, 
\begin{equation*}
||f^{*}(t) - f_{n}(t)|| \le ||f^{*}(t_{0}) - f^{*}(t)||_{B} + ||f^{*}(t_{0}) - f_{n}(t_{0})|| + ||f_{n}(t_{0}) - f(t)||_{B} \le \frac{M}{k(n)} + \frac{1}{n}.
\end{equation*}
Since $k(n) \to \infty$ as $n \to \infty$, this establishes the desired claim. 

\end{proof}

\fi

\subsection{Skorohod representation theorem}

We now use the classical Skorohod representation theorem to translate weak convergence of probability measures to almost sure convergence of random variables, which will allow us to pass to the limit in the semidiscrete weak formulation. 
However, this will be at the expense of working on a different probability space. 
Namely, the Skorohod representation theorem provides the {\emph{existence}} of {\emph{a probability space}}, on which we 
will have almost sure convergence of new random variables with the same laws as the original approximate solutions, to a weak solution with the law $\mu$ from Theorem~\ref{weakconv}. 
This probability space is not necessarily the same as the original probability space
on which our problem is posed. Nevertheless, we can get back to the original probability space by using 
another result, known as the Gy\"{o}ngy-Krylov lemma, see Section~\ref{GKlemma}, to show that along a subsequence, the original approximate solutions on the original probability space converge almost surely to a limit with the same law $\mu$ from Theorem ~\ref{weakconv}. 

More precisely, showing convergence of our approximate solutions almost surely to a weak solution on the original probability space, 
consists of two steps. First, we use the Skorohod representation theorem to show that {\emph{there exists}} a probability space,
which we denote by ``tilde'', on which 
a sequence of random variables that are equal to our approximate solutions {\emph{in law}} converges almost surely in ${\cal{X}}$ as $N\to \infty$,
to a weak solution on the ``tilde'' probability space,
where the law of this weak solution is equal to $\mu$, obtained  in Theorem~\ref{weakconv}. 
Thus, in  this step, we prove the existence of a weak solution in a probabilistically {\emph{weak}} sense, see Definition~\ref{weak}.
Then, in step two, we show using  the Gy\"{o}ngy-Krylov lemma, that we can bring 
that weak solution back to the original probability space, implying that we will have constructed a  weak solution
in a probabilistically strong sense, see Definition~\ref{strong}, of the original continuous problem. This will complete the existence proof, which is the main result of this manuscript.

To achieve these goals, we first obtain almost sure convergence along a subsequence of approximate solutions on a ``tilde'' probability space 
using Skorohod's theorem. A statement of the Skorohod representation theorem, which holds for probability measures on complete separable metric spaces, can be found in Proposition 6.2 in \cite{LNT}.


Before we state the result, we introduce the notation ``$=_d$'' to denote 
random variables that are ``equal in distribution'' i.e., the random variables have the same laws as random variables taking values on the same given phase space ${\cal{X}}$. Namely, we will say that a random variable $X$ is equal in distribution (or equal in law) to
the random variable $\tilde{X}$, and denote 
$$
X =_d \tilde{X} \quad  {\rm if} \quad  \mu_{X} = \mu_{\tilde{X}},
$$
where $\mu_{X}$ for example is the probability measure on ${\cal{X}}$ describing the law of the random variable $X$ on ${\cal{X}}$.

Recall again the definition of the laws corresponding to the approximate solutions \eqref{muN}, and 
the definition of the corresponding phase space \eqref{phase}.
\begin{lemma}\label{properties}\label{SkorohodLemma}
Let $\mu$ denote the probability measure obtained as a weak limit of the measures $\mu_N$ from Theorem~\ref{weakconv}.
Then, there exists a probability space $(\tilde{\Omega}, \tilde{\mathcal{F}}, \tilde{\mathbb{P}})$ and $\mathcal{X}$-valued random variables 
on $(\tilde{\Omega}, \tilde{\mathcal{F}}, \tilde{\mathbb{P}})$:
\begin{equation*}
(\tilde{\eta}, \tilde{\overline{\eta}}, \tilde{\eta}^{\Delta t}, \tilde{\boldsymbol{u}}, \tilde{v}, \tilde{\boldsymbol{u}}^{*}, \tilde{v}^{*}, \tilde{\overline{\boldsymbol{u}}}, \tilde{\overline{v}}, \tilde{\boldsymbol{u}}^{\Delta t}, \tilde{v}^{\Delta t}, \tilde{W}),
\ {\rm and} \
(\tilde{\eta}_{N}, \tilde{\overline{\eta}}_{N}, \tilde{\eta}^{\Delta t}_{N}, \tilde{\boldsymbol{u}}_{N}, \tilde{v}_{N}, \tilde{\boldsymbol{u}}_{N}^{*}, \tilde{v}_{N}^{*}, \tilde{\overline{\boldsymbol{u}}}_{N}, \tilde{\overline{v}}_{N}, \tilde{\boldsymbol{u}}^{\Delta t}_{N}, \tilde{v}^{\Delta t}_{N}, \tilde{W}_{N}), \text{ for each $N$},
\end{equation*}
such that
\begin{equation*}
(\tilde{\eta}_{N}, \tilde{\overline{\eta}}_{N}, \tilde{\eta}^{\Delta t}_{N}, \tilde{\boldsymbol{u}}_{N}, \tilde{v}_{N}, \tilde{\boldsymbol{u}}_{N}^{*}, \tilde{v}_{N}^{*}, \tilde{\overline{\boldsymbol{u}}}_{N}, \tilde{\overline{v}}_{N}, \tilde{\boldsymbol{u}}^{\Delta t}_{N}, \tilde{v}^{\Delta t}_{N}, \tilde{W}_{N}) =_{d} (\eta_{N}, \overline{\eta}_{N}, \eta^{\Delta t}_{N}, \boldsymbol{u}_{N}, v_{N}, \boldsymbol{u}_{N}, v_{N}^{*}, \overline{\boldsymbol{u}}_{N}, \overline{v}_{N}, \boldsymbol{u}^{\Delta t}_{N}, v^{\Delta t}_{N}, W),
\end{equation*}
for all $N$, and
\begin{equation}\label{Skorohod}
(\tilde{\eta}_{N}, \tilde{\overline{\eta}}_{N}, \tilde{\eta}^{\Delta t}_{N}, \tilde{\boldsymbol{u}}_{N}, \tilde{v}_{N}, \tilde{\boldsymbol{u}}_{N}^{*}, \tilde{v}_{N}^{*}, \tilde{\overline{\boldsymbol{u}}}_{N}, \tilde{\overline{v}}_{N}, \tilde{\boldsymbol{u}}^{\Delta t}_{N}, \tilde{v}^{\Delta t}_{N}, \tilde{W}_{N}) 
\to 
(\tilde{\eta}, \tilde{\overline{\eta}}, \tilde{\eta}^{\Delta t}, \tilde{\boldsymbol{u}}, \tilde{v}, \tilde{\boldsymbol{u}}^{*}, \tilde{v}^{*}, \tilde{\overline{\boldsymbol{u}}}, \tilde{\overline{v}}, \tilde{\boldsymbol{u}}^{\Delta t}, \tilde{v}^{\Delta t}, \tilde{W}),
\end{equation}
a.s. in $\mathcal{X}$, as $N \to \infty$, where the law of $(\tilde{\eta}, \tilde{\overline{\eta}}, \tilde{\eta}^{\Delta t}, \tilde{\boldsymbol{u}}, \tilde{v}, \tilde{\boldsymbol{u}}^{*}, \tilde{v}^{*}, \tilde{\overline{\boldsymbol{u}}}, \tilde{\overline{v}}, \tilde{\boldsymbol{u}}^{\Delta t}, \tilde{v}^{\Delta t}, \tilde{W})$ is equal to $\mu$.

Furthermore, the following properties hold:
\begin{enumerate}
\item $\tilde{\boldsymbol{u}}_{N} = \tilde{\boldsymbol{u}}_{N}^{*}$, $\tilde{\boldsymbol{u}} = \tilde{\boldsymbol{u}}^{*} = \tilde{\overline{\boldsymbol{u}}} = \tilde{\boldsymbol{u}}^{\Delta t}$ almost surely,
$\tilde{v} = \tilde{v}^{*} = \tilde{\overline{v}} = \tilde{v}^{\Delta t}$ almost surely, and
$\tilde{\eta} = \tilde{\overline{\eta}} = \tilde{\eta}^{\Delta t}$ almost surely.
\item $\tilde{\eta} \in L^{2}(\tilde{\Omega}; W^{1, \infty}(0, T; L^{2}(\Gamma)) \cap L^{\infty}(0, T; H_{0}^{1}(\Gamma)))$,\ 
$\tilde{\boldsymbol{u}} \in L^{2}(\tilde{\Omega}; L^{2}(0, T; H^{1}(\Omega_{f})) \cap L^{\infty}(0, T; L^{2}(\Omega_{f})))$, \ and 
$\tilde{v} \in L^{2}(\tilde{\Omega}; L^{\infty}(0, T; L^{2}(\Gamma)))$.
\item $\tilde{\eta}(0) = \eta_{0}$ almost surely.
\item $\partial_{t}\tilde{\eta} = \tilde{v}$ almost surely. 
\item $(\tilde{\boldsymbol{u}}, \tilde{v}) \in C(0, T; \mathcal{Q}')$ and $(\tilde{\boldsymbol{u}}, \tilde{\eta}) \in \mathcal{W}(0, T)$ almost surely.
\item Define the filtration
\begin{equation}\label{tildefiltration}
\tilde{\mathcal{F}}_{t} = \sigma(\tilde{\eta}(s), \tilde{\boldsymbol{u}}(s), \tilde{v}(s) : 0 \le s \le t).
\end{equation}
Then $\tilde{W}$ is a Brownian motion with respect to $\tilde{\mathcal{F}}_{t}$. 
\item $(\tilde{\boldsymbol{u}}, \tilde{\eta}, \tilde{v})$ is a predictable process with respect to the filtration $\{\tilde{\mathcal{F}}_{t}\}_{0 \le t \le T}$. 
\end{enumerate}
\end{lemma}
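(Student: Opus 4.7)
The plan is to apply the Skorohod representation theorem to the weakly convergent sequence $\mu_N \Rightarrow \mu$ obtained in Theorem~\ref{weakconv}. The phase space $\mathcal{X}$ in \eqref{phase} is a finite product of separable Banach spaces together with the Polish space $C(0,T;\mathbb{R})$, and is thus itself Polish. Skorohod's theorem then produces the probability space $(\tilde{\Omega}, \tilde{\mathcal{F}}, \tilde{\mathbb{P}})$, the tilde random variables with the prescribed marginal laws, and the almost sure convergence \eqref{Skorohod}.

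For Property~1, the key observation is that $\boldsymbol{u}_N$ occupies two positions of the joint vector in \eqref{muN}, so its two-dimensional marginal is supported on the diagonal; equality of joint laws then forces $\tilde{\boldsymbol{u}}_N = \tilde{\boldsymbol{u}}_N^*$ almost surely. Lemma~\ref{almostsuresub} gives that the differences $\boldsymbol{u}_N - \overline{\boldsymbol{u}}_N$, $\boldsymbol{u}_N - \boldsymbol{u}_N^{\Delta t}$, $v_N - v_N^*$, $v_N - \overline{v}_N$, $v_N - v_N^{\Delta t}$, $\eta_N - \overline{\eta}_N$, $\eta_N - \eta_N^{\Delta t}$ tend to zero almost surely on the original space, hence in law; the same convergence therefore transfers to the tilde pairs, and combined with \eqref{Skorohod} yields $\tilde{\boldsymbol{u}} = \tilde{\boldsymbol{u}}^* = \tilde{\overline{\boldsymbol{u}}} = \tilde{\boldsymbol{u}}^{\Delta t}$ and the analogous identifications for $\tilde{v}$ and $\tilde{\eta}$. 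Property~2 follows from Proposition~\ref{uniformbound} by equality of laws, which preserves the uniform bounds in expectation, combined with Fatou's lemma applied along \eqref{Skorohod}. Property~3 is inherited from $\overline{\eta}_N(0) = \eta_0$, a deterministic identity that persists as $\tilde{\overline{\eta}}_N(0) = \eta_0$ almost surely; the $W^{1,\infty}(0,T;L^2(\Gamma))$ regularity from Property~2 gives a continuous representative of $\tilde{\overline{\eta}} = \tilde{\eta}$ and allows evaluation at $t=0$ in the limit. Property~4 follows from the pathwise distributional identity $\partial_t \overline{\eta}_N = v_N^*$ in \eqref{v_star}, carried over to $\partial_t \tilde{\overline{\eta}}_N = \tilde{v}_N^*$ by equality of joint laws, tested against a smooth compactly supported time test function, and passed to the limit using \eqref{Skorohod} together with $\tilde{v}^* = \tilde{v}$ and $\tilde{\overline{\eta}} = \tilde{\eta}$ from Property~1. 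For Property~5, the membership of $(\tilde{\boldsymbol{u}}, \tilde{v})$ in $C(0,T;\mathcal{Q}')$ is exactly the content of Lemma~\ref{continuitylemma} applied to $\pi_{4,5}\mu$, and membership of $(\tilde{\boldsymbol{u}}, \tilde{\eta})$ in $\mathcal{W}(0,T)$ follows from Property~2 together with the kinematic coupling $\tilde{\boldsymbol{u}}|_\Gamma = \tilde{v}\boldsymbol{e}_r$, which holds pathwise at each discrete level in $\mathcal{V}$ and passes to the limit in $\mathcal{X}$ by continuity of the trace operator.

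The main obstacle is Property~6. Path continuity, mean zero, and the Gaussian increment law of $\tilde{W}$ with the correct variance are automatic from $\tilde{W} =_d W$. The substantive content is the independence of $\tilde{W}(t) - \tilde{W}(s)$ from $\tilde{\mathcal{F}}_s$ in \eqref{tildefiltration} for every $0 \le s < t$. The plan is to first record the finite-$N$ independence on the tilde space: for each $N$ and each discrete time $t^n_N$, the increment $W(\tau)-W(t^n_N)$ with $\tau \ge t^n_N$ is independent of the $\sigma$-algebra generated by the approximate solution up to time $t^n_N$ (this is the content of the last proposition of the previous section); being a statement about the joint law of $(W, \boldsymbol{u}_N, v_N, \eta_N, \overline{\eta}_N, \ldots)$, it carries over verbatim to the tilde versions. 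For fixed $0 \le s < t$, a bounded continuous $G$ on $\mathbb{R}$, and a bounded continuous cylinder functional $F$ depending on $(\tilde{\boldsymbol{u}}, \tilde{v}, \tilde{\eta})$ evaluated at finitely many times $s_1 < \cdots < s_k \le s$, I would choose a sequence of discrete times $\tau_j := t^{n_j}_{N_j} \searrow s$ and factor
\[
\tilde{\mathbb{E}}\bigl[G(\tilde{W}_{N_j}(t) - \tilde{W}_{N_j}(\tau_j))\, F_{N_j}\bigr] = \tilde{\mathbb{E}}\bigl[G(\tilde{W}_{N_j}(t) - \tilde{W}_{N_j}(\tau_j))\bigr] \, \tilde{\mathbb{E}}[F_{N_j}],
\]
where $F_{N_j}$ denotes the cylinder functional evaluated on $(\tilde{\boldsymbol{u}}_{N_j}, \tilde{v}_{N_j}, \tilde{\eta}_{N_j})$ and the factorization is a consequence of the independence above applied once $s_i \le \tau_j$ holds for all $i$ (which is the case for $j$ large). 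Using \eqref{Skorohod}, Property~5, path continuity of $\tilde{W}$, and bounded convergence one can pass to the limit on both sides to obtain
\[
\tilde{\mathbb{E}}\bigl[G(\tilde{W}(t) - \tilde{W}(s))\, F\bigr] = \tilde{\mathbb{E}}\bigl[G(\tilde{W}(t) - \tilde{W}(s))\bigr] \, \tilde{\mathbb{E}}[F].
\]
A monotone class argument then extends the independence from cylinder functionals to all of $\tilde{\mathcal{F}}_s$. Property~7 is then immediate: $(\tilde{\boldsymbol{u}}, \tilde{v}, \tilde{\eta})$ is adapted to $\{\tilde{\mathcal{F}}_t\}$ by construction of the filtration, and the almost sure path regularity (continuity of $\tilde{\eta}$, and left continuity of $\tilde{\boldsymbol{u}}$ and $\tilde{v}$ with values in $\mathcal{Q}'$ from Lemma~\ref{continuitylemma}) delivers predictability by the standard characterization of predictable processes as left-continuous adapted ones.
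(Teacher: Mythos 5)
Your proposal follows essentially the same route as the paper: invoke Skorohod on the tight sequence $\mu_N$, transfer uniform estimates and pathwise identities by equivalence of laws, identify the redundant components of the limit, and establish adaptedness and the Brownian property by passing to the limit in the finite-$N$ independence. Most of your sketch is consistent with the paper's proof. There is, however, a recurring gap at the two places where you need \emph{pointwise-in-time} information from a convergence that is only almost sure in $L^{2}(0,T;\cdot)$, and this is precisely where the paper invests the most effort.

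For Property~3, you assert that the identity $\overline{\eta}_{N}(0)=\eta_{0}$ ``persists as $\tilde{\overline{\eta}}_{N}(0)=\eta_{0}$ almost surely'' and that the $W^{1,\infty}$ regularity of the limit ``allows evaluation at $t=0$ in the limit.'' Neither step is immediate: evaluation at $t=0$ is not a continuous (or even well-defined) functional on $L^{2}(0,T;L^{2}(\Gamma))$, so equality of laws on $\mathcal{X}$ does not by itself transport the boundary value, and a.s.\ convergence in $L^{2}(0,T;L^{2}(\Gamma))$ gives no control of the value at $t=0$. The paper closes this gap by introducing the closed sets $D_{M}$ of \eqref{DM}, which encode the initial value through the integrated estimate $\|\overline{\eta}(\cdot)-\eta_{0}\|_{L^{2}(0,h;L^{2}(\Gamma))}\le Mh^{3/2}$ (a Borel-measurable condition on $L^{2}(0,T;L^{2}(\Gamma))$), uses equivalence of laws to put $\tilde{\overline{\eta}}_{N}$ in $D_M$ with high probability, and then applies Portmanteau to the closed sets $D_{M}$ to carry this to the limit. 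Your sketch omits this quantitative mechanism entirely.

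For Property~6, you set up a cylinder functional $F$ of $(\tilde{\boldsymbol{u}},\tilde{v},\tilde{\eta})$ at finitely many times $s_1<\dots<s_k\le s$ and claim that bounded convergence gives $F_{N_j}\to F$. There are two missing ingredients. First, $\tilde{\boldsymbol{u}}_{N}(s_i)$ must be given a meaning: $\tilde{\boldsymbol{u}}_{N}$ is a priori only an element of $L^{2}(0,T;L^{2}(\Omega_f))$, and the paper establishes via the closed set $K_{N}$ of piecewise-constant paths (and equivalence of laws) that $\tilde{\boldsymbol{u}}_{N}$ has an a.s.\ piecewise-constant version, so that pointwise evaluation and the discrete-level measurability/independence statements transfer verbatim. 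Second, a.s.\ convergence in $L^{2}(0,T;L^{2}(\Omega_f))$ does not give $\tilde{\boldsymbol{u}}_{N}(s_i,\cdot)\to\tilde{\boldsymbol{u}}(s_i,\cdot)$ a.s.\ for a \emph{prescribed} $s_i$; the paper uses a Borel--Cantelli/Fubini argument to extract a full-measure set $S_{0}\subset[0,T]$ of good times at which pointwise a.s.\ convergence holds, proves independence for times in $S_{0}$, and then extends to all $\tau_{0}\in[0,s]$ by density and the $C(0,T;\mathcal{Q}')$ continuity of Property~5. Your use of $\tau_j\searrow s$ does not substitute for this: it addresses the endpoint of the Brownian increment, not the evaluation times $s_i$ entering $F$. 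On the other hand, your cylinder-functional formulation, which establishes joint independence of the increment from $(\tilde{\boldsymbol{u}},\tilde{v},\tilde{\eta})$ at finitely many times, is actually a more explicit statement than what the paper's write-up records (which presents only the single-time case and appeals implicitly to the same generalization), so this part of your structure is sound in spirit; it simply needs the $K_N$ and $S_0$ machinery to be made rigorous.

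A minor further remark: for Property~2 you invoke Fatou's lemma along \eqref{Skorohod}, but the norms involved (e.g.\ $L^{\infty}(0,T;\cdot)$ and $H^{1}(\Omega_f)$) are only weak-$\ast$/weakly lower semicontinuous along $L^{2}(0,T;L^{2})$-convergence; the paper instead extracts weak/weak-$\ast$ limits from the uniform $L^{2}(\tilde\Omega;\cdot)$ bounds and identifies them by uniqueness of limits, which is cleaner. This is a choice of route rather than a gap.
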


\if 1 = 0
Before proving this lemma, we will introduce the following probabilistic lemma.
\begin{lemma}
Let $\{X_{n}\}_{n = 1}^{\infty}$ be a sequence of random variables on a common probability space $(\Omega, \mathcal{F}, \mathbb{P})$ taking values in a Banach space $B$, satisfying a uniform expectation bound
\begin{equation*}
\mathbb{E}(||X_{n}||_{B}^{2}) \le C.
\end{equation*}
Then, there exists a subsequence along which $X_{n}$ is bounded, almost surely, where the subsequence may depend on the outcome $\omega \in \Omega$. 
\end{lemma}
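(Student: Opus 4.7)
The plan is to apply the Skorohod representation theorem in its standard form to the weakly convergent subsequence $\mu_N \Rightarrow \mu$ from Theorem \ref{weakconv}. Since the phase space $\mathcal{X}$ defined in \eqref{phase} is a countable product of separable Banach spaces, hence Polish, Skorohod gives a single probability space $(\tilde\Omega, \tilde{\mathcal{F}}, \tilde{\mathbb{P}})$ carrying $\mathcal{X}$-valued random variables with the same laws as the originals and with the almost sure convergence \eqref{Skorohod}. The remainder of the proof is the verification of Properties 1--7, which consists of transferring pathwise information that is encoded in the joint laws $\mu_N$ (and their limit $\mu$) to the tilde-side via a measurability/continuity argument.

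For Property 1, the identifications on the $N$-level such as $\tilde{\boldsymbol{u}}_N = \tilde{\boldsymbol{u}}_N^*$ are a direct consequence of preservation of joint laws: on the original space the pair $(\boldsymbol{u}_N, \boldsymbol{u}_N)$ is supported on the diagonal, and this property of the joint law transfers. The identifications in the limit, such as $\tilde v = \tilde v^*$, follow by combining almost sure convergence \eqref{Skorohod} with Lemma \ref{almostsuresub}: the difference norm $\|v_N - v_N^*\|_{L^2(0,T;L^2(\Gamma))}$ is a measurable function of the joint law, so $\|\tilde v_N - \tilde v_N^*\|_{L^2(0,T;L^2(\Gamma))} \to 0$ almost surely, and the triangle inequality gives $\tilde v = \tilde v^*$ a.s. The same argument treats the remaining pairings. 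Property 2 is obtained from the uniform estimates of Propositions \ref{uniformbound}--\ref{uniformetabound}, which are bounds on norms that are measurable with respect to the joint law, hence transfer to the tilde-variables; weak lower semicontinuity of the norms together with Fatou's lemma yields the stated regularity of the limits.

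For Property 3, the piecewise linear interpolant satisfies $\overline\eta_N(0) = \eta_0$ deterministically; this passes to $\tilde{\overline\eta}_N(0) = \eta_0$ a.s., and since $\tilde{\overline\eta}$ belongs almost surely to $W^{1,\infty}(0,T;L^2(\Gamma))$ by Property 2 and is therefore pointwise well-defined in $L^2(\Gamma)$, the a.s.\ convergence at $t=0$ gives $\tilde{\overline\eta}(0) = \eta_0$; invoking Property 1 concludes $\tilde\eta(0) = \eta_0$ a.s. Property 4 is obtained by passing to the limit in the distributional identity \eqref{v_star}, which reads $\partial_t \overline\eta_N = v_N^*$, using the identifications in Property 1. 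Property 5 is essentially a restatement of Lemma \ref{continuitylemma}: since the law of $(\tilde{\boldsymbol{u}}, \tilde v)$ is the $(4,5)$-projection of $\mu$, which is supported on $X \cap C(0,T;\mathcal{Q}')$, the a.s.\ continuity follows; combined with the regularity in Property 2 and the kinematic coupling at the level of the approximations (which again is a joint-law property and transfers), this gives $(\tilde{\boldsymbol{u}},\tilde\eta)\in\mathcal{W}(0,T)$ a.s.

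The main obstacle is Property 6, namely showing that $\tilde W$ is a Brownian motion with respect to the natural filtration $\tilde{\mathcal{F}}_t$ generated by $(\tilde\eta, \tilde{\boldsymbol{u}}, \tilde v)$. That $\tilde W$ has continuous paths, $\tilde W(0)=0$, and $\tilde W(t)-\tilde W(s)\sim \mathcal{N}(0,t-s)$ is immediate from equality in law with $W$; the delicate point is the independence of $\tilde W(t) - \tilde W(s)$ from $\tilde{\mathcal{F}}_s$. The plan is to use the Lévy martingale characterization: one shows, first on the approximating level and then by passing to the limit, that for any bounded continuous functional $\Phi$ of the paths of $(\tilde\eta, \tilde{\boldsymbol u}, \tilde v)$ restricted to $[0,s]$, the identities $\tilde{\mathbb E}[(\tilde W(t)-\tilde W(s))\Phi] = 0$ and $\tilde{\mathbb E}[((\tilde W(t)-\tilde W(s))^2 - (t-s))\Phi] = 0$ hold. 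At the discrete level (with $s$, $t$ of the form $k\Delta t$) these identities are true on the original probability space because, by the splitting construction, the approximate solutions up to time $s$ are $\mathcal{F}_s$-measurable and hence independent of the Brownian increment $W(t)-W(s)$, as emphasized in Remark \ref{stochastic_increment}; these identities are joint-law properties and so persist on the tilde-side. Taking $s,t$ to be arbitrary and sending $N\to\infty$ using the a.s.\ convergence \eqref{Skorohod} and dominated convergence (the functionals and increments being uniformly bounded after truncation) yields the identities in the limit, and Lévy's theorem identifies $\tilde W$ as an $\tilde{\mathcal{F}}_t$-Brownian motion. Property 7 is then immediate: $(\tilde{\boldsymbol u}, \tilde\eta, \tilde v)$ is adapted to $\tilde{\mathcal{F}}_t$ by definition of the filtration, and has left-continuous representatives (Property 5 combined with Property 4), which makes it predictable.
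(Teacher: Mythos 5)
Your proposal does not prove the stated lemma at all. The statement is an elementary, self-contained probabilistic fact: given a uniform second-moment bound $\mathbb{E}(\|X_n\|_B^2)\le C$, one must produce, for almost every outcome $\omega$, an $\omega$-dependent subsequence along which $\|X_n(\omega)\|_B$ stays bounded. What you wrote instead is an outline of the proof of the Skorohod representation lemma (the construction of the tilde probability space and the verification of Properties 1--7), which is a different result in the paper; nowhere in your argument do you use the hypothesis $\mathbb{E}(\|X_n\|_B^2)\le C$ to extract pathwise boundedness, and nothing you establish (weak convergence of laws, equality in law, L\'evy's characterization, etc.) yields the conclusion of the statement. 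Weak convergence of the laws $\mu_N$ is neither assumed in the statement nor needed for it, so the entire Skorohod machinery is beside the point here.

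The intended argument is short. For each positive integer $m$ set $A_{m,n}=\{\|X_n\|_B>m\}$, so Chebyshev's inequality gives $\mathbb{P}(A_{m,n})\le C/m^2$. Let $A_m=\liminf_{n\to\infty}A_{m,n}$, i.e.\ the event that $\|X_n\|_B\le m$ holds for only finitely many $n$; by Fatou's lemma for events, $\mathbb{P}(A_m)\le\liminf_n\mathbb{P}(A_{m,n})\le C/m^2$. The sets $A_m$ decrease in $m$, and the bad event $A=\bigcap_{m\ge1}A_m$ (on which no level $m$ is attained infinitely often, equivalently $\|X_n\|_B\to\infty$) satisfies $\mathbb{P}(A)\le C/m^2$ for every $m$, hence $\mathbb{P}(A)=0$. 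For $\omega\notin A$ there is some $m(\omega)$ with $\|X_n(\omega)\|_B\le m(\omega)$ for infinitely many $n$, which is exactly the claimed $\omega$-dependent bounded subsequence. (Equivalently, one line: Fatou gives $\mathbb{E}\bigl(\liminf_n\|X_n\|_B^2\bigr)\le C$, so $\liminf_n\|X_n\|_B<\infty$ almost surely.) You should replace your proposal with an argument of this kind.
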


\begin{proof}[Proof of probabilistic lemma]
Let $A_{m}$ denote the event
\begin{equation*}
A_{m} := \{||X_{n}||_{B} \le m \text{ for finitely many } n\}.
\end{equation*}
Note that these events $A_{M}$ are decreasing as $M$ increases. Then define the event
\begin{equation*}
A = \bigcap_{m \in \mathbb{Z}^{+}} A_{m},
\end{equation*}
and notice that for every $\omega \in A^{c}$, there exists some $M \in \mathbb{Z}^{+}$ depending on $\omega$ such that $\omega \in A^{c}_{M}$, and hence, there exists a subsequence (depending on $\omega$) for which $||X_{n}||_{B}$ along this subsequence is bounded by $M$. So it suffices to show that $\mathbb{P}(A) = 0$. 

Define the events $A_{m, n}$ by
\begin{equation*}
A_{m, n} := \{||X_{n}||_{B} > m\}.
\end{equation*}
Then, following the notation in Section 2.3 of Durrett (TODO: Cite), we have that
\begin{equation*}
A_{m} = \liminf_{n \to \infty} A_{m, n}.
\end{equation*}
Then, by Exercise 2.3.1 in Durrett (TODO: Cite),
\begin{equation*}
\mathbb{P}(A_{m}) := \mathbb{P}\left(\liminf_{n \to \infty} A_{m, n}\right) \le \liminf_{n \to \infty} \mathbb{P}(A_{m, n}) \le \frac{C}{m^{2}}.
\end{equation*}
Since $C$ is uniform in $m$, as it is the given uniform bound in expectation, we conclude that
\begin{equation*}
\mathbb{P}(A) = \lim_{m \to \infty} \mathbb{P}(A_{m}) = 0.
\end{equation*}
\end{proof}

\fi 

\begin{proof}
The existence of the probability space $(\tilde{\Omega}, \tilde{\mathcal{F}}, \tilde{\mathbb{P}})$ and the given random variables follows from the previous result on weak convergence in Theorem \ref{weakconv} and the Skorohod representation theorem. So it suffices to prove the given properties. 

\vspace{0.1in}

\noindent \textbf{Property 1:} Because $(\tilde{\boldsymbol{u}}_{N}, \tilde{\boldsymbol{u}}_{N}^{*}) =_{d} (\boldsymbol{u}_{N}, \boldsymbol{u}_{N})$, we have that
$\tilde{\boldsymbol{u}}_{N} - \tilde{\boldsymbol{u}}_{N}^{*} =_{d} 0$ as random variables taking values in $L^{2}(0, T; L^{2}(\Omega_{f}))$,
so $\tilde{\boldsymbol{u}}_{N} = \tilde{\boldsymbol{u}}_{N}^{*}$ a.s. for all $N$. Hence, by taking the limit as $N \to \infty$, we obtain $\tilde{\boldsymbol{u}} = \tilde{\boldsymbol{u}}^{*}$ a.s., since $\tilde{\boldsymbol{u}}_{N} \to \tilde{\boldsymbol{u}}$ and $\tilde{\boldsymbol{u}}_{N}^{*} \to \tilde{\boldsymbol{u}}^{*}$ in $L^{2}(0, T; L^{2}(\Omega_{f}))$ a.s.

Because $\boldsymbol{u}_{N}$ and $\overline{\boldsymbol{u}}_{N}$ actually have different laws from each other, we must use a different argument to conclude that $\tilde{\boldsymbol{u}} = \tilde{\overline{\boldsymbol{u}}}$ a.s. However, we recall the following fact \eqref{ulindiff} from the proof of Lemma \ref{almostsuresub},
\begin{equation*}
\mathbb{E}\left(||\boldsymbol{u}_{N} - \overline{\boldsymbol{u}}_{N}||^{2}_{L^{2}(0, T; L^{2}(\Omega_{f}))}\right) \to 0, \qquad \text{ as } N \to \infty.
\end{equation*}
Hence, by the equivalence of laws, 
\begin{equation*}
\tilde{\mathbb{E}}\left(||\tilde{\boldsymbol{u}}_{N} - \tilde{\overline{\boldsymbol{u}}}_{N}||^{2}_{L^{2}(0, T; L^{2}(\Omega_{f}))}\right) \to 0, \qquad \text{ as } N \to \infty.
\end{equation*}
Therefore, along a further subsequence, $||\tilde{\boldsymbol{u}}_{N} - \tilde{\overline{\boldsymbol{u}}}_{N}||^{2}_{L^{2}(0, T; L^{2}(\Omega_{f}))} \to 0$ almost surely, by a standard Borel Cantelli lemma argument. Since $\tilde{\boldsymbol{u}}_{N} \to \tilde{\boldsymbol{u}}$ and $\tilde{\overline{\boldsymbol{u}}}_{N} \to \tilde{\overline{\boldsymbol{u}}}$ in $L^{2}(0, T; L^{2}(\Omega_{f}))$, we conclude that $\tilde{\boldsymbol{u}} = \tilde{\overline{\boldsymbol{u}}}$ a.s.

The remaining statements follow from the same argument as above. In particular, by using the estimates \eqref{vstardiff}--\eqref{etashiftdiff} 
from the proof of Lemma \ref{almostsuresub}, the equivalence of laws, and the almost sure convergence of the ``tilde" random variables in \eqref{Skorohod}, we obtain the desired result.

\vspace{0.1in}

\noindent \textbf{Property 2:} These properties will all be handled similarly. By the uniform energy estimates in Lemma \ref{uniformbound} and Lemma \ref{uniformetabound}, we have that
\begin{equation*}
\mathbb{E}\left(||\overline{\eta}_{N}||_{W^{1, \infty}(0, T; L^{2}(\Gamma))}^{2}\right) \le C, \ \mathbb{E}\left(||\overline{\eta}_{N}||_{L^{ \infty}(0, T; H_{0}^{1}(\Gamma))}^{2}\right) \le C,
\end{equation*}
\begin{equation*}
\mathbb{E}\left(||\boldsymbol{u}^{\Delta t}_{N}||_{L^{2}(0, T; H^{1}(\Omega_{f}))}^{2}\right) \le C, \ \mathbb{E}\left(||\boldsymbol{u}_{N}||_{L^{\infty}(0, T; L^{2}(\Omega_{f}))}^{2}\right) \le C,
\
\mathbb{E}\left(||v_{N}||_{L^{\infty}(0, T; L^{2}(\Gamma))}^{2}\right) \le C,
\end{equation*}
for a constant $C$ that is independent of $N$. Therefore, by the equivalence of laws, we have that these uniform estimates hold for the random variables on the new probability space, so that 
\begin{equation*}
\tilde{\mathbb{E}}\left(||\tilde{\overline{\eta}}_{N}||_{W^{1, \infty}(0, T; L^{2}(\Gamma))}^{2}\right) \le C, \ \tilde{\mathbb{E}}\left(||\tilde{\overline{\eta}}_{N}||_{L^{ \infty}(0, T; H_{0}^{1}(\Gamma))}^{2}\right) \le C,
\end{equation*}
\begin{equation*}
\tilde{\mathbb{E}}\left(||\tilde{\boldsymbol{u}}^{\Delta t}_{N}||_{L^{2}(0, T; H^{1}(\Omega_{f}))}^{2}\right) \le C, \ \tilde{\mathbb{E}}\left(||\tilde{\boldsymbol{u}}_{N}||_{L^{\infty}(0, T; L^{2}(\Omega_{f}))}^{2}\right) \le C,
\
\tilde{\mathbb{E}}\left(||\tilde{v}_{N}||_{L^{\infty}(0, T; L^{2}(\Gamma))}^{2}\right) \le C,
\end{equation*}
for a constant $C$ that is independent of $N$. Therefore, by this uniform boundedness, we conclude for example that $\tilde{\overline{\eta}}_{N}$ converges weakly star in $L^{2}(\tilde{\Omega}; W^{1, \infty}(0, T; L^{2}(\Gamma)))$ and weakly star in $L^{2}(\tilde{\Omega}; L^{\infty}(0, T; H^{1}_{0}(\Gamma)))$. Since we already have that $\tilde{\overline{\eta}}_{N}$ converges to $\tilde{\overline{\eta}}$ almost surely in $L^{2}(0, T; L^{2}(\Gamma))$ and $\tilde{\overline{\eta}} = \tilde{\eta}$ almost surely by Property 1, by the uniqueness of this limit, we conclude that
$
\tilde{\overline{\eta}}_{N} \rightharpoonup \tilde{\eta}, \ \text{ weakly star in } L^{2}(\tilde{\Omega}; W^{1, \infty}(0, T; L^{2}(\Gamma)))$  and  
$L^{2}(\tilde{\Omega}; L^{\infty}(0, T; H^{1}_{0}(\Gamma)))$.

Similarly,
$\tilde{\boldsymbol{u}}^{\Delta t}_{N} \rightharpoonup \tilde{\boldsymbol{u}}^{\Delta t}$, weakly $L^{2}(\tilde{\Omega}; L^{2}(0, T; H^{1}(\Omega_{f})))$, $\tilde{\boldsymbol{u}}_{N} \rightharpoonup \tilde{\boldsymbol{u}}$ weakly star in $L^{2}(\tilde{\Omega}; L^{\infty}(0, T; L^{2}(\Omega_{f})))$,
and 
$\tilde{v}_{N} \rightharpoonup \tilde{v}$ weakly star in $L^{2}(\tilde{\Omega}; L^{\infty}(0, T; L^{2}(\Gamma)))$.
This establishes Property 2.
\vspace{0.1in}

\noindent \textbf{Property 3:} Since $\tilde{\eta} = \tilde{\overline{\eta}}$ almost surely, it suffices to show that $\tilde{\overline{\eta}}(0) = \eta_{0}$ almost surely. To do this, we use a method similar to the method in the proof of Lemma \ref{continuitylemma}. We define
\begin{equation}\label{DM}
D_{M} = \{\eta \in L^{2}(0, T; L^{2}(\Gamma)) : ||\eta(\cdot) - \eta_{0}||_{L^{2}(0, h, L^{2}(\Gamma))} \le Mh^{3/2}, \text{ for all $0 < h \le T$}\}.
\end{equation}
Because of the uniform bound
$
\mathbb{E}\left(||\overline{\eta}_{N}||^{2}_{W^{1, \infty}(0, T; L^{2}(\Gamma))}\right) \le C \ \text{ for all $N$},
$
from Lemma \ref{uniformetabound}, we have that 
\begin{equation*}
\mathbb{P}(\overline{\eta}_{N} \in D_{M}) \ge 1 - \frac{C}{M^{2}} \qquad \text{ for all $M$ and $N$},
\end{equation*}
by using Chebychev's inequality. This is because if $||\overline{\eta}_{N}||_{W^{1, \infty}(0, T; L^{2}(\Gamma))} \le M$, then from the fact that $\overline{\eta}_{N}(0) = \eta_{0}$ for all $\omega \in \Omega$ and $N$, we have that 
\begin{equation*}
 ||\overline{\eta}(\cdot) - \eta_{0}||_{L^{2}(0, h, L^{2}(\Gamma))} = \left(\int_{0}^{h} ||\overline{\eta}(s) - \eta_{0}||_{L^{2}(\Gamma)}^{2} ds\right)^{1/2} \le \left(\int_{0}^{h} (Ms)^{2} ds\right)^{1/2} \le Mh^{3/2}.
\end{equation*}
Then, by equivalence of laws,
\begin{equation*}
\tilde{\mathbb{P}}(\tilde{\overline{\eta}}_{N} \in D_{M}) \ge 1 - \frac{C}{M^{2}} \qquad \text{ for all $M$ and $N$}.
\end{equation*}
Because $D_{M}$ is a closed set in $L^{2}(0, T; L^{2}(\Gamma))$ and $\tilde{\overline{\eta}}_{N} \to \tilde{\overline{\eta}}$ in $L^{2}(0, T; L^{2}(\Gamma))$ a.s., we conclude that
\begin{equation*}
\tilde{\mathbb{P}}(\tilde{\overline{\eta}} \in D_{M}) \ge \limsup_{N \to \infty} \tilde{\mathbb{P}}(\tilde{\overline{\eta}}_{N} \in D_{M}) \ge 1 - \frac{C}{M^{2}} \ \text{ for all $M$}, \ {\text{which implies}} \ 
\tilde{\mathbb{P}}\left(\tilde{\overline{\eta}} \in \bigcup_{M = 1}^{\infty} D_{M}\right) = 1.
\end{equation*}
Because $\tilde{\overline{\eta}}$ is almost surely continuous on $[0, T]$ taking values in $L^{2}(\Gamma)$ by Property 2, we obtain $\tilde{\overline{\eta}}(0) = \eta_{0}$ almost surely. This is because 
if a continuous function $\eta$ on $[0, T]$ taking values in $L^{2}(\Gamma)$ has $\eta(0) \ne \eta_{0}$, then
\begin{equation*}
||\eta(\cdot) - \eta_{0}||_{L^{2}(0, h; L^{2}(\Gamma))} \ge \frac{d}{2}h^{1/2}, 
\end{equation*}
for all $h$ sufficiently small where $d = ||\eta(0) - \eta_{0}||_{L^{2}(\Gamma)}$, and hence $\eta$ cannot belong to $\bigcup_{M = 1}^{\infty} D_{M}$.

\vspace{0.1in}

\noindent \textbf{Property 4:} To prove this property, we recall from the second equation in the semidiscrete formulation \eqref{semi1} that
\begin{equation*}
\int_{\Gamma} \frac{\eta^{n + 1}_{N} - \eta^{n}_{N}}{\Delta t} \phi dz = \int_{\Gamma} v^{n + \frac{1}{3}}_{N} \phi dz,
\end{equation*}
almost surely for all $\phi \in L^{2}(\Gamma)$. Integrating in time, we obtain for all $N$ that
\begin{equation*}
\int_{0}^{T} \int_{\Gamma} \partial_{t}\overline{\eta}_{N} \phi dz dt = \int_{0}^{T} \int_{\Gamma} v^{*}_{N} \phi dz dt, \qquad \text{ for all } \phi \in C^{1}([0, T); L^{2}(\Gamma)),
\end{equation*}
almost surely. Because each $\overline{\eta}_{N}$ is almost surely a piecewise linear continuous function satisfying $\overline{\eta}(0) = \eta_{0}$, we obtain by integration by parts that almost surely, for all $\phi \in C^{1}([0, T); L^{2}(\Gamma))$, 
\begin{equation*}
-\eta_{0} \cdot \phi(0) - \int_{0}^{T} \int_{\Gamma} \overline{\eta}_{N} \partial_{t}\phi dz dt = \int_{0}^{T} \int_{\Gamma} v^{*}_{N} \phi dz dt,
\end{equation*}
and hence, by equivalence of laws,
\begin{equation*}
-\eta_{0} \cdot \phi(0) - \int_{0}^{T} \int_{\Gamma} \tilde{\overline{\eta}}_{N} \partial_{t}\phi dz dt = \int_{0}^{T} \int_{\Gamma} \tilde{v}^{*}_{N} \phi dz dt.
\end{equation*}
Passing to the limit, we obtain 
\begin{equation*}
-\eta_{0} \cdot \phi(0) - \int_{0}^{T} \int_{\Gamma} \tilde{\eta} \partial_{t}\phi dz dt = \int_{0}^{T} \int_{\Gamma} \tilde{v}\phi dz dt,
\end{equation*}
for all $\phi \in C^{1}([0, T); L^{2}(\Gamma))$, almost surely. This implies that $\partial_{t} \tilde{\eta} = \tilde{v}$ holds almost surely for the limiting solution, since we showed in Property 3 that $\tilde{\eta}(0) = \eta_{0}$ almost surely.

\vspace{0.1in}

\noindent \textbf{Property 5:} The fact that $(\tilde{\boldsymbol{u}}, \tilde{v}) \in C(0, T; \mathcal{Q}')$ almost surely follows from Lemma \ref{continuitylemma}, since the limiting random variables with the tildes have their law given by the probability measure $\mu$. So it remains to show that $(\tilde{\boldsymbol{u}}, \tilde{v}) \in \mathcal{W}(0, T)$, where $\mathcal{W}(0, T)$ is defined in \eqref{W}.


To establish this result, first notice that we already know from Property 2 that $\tilde{\boldsymbol{u}} \in L^{2}(\tilde{\Omega}; L^{\infty}(0, T; L^{2}(\Omega_{f})))$ and $\tilde{\boldsymbol{u}} \in L^{2}(\tilde{\Omega}; L^{2}(0, T; H^{1}(\Omega_{f})))$, and Property 2 already gives the desired result for the structure.
Thus, it remains to show  that $\tilde{\boldsymbol{u}} \in L^{2}(0, T; \mathcal{V}_{F})$ almost surely, where $\mathcal{V}_{F}$ is defined in \eqref{VF}, and that the kinematic coupling condition holds. By Property 4, we must show in particular that $\tilde{\boldsymbol{u}} = \tilde{v} \boldsymbol{e}_{r}$ a.s. on $\Gamma$. 

To do this, define the deterministic function space
\begin{equation*}
\mathcal{H} = \{(\boldsymbol{u}, v) \in L^{2}(0, T; \mathcal{V}_{F}) \times L^{2}(0, T; L^{2}(\Gamma)) : \boldsymbol{u} = v \boldsymbol{e_{r}} \text{ for almost every $t \in [0, T]$}\}.
\end{equation*}
One can check that the linear subspace $\mathcal{H}\subset L^{2}(0, T; H^{1}(\Omega_{f})) \times L^{2}(0, T; L^{2}(\Gamma))$ is closed in the Hilbert space $L^{2}(0, T; H^{1}(\Omega_{f})) \times L^{2}(0, T; L^{2}(\Gamma))$, and hence $\mathcal{H}$ is a Hilbert space with the inner product of $L^{2}(0, T; H^{1}(\Omega_{f})) \times L^{2}(0, T; L^{2}(\Gamma))$. By equivalence of laws and the uniform boundedness in Lemma \ref{uniformbound}, $(\tilde{\boldsymbol{u}}_{N}, \tilde{v}_{N})$ is uniformly bounded in $L^{2}(\tilde{\Omega}; \mathcal{H})$, and hence converges weakly to $(\tilde{\boldsymbol{u}}, \tilde{v}) \in L^{2}(\tilde{\Omega}; \mathcal{H})$ by uniqueness of the limit, since we already have that $(\tilde{\boldsymbol{u}}_{N}, \tilde{v}_{N})$ converges almost surely to $(\tilde{\boldsymbol{u}}, \tilde{v})$ in $L^{2}(0, T; L^{2}(\Omega_{f})) \times L^{2}(0, T; L^{2}(\Gamma))$. This gives the desired result. 

\if 1 = 0

Consider the Banach space
\begin{equation*}
\mathcal{H} = L^{2}(0, T; H^{1}(\Omega_{f})) \times L^{2}(0, T; L^{2}(\Gamma)).
\end{equation*}
Note that this is a deterministic Banach space making no reference to probability. It is in fact a Hilbert space, and hence, it is reflexive. Next, define for each $R > 0$, the set
\begin{multline*}
\mathcal{W}_{R} = \{(\boldsymbol{u}, v) \in L^{2}(0, T; \mathcal{V}_{F}) \times L^{2}(0, T; L^{2}(\Gamma)), \text{ such that } \\
||\boldsymbol{u}||_{L^{2}(0, T; H^{1}(\Omega_{f}))} \le R, \ ||v||_{L^{2}(0, T; L^{2}(\Gamma))} \le R, \ \boldsymbol{u} = v\boldsymbol{e}_{r} \text{ on } \Gamma \text{ for a.e. $t \in [0, T]$}\}.
\end{multline*}

For each arbitrary fixed $R > 0$, $\mathcal{W}_{R}$ is a closed, convex subset of $\mathcal{H}$. This follows from the fact that $\mathcal{V}_{F}$ is closed in $H^{1}(\Omega_{f})$ and from the continuity of the trace operator sending $H^{1}(\Omega_{f})$ to $L^{2}(\Gamma)$. In addition, $\mathcal{W}_{R}$ is clearly a bounded set in $\mathcal{H}$ by definition. Hence, by Proposition 4.8 of Appendix A of Taylor PDE I (TODO: Cite), we conclude that $\mathcal{W}_{R}$ is weakly compact in $\mathcal{H}$ for each $R > 0$.

From equivalence of laws, we have the uniform boundedness results
\begin{equation*}
\mathbb{E}\left(||\tilde{\boldsymbol{u}}_{N}||_{L^{2}(0, T; H^{1}(\Omega_{f}))}^{2}\right) = \mathbb{E}\left(||\boldsymbol{u}_{N}||_{L^{2}(0, T; H^{1}(\Omega_{f}))}^{2}\right) \le C,
\end{equation*}
\begin{equation*}
\mathbb{E}\left(||\tilde{v}_{N}||_{L^{2}(0, T; L^{2}(\Gamma))}^{2}\right) \le T \cdot \mathbb{E}\left(||v_{N}||_{L^{\infty}(0, T; L^{2}(\Gamma))}^{2}\right) \le CT.
\end{equation*}
Therefore, by applying Lemma 5.8 twice sequentially, we obtain almost surely, the existence of a subsequence for which $\tilde{\boldsymbol{u}}_{N}$ and $\tilde{v}_{N}$ are bounded. We will continue to denote the subsequence for which this is true (which depends on $\omega$) by $N$, and in particular, we will not explicitly notate this $\omega$ dependence of the particular subsequence. 

So almost surely (for almost every $\omega \in \Omega$), $(\tilde{\boldsymbol{u}}_{N}(\omega), \tilde{v}_{N}(\omega)) \in \mathcal{W}_{R}$ for some $R$ depending on $\omega$, where the subsequence depends on $\omega$. Because $\mathcal{W}_{R}$ is weakly compact in $\mathcal{H}$ for each $R > 0$, this implies that the almost sure limit under the norm of $L^{2}(0, T; L^{2}(\Omega_{f})) \times L^{2}(0, T; L^{2}(\Gamma))$, which must coincide with the weak limit in $\mathcal{H}$, satisfies
\begin{equation*}
(\tilde{\boldsymbol{u}}(\omega), \tilde{v}(\omega)) \in \mathcal{W}_{R}.
\end{equation*}
Therefore, since this holds almost surely and every function in $\mathcal{W}_{R}$ has $\boldsymbol{u} = v\boldsymbol{e}_{r}$ on $\Gamma$ for almost every $t \in [0, T]$, we have that $\tilde{\boldsymbol{u}} = \tilde{v}$ for almost every $t \in [0, T]$, almost surely.  Furthermore, by the definition of $\mathcal{W}_{R}$, we have that $u \in L^{2}(0, T; \mathcal{V}_{F})$ almost surely. This establishes Property 8. 

\fi

\vspace{0.1in}

\noindent \textbf{Property 6:} First, we sketch the idea. By construction, we have that on the original probability space, $W(t) - W(s)$ is independent of $\sigma(\boldsymbol{u}_{N}(\tau), v_{N}(\tau), \eta_{N}(\tau), \text{ for } 0 \le \tau \le s)$, where we recall that these processes $(\boldsymbol{u}_{N}(\tau), v_{N}(\tau), \eta_{N}(\tau))$ are piecewise constant on intervals of length $\Delta t = T/N$. This is because for a given time $\tau \in [0, T]$, $(\boldsymbol{u}_{N}(\tau), v_{N}(\tau), \eta_{N}(\tau))$ depends only on the values of the Brownian motion at time $\lfloor \frac{\tau}{\Delta t} \rfloor \Delta t$ or earlier, from which the claim follows by the independent increments property of Brownian motion. The idea will be to transfer this independence property over to the new random variables $(\tilde{\boldsymbol{u}}_{N}, \tilde{v}_{N}, \tilde{\eta}_{N})$ on the new probability space $(\tilde{\Omega}, \tilde{F}, \tilde{\mathbb{P}})$ and then take a limit as $N \to \infty$ to get the desired independence in the limit. 

Note that the definition of $\tilde{\mathcal{F}}_{t}$ as
\begin{equation*}
\tilde{\mathcal{F}}_{t} = \sigma(\tilde{\boldsymbol{u}}(s), \tilde{v}(s), \tilde{\eta}(s), \text{ for } 0 \le s \le t)
\end{equation*}
makes sense, since by the above properties, $\tilde{\eta}$ and $(\tilde{\boldsymbol{u}}, \tilde{v})$ are continuous on $[0, T]$ in time, taking values in $L^{2}(\Gamma)$ and $\mathcal{Q}'$ respectively. So it makes sense to refer to values pointwise at specific times, for example as in $\tilde{\boldsymbol{u}}(\tau)$ for a given $\tau \in [0, T]$. However, it is not clear yet, for example, what $\tilde{\boldsymbol{u}}_{N}(\tau)$ would be, since a priori, we only know that $\tilde{\boldsymbol{u}}_{N} \in L^{2}(0, T; L^{2}(\Omega_{f}))$, and hence, each path of $\tilde{\boldsymbol{u}}_{N}$ is only defined up to a version for $t \in [0, T]$. 

To handle this, define the set $K_{N}$ of all functions in $L^{2}(0, T; L^{2}(\Omega_{f}))$ that have a version that is piecewise constant on the intervals of the form $[0, \Delta t]$ and $(n\Delta t, (n + 1)\Delta t]$ for $1 \le n \le N - 1$, where $\Delta t = T/N$. Note that $K_{N}$ is a closed subset of $L^{2}(0, T; L^{2}(\Omega_{f}))$, so by equivalence of laws, 
\begin{equation*}
\tilde{\mathbb{P}}(\tilde{\boldsymbol{u}}_{N} \in K_{N}) = \mathbb{P}(\boldsymbol{u}_{N} \in K_{N}) = 1.
\end{equation*}
Therefore, $\tilde{\boldsymbol{u}}_{N}$ is almost surely piecewise constant on $[0, \Delta t]$ and $(n\Delta t, (n + 1)\Delta t]$ for $1 \le n \le N - 1$. The same argument shows that $\tilde{v}_{N}$ and $\tilde{\eta}_{N}$ also almost surely have versions that are piecewise constant on these same intervals, since $v_{N}$ and $\eta_{N}$ on the original probability space almost surely have this property too. 

Therefore, for each $N$, up to taking a version of $\tilde{\boldsymbol{u}}_{N}$, $\tilde{v}_{N}$, and $\tilde{\eta}_{N}$, we can define random variables $\tilde{\boldsymbol{u}}^{n}_{N}$, $\tilde{v}^{n}_{N}$, and $\tilde{\eta}^{n}_{N}$ for $0 \le n \le N - 1$, satisfying 
\begin{align*}
\tilde{\boldsymbol{u}}_{N}(t, \omega) &= \tilde{\boldsymbol{u}}^{0}_{N}(\omega), \ \text{ if } 0 \le t \le \Delta t \ {\rm and} \ 
\tilde{\boldsymbol{u}}_{N}(t, \omega) = \tilde{\boldsymbol{u}}^{n}_{N}(\omega), \ \text{ if } n\Delta t < t \le (n + 1)\Delta t,\\
\tilde{v}_{N}(t, \omega) &= \tilde{v}^{0}_{N}(\omega), \ \text{ if } 0 \le t \le \Delta t \ {\rm and} \  
\tilde{v}_{N}(t, \omega) = \tilde{v}^{n}_{N}(\omega), \ \text{ if } n\Delta t < t \le (n + 1)\Delta t,\\
\tilde{\eta}_{N}(t, \omega) &= \tilde{\eta}^{0}_{N}(\omega), \ \text{ if } 0 \le t \le \Delta t \ {\rm and} \ 
\tilde{\eta}_{N}(t, \omega) = \tilde{\eta}^{n}_{N}(\omega), \ \text{ if } n\Delta t < t \le (n + 1)\Delta t.
\end{align*}
Furthermore, by the equivalence of laws, the joint distribution of $\tilde{\boldsymbol{u}}^{n}_{N}, \tilde{v}^{n}_{N}, \tilde{\eta}^{n}_{N}$ for $0 \le n \le N - 1$ is the same as that of $\boldsymbol{u}^{n}_{N}, v^{n}_{N}, \eta^{n}_{N}$ for $0 \le n \le N - 1$. Therefore, we can now make sense of $\tilde{\boldsymbol{u}}_{N}(\tau)$ for example for any $\tau \in [0, T]$, by considering the piecewise constant versions of these stochastic processes as given above. When we refer to $\tilde{\boldsymbol{u}}_{N}$, $\tilde{v}_{N}$, and $\tilde{\eta}_{N}$, we will refer to the piecewise constant versions defined above. 

We now show the desired independence. We consider $\tau_{0} \in [0, s]$ and $0 \le s \le t$, and show that $\tilde{\boldsymbol{u}}(\tau_{0})$ and $\tilde{W}(t) - \tilde{W}(s)$ are independent. The same argument will work for $v(\tau_{0})$ and $\eta(\tau_{0})$, so it suffices to show the independence of $\tilde{W}(t) - \tilde{W}(s)$ and $\tilde{\boldsymbol{u}}(\tau_{0})$ for arbitrary $\tau_{0} \in [0, s]$ and $0 \le s \le t$. 

Recall that
$\tilde{\boldsymbol{u}}_{N} \to \tilde{\boldsymbol{u}}$  almost surely in $L^{2}(0, T; L^{2}(\Omega_{f}))$. Define the set
\begin{equation*}
E_{N, n} = \{(t, \omega) \in [0, T] \times \tilde{\Omega} : ||\tilde{\boldsymbol{u}}(t, \omega, \cdot) - \tilde{\boldsymbol{u}}_{N}(t, \omega, \cdot)||_{L^{2}(\Omega_{f})} \ge 2^{-n}\}.
\end{equation*}
For each positive integer $n$, we can choose $N := N(n)$ sufficiently large such that $N(n) > N(n - 1)$ for $n \ge 2$, and 
\begin{equation}\label{prop10borelcantelli}
(dt \times \tilde{\mathbb{P}})(E_{N(n), n}) \le 2^{-n}.
\end{equation}
To see this, one selects $N(n)$ sufficiently large so that 
\begin{equation*}
\tilde{\mathbb{P}}\left(||\tilde{u} - \tilde{u}_{N(n)}||_{L^{2}(0, T; L^{2}(\Omega_{f}))} \le 2^{-2n}\right) \ge 1 - 2^{-2n},
\end{equation*}
and then apply Chebychev's inequality in time. Then, by applying the Borel Cantelli lemma to \eqref{prop10borelcantelli}, we obtain that 
\begin{equation}\label{almostsureS}
\tilde{\boldsymbol{u}}_{N}(t, \omega, \cdot) \to \tilde{\boldsymbol{u}}(t, \omega, \cdot) \qquad \text{ in } L^{2}(\Omega_{f}),
\end{equation}
for all $(t, \omega) \in S \subset [0, T] \times \tilde{\Omega}$ for a set $S$ satisfying $(dt \times \tilde{\mathbb{P}})(S) = T$, where we continue to denote the new subsequence $N(n)$ by $N$. Thus, $([0, T] \times \tilde{\Omega}) - S$ has measure zero with respect to the measure $(dt \times \tilde{\mathbb{P}})$.

Let $S_{0} \subset [0, T]$ be the set of all $t \in [0, T]$ such that $\tilde{\mathbb{P}}((t, \omega) \in S) = 1$. By Fubini's theorem, $S_{0}$ is a measurable subset of $[0, T]$ for which $[0, T] - S_{0}$ has measure zero. Note that for each $t \in S_{0}$, $\tilde{\boldsymbol{u}}_{N}(t, \cdot) \to \tilde{\boldsymbol{u}}(t, \cdot)$ almost surely as random variables taking values in $L^{2}(\Omega_{f})$. 

So if $\tau_{0} \in S_{0}$, we deduce the independence of $\tilde{\boldsymbol{u}}(\tau_{0})$ and $\tilde{W}(t) - \tilde{W}(s)$ as follows. By the fact that $\boldsymbol{u}_{N}(\tau_{0})$ and $W(t) - W(s)$ are independent, we have by equivalence of laws that
\begin{equation*}
\tilde{\boldsymbol{u}}_{N}(\tau_{0}) \text{ and } \tilde{W}_{N}(t) - \tilde{W}_{N}(s) \text{ are independent.}
\end{equation*}
Here, $N$ denotes the subsequence $N(n)$ we used to define $S$ and $S_{0}$. However, since $\tau_{0} \in S_{0}$, we have that $\tilde{\boldsymbol{u}}(\tau_{0})$ is the almost sure limit of $\tilde{\boldsymbol{u}}_{N}(\tau_{0})$, and furthermore, $\tilde{W}(t) - \tilde{W}(s)$ is the almost sure limit of $\tilde{W}_{N}(t) - \tilde{W}_{N}(s)$. So since the almost sure limits of independent random variables are independent, this gives the desired result.

If $\tau_{0} \notin S_{0}$, since $[0, T] - S_{0}$ has measure zero in $[0, T]$, there exists a sequence $\tau_{i} \in S_{0}$ that converges to $\tau_{0}$ as $i \to \infty$, where $\tau_{i} \in [0, s]$. Then, since $\tilde{\boldsymbol{u}}(\tau_{i})$ and $\tilde{W}(t) - \tilde{W}(s)$ are independent for each $i$ and since $\tilde{\boldsymbol{u}}(\tau_{i}) \to \tilde{\boldsymbol{u}}(\tau_{0})$ almost surely by continuity, the result follows. (For the case of $\tau_{0} = 0$, we recall from Lemma \ref{continuitylemma}, that $(\tilde{\boldsymbol{u}}(0), \tilde{v}(0)) = (\boldsymbol{u}_{0}, v_{0})$ almost surely.) 

We use the equivalence of laws to verify the remaining properties of Brownian motion. In particular, we just need to show that $\tilde{W}(t) - \tilde{W}(s)$ is distributed as $N(0, t - s)$. By the equivalence of laws and the fact that $W$ is originally a Brownian motion, $\tilde{W}_{N}(t) - \tilde{W}_{N}(s) =_{d} W(t) - W(s)$, so that $\tilde{W}_{N}(t) - \tilde{W}_{N}(s)$ is distributed as $N(0, t - s)$. Since $\tilde{W}_{N} \to \tilde{W}$ a.s. in $C(0, T; \mathbb{R})$, we obtain that $\tilde{W}_{N}(t) - \tilde{W}_{N}(s) \to \tilde{W}(t) - \tilde{W}(s)$ almost surely, so that $\tilde{W}(t) - \tilde{W}(s)$ is the almost sure limit of random variables distributed as $N(0, t - s)$. Thus, we conclude that $\tilde{W}(t) - \tilde{W}(s)$ must also be distributed as $N(0, t - s)$, which concludes the proof of Property 6. 

\if 1 = 0
Since
\begin{equation*}
\tilde{\boldsymbol{u}}_{N} \to \tilde{\boldsymbol{u}}, \qquad \text{ almost surely in } L^{2}(0, T; L^{2}(\Omega_{f})),
\end{equation*}
we have that $\tilde{\boldsymbol{u}}_{N}(\tau) \to \tilde{\boldsymbol{u}}(\tau)$ in $L^{2}(\Omega; L^{2}(\Omega_{f}))$ for almost every $\tau \in [0, T]$, where we will denote the set of $\tau \in [0, T]$ for which this holds true $S$. 

So if $\tau_{0} \in S$, we deduce the independence of $\tilde{\boldsymbol{u}}(\tau_{0})$ and $\tilde{W}(t) - \tilde{W}(s)$ as follows. By the fact that $\boldsymbol{u}_{N}(\tau_{0})$ and $W(t) - W(s)$ are independent, we have by equivalence of laws that
\begin{equation*}
\tilde{\boldsymbol{u}}_{N}(\tau_{0}) \text{ and } \tilde{W}_{N}(t) - \tilde{W}_{N}(s) \text{ are independent.}
\end{equation*}
However, since $\tau_{0} \in S$, we have that
\begin{equation*}
\tilde{\boldsymbol{u}}_{N}(\tau_{0}) \to \tilde{\boldsymbol{u}}(\tau_{0}) \text{ in } L^{2}(\Omega; L^{2}(\Omega_{f})),
\end{equation*}
so by taking a subsequence, $\tilde{\boldsymbol{u}}(\tau_{0})$ is the almost sure limit of $\tilde{\boldsymbol{u}}_{N}(\tau_{0})$, and furthermore, $\tilde{W}(t) - \tilde{W}(s)$ is the almost sure limit of $\tilde{W}_{N}(t) - \tilde{W}_{N}(s)$. So since the almost sure limits of independent random variables are independent, this gives the desired result.

If $\tau_{0} \notin S$, then since $S$ has measure zero in $[0, T]$, there exists a sequence $\tau_{i}$ that converges almost surely to $\tau_{0}$ as $i \to \infty$, where $\tau_{i} \in [0, s]$. Then, since $\tilde{\boldsymbol{u}}(\tau_{i})$ and $\tilde{W}(t) - \tilde{W}(s)$ are independent for each $i$ and since $\tilde{\boldsymbol{u}}(\tau_{i}) \to \tilde{\boldsymbol{u}}(\tau_{0})$ almost surely by continuity, the result follows. (For the case of $\tau_{0} = 0$, we recall from Lemma 5.6, that $(\tilde{\boldsymbol{u}}(0), \tilde{v}(0)) = (\boldsymbol{u}_{0}, v_{0})$ almost surely.) 

We use the equivalence of laws to verify the remaining properties of Brownian motion. In particular, we just need to show that $\tilde{W}(t) - \tilde{W}(s)$ is distributed as $N(0, t - s)$. By the equivalence of laws and the fact that $W$ is originally a Brownian motion, $\tilde{W}_{N}(t) - \tilde{W}_{N}(s) =_{d} W(t) - W(s)$, so that $\tilde{W}_{N}(t) - \tilde{W}_{N}(s)$ is distributed as $N(0, t - s)$. Since $\tilde{W}_{N} \to \tilde{W}$ a.s. in $C(0, T; \mathbb{R})$, we obtain that $\tilde{W}_{N}(t) - \tilde{W}_{N}(s) \to \tilde{W}(t) - \tilde{W}(s)$ almost surely, so that $\tilde{W}(t) - \tilde{W}(s)$ is the almost sure limit of random variables distributed as $N(0, t - s)$. Thus, we conclude that $\tilde{W}(t) - \tilde{W}(s)$ must also be distributed as $N(0, t - s)$, which concludes the proof of Property 9. 
\fi

\vspace{0.1in}

\noindent \textbf{Property 7:} By the definition of $\tilde{\mathcal{F}}_{t}$, the process $(\tilde{\boldsymbol{u}}, \tilde{v}, \tilde{\eta})$ is adapted to $\tilde{\mathcal{F}}_{t}$. By Property 2, $\tilde{\eta}$ almost surely has continuous paths on $[0, T]$, taking values in $L^{2}(\Omega_{f})$. By Property 5, $(\tilde{\boldsymbol{u}}, \tilde{v})$ almost surely has continuous paths on $[0, T]$, taking values in $\mathcal{Q}'$. Since a continuous adapted process is predictable (see Proposition 5.1 in Chapter IV of Revuz and Yor \cite{RY}), this establishes the desired property. 

This completes the proof of Lemma~\ref{SkorohodLemma}.
\end{proof}

\subsection{Passing to the limit}\label{limit}
We now consider the approximate solutions defined as random variables on the probability space
$(\tilde{\Omega}, \tilde{\mathcal{F}}, \tilde{\mathbb{P}})$, discussed in Lemma~\ref{SkorohodLemma},
and show that the almost sure limit obtained in Lemma~\ref{SkorohodLemma}, satisfies the weak formulation
stated in Definition~\ref{weak}, almost surely on $(\tilde{\Omega}, \tilde{\mathcal{F}}, \tilde{\mathbb{P}})$.
For this purpose, we recall the semidiscrete formulation of the problem from \eqref{semi1}, given by
\begin{multline*}
\int_{\Omega_{f}} \frac{\boldsymbol{u}^{n + 1}_{N} - \boldsymbol{u}^{n}_{N}}{\Delta t} \cdot \boldsymbol{q} d\boldsymbol{x} + 2\mu \int_{\Omega_{f}} \boldsymbol{D}(\boldsymbol{u}^{n + 1}_{N}) : \boldsymbol{D}(\boldsymbol{q}) d\boldsymbol{x} + \int_{\Gamma} \frac{v^{n + 1}_{N} - v^{n}_{N}}{\Delta t} \psi dz + \int_{\Gamma} \nabla \eta^{n + 1}_{N} \cdot \nabla \psi dz \\
= \int_{\Gamma} \frac{W((n + 1)\Delta t) - W(n\Delta t)}{\Delta t} \psi dz + P^{n}_{N, in}\int_{0}^{R} (q_{z})|_{z = 0} dr - P^{n}_{N, out} \int_{0}^{R} (q_{z})|_{z = L} dr, 
\ \forall(\boldsymbol{q}, \psi) \in \mathcal{Q},
\end{multline*}
\begin{equation*}
\int_{\Gamma} \frac{\eta^{n + 1}_{N} - \eta^{n}_{N}}{\Delta t} \phi dz = \int_{\Gamma} v^{n + \frac{1}{3}}_{N} \phi dz, \qquad \forall \phi \in L^{2}(\Gamma),
\end{equation*}
where
$
P^{n}_{N, in/out} = \frac{1}{\Delta t} \int_{n\Delta t}^{(n + 1)\Delta t} P_{in/out}(t) dt.
$
Notice that as stated, this semidiscrete formulation refers to the original variables, defined on the original probability space.
%
Given a general $(\boldsymbol{q}, \psi) \in \mathcal{Q}(0, T)$, we use the semidiscrete formulation at each fixed time and integrate in time from $0$ to $T$ to obtain for all $(\boldsymbol{q}, \psi) \in \mathcal{Q}(0, T)$, 
\begin{eqnarray*}
\int_{0}^{T} \int_{\Omega_{f}} \partial_{t}\overline{\boldsymbol{u}}_{N} \cdot \boldsymbol{q} d\boldsymbol{x} + 2\mu \int_{0}^{T} \int_{\Omega_{f}} \boldsymbol{D}(\boldsymbol{u}^{\Delta t}_{N}) : \boldsymbol{D}(\boldsymbol{q}) d\boldsymbol{x} dt + \int_{0}^{T} \int_{\Gamma} \partial_{t}\overline{v}_{N}\psi dz dt \\
+ \int_{0}^{T} \int_{\Gamma} \nabla \eta^{\Delta t}_{N} \cdot \nabla \psi dz dt = \sum_{n = 0}^{N - 1} \int_{n\Delta t}^{(n + 1)\Delta t} \int_{\Gamma} \frac{W((n + 1)\Delta t) - W(n\Delta t)}{\Delta t} \psi dz dt \\
+ \sum_{n = 0}^{N - 1}\left(\int_{n\Delta t}^{(n + 1)\Delta t} P^{n}_{N, in}\int_{0}^{R} (q_{z})|_{z = 0} dr dt - \int_{n\Delta t}^{(n + 1)\Delta t} P^{n}_{N, out} \int_{0}^{R} (q_{z})|_{z = L} dr dt\right), 
\end{eqnarray*}

\begin{equation*}
\int_{0}^{T} \int_{\Gamma} \partial_{t}\overline{\eta}_{N} \phi dz dt = \int_{0}^{T} \int_{\Gamma} v^{*}_{N} \phi dz dt, 
\quad \forall \phi \in C^{1}(0, T; L^{2}(\Gamma)),
\end{equation*}
where $\overline{\boldsymbol{u}}_{N}, \overline{v}_{N}$ and $\overline{\eta}_{N}$ are the piecewise linear approximations, given by
\eqref{eta_bar} and \eqref{uv_bar}, and $\boldsymbol{u}^{\Delta t}_{N}$ and $\eta^{\Delta t}_{N}$ are the piecewise constant time shifted functions, given by \eqref{etatimeshift} and \eqref{uvtimeshift}.
Now, we convert to the new probability space by noticing that the same identities  hold for the new random
variables defined on the ``tilde'' probability space since the two sets of random variables have the same law on $\mathcal{X}$.
So for all $(\boldsymbol{q}, \psi) \in \mathcal{Q}(0, T)$, on the new probability space $(\tilde{\Omega}, \tilde{\mathcal{F}}, \tilde{\mathbb{P}})$ with the filtration $\{\tilde{\mathcal{F}}_{t}\}_{t \ge 0}$ defined in \eqref{tildefiltration}, we obtain
\begin{eqnarray*}
\int_{0}^{T} \int_{\Omega_{f}} \partial_{t}\tilde{\overline{\boldsymbol{u}}}_{N} \cdot \boldsymbol{q} d\boldsymbol{x} + 2\mu \int_{0}^{T} \int_{\Omega_{f}} \boldsymbol{D}(\tilde{\boldsymbol{u}}^{\Delta t}_{N}) : \boldsymbol{D}(\boldsymbol{q}) d\boldsymbol{x} dt + \int_{0}^{T} \int_{\Gamma} \partial_{t}\tilde{\overline{v}}_{N}\psi dz dt \\
+ \int_{0}^{T} \int_{\Gamma} \nabla \tilde{\eta}^{\Delta t}_{N} \cdot \nabla \psi dz dt = \sum_{n = 0}^{N - 1} \int_{n\Delta t}^{(n + 1)\Delta t} \int_{\Gamma} \frac{\tilde{W}_{N}((n + 1)\Delta t) - \tilde{W}_{N}(n\Delta t)}{\Delta t} \psi dz dt \\
+ \sum_{n = 0}^{N - 1}\left(\int_{n\Delta t}^{(n + 1)\Delta t} P^{n}_{N, in}\int_{0}^{R} (q_{z})|_{z = 0} dr - \int_{n\Delta t}^{(n + 1)\Delta t} P^{n}_{N, out} \int_{0}^{R} (q_{z})|_{z = L} dr dt\right), 
\end{eqnarray*}
\begin{equation*}
\int_{0}^{T} \int_{\Gamma} \partial_{t}\tilde{\overline{\eta}}_{N} \phi dz dt = \int_{0}^{T} \int_{\Gamma} \tilde{v}^{*}_{N} \phi dz dt
\quad
\forall\phi \in C^{1}(0, T; L^{2}(\Gamma)).
\end{equation*}
We can now pass to the limit in all of the integrals, and use the almost sure convergence of the ``tilde'' random variables as follows.

\noindent \textbf{First term:} 
For the functions on the original probability space, note that because $\boldsymbol{q}(T) = 0$, we can integrate by parts to obtain
\begin{equation*}
\int_{0}^{T} \int_{\Omega_{f}} \partial_{t}\overline{\boldsymbol{u}}_{N} \cdot \boldsymbol{q} d\boldsymbol{x} dt = -\int_{0}^{T} \int_{\Omega_{f}} \overline{\boldsymbol{u}}_{N} \cdot \partial_{t}\boldsymbol{q} d\boldsymbol{x} dt - \int_{\Omega_{f}} \boldsymbol{u}_{0} \cdot \boldsymbol{q}(0) d\boldsymbol{x}.
\end{equation*}
By equivalence of laws, this identity also holds with $\tilde{\overline{\boldsymbol{u}}}_{N}$ in place of $\overline{\boldsymbol{u}}_{N}$. Then, because $\tilde{\overline{\boldsymbol{u}}}_{N} \to \tilde{\boldsymbol{u}}$ almost surely in $L^{2}(0, T; L^{2}(\Omega_{f}))$, we can pass to the limit to obtain the desired almost sure convergence,
\begin{equation*}
\int_{0}^{T} \int_{\Omega_{f}} \partial_{t}\tilde{\overline{\boldsymbol{u}}}_{N} \cdot \boldsymbol{q} d\boldsymbol{x} \to -\int_{0}^{T} \int_{\Omega_{f}} \tilde{\boldsymbol{u}} \cdot \partial_{t}\boldsymbol{q} d\boldsymbol{x} dt - \int_{\Omega_{f}} \boldsymbol{u}_{0} \cdot \boldsymbol{q}(0) d\boldsymbol{x}.
\end{equation*}


\noindent \textbf{Third term:} For the third term, we use an argument similar to that for the first term. Since $\psi(T) = 0$, we can integrate by parts,
\begin{equation*}
\int_{0}^{T} \int_{\Gamma} \partial_{t}\overline{v}_{N}\psi dz dt = -\int_{0}^{T} \int_{\Gamma} \overline{v}_{N} \partial_{t}\psi dz dt - \int_{\Gamma} v_{0} \psi(0) dz. 
\end{equation*}
This holds with $\tilde{\overline{v}}_{N}$ in place of $\overline{v}_{N}$ too by equivalence of laws. Because $\tilde{\overline{v}}_{N} \to \tilde{v}$ in $L^{2}(0, T; L^{2}(\Gamma))$ almost surely, we have the desired almost sure convergence,
\begin{equation*}
\int_{0}^{T} \int_{\Gamma} \partial_{t}\tilde{\overline{v}}_{N}\psi dz dt = -\int_{0}^{T} \int_{\Gamma} \tilde{\overline{v}}_{N} \partial_{t}\psi dz dt - \int_{\Gamma} v_{0} \psi(0) dz \to -\int_{0}^{T} \int_{\Gamma} \tilde{v} \partial_{t}\psi dz dt - \int_{\Gamma} v_{0} \psi(0) dz.
\end{equation*}


\noindent \textbf{Second and fourth term with smooth test function:} For the second and fourth term, we have to use an approximation argument, since we only have estimates of convergence of $\tilde{\boldsymbol{u}}_{N}$ and $\tilde{\boldsymbol{u}}^{\Delta t}_{N}$ in $L^{2}(0, T; L^{2}(\Omega_{f}))$ and $\tilde{v}_{N}$ in $L^{2}(0, T; L^{2}(\Gamma))$. 

We will first show the desired convergence under the assumption that $(\boldsymbol{q}, \psi) \in \mathcal{Q}(0, T)$ is spatially smooth at each time in $[0, T]$. Then, on the original probability space,
\begin{equation*}
2\mu \int_{0}^{T} \int_{\Omega_{f}} \boldsymbol{D}(\boldsymbol{u}^{\Delta t}_{N}) : \boldsymbol{D}(\boldsymbol{q}) d\boldsymbol{x} dt = \mu \int_{0}^{T} \int_{\Omega_{f}} \nabla \boldsymbol{u}_{N}^{\Delta t} : \nabla \boldsymbol{q} d\boldsymbol{x} dt = -\mu \int_{0}^{T} \int_{\Omega_{f}} \boldsymbol{u}^{\Delta t}_{N} \cdot \Delta \boldsymbol{q} d\boldsymbol{x} dt,
\end{equation*}
where the last integration by parts has no boundary terms due to the properties of the solution space and test space for the fluid. Then, by the uniform dissipation estimate in Proposition \ref{uniformenergy},
$
\sum_{n = 0}^{N - 1} \mathbb{E}\left(||\boldsymbol{u}^{n + 1}_{N} - \boldsymbol{u}^{n}_{N}||_{L^{2}(\Omega_{f})}^{2}\right) \le C,
$
we have that
\begin{equation*}
\mathbb{E}\left(||\boldsymbol{u}_{N}^{\Delta t} - \boldsymbol{u}_{N}||^{2}_{L^{2}(0, T; L^{2}(\Omega_{f}))}\right) \le C(\Delta t) \to 0, \qquad \text{ as } N \to \infty.
\end{equation*}
By equivalence of laws, the above identities and estimates hold for $\tilde{\boldsymbol{u}}_{N}^{\Delta t}$ in place of $\boldsymbol{u}_{N}^{\Delta t}$. 
By the Borel-Cantelli lemma, we have that
\begin{equation*}
||\tilde{\boldsymbol{u}}^{\Delta t}_{N} - \tilde{\boldsymbol{u}}_{N}||_{L^{2}(0, T; L^{2}(\Omega_{f}))} \to 0 
\ \rm{almost\  surely } \ \text{as $N \to \infty$},
\end{equation*}
taking a subsequence if needed. Because $\tilde{\boldsymbol{u}}_{N}$ converges to $\tilde{\boldsymbol{u}}$ in $L^{2}(0, T; L^{2}(\Omega_{f}))$ as $N \to \infty$, we also have that
\begin{equation*}
||\tilde{\boldsymbol{u}}^{\Delta t}_{N} - \tilde{\boldsymbol{u}}||_{L^{2}(0, T; L^{2}(\Omega_{f}))} \to 0
\ \rm{almost\  surely } \ \text{as $N \to \infty$}
\end{equation*}
 along this subsequence, which allows us to pass to the limit to obtain
\begin{eqnarray}\label{secondsmooth}
2\mu \int_{0}^{T} \int_{\Omega_{f}} \boldsymbol{D}(\tilde{\boldsymbol{u}}^{\Delta t}_{N}) : \boldsymbol{D}(\boldsymbol{q}) d\boldsymbol{x} dt = -\mu \int_{0}^{T} \int_{\Omega_{f}} \tilde{\boldsymbol{u}}^{\Delta t}_{N} \cdot \Delta \boldsymbol{q} d\boldsymbol{x} dt \\
\to -\mu \int_{0}^{T} \int_{\Omega_{f}} \tilde{\boldsymbol{u}} \cdot \Delta \boldsymbol{q} d\boldsymbol{x} dt = 2\mu \int_{0}^{T} \int_{\Omega_{f}} \boldsymbol{D}(\tilde{\boldsymbol{u}}) : \boldsymbol{D}(\boldsymbol{q})d\boldsymbol{x} dt.
\nonumber
\end{eqnarray}

For the fourth term, one can use a similar argument under the assumption that the test function $(\boldsymbol{q}, \psi)$ is spatially smooth. On the original probability space, 
\begin{equation*}
\int_{0}^{T} \int_{\Gamma} \nabla \eta_{N}^{\Delta t} \cdot \nabla \psi dz dt  = -\int_{0}^{T} \int_{\Gamma} \eta_{N}^{\Delta t} \cdot \Delta \psi dz dt.
\end{equation*}
By the numerical dissipation estimate from Lemma \ref{uniformenergy},
$
\sum_{n = 0}^{N - 1} \mathbb{E}\left(||\nabla \eta^{n + \frac{1}{3}}_{N} - \nabla \eta^{n}_{N}||^{2}_{L^{2}(\Gamma)}\right) \le C,
$
so we obtain, by Poincar\'{e}'s inequality, that 
\begin{equation*}
\mathbb{E}\left(||\eta_{N}^{\Delta t} - \eta_{N}||_{L^{2}(0, T; L^{2}(\Gamma))}^{2}\right) \le C(\Delta t) \to 0, \qquad \text{ as } N \to \infty.
\end{equation*}
These estimates hold on the new probability space with $\tilde{\eta}_{N}$ in place of $\eta_{N}$. By the Borel-Cantelli lemma and the convergence of $\tilde{\eta}_{N}$ to $\tilde{\eta}$ in $L^{2}(0, T; L^{2}(\Gamma))$, 
\begin{equation*}
||\tilde{\eta}_{N}^{\Delta t} - \tilde{\eta}||_{L^{2}(0, T; L^{2}(\Gamma))} \to 0, \qquad \text{ almost surely as } N \to \infty,
\end{equation*}
taking a subsequence. This allows us to pass to the limit to obtain the almost sure convergence,
\begin{eqnarray}\label{fourthsmooth}
\int_{0}^{T} \int_{\Gamma} \nabla \tilde{\eta}_{N}^{\Delta t} \cdot \nabla \psi dz dt  = -\int_{0}^{T} \int_{\Gamma} \tilde{\eta}_{N}^{\Delta t} \cdot \Delta \psi dz dt \\
\to -\int_{0}^{T} \int_{\Gamma} \tilde{\eta} \cdot \Delta \psi dz dt = \int_{0}^{T} \int_{\Gamma} \nabla \tilde{\eta} \cdot \nabla \psi dz dt, \qquad \text{ as } N \to \infty.
\nonumber
\end{eqnarray}


\noindent \textbf{Second and fourth term with general test function:} To show the almost sure convergence in the previous step, we assumed that $(\boldsymbol{q}, \psi) \in \mathcal{Q}(0, T)$ was spatially smooth. To get the general convergence, we use an approximation argument. Suppose that $(\boldsymbol{q}, \psi) \in \mathcal{Q}(0, T)$ is not smooth spatially. It suffices to show that
$\int_{0}^{T} \int_{\Omega_{f}} \boldsymbol{D}(\tilde{\boldsymbol{u}}^{\Delta t}_{N}) : \boldsymbol{D}(\boldsymbol{q}) d\boldsymbol{x} dt \to \int_{0}^{T} \int_{\Omega_{f}} \boldsymbol{D}(\tilde{\boldsymbol{u}}) : \boldsymbol{D}(\boldsymbol{q})d\boldsymbol{x} dt$
in probability, and
$\int_{0}^{T} \int_{\Gamma} \nabla \tilde{\eta}_{N}^{\Delta t} \cdot \nabla \psi dz dt  \to \int_{0}^{T} \int_{\Gamma} \nabla \tilde{\eta} \cdot \nabla \psi dz dt$ 
 in probability (see below for the precise definition),
as we would get the desired result from the fact that we then have almost sure convergence along a subsequence. So given any $\epsilon > 0$ and $\delta > 0$, we must show that there exists $N_{0}$ such that for all $N \ge N_{0}$, 
\begin{equation}\label{probconvcond}
\tilde{\mathbb{P}}\left(\left|\int_{0}^{T} \int_{\Omega_{f}} \boldsymbol{D}(\tilde{\boldsymbol{u}}^{\Delta t}_{N}) : \boldsymbol{D}(\boldsymbol{q}) d\boldsymbol{x} dt - \int_{0}^{T} \int_{\Omega_{f}} \boldsymbol{D}(\tilde{\boldsymbol{u}}) : \boldsymbol{D}(\boldsymbol{q})d\boldsymbol{x} dt\right| > \delta\right) \le \epsilon,
\end{equation}
\begin{equation}\label{probconvcond2}
\tilde{\mathbb{P}}\left(\left|\int_{0}^{T} \int_{\Gamma} \nabla \tilde{\eta}^{\Delta t}_{N} \cdot \nabla \psi dz dt - \int_{0}^{T} \int_{\Gamma} \nabla \tilde{\eta} \cdot \nabla \psi dz dt\right| > \delta\right) \le \epsilon.
\end{equation}

To show this, observe that by the uniform dissipation estimate in Proposition \ref{uniformenergy}, we have that
\begin{equation*}
\mathbb{E}\left(\sum_{n = 0}^{N - 1} (\Delta t) \int_{\Omega_{f}} |\boldsymbol{D}(\boldsymbol{u}^{n + 1}_{N})|^{2}d\boldsymbol{x}\right) = \mathbb{E}\left(||\boldsymbol{D}(\boldsymbol{u}^{\Delta t}_{N})||_{L^{2}(0, T; L^{2}(\Omega_{f}))}^{2}\right) \le C.
\end{equation*}
and hence by equivalence of laws,
\begin{equation*}
\tilde{\mathbb{E}}\left(||\boldsymbol{D}(\tilde{\boldsymbol{u}}^{\Delta t}_{N})||_{L^{2}(0, T; L^{2}(\Omega_{f}))}^{2}\right) \le C,
\end{equation*}
for a uniform constant $C$. Since $\tilde{\boldsymbol{u}} \in L^{2}(\Omega; L^{2}(0, T; H^{1}(\Omega_{f})))$ by Property 2 of Lemma \ref{properties}, we conclude that there exists a sufficiently large positive constant $M$ such that for all $N$,
\begin{equation}\label{Mchoice1}
\tilde{\mathbb{P}}\left(||\boldsymbol{D}(\tilde{\boldsymbol{u}}^{\Delta t}_{N})||_{L^{2}(0, T; L^{2}(\Omega_{f}))} \ge M\right) \le \frac{\epsilon}{3}, \qquad \tilde{\mathbb{P}}\left(||\boldsymbol{D}(\tilde{\boldsymbol{u}})||_{L^{2}(0, T; L^{2}(\Omega_{f}))} \ge M\right) \le \frac{\epsilon}{3}.
\end{equation}
For the fourth term involving structure displacements, recall from Lemma \ref{uniformenergy} that 
\begin{equation*}
\mathbb{E}\left(||\nabla \eta^{\Delta t}_{N}||^{2}_{L^{\infty}(0, T; L^{2}(\Gamma))}\right) \le C,
\end{equation*}
and by Property 2 in Lemma \ref{properties}, $\tilde{\eta} \in L^{2}(\tilde{\Omega}; L^{\infty}(0, T; H_{0}^{1}(\Gamma)))$. So using equivalence of laws, $M$ can also be chosen sufficiently large so that for all $N$,
\begin{equation}\label{Mchoice2}
\tilde{\mathbb{P}}\left(||\nabla \tilde{\eta}^{\Delta t}_{N}||_{L^{\infty}(0, T; L^{2}(\Gamma))} \ge M\right) \le \frac{\epsilon}{3}, \qquad \tilde{\mathbb{P}}\left(||\nabla \tilde{\eta}||_{L^{\infty}(0, T; L^{2}(\Gamma))} \ge M\right) \le \frac{\epsilon}{3}.
\end{equation}

Then, choose $(\boldsymbol{\widehat{q}}, \widehat{\psi}) \in \mathcal{Q}(0, T)$ that are smooth spatially at all times in $[0, T]$, such that
\begin{equation}\label{approximation}
||\boldsymbol{D}(\boldsymbol{q}) - \boldsymbol{D}(\boldsymbol{\widehat{q}})||_{L^{2}(0, T; L^{2}(\Omega_{f}))} \le \frac{\delta}{3M}, \qquad ||\nabla \psi - \nabla \widehat{\psi}||_{L^{1}(0, T; L^{2}(\Gamma))} \le \frac{\delta}{3M}.
\end{equation}
Then, the almost sure convergences \eqref{secondsmooth} and \eqref{fourthsmooth}, which hold for this smoother $(\widehat{\boldsymbol{q}}, \widehat{\psi})$, allow us to choose $N_{0}$ sufficiently large such that for all $N \ge N_{0}$,
\begin{equation}\label{secondterm1}
\tilde{\mathbb{P}}\left(\left|\int_{0}^{T} \int_{\Omega_{f}} \boldsymbol{D}(\tilde{\boldsymbol{u}}^{\Delta t}_{N}) : \boldsymbol{D}(\widehat{\boldsymbol{q}}) d\boldsymbol{x} dt - \int_{0}^{T} \int_{\Omega_{f}} \boldsymbol{D}(\tilde{\boldsymbol{u}}) : \boldsymbol{D}(\widehat{\boldsymbol{q}})d\boldsymbol{x} dt\right| > \frac{\delta}{3}\right) \le \frac{\epsilon}{3}, 
\end{equation}
\begin{equation}\label{fourthterm1}
\tilde{\mathbb{P}}\left(\left|\int_{0}^{T} \int_{\Gamma} \nabla \tilde{\eta}^{\Delta t}_{N} \cdot \nabla \widehat{\psi} dz dt - \int_{0}^{T} \int_{\Gamma} \nabla \tilde{\eta} \cdot \nabla \widehat{\psi} dz dt\right| > \frac{\delta}{3} \right) \le \frac{\epsilon}{3}.
\end{equation}
Furthermore, the choice of $(\boldsymbol{\widehat{q}}, \widehat{\psi})$ in \eqref{approximation} and the choice of $M$ in \eqref{Mchoice1} and \eqref{Mchoice2} give that for all $N$,
\begin{equation}\label{secondterm2}
\tilde{\mathbb{P}}\left(\left|\int_{0}^{T}\int_{\Omega_{f}} \boldsymbol{D}(\tilde{\boldsymbol{u}}^{\Delta t}_{N}) : \boldsymbol{D}(\boldsymbol{q}) d\boldsymbol{x} dt - \int_{0}^{T} \int_{\Omega_{f}} \boldsymbol{D}(\tilde{\boldsymbol{u}}^{\Delta t}_{N}) : \boldsymbol{D}(\widehat{\boldsymbol{q}}) d\boldsymbol{x} dt \right| > \frac{\delta}{3}\right) \le \frac{\epsilon}{3},
\end{equation}
\begin{equation}\label{fourthterm2}
\tilde{\mathbb{P}}\left(\left|\int_{0}^{T}\int_{\Gamma} \nabla \tilde{\eta}^{\Delta t}_{N} \cdot \nabla \psi dz dt - \int_{0}^{T}\int_{\Gamma} \nabla \tilde{\eta}_{N}^{\Delta t} \cdot \nabla \widehat{\psi} dz dt \right| > \frac{\delta}{3}\right) \le \frac{\epsilon}{3},
\end{equation}
and
\begin{equation}\label{secondterm3}
\tilde{\mathbb{P}}\left(\left|\int_{0}^{T}\int_{\Omega_{f}} \boldsymbol{D}(\tilde{\boldsymbol{u}}) : \boldsymbol{D}(\boldsymbol{q}) d\boldsymbol{x} dt - \int_{0}^{T} \int_{\Omega_{f}} \boldsymbol{D}(\tilde{\boldsymbol{u}}) : \boldsymbol{D}(\widehat{\boldsymbol{q}}) d\boldsymbol{x} dt \right| > \frac{\delta}{3}\right) \le \frac{\epsilon}{3},
\end{equation}
\begin{equation}\label{fourthterm3}
\tilde{\mathbb{P}}\left(\left|\int_{0}^{T}\int_{\Gamma} \nabla \tilde{\eta} \cdot \nabla \psi dz dt - \int_{0}^{T}\int_{\Gamma} \nabla \tilde{\eta} \cdot \nabla \widehat{\psi} dz dt \right| > \frac{\delta}{3}\right) \le \frac{\epsilon}{3}.
\end{equation}
Combining the estimates \eqref{secondterm1}, \eqref{fourthterm1}, \eqref{secondterm2}, \eqref{fourthterm2}, \eqref{secondterm3}, and \eqref{fourthterm3} establishes the desired estimates \eqref{probconvcond} and \eqref{probconvcond2}, and hence proves the desired convergence in probability.

\if 1 = 0

To show this convergence, we assumed that $\boldsymbol{q}$ was smooth. To get the general convergence, we use an approximation argument. By the uniform dissipation estimate,
\begin{equation*}
\mathbb{E}\left(\sum_{n = 0}^{N - 1} (\Delta t) \mu \int_{\Omega} |\boldsymbol{D}(\boldsymbol{u}^{n + 1}_{N})|^{2}d\boldsymbol{x}\right) = \mathbb{E}\left(||\boldsymbol{D}(\tau_{\Delta t} \boldsymbol{u}_{N})||_{L^{2}(0, T; L^{2}(\Omega))}^{2}\right) \le C.
\end{equation*}
Hence, by Lemma 5.8, we have that $||\boldsymbol{D}(\tau_{\Delta t} \boldsymbol{u}_{N})||_{L^{2}(0, T; L^{2}(\Omega))}$ is bounded along a subsequence almost surely. Therefore, given arbitrary $\boldsymbol{q} \in \mathcal{Q}(0, T)$, we can obtain a sequence $\boldsymbol{q}_{N}$ where $\boldsymbol{q}_{N}$ is smooth at each time such that $||\boldsymbol{q}_{N} - \boldsymbol{q}_{N}||_{C(0, T; H^{1}(\Omega))} \to 0$. This combined with the Korn inequality implies the desired convergence.

\fi 

\if 1 = 0

\noindent \textbf{Fourth term:} We will pass to the limit in the fourth term in a similar manner as the second term. Assuming that $\psi$ is smooth, we have that 

To pass to the limit, we take a sequence $\psi_{n} \to \psi$ that converges in $C(0, T; H_{0}^{1}(\Gamma))$. We then note that because 
\begin{equation*}
\mathbb{E}\left(||\nabla (\tau_{\Delta t} \eta_{N})||_{L^{\infty}(0, T; L^{2}(\Gamma))}\right) \le C,
\end{equation*}
by the uniform energy bound, by Lemma 5.8, along a subsequence depending on the outcome, we have that the quantity $||\nabla (\tau_{\Delta t} \eta_{N})||_{L^{\infty}(0, T; L^{2}(\Gamma))}$ is almost surely bounded. Therefore, we can pass to the limit to obtain the result for general $\psi$.

\fi 
\vspace{0.1in}

\noindent \textbf{Passing to the limit in the stochastic integral.} We want to pass to the limit in the stochastic integral and show that for arbitrary $\psi$ such that $(\boldsymbol{q}, \psi) \in \mathcal{Q}(0, T)$,
\begin{equation*}
\sum_{n = 0}^{N - 1} \int_{n\Delta t}^{(n + 1)\Delta t} \int_{\Gamma} \frac{\tilde{W}_{N}((n + 1)\Delta t) - \tilde{W}_{N}(n\Delta t)}{\Delta t} \psi dz dt \to \int_{0}^{T} \left(\int_{\Gamma} \psi dz\right) d\tilde{W}, \qquad \text{ a.s. as $N \to \infty$}.
\end{equation*}
Note that because $\psi$ is deterministic, we can express the right hand side as a stochastic integral,
\begin{eqnarray*}
\sum_{n = 0}^{N - 1} \int_{n\Delta t}^{(n + 1)\Delta t} \int_{\Gamma} \frac{\tilde{W}_{N}((n + 1)\Delta t) - \tilde{W}_{N}(n\Delta t)}{\Delta t} \psi dz dt 
\hskip 1in\\
= \int_{0}^{T} \sum_{n = 0}^{N - 1} \left(\frac{1}{\Delta t} \int_{n\Delta t}^{(n + 1)(\Delta t)} \int_{\Gamma} \psi(s, z) dz ds\right) 1_{t \in (n\Delta t, (n + 1)\Delta t]}(t) d\tilde{W}_{N}(t).
\end{eqnarray*}
Because convergence in probability implies convergence almost surely along a subsequence, it thus suffices to prove that
\begin{equation*}
\int_{0}^{T} \sum_{n = 0}^{N - 1} \left(\frac{1}{\Delta t} \int_{n\Delta t}^{(n + 1)(\Delta t)} \int_{\Gamma} \psi(s, z) dz ds\right) 1_{t \in (n\Delta t, (n + 1)\Delta t]}(t) d\tilde{W}_{N} \to \int_{0}^{T}\left(\int_{\Gamma} \psi dz\right) d\tilde{W},
\end{equation*}
as $N \to \infty$ in probability.
So we must show that given any $\delta > 0$ and any $\epsilon > 0$, there exists $N_{0}$ sufficiently large such that for all $N \ge N_{0}$
\begin{equation*}
\tilde{\mathbb{P}}\left(\left|\int_{0}^{T} \sum_{n = 0}^{N - 1} \left(\frac{1}{\Delta t} \int_{n\Delta t}^{(n + 1)(\Delta t)} \int_{\Gamma} \psi(s, z) dz ds\right) 1_{t \in (n\Delta t, (n + 1)\Delta t]}(t) d\tilde{W}_{N} - \int_{0}^{T}\left(\int_{\Gamma} \psi dz\right) d\tilde{W}\right| > \delta \right) < \epsilon.
\end{equation*}

We accomplish this through two estimates. We claim that we can choose $N_{0}$ sufficiently large such that 
\begin{equation}\label{Brownianest1}
\tilde{\mathbb{P}}\left(\left|\int_{0}^{T} \sum_{n = 0}^{N - 1} \left(\frac{1}{\Delta t} \int_{n\Delta t}^{(n + 1)(\Delta t)} \int_{\Gamma} \psi(s, z) dz ds\right) 1_{t \in (n\Delta t, (n + 1)\Delta t]}(t) d\tilde{W}_{N} - \int_{0}^{T}\left(\int_{\Gamma} \psi dz\right) d\tilde{W}_{N}\right| > \frac{\delta}{2} \right) < \frac{\epsilon}{2},
\end{equation}
and
\begin{equation}\label{Brownianest2}
\tilde{\mathbb{P}}\left(\left|\int_{0}^{T} \left(\int_{\Gamma} \psi dz\right) d\tilde{W}_{N} - \int_{0}^{T} \left(\int_{\Gamma} \psi dz\right) d\tilde{W}\right| > \frac{\delta}{2} \right) < \frac{\epsilon}{2},
\end{equation}
for all $N \ge N_{0}$. 

For the first estimate \eqref{Brownianest1}, it suffices to use the Itô isometry along with the fact that 
\begin{equation*}
\left|\left|\sum_{n = 0}^{N - 1} \left(\frac{1}{\Delta t} \int_{n\Delta t}^{(n + 1)(\Delta t)} \int_{\Gamma} \psi(s, z) dz ds\right) 1_{t \in (n\Delta t, (n + 1)\Delta t]}(t) - \int_{\Gamma} \psi dz\right|\right|_{L^{2}(0, T)} \to 0, \qquad \text{ as } N \to \infty,
\end{equation*}
to conclude that
\begin{equation*}
\tilde{\mathbb{E}}\left(\left|\int_{0}^{T} \sum_{n = 0}^{N - 1} \left(\frac{1}{\Delta t} \int_{n\Delta t}^{(n + 1)(\Delta t)} \int_{\Gamma} \psi(s, z) dz ds\right) 1_{t \in (n\Delta t, (n + 1)\Delta t]}(t) d\tilde{W}_{N} - \int_{0}^{T}\left(\int_{\Gamma} \psi dz\right) d\tilde{W}_{N}\right|^{2}\right) \to 0,
\end{equation*}
as $N \to \infty$. The first estimate \eqref{Brownianest1} thus follows from taking $N_{0}$ sufficiently large to make this expectation sufficiently small, and then using Chebychev's inequality. 

For the second estimate, note that we can approximate $\int_{\Gamma}\psi(t, z) dz := g(t)$ by a \textit{deterministic} step function 
\begin{equation*}
g_{m}(t) = g\left(\frac{kT}{m}\right) \qquad \text{ if } \frac{kT}{m} < t \le \frac{(k + 1)T}{m}.
\end{equation*}
By the continuity of $g(t)$, we can select $m$ sufficiently large such that
\begin{equation*}
\left|\left|g(t) - g_{m}(t)\right|\right|_{L^{2}(0, T)}^{2} < \frac{\epsilon}{6} \cdot \left(\frac{\delta}{6}\right)^{2}.
\end{equation*}
Then, by the Itô isometry and Chebychev's inequality,
\begin{equation}\label{Brownianest2part1}
\tilde{\mathbb{P}}\left(\left|\int_{0}^{T} \left(\int_{\Gamma} \psi dz\right) d\tilde{W}_{N} - \int_{0}^{T} g_{m}(t) d\tilde{W}_{N}\right| > \frac{\delta}{6} \right) < \frac{\epsilon}{6},
\end{equation}
for all $N$, and
\begin{equation}\label{Brownianest2part2}
\tilde{\mathbb{P}}\left(\left|\int_{0}^{T} \left(\int_{\Gamma} \psi dz\right) d\tilde{W} - \int_{0}^{T} g_{m}(t) d\tilde{W}\right| > \frac{\delta}{6} \right) < \frac{\epsilon}{6}.
\end{equation}

So it remains to choose $N_{0}$ sufficiently large such that for all $N \ge N_{0}$,
\begin{equation}\label{Brownianest2part3}
\tilde{\mathbb{P}}\left(\left|\int_{0}^{T} g_{m}(t) d\tilde{W}_{N} - \int_{0}^{T} g_{m}(t) d\tilde{W}\right| > \frac{\delta}{6} \right) < \frac{\epsilon}{6}.
\end{equation}
We note that $|g_{m}(t)| \le K$ for some constant $K$ that is deterministic, as $g_{m}(t)$ is a deterministic function of time. 
Also, note that
\begin{equation*}
\int_{0}^{T}g_{m}(t) d\tilde{W}_{N} = \sum_{k = 0}^{m - 1} g\left(\frac{kT}{m}\right) \cdot \left(\tilde{W}_{N}\left(\frac{(k + 1)T}{m}\right) - \tilde{W}_{N}\left(\frac{kT}{m}\right)\right),
\end{equation*}
with an analogous formula for the integration against $\tilde{W}$. Hence,
\begin{gather*}
\left|\int_{0}^{T} g_{m}(t) d\tilde{W}_{N} - \int_{0}^{T} g_{m}(t) d\tilde{W}\right| \\
\le \sum_{k = 0}^{m - 1} \left|g\left(\frac{kT}{m}\right) \cdot \left[\left(\tilde{W}_{N}\left(\frac{(k + 1)T}{m}\right) - \tilde{W}_{N}\left(\frac{kT}{m}\right)\right) - \left(\tilde{W}\left(\frac{(k + 1)T}{m}\right) - \tilde{W}\left(\frac{kT}{m}\right)\right)\right]\right| \\
\le \sum_{k = 0}^{m - 1} 2K||\tilde{W} - \tilde{W}_{N}||_{C(0, T; \mathbb{R})} \le 2Km \cdot ||\tilde{W} - \tilde{W}_{N}||_{C(0, T; \mathbb{R})}.
\end{gather*}
Because $\tilde{W}_{N} \to \tilde{W}$ in $C(0, T; \mathbb{R})$ almost surely, there exists $N_{0}$ sufficiently large such that
\begin{equation*}
\tilde{\mathbb{P}}\left(||\tilde{W} - \tilde{W}_{N}||_{C(0, T; \mathbb{R})} > \frac{\delta}{12Km}\right) < \frac{\epsilon}{6}, \qquad \text{ for all } N \ge N_{0}.
\end{equation*}
Therefore, 
\begin{equation*}
\tilde{\mathbb{P}}\left(\left|\int_{0}^{T} g_{m}(t) d\tilde{W}_{N} - \int_{0}^{T} g_{m}(t) d\tilde{W}\right| > \frac{\delta}{6} \right) < \frac{\epsilon}{6}, \qquad \text{ for all } N \ge N_{0}.
\end{equation*}
The estimates \eqref{Brownianest2part1}, \eqref{Brownianest2part2}, and \eqref{Brownianest2part3} thus imply the desired estimate in \eqref{Brownianest2}.

\vspace{0.1in}

\noindent \textbf{Convergence of the pressure term.} Finally, we show that
\begin{equation}\label{pressureconv}
\sum_{n = 0}^{N - 1} \int_{n\Delta t}^{(n + 1)\Delta t} P^{n}_{N, in}\left(\int_{0}^{R} (q_{z})|_{z = 0} dr\right) dt \to \int_{0}^{T} P_{in}(t) \left(\int_{0}^{R} (q_{z})|_{z = 0} dr\right) dt, \qquad \text{ as } N \to \infty.
\end{equation}
The same argument will work for the outlet pressure term.

Define the following piecewise approximation of the test function $\boldsymbol{q}$,
\begin{equation*}
\boldsymbol{q}^{m}(t, \cdot) = \boldsymbol{q}\left(\frac{kT}{m}, \cdot \right), \qquad \text{ if } \frac{kT}{m} < t \le \frac{(k + 1)T}{m}.
\end{equation*} 
For any positive integer $N$, 
\begin{gather*}
\int_{0}^{T} P_{in}(t) \left(\int_{0}^{R} (q^{N}_{z})|_{z = 0} dr\right) dt - \sum_{n = 0}^{N - 1} \int_{n\Delta t}^{(n + 1)\Delta t} P^{n}_{N, in}\left(\int_{0}^{R} (q^{N}_{z})|_{z = 0} dr\right) dt \\
= \sum_{n = 0}^{N - 1} \int_{n\Delta t}^{(n + 1)\Delta t} (P_{in}(t) - P^{n}_{N, in})\left(\int_{0}^{R} (q^{N}_{z})|_{z = 0} dr\right) dt \\
= \sum_{n = 0}^{N - 1} \left(\int_{0}^{R} (q^{N}_{z})|_{z = 0} dr\right) \int_{n\Delta t}^{(n + 1)\Delta t} (P_{in}(t) - P^{n}_{N, in})dt = 0.
\end{gather*}
To establish \eqref{pressureconv}, it suffices to show that
\begin{equation}\label{pressureconv1}
\int_{0}^{T} P_{in}(t) \left(\int_{0}^{R} (q_{z})|_{z = 0} dr\right) dt - \int_{0}^{T} P_{in}(t) \left(\int_{0}^{R} (q^{N}_{z})|_{z = 0} dr\right) dt \to 0, \qquad \text{ as } N \to \infty,
\end{equation}
\begin{equation}\label{pressureconv2}
\sum_{n = 0}^{N - 1} \int_{n\Delta t}^{(n + 1)\Delta t} P^{n}_{N, in}\left(\int_{0}^{R} (q_{z})|_{z = 0} dr\right) dt - \sum_{n = 0}^{N - 1} \int_{n\Delta t}^{(n + 1)\Delta t} P^{n}_{N, in}\left(\int_{0}^{R} (q^{N}_{z})|_{z = 0} dr\right) dt \to 0, \qquad \text{ as } N \to \infty.
\end{equation}

For \eqref{pressureconv1}, we compute
\begin{gather}\nonumber
\left|\int_{0}^{T} P_{in}(t) \left(\int_{0}^{R} (q_{z})|_{z = 0} dr\right) dt - \int_{0}^{T} P_{in}(t) \left(\int_{0}^{R} (q^{N}_{z})|_{z = 0} dr\right) dt \right| \\
\nonumber
= \left|\int_{0}^{T} P_{in}(t) \left(\int_{0}^{R} (q_{z} - q_{z}^{N})|_{z = 0} dr\right) dt\right| \le ||P_{in}||_{L^{2}(0, T)} \left(\int_{0}^{T} \left(\int_{0}^{R} (q_{z} - q_{z}^{N})|_{z = 0} dr \right)^{2} dt\right)^{1/2} \\
\label{pressure1calc}
\le C ||P_{in}||_{L^{2}(0, T)} \left(\int_{0}^{T} ||\boldsymbol{q} - \boldsymbol{q}^{N}||^{2}_{H^{1}(\Omega_{f})} dt\right)^{1/2}.
\end{gather}
Because $\boldsymbol{q}$ is continuous taking values in $\mathcal{V}_{F}$ equipped with the norm of $H^{1}(\Omega_{f})$, we have that 
$||\boldsymbol{q} - \boldsymbol{q}^{N}||_{H^{1}(\Omega_{f})} \to 0$ uniformly on $[0, T]$ as $N \to \infty$, which establishes the desired limit.
Similary, to estabish \eqref{pressureconv2} we calculate
\begin{gather*}\label{pressure2calc}
 \left|\sum_{n = 0}^{N - 1} P^{n}_{N, in} \int_{n\Delta t}^{(n + 1)\Delta t} \left(\int_{0}^{R} (q_{z} - q^{N}_{z})|_{z = 0} dr\right) dt\right| 
\le \left|\sum_{n = 0}^{N - 1} (\Delta t)^{1/2} P^{n}_{N, in} \left(\int_{n\Delta t}^{(n + 1)\Delta t} \left(\int_{0}^{R} (q_{z} - q^{N}_{z})|_{z = 0} dr\right)^{2} dt\right)^{1/2}\right| \\
\le C\left|\sum_{n = 0}^{N - 1} (\Delta t)^{1/2} P^{n}_{N, in} \left(\int_{n \Delta t}^{(n + 1)\Delta t} ||\boldsymbol{q} - \boldsymbol{q}^{N}||^{2}_{H^{1}(\Omega_{f})} dt \right)^{1/2}\right| \\
\le C\left(\sum_{n = 0}^{N - 1} (\Delta t)^{1/2} |P^{n}_{N, in}|\right) \cdot \max_{0 \le n \le N - 1} \left(\int_{n \Delta t}^{(n + 1)\Delta t} ||\boldsymbol{q} - \boldsymbol{q}^{N}||^{2}_{H^{1}(\Omega_{f})} dt \right)^{1/2} \\
\le C \left(\sum_{n = 0}^{N - 1} \frac{1}{(\Delta t)^{1/2}} \int_{n\Delta t}^{(n + 1)\Delta t} |P_{in}(t)| dt\right) \cdot \max_{0 \le n \le N - 1} \left(\int_{n \Delta t}^{(n + 1)\Delta t} ||\boldsymbol{q} - \boldsymbol{q}^{N}||^{2}_{H^{1}(\Omega_{f})} dt \right)^{1/2} \\
\le C ||P_{in}||_{L^{2}(0, T)} \cdot \max_{0 \le n \le N - 1} \left(\int_{n \Delta t}^{(n + 1)\Delta t} ||\boldsymbol{q} - \boldsymbol{q}^{N}||^{2}_{H^{1}(\Omega_{f})} dt \right)^{1/2}.
\end{gather*}
Again, because $\boldsymbol{q}$ is continuous taking values in $\mathcal{V}_{F}$ equipped with the norm of $H^{1}(\Omega_{f})$, we have that $||\boldsymbol{q} - \boldsymbol{q}^{N}||_{H^{1}(\Omega_{f})} \to 0$ uniformly on $[0, T]$ as $N \to \infty$, which establishes the desired limit. 

We have, therefore, established
the existence of a weak solution to the stochastic fluid-structure interaction problem in a probabilistically weak sense, as in Definition \ref{weak}.

\section{Return to the original probability space}\label{original_space}

We have thus constructed a stochastic process $(\tilde{\boldsymbol{u}}, \tilde{\eta})$,
which satisfies the weak formulation of the continuous problem almost surely on the ``tilde'' probability space
determined by the Skorohod representation theorem. 
However, we want to bring the solution back to the original probability space. In particular, we must get convergence of the original approximate solutions $(\boldsymbol{u}_{N}, v_{N}, \eta_{N})$ on the original probability space $(\Omega, \mathcal{F}, \mathbb{P})$ with the original given complete filtration $\{\mathcal{F}_{t}\}_{t \ge 0}$ and the original Brownian motion $W(t)$.

To do this, we will use a standard \textit{Gy\"{o}ngy-Krylov argument} based on the following lemma, see Lemma 1.1 in \cite{GK} and Proposition 6.3 in \cite{LNT}.
\begin{lemma}[Gy\"{o}ngy-Krylov lemma]\label{GK}
Let $\{X_{n}\}_{n = 1}^{\infty}$ be a sequence of random variables defined on a probability space $(\Omega, \mathcal{F}, \mathbb{P})$ taking values in a separable Banach space $B$. Then {\bf{$X_n$ converges in probability}} to some $B$-valued random variable $X^*$
if and only if for every two subsequences $X_{l_{k}}$ and $X_{m_{k}}$ of $X_{n}$, there exists a further subsequence of $x_{k} = (X_{l_k},X_{m_k})$ whose laws converge weakly to a probability measure 
$\nu$ on $B \times B$ that is supported on the diagonal  
$\{(x, y) \in B \times B : x = y\}$.
\end{lemma}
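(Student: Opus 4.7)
The plan is to prove the biconditional by handling each direction separately, with the nontrivial content residing in the ``if'' direction. Throughout I will use that $B$, being a separable Banach space, is a complete separable metric space, and hence that the space of $B$-valued random variables on $(\Omega,\mathcal{F},\mathbb{P})$ equipped with convergence in probability is itself complete: every Cauchy-in-probability sequence admits a limit in probability (this follows from the standard fact that such a sequence has an almost surely convergent subsequence, whose limit is then also the limit in probability of the full sequence).

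For the \emph{only if} direction, suppose $X_n \to X^*$ in probability. Then every subsequence $X_{l_k}$ and every subsequence $X_{m_k}$ also converges in probability to the same $X^*$, and so the joint sequence $(X_{l_k}, X_{m_k})$ converges in probability, and therefore in law, to the constant random pair $(X^*, X^*)$ in $B \times B$. The law of $(X^*, X^*)$ is supported on the diagonal $\Delta := \{(x,x) : x \in B\}$, so the required joint convergence already holds along the full index $k$ with no further extraction.

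For the \emph{if} direction I would argue by contradiction. Suppose $\{X_n\}$ is not Cauchy in probability. Then there exist $\epsilon > 0$ and subsequences $X_{l_k}, X_{m_k}$ with
\[ \mathbb{P}\bigl(\|X_{l_k} - X_{m_k}\|_B \ge \epsilon\bigr) \ge \epsilon \quad \text{for all } k. \]
By hypothesis, along a further subsequence (which I continue to denote by $k$) the joint laws of $(X_{l_k}, X_{m_k})$ converge weakly to some probability measure $\nu$ on $B \times B$ with $\nu(\Delta) = 1$. The set $F_\epsilon := \{(x,y) \in B \times B : \|x - y\|_B \ge \epsilon\}$ is closed in $B \times B$ and disjoint from $\Delta$, hence $\nu(F_\epsilon) = 0$. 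The Portmanteau theorem applied to the closed set $F_\epsilon$ then yields
\[ \limsup_{k\to\infty} \mathbb{P}\bigl((X_{l_k}, X_{m_k}) \in F_\epsilon\bigr) \le \nu(F_\epsilon) = 0, \]
which contradicts the choice of subsequences. Thus $\{X_n\}$ is Cauchy in probability, and by the completeness recalled above it converges in probability to some $B$-valued random variable $X^*$.

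The proof is essentially soft analysis; the main point that has to be gotten right (the ``hard part'', such as it is) is the reduction of convergence in probability to the Cauchy criterion via two subsequences, combined with the observation that the diagonal-support hypothesis is exactly what is needed to apply Portmanteau to the closed set $F_\epsilon$. Once those two ingredients are identified, no further estimates are required, and completeness of the metric of convergence in probability furnishes the limit $X^*$ automatically.
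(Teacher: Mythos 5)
The paper does not prove this lemma; it cites Lemma 1.1 of Gy\"ongy--Krylov \cite{GK} and Proposition 6.3 of \cite{LNT}, so there is no internal proof to compare against. Your argument is correct and is essentially the standard proof from the literature. The only-if direction is immediate: convergence in probability of $X_n$ gives convergence in probability, hence in law, of the pair $(X_{l_k}, X_{m_k})$ to $(X^*,X^*)$, whose law sits on the diagonal. For the if direction, your reduction to the Cauchy criterion is the key move: the negation of Cauchy-in-probability yields subsequences with $\mathbb{P}(\|X_{l_k}-X_{m_k}\|_B\ge\epsilon)\ge\epsilon$ uniformly in $k$, and since this bound survives passage to any further subsequence, Portmanteau applied to the closed set $F_\epsilon$ (which is disjoint from the diagonal, so $\nu(F_\epsilon)=0$) delivers the contradiction. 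Two small points worth being explicit about if this were to be written out in full: extracting genuine (strictly increasing) subsequences $l_k,m_k$ from the negation of the Cauchy condition is routine but should be stated, and the completeness of the metric of convergence in probability over a complete separable $B$ (Borel--Cantelli on a fast subsequence, a.s.\ convergence in the complete space $B$, measurability of the pointwise limit) is exactly what produces the $B$-valued random variable $X^*$ at the end. Neither is a gap; both are standard, and your sketch identifies them correctly.
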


In other words, the statement of the Gy\"{o}ngy-Krylov lemma holds if and only if for every two subsequences $X_{l_k}$ and $X_{m_k}$,
there exists a further subsequence such that 
the joint probability measures associated with $x_k = (X_{l_k}, X_{m_k})$ on $B \times B$, defined by 
\begin{equation*}
\nu_{x_k} = \nu_{X_{l_k},X_{m_k}}(A_{1} \times A_{2}) = \mathbb{P}(X_{l_k} \in A_{1}, X_{m_k} \in A_{2}), \quad A_1, A_2 \in {\cal{B}}(B),
\end{equation*}
where $\mathcal{B}(B)$ is the Borel sigma algebra on $B$, converge weakly along this \textit{further subsequence} to some probability measure $\nu$, where $\nu$ is such that 
\begin{equation}\label{diagonal}
\nu(\{(x, y) \in B \times B: x = y\}) = 1.
\end{equation}
Thus, the limits of any two convergent subsequences have to be ``the same'' with probability 1. 

Once we show convergence in probability of our original sequence using the Gy\"{o}ngy-Krylov lemma, we will have almost sure convergence along a subsequence of our approximate solutions on the \textit{original probability space}. Then, using the fact that our approximate solutions converge almost surely along a subsequence on the original probability space, we can adapt the arguments in Section~\ref{limit} in order to show that the limiting weak solution on the original probability space satisfies the weak form of the continuous problem almost surely, so that the limiting solution is a weak solution in a probabilistically strong sense.

Thus, what remains to be shown is that the diagonal condition in the Gy\"{o}ngy-Krylov lemma holds. 
Since our problem is linear and the stochastic noise is additive, using the Skorohod representation theorem, one can show that
the diagonal condition is equivalent to showing {\emph{deterministic uniqueness}} holding pathwise. 
To demonstrate this, we first prove deterministic uniqueness, and then use it to show how this implies the diagonal condition.

\subsection{Uniqueness of the deterministic linear problem}\label{deterministic}

\if 1 = 0

To prove this result, we first recall the following lemma, which is the Korn equality ({\color{red}{TODO:}} Cite Canic and Muha). Recall the definition of the solution space for the fluid in space,
\begin{equation*}
\mathcal{V}_{F} = \{\boldsymbol{u} = (u_{z}, u_{r}) \in H^{1}(\Omega_{f})^{2}: \nabla \cdot \boldsymbol{u} = 0, \ u_{z} = 0 \text{ on } \Gamma, \ u_{r} = 0 \text{ on } \Omega_{f} \text{ \textbackslash} \ \Gamma, \ \partial_{r}u_{z} = 0 \text{ on } \Gamma_{b}.\}
\end{equation*}

\begin{lemma}
For $\boldsymbol{u} \in \mathcal{V}_{F}$, we have that
\begin{equation*}
2\int_{\Omega_{f}} |\boldsymbol{D}(\boldsymbol{u})|^{2} dz dr = \int_{\Omega_{f}} |\nabla \boldsymbol{u}|^{2} dz dr.
\end{equation*}
\end{lemma}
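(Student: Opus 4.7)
My plan is to reduce the identity to a pointwise algebraic one on the integrand and then control the resulting integral using the divergence-free condition together with the boundary conditions built into $\mathcal{V}_F$. First I would expand $|\boldsymbol{D}(\boldsymbol{u})|^{2} = \tfrac{1}{4}\sum_{i,j}(\partial_j u_i + \partial_i u_j)^{2}$ to obtain the pointwise identity
\begin{equation*}
2|\boldsymbol{D}(\boldsymbol{u})|^{2} - |\nabla \boldsymbol{u}|^{2} = \sum_{i,j}(\partial_j u_i)(\partial_i u_j),
\end{equation*}
so that the lemma is equivalent to the vanishing statement
\begin{equation*}
\int_{\Omega_f}\sum_{i,j}(\partial_j u_i)(\partial_i u_j)\,dz\,dr = 0.
\end{equation*}

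Next, I would rewrite the integrand as a divergence. From the product rule $\partial_i(u_j\partial_j u_i) = (\partial_i u_j)(\partial_j u_i) + u_j\partial_i\partial_j u_i$, summation over $i,j$, and exchange of partials, one obtains
\begin{equation*}
\sum_{i,j}(\partial_j u_i)(\partial_i u_j) = \nabla \cdot\bigl[(\boldsymbol{u}\cdot\nabla)\boldsymbol{u}\bigr] - (\boldsymbol{u}\cdot\nabla)(\nabla\cdot\boldsymbol{u}),
\end{equation*}
and the second term drops out since $\nabla\cdot\boldsymbol{u}=0$ in $\mathcal{V}_F$. The divergence theorem then reduces the claim to showing that $\int_{\partial\Omega_f}\boldsymbol{n}\cdot[(\boldsymbol{u}\cdot\nabla)\boldsymbol{u}]\,dS = 0$.

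The third step is to verify this piece by piece on the four flat sides of $\partial\Omega_f$. On $\Gamma$ (outer normal $\boldsymbol{e}_r$) the integrand is $u_z\partial_z u_r + u_r\partial_r u_r$; the condition $u_z\equiv 0$ on $\Gamma$ kills the first term and, by differentiating tangentially, gives $\partial_z u_z=0$ on $\Gamma$, which combined with $\nabla\cdot\boldsymbol{u}=0$ forces $\partial_r u_r=0$ on $\Gamma$ and kills the second. On $\Gamma_{in}$ and $\Gamma_{out}$ (normals $\mp\boldsymbol{e}_z$) the integrand is a signed multiple of $u_z\partial_z u_z + u_r\partial_r u_z$; here $u_r\equiv 0$ along the side kills the second term and gives $\partial_r u_r=0$ tangentially, so $\nabla\cdot\boldsymbol{u}=0$ yields $\partial_z u_z=0$ and kills the first. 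On $\Gamma_b$ (normal $-\boldsymbol{e}_r$) the integrand is $-(u_z\partial_z u_r + u_r\partial_r u_r)$, which vanishes because $u_r\equiv 0$ along $\Gamma_b$ makes both $u_r$ and its tangential derivative $\partial_z u_r$ zero there.

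The main obstacle, and the final step in the argument, is that these manipulations are only formal for a generic $\boldsymbol{u}\in\mathcal{V}_F\subset H^{1}(\Omega_f)^{2}$: the products $(\partial_j u_i)(\partial_i u_j)$ are merely $L^1$, traces of $\partial_r u_r$ and $\partial_z u_z$ are not a priori defined for $H^1$ vector fields, and the pointwise rewriting of the integrand as a divergence needs regularity beyond $H^1$. I would therefore carry out the calculation for a smooth approximating sequence $\{\boldsymbol{u}_n\}\subset \mathcal{V}_F\cap C^{\infty}(\overline{\Omega}_f)^{2}$ with $\boldsymbol{u}_n\to\boldsymbol{u}$ in $H^{1}(\Omega_f)^{2}$, and then pass to the limit: both $\int|\boldsymbol{D}(\boldsymbol{u})|^{2}$ and $\int|\nabla\boldsymbol{u}|^{2}$ are continuous quadratic forms on $H^{1}(\Omega_f)^{2}$, so the identity for smooth approximants transfers to $\boldsymbol{u}$ in the limit. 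For the rectangular domain $\Omega_f$ such an approximation is standard but delicate to carry out while simultaneously preserving (i) the solenoidality $\nabla\cdot\boldsymbol{u}_n=0$, (ii) the trace conditions $u_z|_\Gamma=0$ and $u_r|_{\partial\Omega_f\setminus\Gamma}=0$, and (iii) the symmetry condition $\partial_r u_z|_{\Gamma_b}=0$; my plan is to even-reflect $u_z$ and odd-reflect $u_r$ across $\Gamma_b$ (using (iii) to keep the reflection in $H^1$), mollify on the enlarged rectangle, restrict back to $\Omega_f$, and finally apply a Bogovski\u{\i}-type corrector to restore (i) while not spoiling the trace conditions. Verifying that all constraints survive these three operations is the most technical point of the proof.
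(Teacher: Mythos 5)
Your argument matches the paper's own proof: both reduce (by polarization) to showing $\int_{\Omega_f}\nabla\boldsymbol{u}:\nabla^{T}\boldsymbol{u}\,dz\,dr=0$, integrate by parts (equivalently, write the integrand as a divergence and apply the divergence theorem) to produce a boundary integral over $\partial\Omega_f$, and then annihilate it piece by piece on $\Gamma$, $\Gamma_{in/out}$, and $\Gamma_b$ using the trace conditions in $\mathcal{V}_F$, tangential differentiation, and the solenoidal constraint, the whole calculation being run for smooth $\boldsymbol{u}$ and transferred to $\mathcal{V}_F$ by density. One small remark on your density scheme: your condition (iii), $\partial_r u_z|_{\Gamma_b}=0$, is not part of the definition of $\mathcal{V}_F$ in \eqref{VF} and is not needed for the even/odd reflection across $\Gamma_b$ to stay in $H^1$ (even reflection of an $H^1$ function across a flat face is always $H^1$, with the Neumann matching condition only entering if one wants to preserve $H^2$), so that concern can be dropped.
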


\begin{proof}
Noting that density would yield the desired result, we can assume that $\boldsymbol{u}$ is smooth. Since
\begin{equation*}
\boldsymbol{D}(\boldsymbol{u}) = \frac{1}{2}(\nabla \boldsymbol{u} + \nabla^{t} \boldsymbol{u}),
\end{equation*}
it suffices to show that
\begin{equation*}
\int_{\Omega_{f}} \nabla \boldsymbol{u} : \nabla^{t} \boldsymbol{u} dz dr = 0.
\end{equation*}
So we want to show that
\begin{equation*}
\int_{\Omega_{f}} \frac{\partial u_{z}}{\partial z} \frac{\partial u_{z}}{\partial z} + \frac{\partial u_{z}}{\partial r}\frac{\partial u_{r}}{\partial z} + \frac{\partial u_{r}}{\partial z} \frac{\partial u_{z}}{\partial r} + \frac{\partial u_{r}}{\partial r} \frac{\partial u_{r}}{\partial r} dz dr = 0.
\end{equation*}
We move the derivative on the first term in each product onto the separate term, and by the divergence free condition, are left only with the boundary integral. So we must show that
\begin{equation*}
I_{\Gamma} + I_{\Gamma_{in}} + I_{\Gamma_{b}} + I_{\Gamma_{out}} = 0,
\end{equation*}
where
\begin{align*}
I_{\Gamma} &= \int_{\Gamma} u_{z} \frac{\partial u_{r}}{\partial z} + u_{r} \frac{\partial u_{r}}{\partial r} dz, \\
I_{\Gamma_{in}} &= -\int_{\Gamma_{in}} u_{z} \frac{\partial u_{z}}{\partial z} + u_{r} \frac{\partial u_{z}}{\partial r} dr, \\
I_{\Gamma_{b}} &= -\int_{\Gamma_{b}} u_{z} \frac{\partial u_{r}}{\partial z} + u_{r} \frac{\partial u_{r}}{\partial r} dz, \\
I_{\Gamma_{out}} &= \int_{\Gamma_{out}} u_{z} \frac{\partial u_{z}}{\partial z} + u_{r} \frac{\partial u_{z}}{\partial r} dr. \\
\end{align*}
We handle these integrals as follows.
\begin{itemize}
\item For $I_{\Gamma}$, we observe that $u_{z} = 0$ on $\Gamma$. Thus, using the divergence free condition, and the fact that $\frac{\partial u_{z}}{\partial z} = 0$ on $\Gamma$, we have that 
\begin{equation*}
I_{\Gamma} = -\int_{\Gamma} u_{r} \frac{\partial u_{z}}{\partial z} dz = 0.
\end{equation*}
\item For $I_{\Gamma_{in}}$, we have that $u_{r} = 0$ on $\Gamma_{in}$. So by the divergence free condition and the fact that $\frac{\partial u_{r}}{\partial r} = 0$ on $\Gamma_{in}$, we obtain
\begin{equation*}
I_{\Gamma_{in}} = \int_{\Gamma_{in}} u_{z} \frac{\partial u_{r}}{\partial r} dr = 0.
\end{equation*}
An analogous argument shows that $I_{\Gamma_{out}} = 0$. 
\item For $I_{\Gamma_{b}}$, we have that $u_{z} = 0$ and $\frac{\partial u_{z}}{\partial r} = 0$ on $\Gamma_{b}$, so $I_{\Gamma_{b}} = 0$. 
\end{itemize}
This concludes the proof of the Korn equality. 
\end{proof}

The next lemma we will need is a spectral result about functions satisfying the kinematic coupling condition. 

\fi

\begin{lemma}[Uniqueness for the deterministic problem]\label{detunique}
Suppose that $\boldsymbol{u} \in L^{\infty}(0, T; L^{2}(\Omega_{f})) \cap L^{2}(0, T; \mathcal{V}_{F})$, $\eta \in W^{1, \infty}(0, T; L^{2}(\Gamma)) \cap L^{\infty}(0, T; \mathcal{V}_{S})$, and $\boldsymbol{u}|_{\Gamma} = \partial_{t}\eta \boldsymbol{e}_{r}$. Suppose also that $(\boldsymbol{u}, \partial_{t}\eta) \in C(0, T; \mathcal{Q}')$, with $\eta(0) = 0$. If for all $(\boldsymbol{q}, \psi) \in \mathcal{Q}(0, T)$, 
\begin{equation*}
- \int_{0}^{T} \int_{\Omega_{f}} \boldsymbol{u} \cdot \partial_{t}\boldsymbol{q} d\boldsymbol{x} dt + 2\mu \int_{0}^{T} \int_{\Omega_{f}} \boldsymbol{D}(\boldsymbol{u}) : \boldsymbol{D}(\boldsymbol{q}) d\boldsymbol{x} dt - \int_{0}^{T} \int_{\Gamma} \partial_{t}\eta\partial_{t}\psi dz dt + \int_{0}^{T} \int_{\Gamma} \nabla \eta \cdot \nabla \psi dz dt = 0,
\end{equation*}
then $(\boldsymbol{u}, \eta) = 0$. 
\end{lemma}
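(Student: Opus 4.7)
The plan is to derive the standard energy identity
\[
\tfrac12\|\boldsymbol{u}(s)\|_{L^2(\Omega_f)}^2 + \tfrac12\|\partial_t\eta(s)\|_{L^2(\Gamma)}^2 + \tfrac12\|\nabla\eta(s)\|_{L^2(\Gamma)}^2 + 2\mu\int_0^s\|\boldsymbol{D}(\boldsymbol{u})\|_{L^2(\Omega_f)}^2\,d\tau = 0
\]
for a.e.\ $s\in(0,T)$, which immediately forces $\boldsymbol{u}\equiv 0$ and $\nabla\eta\equiv 0$, and together with $\eta(0)=0$ gives $\eta\equiv 0$. The zero initial values of $\boldsymbol{u}$ and $\partial_t\eta$ follow from localizing the weak formulation near $t=0$. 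Formally the identity is obtained by testing with the solution itself $(\boldsymbol{u},\partial_t\eta)$, but this pair is not admissible: the spatial regularity of $\partial_t\eta$ is only $L^2(\Gamma)$, not $\mathcal{V}_S = H_0^1(\Gamma)$, and the time regularity is too low for $\mathcal{Q}(0,T) \subset C_c^1$.

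First, I would extend the class of admissible test functions. Each of the four integral terms in the weak identity defines a continuous linear form on $(\boldsymbol{q},\psi) \in W^{1,2}([0,T);\mathcal{Q})$ with compact support in $[0,T)$ (using the assumed regularity of $(\boldsymbol{u},\eta)$); since $C_c^1([0,T);\mathcal{V}_F\times\mathcal{V}_S)$ is dense in this wider class subject to the coupling constraint, the weak identity holds for all such $(\boldsymbol{q},\psi)$. Next, following the Muha--\v{C}ani\'c trick for hyperbolic-parabolic couplings, for fixed $s\in(0,T)$ I would substitute the backward-integrated pair
\[
\boldsymbol{q}(t) = \mathbf{1}_{[0,s]}(t)\int_t^s \boldsymbol{u}(\tau)\,d\tau,\qquad \psi(t) = \mathbf{1}_{[0,s]}(t)\bigl(\eta(s)-\eta(t)\bigr).
\]
These lie in $\mathcal{V}_F\times \mathcal{V}_S$ pointwise (integration preserves divergence-free and boundary conditions, and $\eta(s)-\eta(t)\in H_0^1(\Gamma)$), vanish at $t=s$, and satisfy the kinematic constraint $\boldsymbol{q}|_\Gamma = \psi\,\boldsymbol{e}_r$ by integrating $\boldsymbol{u}|_\Gamma = \partial_t\eta\,\boldsymbol{e}_r$.

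Substituting and using $\partial_t\boldsymbol{q}=-\boldsymbol{u}$, $\partial_t\psi = -\partial_t\eta$ on $(0,s)$, I would simplify the dissipation integral via the identity $\int_0^s A'(t):A(t)\,dt = \tfrac12|A(s)|^2 - \tfrac12|A(0)|^2$ with $A(t)=\int_t^s\boldsymbol{D}(\boldsymbol{u}(\tau))d\tau$, and the elastic integral expands to $\int_\Gamma \nabla\eta(s)\cdot H(s)\,dz - \int_0^s\|\nabla\eta\|^2\,d\tau$ where $H(s):=\int_0^s\nabla\eta(\tau)d\tau$. This yields the integral identity
\[
\int_0^s\bigl(\|\boldsymbol{u}\|^2+\|\partial_t\eta\|^2\bigr)d\tau + \mu\Bigl\|\int_0^s \boldsymbol{D}(\boldsymbol{u})\,d\tau\Bigr\|^2 + \tfrac12\tfrac{d}{ds}\|H(s)\|^2 = \int_0^s\|\nabla\eta(\tau)\|^2\,d\tau, \quad \text{a.e. } s.
\]

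To close, I would integrate in $s$, use $H(0)=0$, and combine with the Cauchy--Schwarz bound $\|H(S)\|^2\le S\int_0^S\|\nabla\eta\|^2\,d\tau$ to reduce everything to a Gr\"onwall-type inequality in $\mathcal{E}(S) := \int_0^S(\|\boldsymbol{u}\|^2+\|\partial_t\eta\|^2+\|\nabla\eta\|^2)\,d\tau$; using $\nabla\eta(s) = \int_0^s\nabla\partial_\tau\eta\,d\tau$ (valid because $\eta(0)=0$) together with the kinematic coupling, which transfers control of $\nabla\partial_t\eta$ back to $\boldsymbol{u}$ through the trace $\partial_t\eta = \boldsymbol{u}|_\Gamma\cdot\boldsymbol{e}_r \in H^{1/2}(\Gamma)$, should allow me to absorb the mixed term and conclude $\mathcal{E}\equiv 0$. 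The main obstacle will be Step 3--4: the backward-integrated test yields the non-standard quantities $\|\int_0^s\boldsymbol{D}(\boldsymbol{u})d\tau\|^2$ and $\tfrac{d}{ds}\|H(s)\|^2$ instead of the natural energy norms, so extracting a closed Gr\"onwall inequality requires a double-integration/Fubini argument together with the kinematic coupling to relate structure-quantities back to the fluid dissipation, and the low temporal regularity of $\boldsymbol{u}$ means a mollification-in-time step is needed to justify the integration-by-parts manipulations rigorously.
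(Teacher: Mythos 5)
Your approach is the right family of ideas (antiderivative test functions to circumvent the regularity mismatch), but the specific test function you chose is one level of integration short, and this produces a sign problem that cannot be repaired by a Gr\"onwall argument. Concretely: your $\psi(t)=\eta(s)-\eta(t)=\int_t^s\partial_\tau\eta\,d\tau$ is the backward antiderivative of the structure \emph{velocity}, which makes the elastic term in the weak form come out as $\tfrac12\tfrac{d}{ds}\|H(s)\|^2 - \int_0^s\|\nabla\eta\|^2\,dt$, with an indefinite sign. Your derived identity is correct, but after integrating in $s$ it reads $G'(s)=F(s)$ with $G(0)=0$ and $F(s)=\int_0^s\|\nabla\eta\|^2$, which is the fundamental theorem of calculus for $G$ and carries no information. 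The Cauchy--Schwarz bound $\|H(S)\|^2\le S\int_0^S\|\nabla\eta\|^2$ goes the \emph{wrong} way (you need a lower bound on $\|H(S)\|^2$, not an upper bound), and the attempted regularity transfer $\nabla\eta(s)=\int_0^s\nabla\partial_\tau\eta\,d\tau$ with control via the trace fails because $\partial_t\eta=\boldsymbol{u}|_\Gamma\cdot\boldsymbol{e}_r$ only gives $\partial_t\eta\in H^{1/2}(\Gamma)$, which does not control $\nabla\partial_t\eta$ in $L^2(\Gamma)$. Thus the negative term $-\int_0^s\|\nabla\eta\|^2$ cannot be absorbed, and the argument does not close.

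The fix is exactly the Evans Section~7.2 uniqueness trick for second-order-in-time problems: you should test with the backward antiderivative of the \emph{position} variables, not the velocity variables. The paper sets $\boldsymbol{U}(t)=\int_0^t\boldsymbol{u}(\sigma)d\sigma$ and uses
\[
\boldsymbol{q}_0(t)=\int_t^s\boldsymbol{U}(\tau)\,d\tau,\qquad \psi_0(t)=\int_t^s\eta(\tau)\,d\tau \quad(0\le t\le s),\qquad (\boldsymbol{q}_0,\psi_0)=(0,0)\ \text{for } t\ge s.
\]
Note $\psi_0$ is the antiderivative of $\eta$, not of $\partial_t\eta$; likewise $\boldsymbol{q}_0$ is one level more integrated than your $\boldsymbol{q}$. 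The kinematic constraint $\boldsymbol{q}_0|_\Gamma=\psi_0\boldsymbol{e}_r$ still holds by integrating $\boldsymbol{u}|_\Gamma=\partial_t\eta\,\boldsymbol{e}_r$ twice and using $\eta(0)=0$. With this choice, every term in the weak form becomes manifestly nonnegative: the first gives $\tfrac12\|\boldsymbol{U}(s)\|_{L^2(\Omega_f)}^2$; the second, after one integration by parts in time (with no boundary contributions since $\boldsymbol{U}(0)=0$ and $\boldsymbol{q}_0(s)=0$), gives $2\mu\int_0^s\|\boldsymbol{D}(\boldsymbol{U})\|^2$; the third gives $\tfrac12\|\eta(s)\|_{L^2(\Gamma)}^2$; and because $\eta=-\partial_t\psi_0$, the fourth gives $\tfrac12\|\nabla\psi_0(0)\|_{L^2(\Gamma)}^2$. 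Their sum vanishes, so $\boldsymbol{U}(s)=0$ and $\eta(s)=0$ for every $s\in[0,T]$, hence $\boldsymbol{u}\equiv 0$ and $\eta\equiv 0$. No Gr\"onwall step, no absorption of a signed term, and no extra regularity is needed.
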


\if 1 = 0 

\begin{proof}
The idea is that we want to substitute in $(\boldsymbol{q}, \psi) = (\boldsymbol{u}, \partial_{t}\eta)$ into the above equality and use the resulting energy inequality. However, there is not enough regularity to justify this. Therefore, we smooth out using convolution in time against a compactly supported function. 

Define an even compactly supported function $\phi \in C_{0}^{\infty}(\mathbb{R})$ such that $\phi$ is even, $\phi = 0$ for $|x| \ge 1$, $\phi > 0$ for $x \in (-1, 1)$, and 
\begin{equation*}
\int_{\mathbb{R}} \phi(t) dt = 1.
\end{equation*}
We then define for $\epsilon > 0$, 
\begin{equation*}
\phi_{\epsilon}(t) := \frac{1}{\epsilon} \phi\left(\frac{t}{\epsilon}\right),
\end{equation*}
so that $\phi_{\epsilon}(t)$ is supported in $[-\epsilon, \epsilon]$. We then define the regularized solution for $t \in [\epsilon, T - \epsilon]$ by 
\begin{equation*}
\boldsymbol{u}_{\epsilon}(t, \cdot) = \int_{\mathbb{R}} \phi_{\epsilon}(t - s) \boldsymbol{u}(s, \cdot) ds = \int_{0}^{T} \phi_{\epsilon}(t - s) \boldsymbol{u}(s, \cdot) ds,
\end{equation*}
\begin{equation*}
\eta_{\epsilon}(t, \cdot) = \int_{\mathbb{R}} \phi_{\epsilon}(t - s) \eta(s, \cdot) ds = \int_{0}^{T} \phi_{\epsilon}(t - s) \eta(s, \cdot) ds.
\end{equation*}
Note that since $\boldsymbol{u} = \partial_{t}\eta$ for all every $t \in [0, T]$, we have that $\boldsymbol{u}_{\epsilon} = \partial_{t}\eta_{\epsilon}$ for all $t \in [\epsilon, T - \epsilon]$. We now proceed with the uniqueness proof in the following steps.

\vspace{0.2in}

\noindent \textbf{Step 1:} We first verify the following identity for the regularized solutions. We claim that for all $(\boldsymbol{q}, \psi) \in \mathcal{Q}(0, T)$ such that 
\begin{equation*}
(\boldsymbol{q}(t), \psi(t)) = 0, \qquad \text{ for all } t \in [0, \epsilon] \cup [T - \epsilon, T],
\end{equation*}
we have that
\begin{multline}\label{step1unique}
- \int_{\epsilon}^{T - \epsilon} \int_{\Omega_{f}} \boldsymbol{u}_{\epsilon} \cdot \partial_{t}\boldsymbol{q} d\boldsymbol{x} dt + 2\mu \int_{\epsilon}^{T - \epsilon} \int_{\Omega_{f}} \boldsymbol{D}(\boldsymbol{u}_{\epsilon}) : \boldsymbol{D}(\boldsymbol{q}) d\boldsymbol{x} dt \\
- \int_{\epsilon}^{T - \epsilon} \int_{\Gamma} \partial_{t}\eta_{\epsilon}\partial_{t}\psi dz dt + \int_{\epsilon}^{T - \epsilon} \int_{\Gamma} \nabla \eta_{\epsilon} \cdot \nabla \psi dz dt = 0.
\end{multline}
Define for arbitrary $(\boldsymbol{q}, \psi) \in \mathcal{Q}(0, T)$ and $s \in [0, T]$, the regularized test functions, 
\begin{equation*}
\boldsymbol{q}_{\epsilon}(s) := \int_{\epsilon}^{T - \epsilon} \phi_{\epsilon}(t - s) \boldsymbol{q}(t) dt, \qquad \psi_{\epsilon}(s) := \int_{\epsilon}^{T - \epsilon} \phi_{\epsilon}(t - s) \psi(t) dt,
\end{equation*}
and note that $(\boldsymbol{q}_{\epsilon}, \psi_{\epsilon}) \in \mathcal{Q}(0, T)$. The left hand side of \eqref{step1unique}, by using Fubini's theorem to move the convolution onto the test function, is equal to
\begin{equation*}
-\int_{0}^{T} \int_{\Omega_{f}} \boldsymbol{u} \cdot \partial_{t}\boldsymbol{q}_{\epsilon} d\boldsymbol{x} dt + 2\mu \int_{0}^{T} \int_{\Omega_{f}} \boldsymbol{D}(\boldsymbol{u}) : \boldsymbol{D}(\boldsymbol{q}_{\epsilon}) d\boldsymbol{x} dt - \int_{0}^{T} \int_{\Gamma} \partial_{t}\eta \partial_{t}\psi_{\epsilon} dz dt + \int_{0}^{T} \int_{\Gamma} \nabla \eta \cdot \nabla \psi_{\epsilon} dz dt.
\end{equation*}
However, since $(\boldsymbol{q}_{\epsilon}, \psi_{\epsilon}) \in \mathcal{Q}(0, T)$, this is equal to zero by assumption, which establishes the desired identity \eqref{step1unique}.

\vspace{0.2in}

\noindent \textbf{Step 2:} We claim that $(\boldsymbol{u}_{\epsilon}, \eta_{\epsilon})$ satisfies the following identity for all $(\boldsymbol{q}, \psi) \in \mathcal{Q}(0, T)$:
\begin{multline}\label{step2unique}
- \int_{\epsilon}^{T - \epsilon} \int_{\Omega_{f}} \boldsymbol{u}_{\epsilon} \cdot \partial_{t}\boldsymbol{q} d\boldsymbol{x} dt + 2\mu \int_{\epsilon}^{T - \epsilon} \int_{\Omega_{f}} \boldsymbol{D}(\boldsymbol{u}_{\epsilon}) : \boldsymbol{D}(\boldsymbol{q}) d\boldsymbol{x} dt \\
- \int_{\epsilon}^{T - \epsilon} \int_{\Gamma} \partial_{t}\eta_{\epsilon}\partial_{t}\psi dz dt + \int_{\epsilon}^{T - \epsilon} \int_{\Gamma} \nabla \eta_{\epsilon} \cdot \nabla \psi dz dt = \int_{\Omega_{f}} \boldsymbol{u}_{\epsilon}(\epsilon) \cdot \boldsymbol{q}(\epsilon) d\boldsymbol{x} + \int_{\Gamma} \partial_{t}\eta_{\epsilon}(\epsilon)\phi(\epsilon) dz \\
- \int_{\Omega_{f}} \boldsymbol{u}_{\epsilon}(T - \epsilon) \cdot \boldsymbol{q}(T - \epsilon) d\boldsymbol{x} - \int_{\Gamma} \partial_{t}\eta_{\epsilon}(T - \epsilon)\phi(T - \epsilon) dz.
\end{multline}

To see this, we can use a limiting argument. Let us define the cutoff functions $\beta_{\epsilon, \delta}: [0, T] \to \mathbb{R}$ for $\delta > 0$ as follows. 
\begin{equation*}
\beta_{\delta}(t) = 0, \qquad \text{ if } t \in [0, \epsilon] \cup [T - \epsilon, T],
\end{equation*}
\begin{equation*}
\beta_{\delta}(t) = 1, \qquad \text{ if } t \in [\epsilon + \delta, T - \epsilon - \delta],
\end{equation*}
\begin{equation*}
\beta_{\delta}(t) = \int_{-\delta}^{2(t - \epsilon) - \delta} \phi_{\delta/2}(s) ds, \qquad \text{ if } t \in [\epsilon, \epsilon + \delta],
\end{equation*}
\begin{equation*}
\beta_{\delta}(t) = 1 - \int_{-\delta}^{2(t - T + \epsilon) + \delta} \phi_{\delta/2}(s) ds, \qquad \text{ if } t \in [T - \epsilon - \delta, T - \epsilon].
\end{equation*}

To prove the identity \eqref{step2unique}, we note that for our fixed but arbitrary $\epsilon > 0$, and for any $\delta > 0$ and $(\boldsymbol{q}, \psi) \in \mathcal{Q}(0, T)$, we have that
\begin{equation*}
(\boldsymbol{q}_{\epsilon, \delta}, \psi_{\epsilon, \delta}) := (\beta_{\epsilon, \delta}(t)\boldsymbol{q}(t), \beta_{\epsilon, \delta}(t)\psi(t)) \in \mathcal{Q}(0, T)
\end{equation*}
and is equal to zero for $t \in [0, \epsilon] \cup [T - \epsilon, T]$ by construction. So by the result of Step 1, we have that
\begin{multline}\label{step2uniqueapprox}
- \int_{\epsilon}^{T - \epsilon} \int_{\Omega_{f}} \boldsymbol{u}_{\epsilon} \cdot \partial_{t}\boldsymbol{q}_{\epsilon, \delta} d\boldsymbol{x} dt + 2\mu \int_{\epsilon}^{T - \epsilon} \int_{\Omega_{f}} \boldsymbol{D}(\boldsymbol{u}_{\epsilon}) : \boldsymbol{D}(\boldsymbol{q}_{\epsilon, \delta}) d\boldsymbol{x} dt \\
- \int_{\epsilon}^{T - \epsilon} \int_{\Gamma} \partial_{t}\eta_{\epsilon}\partial_{t}\psi_{\epsilon, \delta} dz dt + \int_{\epsilon}^{T - \epsilon} \int_{\Gamma} \nabla \eta_{\epsilon} \cdot \nabla \psi_{\epsilon, \delta} dz dt = 0.
\end{multline}
However, one obtains the desired equality \eqref{step2unique} by taking the limit as $\delta \to 0$. This is because $\partial_{t}\boldsymbol{q}_{\epsilon, \delta}$ for example, defined on $t \in [\epsilon, T - \epsilon]$, converges as $\delta \to 0$ to 
\begin{equation*}
\boldsymbol{q}(\epsilon)\delta_{t = \epsilon} - \boldsymbol{q}(T - \epsilon) \delta_{t = T - \epsilon} + 1_{t \in [\epsilon, T - \epsilon]} \partial_{t}\boldsymbol{q},
\end{equation*}
where $\delta_{t = \epsilon}$ for example denotes a delta function in time at $t = \epsilon$. 

\vspace{0.2in}

\noindent \textbf{Step 3:} We now obtain an energy estimate for the regularized solutions $(\boldsymbol{u}_{\epsilon}, \eta_{\epsilon})$. Because $(\boldsymbol{u}_{\epsilon}, \eta_{\epsilon})$ is now infinitely differentiable in time due to the convolution, we can integrate by parts backwards in \eqref{step2unique} to obtain that for all $(q, \psi) \in \mathcal{Q}(0, T)$, 
\begin{multline*}
\int_{\epsilon}^{T - \epsilon} \int_{\Omega_{f}} \partial_{t} \boldsymbol{u}_{\epsilon} \cdot \boldsymbol{q} d\boldsymbol{x} dt + 2\mu \int_{\epsilon}^{T - \epsilon} \int_{\Omega_{f}} \boldsymbol{D}(\boldsymbol{u}_{\epsilon}) : \boldsymbol{D}(\boldsymbol{q}) d\boldsymbol{x} dt \\
+ \int_{\epsilon}^{T - \epsilon} \int_{\Gamma} (\partial_{tt} \eta_{\epsilon}) \psi dz dt + \int_{\epsilon}^{T - \epsilon} \int_{\Gamma} \nabla \eta_{\epsilon} \cdot \nabla \psi dz dt = 0.
\end{multline*}

\end{proof}

\fi

\begin{proof}
Observe first that to get the usual energy equality, we would want to formally substitute in $(\boldsymbol{u}, \partial_{t}\eta)$ for $(\boldsymbol{q}, \psi)$. However, since $(\boldsymbol{q}, \psi)$ must have $\psi(t) \in H_{0}^{1}(\Gamma)$ by the definition of the test space $\mathcal{Q}(0, T)$, we do not have enough regularity to do this. Therefore, we use a different approach of taking an antiderivative, which is an approach used for example in establishing uniqueness of weak solutions for general hyperbolic equations (see Section 7.2 in \cite{Evans}).

Consider an arbitrary $s$ such that $0 \le s \le T$. We use the following test function,
\begin{equation*}\label{wavefundamental}
(\boldsymbol{q}_{0}(t), \psi_{0}(t)) = 
\left\{
\begin{array}{cl}
\displaystyle{\left(\int_{t}^{s} \left(\int_{0}^{\tau} \boldsymbol{u}(\sigma) d\sigma\right) d\tau, \int_{t}^{s} \eta(\tau) d\tau \right)} \qquad &\text{ if } 0 \le t \le s,
\\
\displaystyle{(0, 0)} \qquad &\text{ if } s \le t \le T.
\end{array}
\right.
\end{equation*}
Recall that $\eta(0) = 0$ by assumption. Note that since
\begin{equation*}
\int_{0}^{\tau} \boldsymbol{u}(\sigma) d\sigma \Big\vert_{\Gamma} = \int_{0}^{\tau} \partial_{t}\eta(\sigma) d\sigma = \eta(\tau)
\end{equation*}
for all $\tau \in [0, T]$, the function $(\boldsymbol{q}_{0}, \psi_{0})$ satisfies the necessary kinematic coupling condition for $\mathcal{Q}(0, T)$. While this test function is only piecewise differentiable, it is easy to show by an approximation argument that the weak formulation should still hold with this test function by approximating it with differentiable functions. For notational simplicity, we define
\begin{equation*}
\boldsymbol{U}(t) = \int_{0}^{t} \boldsymbol{u}(\sigma) d\sigma.
\end{equation*} 

Substituting the test function into the weak formulation, we obtain for all $s \in [0, T]$, 
\begin{equation*}
\int_{0}^{s} \int_{\Omega_{f}} \boldsymbol{u} \cdot \boldsymbol{U} d\boldsymbol{x} dt + 2\mu \int_{0}^{s} \int_{\Omega_{f}} \boldsymbol{D}(\boldsymbol{u}) : \boldsymbol{D}(\boldsymbol{q}_{0}) d\boldsymbol{x} dt + \int_{0}^{s} \int_{\Gamma} \partial_{t}\eta \cdot \eta dz dt + \int_{0}^{s} \int_{\Gamma} \nabla \eta \cdot \nabla \psi_{0} dz dt = 0,
\end{equation*}
where we note that $\partial_{t}\boldsymbol{q}_{0}(t) = -U(t)$ and $\partial_{t}\psi_{0}(t) = -\eta(t)$, for $t \in [0, s)$. We handle the four terms on the left hand side as follows.

\begin{itemize}
\item \textbf{First term:} We note that $\boldsymbol{u} = \partial_{t}\boldsymbol{U}$. Hence, using the fact that $\boldsymbol{U}(0) = 0$, we get
\begin{gather*}
\int_{0}^{s} \int_{\Omega_{f}} \boldsymbol{u} \cdot \boldsymbol{U} d\boldsymbol{x} dt = \int_{0}^{s} \frac{d}{dt} \left(\frac{1}{2} ||\boldsymbol{U}||_{L^{2}(\Omega_{f})}^{2}\right) dt 
= \frac{1}{2} ||\boldsymbol{U}(s)||_{L^{2}(\Omega_{f})}^{2} - \frac{1}{2} ||\boldsymbol{U}(0)||_{L^{2}(\Omega_{f})}^{2} = \frac{1}{2} ||\boldsymbol{U}(s)||_{L^{2}(\Omega_{f})}^{2}.
\end{gather*}
\item \textbf{Second term:} For the second term, we again use that $\boldsymbol{u} = \partial_{t}\boldsymbol{U}$. Therefore,
\begin{equation*}
2\mu \int_{0}^{s} \int_{\Omega_{f}} \boldsymbol{D}(\boldsymbol{u}) : \boldsymbol{D}(\boldsymbol{q}_{0}) d\boldsymbol{x} dt = 2\mu \int_{0}^{s} \int_{\Omega_{f}} \boldsymbol{D}(\partial_{t}\boldsymbol{U}) : \boldsymbol{D}(\boldsymbol{q}_{0}) d\boldsymbol{x} dt.
\end{equation*}
We integrate by parts in time. Note that $\boldsymbol{U}(0) = 0$ and $\boldsymbol{q}_{0}(s) = 0$, so there are no boundary terms from the integration by parts. Hence, using the fact that $\partial_{t}\boldsymbol{q}_{0} = -\boldsymbol{U}$, we obtain
\begin{equation*}
2\mu \int_{0}^{s} \int_{\Omega_{f}} \boldsymbol{D}(\boldsymbol{u}) : \boldsymbol{D}(\boldsymbol{q}_{0}) d\boldsymbol{x} dt = -2\mu \int_{0}^{s} \int_{\Omega_{f}} \boldsymbol{D}(\boldsymbol{U}) : \boldsymbol{D}(\partial_{t}\boldsymbol{q}_{0}) d\boldsymbol{x} dt = 2\mu \int_{0}^{s} \int_{\Omega_{f}} |\boldsymbol{D}(\boldsymbol{U})|^{2} d\boldsymbol{x} dt.
\end{equation*}
\item \textbf{Third term:} We immediately have that
\begin{equation*}
\int_{0}^{s} \int_{\Gamma} \partial_{t}\eta \cdot \eta dz dt = \frac{1}{2}||\eta(s)||_{L^{2}(\Gamma)}^{2} - \frac{1}{2}||\eta(0)||_{L^{2}(\Gamma)}^{2} = \frac{1}{2}||\eta(s)||_{L^{2}(\Gamma)}^{2}.
\end{equation*}
\item \textbf{Fourth term:} Since $\eta = -\partial_{t}\psi_{0}$, we have that
\begin{equation*}
\int_{\Gamma} \nabla \eta \cdot \nabla \psi_{0} dz = -\frac{1}{2} \frac{d}{dt}\left(||\nabla \psi_{0}||_{L^{2}(\Gamma)}^{2}\right),
\end{equation*}
and hence, using the fact that $\psi_{0}(s) = 0$, we get that 
\begin{equation*}
\int_{0}^{s} \int_{\Gamma} \nabla \eta \cdot \nabla \psi_{0} dz dt = \frac{1}{2}||\nabla \psi_{0}(0)||^{2}_{L^{2}(\Gamma)}.
\end{equation*}
\end{itemize}

Therefore, for all $0 \le s \le T$,  the entire expression (energy) can now be written as
\begin{equation*}
\frac{1}{2}||\boldsymbol{U}(s)||^{2}_{L^{2}(\Omega_{f})} + 2\mu \int_{0}^{s} \int_{\Omega_{f}} |\boldsymbol{D}(\boldsymbol{U})|^{2} d\boldsymbol{x} dt + \frac{1}{2}||\eta(s)||^{2}_{L^{2}(\Gamma)} + \frac{1}{2}||\nabla \psi_{0}(0)||^{2}_{L^{2}(\Gamma)} = 0.
\end{equation*}
Thus, we conclude that $\boldsymbol{U}(s) = 0$ and $\eta(s) = 0$ for all $s \in [0, T]$. From the definition of $\boldsymbol{U}$, we conclude that $\boldsymbol{u}(t) = \partial_{t}\boldsymbol{U}(t) = 0$ for all $t \in [0, T]$ also, which completes the proof. 
\end{proof}

\subsection{Verifying the diagonal condition of the Gy\"{o}ngy-Krylov lemma}\label{GKlemma}

Now that we have established a uniqueness result, we can construct a solution on the original probability space $(\Omega, \mathcal{F}, \mathbb{P})$ by invoking a standard argument involving the Gy\"{o}ngy-Krylov argument (Lemma \ref{GK}), to show that the random variables 
$(\eta_{N}, \boldsymbol{u}_{N}, v_{N})$ defined on the original probability space converge in probability, and hence converge almost surely along a subsequence \textit{in the original topology}. 

\if 1 = 0

To do this, we first recall the Gy\"{o}ngy-Krylov lemma, which is Lemma 1.1 in \cite{GK}.

\if 1 = 0
\begin{lemma}[Gy\"{o}ngy-Krylov lemma]
Let $\{X_{n}\}_{n = 1}^{\infty}$ be a sequence of random variables defined on a probability space $(\Omega, \mathcal{F}, \mathbb{P})$ taking values in a Banach space $B$. For positive integers $m$ and $n$, define the joint probability measures $\nu_{m, n}$ on $B \times B$ by 
\begin{equation*}
\nu_{m, n}(A_{1} \times A_{2}) = \mathbb{P}(X_{m} \in A_{1}, X_{n} \in A_{2}).
\end{equation*}
Suppose that the following \textit{diagonal condition} holds: for any arbitrary subsequences $\{m_{k}\}_{k = 1}^{\infty}$ and $\{n_{k}\}_{k = 1}^{\infty}$, there exists a further subsequence such that the joint probability laws $\nu_{m_{k_{l}}, n_{k_{l}}}$ along this subsequence
converge weakly, as $l \to \infty$, to a limiting probability measure $\nu$, such that
\begin{equation*}
\nu(\Delta) = 1,\ {\text{where}} \  \Delta = \{{\color{blue}{(X, X) : X \in B}}\} \ \text{denotes the diagonal of}\  B \times B.
\end{equation*}
 Then, $X_{n}$ converges in probability to some $B$-valued random variable {\color{blue}{$X^*$}} as $n \to \infty$. 
\end{lemma}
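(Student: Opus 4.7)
The plan is to show that the hypothesis forces $\{X_{n}\}$ to be Cauchy in probability, from which convergence in probability to some $B$-valued limit $X^{*}$ follows automatically via completeness of the Banach space $B$.

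To execute this, I would argue by contradiction: suppose $\{X_{n}\}$ is not Cauchy in probability. Then there exist constants $\epsilon_{0} > 0$ and $\delta_{0} > 0$ together with subsequences $\{m_{k}\}$ and $\{n_{k}\}$ tending to infinity such that
\begin{equation*}
\mathbb{P}\bigl(\|X_{m_{k}} - X_{n_{k}}\|_{B} > \epsilon_{0}\bigr) > \delta_{0} \quad \text{for every } k.
\end{equation*}
Applying the hypothesis to the pair $\{X_{m_{k}}\}$, $\{X_{n_{k}}\}$ produces a further subsequence, which I relabel by $k$, along which the joint laws $\nu_{X_{m_{k}}, X_{n_{k}}}$ converge weakly on $B \times B$ to a limiting probability measure $\nu$ concentrated on the diagonal $\Delta = \{(x, x) : x \in B\}$.

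Next I would invoke the Portmanteau theorem for closed sets. Define
\begin{equation*}
F = \{(x, y) \in B \times B : \|x - y\|_{B} \geq \epsilon_{0}\},
\end{equation*}
which is closed in $B \times B$ since $(x, y) \mapsto \|x - y\|_{B}$ is continuous, and which is disjoint from $\Delta$. Portmanteau's theorem then yields
\begin{equation*}
0 = \nu(F) \geq \limsup_{k \to \infty} \nu_{X_{m_{k}}, X_{n_{k}}}(F) \geq \delta_{0},
\end{equation*}
a contradiction. Hence $\{X_{n}\}$ is Cauchy in probability.

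Completing the proof is then standard. From a Cauchy-in-probability sequence in a complete separable metric space, one extracts a sub-subsequence $\{X_{n_{j}}\}$ satisfying $\mathbb{P}\bigl(\|X_{n_{j+1}} - X_{n_{j}}\|_{B} > 2^{-j}\bigr) < 2^{-j}$; Borel--Cantelli guarantees that this sub-subsequence is almost surely Cauchy in $B$, and completeness of $B$ then provides an almost sure limit $X^{*}$, which the Cauchy-in-probability property of the full sequence promotes to $X_{n} \to X^{*}$ in probability. The main obstacle is largely notational: setting up the contradiction correctly and being careful to use the $\limsup$ direction of Portmanteau for closed sets (not the $\liminf$ direction reserved for open sets). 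Beyond that, the argument rests only on completeness of $B$ and continuity of the norm, so I do not anticipate any serious technical difficulties.
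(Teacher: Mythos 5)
Your proof is correct, and it is indeed the standard argument for the Gy\"ongy--Krylov lemma. The paper does not prove this lemma itself; it cites it as Lemma~1.1 in Gy\"ongy--Krylov~\cite{GK} and Proposition~6.3 in~\cite{LNT} and invokes it as a black box. Your contradiction argument (assume failure of Cauchy-in-probability, pass to the subsequence provided by the hypothesis, apply Portmanteau for the closed set $F = \{(x,y): \|x-y\|_B \ge \epsilon_0\}$, which is disjoint from the diagonal) is precisely the structure of the proof in the original Gy\"ongy--Krylov paper, and the closing step from Cauchy-in-probability to convergence in probability via Borel--Cantelli and completeness of $B$ is standard and correctly sketched. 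Two minor technical remarks, neither of which is a genuine gap: first, the statement as given says only ``Banach space'' but the paper's active version of the lemma (and the original~\cite{GK}) assumes separability, which is implicitly needed for the Portmanteau/weak-convergence machinery and for the Borel measurability of the set $F$ in $B\times B$; you do implicitly assume this when you write ``complete separable metric space'' near the end, which is the right hypothesis. Second, you are correct to note that the strict inequality event $\{\|X_{m_k}-X_{n_k}\|_B > \epsilon_0\}$ is contained in the closed set $F$ (with $\ge$), so $\nu_{X_{m_k},X_{n_k}}(F) > \delta_0$ and the $\limsup$ direction of Portmanteau applies; this care is exactly what makes the contradiction work.
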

\fi

{\color{blue}{
\begin{lemma}[Gy\"{o}ngy-Krylov lemma]
Let $X_{n}, {n = 1}\dots{\infty}$, be a sequence of random variables defined on a probability space $(\Omega, \mathcal{F}, \mathbb{P})$ taking values in a Banach space $B$. Then {\bf{$X_n$ converges in probability}} to some $B$-valued random variable $X^*$
if and only if for every two subsequences $X_l$ and $X_m$ of $X_{n}$, there exists a subsequence $x_k = (X_{l_k},X_{m_k})$ in $B\times B$,
converging weakly to a random variable $x^*$ supported on the diagonal  
$\{(x, y) : x = y\}$.
\end{lemma}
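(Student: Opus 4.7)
The proof splits cleanly into two directions, and my plan is to handle the forward direction in one line and focus the real work on the converse. For the ``only if'' direction, if $X_n \to X^*$ in probability then every subsequence does too, so for any pair of subsequences $X_l, X_m$ the matched pairs $(X_{l_k}, X_{m_k})$ (any diagonal indexing) converge in probability, hence in distribution, to $(X^*, X^*)$ in $B \times B$, a law manifestly supported on the diagonal.

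For the ``if'' direction, I would metrize convergence in probability by
\[
d_P(X, Y) := \mathbb{E}\big[\|X - Y\|_B \wedge 1\big],
\]
and show that the hypothesis forces $(X_n)$ to be Cauchy in $d_P$. Since $B$ is a Banach space, the space of (equivalence classes of) $B$-valued random variables is complete under $d_P$ (one recovers a limit $X^*$ from a Cauchy sequence by extracting an a.s.\ convergent subsequence via a standard Borel--Cantelli argument and then upgrading to convergence in probability of the whole sequence), which delivers the desired limit $X^*$.

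The key step is verifying the Cauchy property by contradiction. Suppose $(X_n)$ is not Cauchy in probability: then one can find $\epsilon > 0$ and index sequences $l_j, m_j \to \infty$ with $d_P(X_{l_j}, X_{m_j}) \ge \epsilon$ for every $j$. Apply the hypothesis to the subsequences $(X_{l_j})$ and $(X_{m_j})$: there is a further subsequence along which the joint laws of $(X_{l_{j_k}}, X_{m_{j_k}})$ converge weakly to some $\nu$ concentrated on the diagonal $\{(x,x) : x \in B\}$. Testing against the bounded, continuous function $\phi(x,y) := \|x-y\|_B \wedge 1$, which vanishes identically on the diagonal, weak convergence gives
\[
d_P(X_{l_{j_k}}, X_{m_{j_k}}) \;=\; \mathbb{E}\big[\phi(X_{l_{j_k}}, X_{m_{j_k}})\big] \;\longrightarrow\; \int_{B \times B} \phi \, d\nu \;=\; 0,
\]
contradicting $d_P(X_{l_{j_k}}, X_{m_{j_k}}) \ge \epsilon$.

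The one genuine obstacle is bookkeeping: one must be careful that the subsequence principle is being applied to honest subsequences of the original sequence $(X_n)$ (not to, say, a non-monotone relabeling), and that $\phi$ is legitimately bounded and continuous on $B \times B$ so that weak convergence of the joint laws can be tested against it. Both points are handled once $d_P$ is fixed as the metric, and the rest of the argument is the short contradiction above.
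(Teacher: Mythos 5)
The paper does not prove this lemma: it is stated as Lemma~\ref{GK} with citations to Lemma~1.1 of Gy\"{o}ngy--Krylov \cite{GK} and Proposition~6.3 of \cite{LNT}, and is then applied as a black box in Section~\ref{GKlemma}. Your proof is correct and is, in substance, the standard one from those sources. The forward direction is as immediate as you say. For the converse, metrizing convergence in probability by $d_P(X,Y)=\mathbb{E}[\|X-Y\|_B\wedge 1]$, reducing to the Cauchy property (completeness of the space of $B$-valued random variables under $d_P$ being a Borel--Cantelli extraction plus completeness of $B$), and testing weak convergence of the joint laws against the bounded continuous function $\phi(x,y)=\|x-y\|_B\wedge 1$, which vanishes identically on the diagonal, is exactly the right mechanism; the Gy\"ongy--Krylov reference runs the same contradiction through the portmanteau inequality on the closed set $\{(x,y):\|x-y\|_B\ge\epsilon\}$, which is disjoint from the diagonal, an equivalent and no more elementary reading of the weak-convergence hypothesis. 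Two small points to spell out. When negating the Cauchy property you should build the indices $l_j, m_j$ recursively so that each sequence is strictly increasing, so that $(X_{l_j})$ and $(X_{m_j})$ are genuine subsequences to which the hypothesis applies; you flag this, and it is harmless. Also, the lemma as stated in the paper assumes $B$ is a \emph{separable} Banach space; your argument does not directly invoke separability, but it is what makes $B\times B$ Polish, so that Borel measures, weak convergence, and the Prohorov/tightness machinery surrounding the lemma in this paper are well-posed.
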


In other words, the statement of Gy\"{o}ngy-Krylov lemma holds if and only if for every two subsequences $X_l$ and $X_m$,
there exists a further subsequence $x_k = (X_{l_k},X_{m_k})$, such that 
the joint probability measures $\nu_{l,m}$, defined by 
\begin{equation*}
\nu_{X_{l_k},X_{m_k}}(A_{1} \times A_{2}) = \mathbb{P}(X_{l_k} \in A_{1}, X_{m_k} \in A_{2}).
\end{equation*}
converge to some probability measure $\nu$ as $k\to\infty$:
$$
\nu_{X_{l_k},X_{m_k}} \to \mu, \ {\rm{as}} \ k\to\infty,
$$
where $\nu$ is such that 
$$
\nu(\{(X, Y) : X = Y\}) = 1.
$$
Thus, the limits of any two convergent subsequences have to be ``the same'' with probability 1. 

To verify this diagonal condition, we will show {\emph{deterministic uniqueness}}, which holds pathwise, and combine it with 
the Skorohod's embedding theorem. 
}}

We will use this lemma to {\color{blue}{show that the random variables $(\eta_{N}, \boldsymbol{u}_{N}, v_{N})$ converge 
in probability, and then almost surely, in the original topology}} . The following argument using the previous Gy\"{o}ngy-Krylov lemma,
will be used, see for example \cite{LNT}. {\color{blue}{For each $M,N = 1, 2, \dots$ we define 
\begin{equation*}
\nu_{M, N} = \mu_{M} \times \mu_{N}
\end{equation*}
to be the probability measure defined on phase space $\mathcal{X} \times \mathcal{X}$, where we recall the definitions of $\mathcal{X}$ and $\mu_{N}$ from \eqref{phase} and \eqref{muN}. By the tightness of $\{\mu_{N}\}_{N = 1}^{\infty}$ on $\mathcal{X}$ proved in Lemma \ref{KRcompact}, we have that for any two given subsequences $M_{k}$ and $N_{k}$, the collection $\{\nu_{M_{k}, N_{k}}\}_{k = 1}^{\infty}$ is also tight and hence has a weakly convergence subsequence, which we will continue to denote by $\nu_{M_{k}, N_{k}}$, which converges to some limiting probability measure $\nu$ on $\mathcal{X} \times \mathcal{X}$. We want to show that 
\begin{equation}\label{diagonal}
\nu(\{(X, X) : X \in \mathcal{X}\}) = 1.
\end{equation}
{\color{pink}{This will imply that $(\eta_{N}, \boldsymbol{u}_{N}, v_{N})$ converge 
in probability, and then almost surely to some $(\eta^*, \boldsymbol{u}^*, v^*)$, in the original topology. However, we do not know that
the realizations $(\eta^*, \boldsymbol{u}^*, v^*)$ satisfy the weak form of our original problem. We want to show that they satisfy the 
weak form almost surely, for almost all realizations $(\eta^*, \boldsymbol{u}^*, v^*)$.
To do that, we would 
}}
\fi

\if 1 = 0
{\color{red} {To verify \eqref{diagonal}, we will use the approach of translating the diagonal condition \eqref{diagonal} from the Gy\"{o}ngy-Krylov lemma to a condition on deterministic uniqueness, which can be much more readily verified. In particular, by invoking the Skorokhod embedding theorem on the ordered pairs of appropriate solutions, $(X_{M}, X_{N})$, where $X_{M} = (\eta_{M}, \overline{\eta}_{M}, \boldsymbol{u}_{M}, v_{M}, \boldsymbol{u}_{M}, v_{M}^{*}, \overline{\boldsymbol{u}}_{M}, \overline{v}_{M}, W)$ for example and similarly with $X_{N}$, we can show that the diagonal condition \eqref{diagonal} is equivalent to showing uniqueness in law of solutions to the original linear stochastic fluid-structure interaction problem, which is equivalent to showing uniqueness of solutions to the \textit{deterministic} analogue of this linear fluid-structure interaction problem due to the fact that the stochastic noise is additive. Because we have already shown deterministic uniqueness in Sec.~\ref{deterministic}, it only remains to demonstrate how the Skorokhod embedding theorem can be used to show that the diagonal condition \eqref{diagonal} is equivalent to showing deterministic uniqueness, and we will explicitly demonstrate this now.}
}
\fi
Because we have already shown deterministic uniqueness in Sec.~\ref{deterministic}, it only remains to demonstrate how the Skorohod representation theorem can be used to show that the diagonal condition \eqref{diagonal} from the Gy\"{o}ngy-Krylov lemma
is equivalent to showing deterministic uniqueness.

For this purpose, denote by $\{X^1_{M_{k}}\}_{k = 1}^{\infty}$ and $\{X^2_{N_{k}}\}_{k = 1}^{\infty}$ any two subsequences of our random variables (approximate solutions) 
defined on the original probability space $({\Omega}, {\mathcal{F}}, {\mathbb{P}})$:
\begin{align*}
X^1_{M_{k}} &= (\eta_{M_{k}}, \overline{\eta}_{M_{k}}, \eta^{\Delta t}_{M_{k}}, \boldsymbol{u}_{M_{k}}, v_{M_{k}}, \boldsymbol{u}_{M_{k}}, v_{M_{k}}^{*}, \overline{\boldsymbol{u}}_{M_{k}}, \overline{v}_{M_{k}}, \boldsymbol{u}^{\Delta t}_{M_{k}}, v^{\Delta t}_{M_{k}}, W), \\
X^2_{N_{k}} &= (\eta_{N_{k}}, \overline{\eta}_{N_{k}}, \eta^{\Delta t}_{N_{k}}, \boldsymbol{u}_{N_{k}}, v_{N_{k}}, \boldsymbol{u}_{N_{k}}, v_{N_{k}}^{*}, \overline{\boldsymbol{u}}_{N_{k}}, \overline{v}_{N_{k}}, \boldsymbol{u}^{\Delta t}_{N_{k}}, v^{\Delta t}_{N_{k}}, W).
\end{align*}
Recall that the laws corresponding to each of these these two sequences of random variables \textit{individually} converge to the law $\mu$. However, to verify the diagonal condition in the Gy\"{o}ngy-Krylov lemma, we must examine the \textit{joint laws} of these random variables $(X^{1}_{M_{k}}, X^{2}_{N_{k}})$. 

Hence, we consider the joint probability measures (or joint laws) $\{\nu_{X_{M_{k}}^{1}, X_{N_{k}}^{2}}\}_{k = 1}^{\infty}$ on $\mathcal{X} \times \mathcal{X}$, associated with the subsequence $(X_{M_{k}}^{1}, X_{N_{k}}^{2})$. By the tightness of the original probability measures $\mu_{N}$, established in the proof of Theorem \ref{weakconv}, we have that the collection of joint laws $\{\nu_{X_{M_{k}}^{1},  X_{N_{k}}^{2}}\}_{k = 1}^{\infty}$ is also tight, and hence converges weakly to a probability measure $\nu$ on $\mathcal{X} \times \mathcal{X}$ along a further subsequence, which we will continue to denote by the same indexing for notational simplicity.
Then, by the Skorohod representation theorem, there exists a probability space $(\tilde{\Omega}, \tilde{\mathcal{F}}, \tilde{\mathbb{P}})$ and random variables
\begin{align*}
\tilde{X}_{M_{k}}^{1} &= (\tilde{\eta}_{M_{k}}^{1}, \tilde{\overline{\eta}}_{M_{k}}^{1}, \tilde{\eta}^{\Delta t, 1}_{M_{k}}, \tilde{\boldsymbol{u}}_{M_{k}}^{1}, \tilde{v}_{M_{k}}^{1}, \tilde{\boldsymbol{u}}_{M_{k}}^{*, 1}, \tilde{v}_{M_{k}}^{*, 1}, \tilde{\overline{\boldsymbol{u}}}_{M_{k}}^{1}, \tilde{\overline{v}}_{M_{k}}^{1}, \tilde{\boldsymbol{u}}^{\Delta t, 1}_{M_{k}}, \tilde{v}^{\Delta t, 1}_{M_{k}}, \tilde{W}_{M_{k}}^{1}),
\\
\tilde{X}_{N_{k}}^{2} &= (\tilde{\eta}_{N_{k}}^{2}, \tilde{\overline{\eta}}_{N_{k}}^{2}, \tilde{\eta}^{\Delta t, 2}_{N_{k}}, \tilde{\boldsymbol{u}}_{N_{k}}^{2}, v_{N_{k}}^{2}, \tilde{\boldsymbol{u}}_{N_{k}}^{*, 2}, \tilde{v}_{N_{k}}^{*, 2}, \tilde{\overline{\boldsymbol{u}}}_{N_{k}}^{2}, \tilde{\overline{v}}_{N_{k}}^{2}, \tilde{\boldsymbol{u}}^{\Delta t, 2}_{N_{k}}, \tilde{v}^{\Delta t, 2}_{N_{k}}, \tilde{W}_{N_{k}}^{2}),
\end{align*}
such that
\begin{equation}\label{jointequiv}
(\tilde{X}^{1}_{M_{k}}, \tilde{X}^{2}_{N_{k}}) =_{d} (X^{1}_{M_{k}}, X^{2}_{N_{k}}),
\end{equation}
and
$(\tilde{X}^{1}_{M_{k}}, \tilde{X}^{2}_{N_{k}}) \to (\tilde{X}^{1}, \tilde{X}^{2})$ in 
$\mathcal{X} \times \mathcal{X}$ almost surely as $k \to \infty$, where
\begin{align*}
\tilde{X}^{1} = (\tilde{\eta}^{1}, \tilde{\overline{\eta}}^{1}, \tilde{\eta}^{\Delta t, 1}, \tilde{\boldsymbol{u}}^{1}, \tilde{v}^{1}, \tilde{\boldsymbol{u}}^{*, 1}, \tilde{v}^{*, 1}, \tilde{\overline{\boldsymbol{u}}}^{1}, \tilde{\overline{v}}^{1}, \tilde{\boldsymbol{u}}^{\Delta t, 1}, \tilde{v}^{\Delta t, 1}, \tilde{W}^{1}),
\\
\tilde{X}^{2} = (\tilde{\eta}^{2}, \tilde{\overline{\eta}}^{2}, \tilde{\eta}^{\Delta t, 2}, \tilde{\boldsymbol{u}}^{2}, \tilde{v}^{2}, \tilde{\boldsymbol{u}}^{*, 2}, \tilde{v}^{*, 2}, \tilde{\overline{\boldsymbol{u}}}^{2}, \tilde{\overline{v}}^{2}, \tilde{\boldsymbol{u}}^{\Delta t, 2}, \tilde{v}^{\Delta t, 2}, \tilde{W}^{2}),
\end{align*} 
are random variables
on $(\tilde{\Omega}, \tilde{\mathcal{F}}, \tilde{\mathbb{P}})$, and $\nu$ is the law of $(\tilde{X}_{1}, \tilde{X}_{2})$. 

We want to show that $\nu$ is supported on the diagonal. It suffices to show that
$\tilde{\mathbb{P}}(\tilde{X}^{1} = \tilde{X}^{2}) =1$.
We do this in three steps.

{\bf{Step 1.}} First we notice that 
\textit{$\tilde{X}^{1}$ is a weak solution in a probabilistically weak sense with respect to the stochastic basis $(\tilde{\Omega}, \tilde{\mathcal{F}}, \{\tilde{\mathcal{F}}_{t}^{1}\}_{t \ge 0}, \tilde{\mathbb{P}}, \tilde{W}_{1})$} in the sense of Definition \ref{weak}. This follows from the results of Lemma \ref{properties}. 
Namely,  the results of Lemma \ref{properties}
imply that
$\tilde{\eta}^{1} = \tilde{\overline{\eta}}^{1} = \tilde{\eta}^{\Delta t, 1}$, $\tilde{\boldsymbol{u}}^{1} = \tilde{\boldsymbol{u}}^{*, 1} = \tilde{\overline{\boldsymbol{u}}}^{1} = \tilde{\boldsymbol{u}}^{\Delta t, 1}$, $\tilde{v}^{1} = \tilde{v}^{*, 1} = \tilde{\overline{v}}^{1} = \tilde{v}^{\Delta t, 1}$, and $\partial_{t}\tilde{\eta}^{1} = \tilde{v}^{1}$ almost surely.
Furthermore, $(\tilde{\boldsymbol{u}}^{1}, \tilde{\eta}^{1}) \in  \mathcal{W}(0, T)$ and $(\tilde{\boldsymbol{u}}^{1}, \tilde{v}^{1}) \in C(0, T; \mathcal{Q}')$, satisfying the initial condition $\tilde{\eta}^{1}(0) = \eta_{0}$ almost surely.
Furthermore, {$\tilde{X}^{1}$ is a weak solution in a probabilistically weak sense with respect to the stochastic basis $(\tilde{\Omega}, \tilde{\mathcal{F}}, \{\tilde{\mathcal{F}}_{t}^{1}\}_{t \ge 0}, \tilde{\mathbb{P}}, \tilde{W}_{1})$} in the sense of Definition \ref{weak}. The same is true for the components of $\tilde{X}^{2}$, with respect to $(\tilde{\Omega}, \tilde{\mathcal{F}}, \{\tilde{\mathcal{F}}_{t}^{2}\}_{t \ge 0}, \tilde{\mathbb{P}}, \tilde{W}_{2})$. Here, the filtrations $\{\tilde{\mathcal{F}}_{t}^{1}\}_{t \ge 0}$ and $\{\tilde{\mathcal{F}}_{t}^{2}\}_{t \ge 0}$ are defined by \eqref{tildefiltration} with the appropriate limiting random variables with superscripts ``1" and ``2" respectively. 

{\bf{Step 2.}} Here we notice  that the limiting white noise satisfies $\tilde{W}_{1} = \tilde{W}_{2}$. This follows directly from 
\eqref{jointequiv}, which implies 
$\tilde{W}^{1}_{M_{k}} = \tilde{W}^{2}_{N_{k}}$ almost surely, since the law of $(\tilde{W}^{1}_{M_{k}}, \tilde{W}^{2}_{N_{k}})$ is the same as that of $(W, W)$. 
Thus, by the convergence of $\tilde{W}^{1}_{M_{k}}$ and $\tilde{W}^{2}_{N_{k}}$ in $C(0, T; \mathbb{R})$ almost surely to $\tilde{W}^{1}$ and $\tilde{W}^{2}$, we have that $\tilde{W}^{1} = \tilde{W}^{2}$ almost surely in $C(0, T; \mathbb{R})$. This will allow us to make sense of the difference of the stochastic integrals with respect to $\tilde{W}_{1}$ and $\tilde{W}_{2}$ in the weak formulations on the ``tilde" probability space.

{\bf{Step 3.}} Finally, we use deterministic uniqueness to obtain the diagonal condition. 
We consider the difference $(\tilde{\eta}^{1} - \tilde{\eta}^{2}, \tilde{\boldsymbol{u}}^{1} - \tilde{\boldsymbol{u}}^{2})$. 
By subtracting the weak formulations defining
$(\tilde{\boldsymbol{u}}^{1}, \tilde{\eta}^{1})$ and $(\tilde{\boldsymbol{u}}^{2}, \tilde{\eta}^{2})$
as probabilistically weak solutions, given in Definition \ref{weak}, 
and by using the result of Step 2 above, we obtain that $(\tilde{\boldsymbol{u}}^{1} - \tilde{\boldsymbol{u}}^{2}, \tilde{\eta}^{1} - \tilde{\eta}^{2})$ almost surely satisfies for all $(\boldsymbol{q}, \psi) \in \mathcal{Q}(0, T)$, 
\begin{gather*}
- \int_{0}^{T} \int_{\Omega_{f}} (\boldsymbol{u}_{1} - \boldsymbol{u}_{2}) \cdot \partial_{t}\boldsymbol{q} d\boldsymbol{x} dt + 2\mu \int_{0}^{T} \int_{\Omega_{f}} \boldsymbol{D}(\boldsymbol{u}_{1} - \boldsymbol{u}_{2}) : \boldsymbol{D}(\boldsymbol{q}) d\boldsymbol{x} dt \\
- \int_{0}^{T} \int_{\Gamma} \partial_{t}(\eta_{1} - \eta_{2})\partial_{t}\psi dz dt + \int_{0}^{T} \int_{\Gamma} \nabla (\eta_{1} - \eta_{2}) \cdot \nabla \psi dz dt = 0,
\end{gather*}
with $\tilde{\eta}^{1} - \tilde{\eta}^{2} = 0$ almost surely. Therefore, by using the uniqueness result in Lemma \ref{detunique}, we conclude that $\tilde{\eta}^{1} = \tilde{\eta}^{2}$ and $\tilde{\boldsymbol{u}}^{1} = \tilde{\boldsymbol{u}}^{2}$ almost surely. Since $\tilde{v}^{1} = \partial_{t}\tilde{\eta}^{1}$ and $\tilde{v}^{2} = \partial_{t}\tilde{\eta}^{2}$, we also obtain that $\tilde{v}^{1} = \tilde{v}^{2}$ almost surely. This allows us to conclude that $\tilde{\mathbb{P}}(\tilde{X}^{1} = \tilde{X}^{2})  = 1$, which implies that the
limiting joint probability measure (or law) $\nu$ is supported on the diagonal. 

This completes the verification of the diagonal condition of the Gy\"{o}ngy-Krylov lemma. 

\subsection{Existence of a weak solution in a probabilistically strong sense}\label{final}

The existence of a weak solution in a probabilistically strong sense,  given by Definition~\ref{strong},
now follows from the Gy\"{o}ngy-Krylov lemma in Lemma \ref{GK}. More precisely, by the Gy\"{o}ngy-Krylov lemma, the original sequence 
$
(\eta_{N}, \overline{\eta}_{N}, \eta^{\Delta t}_{N}, \boldsymbol{u}_{N}, v_{N}, \boldsymbol{u}_{N}, v_{N}^{*}, \overline{\boldsymbol{u}}_{N}, \overline{v}_{N}, \boldsymbol{u}^{\Delta t}_{N}, v^{\Delta t}_{N}, W)
$
converges in probability to some random variable
$
(\eta, \overline{\eta}, \eta^{\Delta t}, \boldsymbol{u}, v, \boldsymbol{u}^{*}, v^{*}, \overline{\boldsymbol{u}}, \overline{v}, \boldsymbol{u}^{\Delta t}, v^{\Delta t}, W),
$
where the last component must be $W$ up to a null set, since the limit in probability of any constant sequence is almost surely exactly that constant. 

Since convergence in probability implies almost sure convergence along a subsequence,
we conclude that along a subsequence which we continue to denote by $N$, we have that
\begin{equation}\label{strongconv}
(\eta_{N}, \overline{\eta}_{N}, \eta^{\Delta t}_{N}, \boldsymbol{u}_{N}, v_{N}, \boldsymbol{u}_{N}, v_{N}^{*}, \overline{\boldsymbol{u}}_{N}, \overline{v}_{N}, \boldsymbol{u}^{\Delta t}_{N}, v^{\Delta t}_{N}, W) \to (\eta, \overline{\eta}, \eta^{\Delta t}, \boldsymbol{u}, v, \boldsymbol{u}^{*}, v^{*}, \overline{\boldsymbol{u}}, \overline{v}, \boldsymbol{u}^{\Delta t}, v^{\Delta t}, W), \textit{ a.s. in } \mathcal{X}.
\end{equation}
To show that this limit is a weak solution in the sense of Definition~\ref{strong}, we use the same 
arguments as in Lemma \ref{properties}. All of the properties from Definition~\ref{strong} follow from Lemma \ref{properties},
except for uniqueness and showing that $(\boldsymbol{u}, v, \eta)$ is $\mathcal{F}_{t}$-adapted. 

{\bf{Uniqueness}} follows from the deterministic uniqueness result of Lemma \ref{detunique}.

{\bf{$\mathcal{F}_{t}$-adaptedness of $(\boldsymbol{u}, v, \eta)$:}}
Note that this is not provided by Lemma \ref{properties}, as we want to show that this solution is adapted to the \textit{original} filtration $\{\mathcal{F}_{t}\}_{t \ge 0}$, while the filtration defined in \eqref{tildefiltration} is not necessarily the same filtration. 

To verify this, we note that by construction, $(\boldsymbol{u}_{N}, v_{N}, \eta_{N})$ is adapted to the given complete filtration $\{\mathcal{F}_{t}\}_{t \ge 0}$. We want to pass to the limit as $N \to \infty$. By the convergence in \eqref{strongconv}, 
\begin{gather*}
\boldsymbol{u}_{N} \to \boldsymbol{u}, \qquad \text{ almost surely in } L^{2}(0, T; L^{2}(\Omega_{f})),
\\
v_{N} \to v, \qquad \text{ almost surely in } L^{2}(0, T; L^{2}(\Gamma)),
\\
\eta_{N} \to \eta, \qquad \text{ almost surely in } L^{2}(0, T; L^{2}(\Gamma)).
\end{gather*}
By the same argument used to establish \eqref{almostsureS} for example, we obtain that for a measurable set $S \subset [0, T] \times \Omega$ with $(dt \times \mathbb{P})(S) = T$, 
\begin{gather}
\boldsymbol{u}_{N_{k}}(t, \omega, \cdot) \to \boldsymbol{u}(t, \omega, \cdot) \ \text{ in } L^{2}(\Omega_{f}), \qquad v_{N_{k}}(t, \omega, \cdot) \to v(t, \omega, \cdot) , \ \eta_{N_{k}}(t, \omega, \cdot) \to \eta(t, \omega, \cdot) \ \text{ in } L^{2}(\Gamma)
\label{adaptedconv}
\end{gather}
along a common subsequence $N_{k}$. In particular, $([0, T] \times \Omega) - S$ has measure zero with respect to the product measure $dt \times \mathbb{P}$. 

Define $S_{0} \subset [0, T]$ to be all times $t \in [0, T]$ for which $\mathbb{P}((t, \omega) \in S) = 1$, so that the time slice at time $t$ has full measure in probability. $S_{0}$ is measurable in $[0, T]$ and contains almost every time in $[0, T]$ by Fubini's theorem. So for all $t \in S_{0}$, the convergences \eqref{adaptedconv} are almost sure convergences. 

Because $\{\mathcal{F}_{t}\}_{t \ge 0}$ is a complete filtration by assumption, the almost sure limit of $\mathcal{F}_{t}$-measurable random variables must also be $\mathcal{F}_{t}$-measurable, since $\mathcal{F}_{t}$ contains all null sets of $(\Omega, \mathcal{F}, \mathbb{P})$. So for all $t \in S_{0}$, $\boldsymbol{u}(t)$, $v(t)$, and $\eta(t)$ are $\mathcal{F}_{t}$-measurable since $\boldsymbol{u}_{N_{k}}(t)$, $v_{N_{k}}(t)$, and $\eta_{N_{k}}(t)$ are $\mathcal{F}_{t}$-measurable by construction. 

To show $\boldsymbol{u}(t)$, $v(t)$, and $\eta(t)$ are $\mathcal{F}_{t}$-measurable for $t \notin S_{0}$, we use the fact that $S_{0}$ has full measure in $[0, T]$ and is hence dense. We can assume $t \ne 0$, since at $t = 0$, $(\boldsymbol{u}(0), v(0), \eta(0)) = (\boldsymbol{u}_{0}, v_{0}, \eta_{0})$ almost surely so the result holds. So for $t \notin S_{0}$ and $t \ne 0$, we can construct $t_{n} \in S_{0}$ such that $t_{n} \nearrow t$. By the fact that $(\boldsymbol{u}, v) \in C(0, T; \mathcal{Q}')$ and $\eta$ is Lipschitz continuous almost surely, we have that $(\boldsymbol{u}(t), v(t), \eta(t))$ is the almost sure limit of $(\boldsymbol{u}(t_{n}), v(t_{n}), \eta(t_{n}))$, which are $\mathcal{F}_{t}$-measurable since $\mathcal{F}_{t_{n}} \subset \mathcal{F}_{t}$, as $t_{n} \le t$. This establishes the adaptedness of $(\boldsymbol{u}, v, \eta)$ to the given complete filtration $\{\mathcal{F}_{t}\}_{t \ge 0}$. 

In conclusion, we have now shown that  $(\boldsymbol{u}, v, \eta)$ has all of the required properties needed to be a weak solution in a probabilistically strong sense to the given fluid-structure interaction problem with respect to the Brownian motion $W$ with complete filtration $\{\mathcal{F}_{t}\}_{t \ge 0}$, as in Definition \ref{strong}. This completes the proof of the main result, stated in Theorem~\ref{Main theorem},
and restated here:
\vskip 0.1in

\begin{theorem}[{\bf{Main Result}}]
Let $\boldsymbol{u}_{0} \in L^{2}(\Omega_{f})$, $v_{0} \in L^{2}(\Gamma)$, and $\eta_{0} \in H_{0}^{1}(\Gamma)$. Let $P_{in/out} \in L^{2}_{loc}(0, \infty)$ and let $(\Omega, \mathcal{F}, \mathbb{P})$ be a probability space with a Brownian motion $W$ with respect to a given complete filtration $\{\mathcal{F}_{t}\}_{t \ge 0}$. Then, for any $T > 0$, there exists a unique weak solution in a probabilistically strong sense to the given stochastic fluid-structure interaction problem \eqref{eq}--\eqref{idata}. 
\end{theorem}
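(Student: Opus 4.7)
The plan is to build a constructive existence proof by time-discretizing via a carefully ordered operator splitting scheme, deriving uniform energy estimates in expectation, extracting weak limits of the laws through tightness, upgrading to almost-sure convergence on an auxiliary probability space via the Skorohod representation theorem, and finally returning to the original probability space using the Gy\"ongy--Krylov lemma, with the diagonal condition reduced to pathwise deterministic uniqueness of the linear coupled problem.

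First, I would partition $[0,T]$ into $N$ subintervals of length $\Delta t = T/N$ and on each subinterval solve three subproblems in the specific order designed in Section~\ref{split_scheme}: a purely deterministic structure update (producing $\mathcal{F}_{t^n_N}$-measurable quantities), a stochastic update that adds the Brownian increment $W(t^{n+1}_N)-W(t^n_N)$ to the structure velocity, and a coupled fluid--structure velocity update. The point of this ordering is that when one later assembles the discrete white-noise term into a single sum, the integrand $v^{k+1/3}_N$ is $\mathcal{F}_{t^k_N}$-measurable and therefore predictable, so the sum may be rewritten as an It\^o stochastic integral and controlled through the Burkholder--Davis--Gundy inequality. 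This is precisely what yields the uniform energy estimates in expectation of Proposition~\ref{uniformenergy}, together with the numerical dissipation bounds. From these I would introduce the piecewise-constant and piecewise-linear in-time interpolants and the time-shifted variants, and verify the uniform bounds of Propositions~\ref{uniformbound}--\ref{uniformetabound}.

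Next I would assemble the joint law $\mu_N$ on the phase space $\mathcal{X}$ of \eqref{phase}. Tightness of $\mu_N$ follows from two deterministic compactness inputs: the Aubin--Lions embedding for the structure (Lemma~\ref{structurecompact}) and the Simon-type compactness for the fluid-structure velocity pair via the sets $\mathcal{K}_R$ (Lemma~\ref{KRcompact}), combined with the bounds on numerical dissipation and the a.s.\ H\"older regularity of Brownian motion. By Prohorov's theorem one extracts a weakly convergent subsequence $\mu_N \Rightarrow \mu$, and Lemmas~\ref{Aclosed}--\ref{continuitylemma}, based on the $p$-variation control of length scale $\delta$ coming from the time-shift estimate \eqref{Qprimeest}, ensure that $\mu$ is concentrated on paths in $C(0,T;\mathcal{Q}')$ agreeing with $(\bu_0,v_0)$ at $t=0$. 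The Skorohod representation theorem (Lemma~\ref{SkorohodLemma}) then transfers this to a.s.\ convergence on a tilde probability space, where passage to the limit in each term of the semidiscrete weak formulation (Section~\ref{limit}) produces a weak solution in the probabilistically weak sense of Definition~\ref{weak}.

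Finally, to upgrade to a probabilistically strong solution on $(\Omega,\mathcal{F},\mathbb{P})$, I would invoke Gy\"ongy--Krylov (Lemma~\ref{GK}). Given two subsequences $X^1_{M_k}$ and $X^2_{N_k}$ of the original approximate solutions, the joint laws are tight, so a further subsequence has joint law $\nu$ on $\mathcal{X}\times\mathcal{X}$. A second Skorohod application realises $\nu$ as the law of a pair $(\tilde X^1,\tilde X^2)$ on a common tilde space, each a probabilistically weak solution; crucially, since both components are built from the original Brownian motion, the limits $\tilde W^1=\tilde W^2$ a.s., so $(\tilde X^1-\tilde X^2)$ solves the homogeneous linear coupled problem with zero initial data. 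The diagonal condition $\tilde{\mathbb P}(\tilde X^1=\tilde X^2)=1$ thus reduces to pathwise uniqueness for the deterministic linear FSI system, which is exactly Lemma~\ref{detunique}; I would prove this lemma by testing the homogeneous weak formulation against the antiderivative test function $(\bu_0,\psi_0)$ of \eqref{wavefundamental} to obtain a closed energy identity. Gy\"ongy--Krylov then delivers convergence in probability of $(\bu_N,v_N,\eta_N)$, hence almost sure convergence along a subsequence in $\mathcal{X}$; adaptedness of the limit to the original complete filtration $\{\mathcal{F}_t\}$ is preserved because a.s.\ limits of $\mathcal{F}_t$-measurable variables remain $\mathcal{F}_t$-measurable under completeness, and uniqueness of the strong solution is immediate from Lemma~\ref{detunique} applied to the difference of two such solutions driven by the same $W$.

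The principal obstacle is the interplay between the stability of the splitting scheme and the uniqueness argument needed to run Gy\"ongy--Krylov. The three-step splitting must preserve predictability of the integrand multiplying each Brownian increment, because only then can the BDG inequality bound the stochastic work term in the energy estimate; a naive two-step splitting produces a non-predictable integrand and destroys both the uniform estimate of Proposition~\ref{uniformenergy} and the identification of $\int \psi\,d\tilde W$ as the limit of the discrete stochastic sums in Section~\ref{limit}. Similarly, the deterministic uniqueness in Lemma~\ref{detunique} is subtle because the natural test function $(\bu,\partial_t\eta)$ lacks the regularity demanded by $\mathcal{Q}(0,T)$, forcing the antiderivative construction used in wave-type uniqueness arguments.
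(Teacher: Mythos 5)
Your proposal is correct and follows essentially the same approach as the paper: the three-step operator splitting with the specific ordering to preserve predictability, uniform expectation-level energy estimates via BDG, tightness from Aubin--Lions and Simon compactness, Skorohod to pass to the limit on a tilde space, and Gy\"ongy--Krylov with the diagonal condition reduced to pathwise deterministic uniqueness proved by the antiderivative test-function argument. (One small slip: the test function in the uniqueness lemma is $(\boldsymbol{q}_0,\psi_0)$, not $(\boldsymbol{u}_0,\psi_0)$, but the intent is clear.)
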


\section{Conclusions}
In this manuscript, we presented a constructive proof of the existence of a weak solution in a probabilistically strong sense, 
to a benchmark stochastic fluid-structure interaction (SFSI) problem \eqref{eq}--\eqref{idata}. 
An example of such a problem is the flow of blood in coronary arteries that sit on the surface of the heart
and contract and expand under the outside forcing due to the heart muscle contraction and expansion.
Dynamic patient images show significant stochastic effects in the heart contractions,
which can be captured by an SFSI model such as the one studied in this work. 
Our well-posedness result indicates that stochastic FSI models are robust in the sense that a unique weak solution 
in the sense of Definition~\ref{strong} will exist even when the problem is stochastically forced by
a rough time-dependent white noise, as considered in this work.

In addition to the importance of this work in terms of modeling real-life fluid structure interaction phenomena with stochastic noise, 
to the best of our knowledge the results of this work present a first constructive existence proof of a unique weak solution 
in a probabilistically strong sense
to a stochastically forced and fully coupled FSI problem, as defined in Definition~\ref{strong}.

In contrast to the deterministic case, the proof based on the operator splitting strategy presented in this work
has several new interesting components, which we summarize below.
\begin{enumerate}
\item The energy estimates are given in expectation, and do not necessarily hold pathwise. 
As a consequence, weak precompactness can be deduced only for the {\emph{probability measures}}, or {\emph{laws}} 
associated with the approximate solution sequences, and not the sequences themselves. 
\item The energy estimate has an extra term that accounts for the energy pumped into the problem by the stochastic forcing in expectation. 
\item Solving the subproblems resulting from the operator splitting scheme must be done
in the ``correct'' order to obtain a {\emph{stable scheme}}.
In particular, the order is: (1) the structure subproblem, (2) the stochastic subproblem, and (3) the fluid subproblem.
With this order, we can properly interpret the terms involving time increments of the stochastic forcing as a stochastic integral, due to the measurability properties of the approximate solutions, which allows us to show stability.
\item To establish
weak convergence of probability measures, one must show that the probability measures are {\bf{tight}}, which requires the use of a {\emph{compactness
result}} alla Aubin-Lions, even though the coupled problem is linear.
\item Once weak convergence of the probability measures (laws) associated with the approximate solutions is established, probabilistic techniques
based on the Skorohod representation theorem and the Gy\"ongy-Krylov lemma have to be employed to obtain almost sure convergence along a subsequence to a weak solution. 
\end{enumerate}

\section*{Acknowledgements}
This work was partially supported by the National Science Foundation under grants DMS-1853340 and DMS-2011319. 

\bibliography{linearsplit}
\bibliographystyle{plain}

\end{document}